\documentclass{article}
\usepackage[utf8]{inputenc}
\usepackage[T1]{fontenc}
\usepackage{lmodern}
\usepackage{microtype}

\usepackage{amsmath,amssymb,amsfonts,amsthm}
\usepackage{mathtools}
\usepackage{mathrsfs}
\usepackage{dsfont}
\usepackage{bm}
\usepackage{systeme}
\usepackage{derivative}

\usepackage[margin=1in]{geometry}
\usepackage{graphicx}
\usepackage{float}
\usepackage[export]{adjustbox}

\usepackage{enumitem}

\usepackage[colorlinks=true,
            linkcolor=blue,
            citecolor=blue,
            urlcolor=blue]{hyperref}
\usepackage[nameinlink,capitalise]{cleveref}

\numberwithin{equation}{section}

\theoremstyle{plain}
\newtheorem{theorem}{Theorem}[section]
\newtheorem{proposition}[theorem]{Proposition}
\newtheorem{lemma}[theorem]{Lemma}
\newtheorem{corollary}[theorem]{Corollary}

\theoremstyle{definition}
\newtheorem{definition}[theorem]{Definition}

\theoremstyle{remark}
\newtheorem{remark}[theorem]{Remark}

\title{Resonant Geometry and Well-Posedness for the 2D Boussinesq Wave Kinetic Equation}

\author{Haoling Xiang}

\date{}

\begin{document}
\maketitle

\begin{abstract}
We study the wave kinetic equation (WKE) derived by Shavit--B\"uhler--Shatah
from the two-dimensional Boussinesq system of internal waves.  The equation is
anisotropic and vector-valued, with a two-branch sign-changing dispersion
relation, coupled propagation branches, and a sign-indefinite pseudo-momentum
invariant.  The resonant manifold has angular degeneracies that obstruct the
standard analytic treatment of the collision operator.  We introduce a natural
angular cut-off adapted to these degeneracies, prove boundedness of the cut-off
collision operator, and establish local well-posedness in weighted
\(L^\infty\) spaces.  We also show that, even with the zero-frequency cut-off
retained, removing the cut-off near
\(|\cos\theta|=\tfrac12\) makes the collision operator unbounded on these
weighted spaces.  This provides, to our knowledge, the first rigorous analytic
framework for an anisotropic vector-valued WKE arising from Boussinesq
dynamics.
\end{abstract}

\tableofcontents

\section{Introduction}
\subsection{Background on internal waves}\label{section:Background on internal waves}
Wave kinetic equations (WKEs) arise as asymptotic models for weakly nonlinear,
dispersive wave fields whose long-time dynamics is governed by resonant wave
interactions~\cite{hasselmann1962non,nazarenko2011wave}.  A central example
comes from internal gravity waves in the ocean.  In this setting, weak
nonlinearity leads to resonant three-wave interactions in stratified fluids,
and kinetic theory provides a framework for describing the redistribution of
wave energy across scales and directions.

A major observational reference point is the Garrett--Munk spectrum, the
classical empirical spectrum for the distribution of internal-wave energy in
the ocean~\cite{garrett1972space,garrett1975space,munk1981internal}.  One of
the main motivations for internal-wave kinetic theory is to understand whether
nonlinear resonant interactions can help explain, maintain, or modify such
observed energy distributions~\cite{LvovTabak2001,lvov2010oceanic,
nazarenko2011wave}.

At the primitive level, internal gravity waves in a stably stratified fluid are
described by the Boussinesq equations.  Classical internal-wave kinetic theory
has been developed primarily for the \emph{three-dimensional} Boussinesq
system, often in regimes where the hydrostatic approximation is imposed or used
as a leading-order simplification.  Early foundational works include the
studies of M\"uller--Olbers~\cite{muller1975dynamics}, Olbers
~\cite{olbers1976nonlinear}, McComas--Bretherton
~\cite{mccomas1977resonant}, and McComas--M\"uller
~\cite{mccomas1981dynamic,mccomas1981time}.  Later developments include the
kinetic formulation of Caillol--Zeitlin~\cite{caillol2000kinetic} and the
Hamiltonian and wave-turbulence formulations of Lvov--Tabak and collaborators
~\cite{LvovTabak2001,lvov2004hamiltonian,lvov2012resonant}.  More recently,
Labarre--Lanchon--Cortet--Krstulovic--Nazarenko
~\cite{labarre2024kinetics} derived a fully symmetric interaction coefficient
and a kinetic equation beyond the hydrostatic approximation.

Although these works differ in their precise modeling assumptions,
normalizations, and asymptotic regimes---most notably in whether the
hydrostatic approximation is imposed, which changes the dispersion relation
and hence the resonant geometry---they all belong to the classical
three-dimensional internal-wave kinetic theory.  In that setting, the kinetic
description is formulated after the usual modal reduction for the propagating
internal-wave field, and the collision integral is organized around resonant
surfaces in three-dimensional wave-vector space.

\medskip
\subsection{The two-dimensional internal-wave kinetic equation of Shavit--B\"uhler--Shatah}\label{section:The two-dimensional internal-wave kinetic equation of Shavit--B\"uhler--Shatah}
Although the physically relevant problem is three-dimensional, deriving a
closed WKE for the propagating waves is complicated by the additional
zero-frequency vortical branch of the linearized three-dimensional Boussinesq
system; moreover, the geometry of the corresponding wave-resonant manifold is
prohibitively intricate.  We therefore consider the two-dimensional
vertical-plane model, which removes the independent vortical branch and permits
a complete analysis of the resonant geometry and collision operator, while
retaining the essential mathematical and physical features of internal-wave
interactions.

The present paper concerns the two-dimensional Boussinesq wave kinetic equation
introduced by Shavit--B\"uhler--Shatah.  In two dimensions, the linear
internal-wave dynamics has two propagating branches, corresponding to right-
and left-traveling waves.  The kinetic unknown is therefore a coupled
two-component wave-action density
\[
    n(\sigma,k), \qquad \sigma\in\{\pm1\},\quad k=(k_1,k_2)\in\mathbb{R}^2,
\]
with dispersion relation
\[
    \omega_{\sigma,k}=\sigma\,\frac{k_1}{|k|}.
\]
This dispersion relation is homogeneous of degree zero, anisotropic, and
sign-changing.  The two branches are not merely two copies of the same scalar
equation: they interact through the collision operator and must be retained as
distinct components of the kinetic unknown.

A further distinctive feature of the two-dimensional model is the presence of a
second quadratic invariant, the pseudo-momentum.  Unlike the energy, this
invariant is sign-indefinite: the two propagation branches contribute with
opposite signs.  This sign-indefinite structure plays an important role in the
cascade scenarios proposed in~\cite{shavit2024}.

The wave kinetic equation has the form
\[
    \partial_t n = \mathcal{C}(n)(\alpha),
    \qquad \alpha=(\sigma_\alpha,k_\alpha),
\]
where the collision operator couples the two propagation branches
$\sigma=\pm1$ and is supported on the resonant set
\[
    k_\alpha+k_\beta+k_\gamma=0,
    \qquad
    \omega_\alpha+\omega_\beta+\omega_\gamma=0.
\]
For a fixed output mode $\alpha$, this resonant set is a one-dimensional curve
in the remaining wave-vector variables, rather than a resonant surface as in
the three-dimensional theory.  Moreover, because the dispersion relation is
degree-zero homogeneous, anisotropic, and sign-changing, this curve is governed
almost entirely by angular variables and develops strong angular degeneracies.

The physics works of Shavit--B\"uhler--Shatah also discuss several consequences
of this kinetic equation.  In~\cite{shavit2024}, the sign-indefinite
pseudo-momentum is used to motivate new cascade scenarios.  In~\cite{shavit2025},
the authors propose a scale-invariant Kolmogorov-type stationary spectrum for
two-dimensional internal gravity waves.  This spectrum is obtained through
formal constant-flux scaling arguments, together with a regularized treatment
of the angular collision integral and a numerical reduction of the directional
dynamics.  In particular, the proposed spectrum should be understood as a
formal and numerically supported stationary-cascade candidate, rather than as
a rigorous solution theory for the kinetic equation.

More recently, the same authors extended this framework to the
two-dimensional rotating--stratified Boussinesq system and derived the
corresponding inertia--gravity-wave kinetic equation after restricting the
dynamics to the wave manifold~\cite{shavit2026rotating}.

From the viewpoint of analysis, however, the full two-component kinetic
equation remains open.  The previous works do not provide a function space in
which the collision operator is shown to be well defined, bounded, or
continuous, nor do they establish well-posedness or stability for the proposed
spectra.  In particular, the full coupled equation for
\[
    (n(+,\cdot),n(-,\cdot))
\]
has not previously been studied in a rigorous analytic framework.  The purpose
of the present paper is to provide such a framework for the cut-off
two-dimensional Boussinesq wave kinetic equation.

\medskip

\subsection{Mathematical wave kinetic theory}\label{sec:Mathematical wave kinetic theory}
\subsubsection*{Derivation from microscopic dynamics}

The rigorous mathematical study of wave turbulence for dispersive Hamiltonian
PDEs was initiated by Buckmaster--Germain--Hani--Shatah, who identified the
first kinetic correction predicted by the WKE for random-phase cubic NLS on
long, but sub-kinetic, time scales
~\cite{BuckmasterGermainHaniShatah2021}.  Subsequent works of Deng--Hani and
Collot--Germain reached near-kinetic regimes, up to arbitrarily small
polynomial losses in favorable scalings, and clarified the roles of the
scaling law and the dispersion relation
~\cite{deng2021derivation,collot2025homogeneous,collot2025longer}.
Deng--Hani then achieved a full derivation at the kinetic time scale under a
distinguished scaling; in companion work they also proved propagation of chaos
and identified the limiting higher-order statistics
~\cite{deng2023full,deng2021propagation}.  Their later long-time
theorem extended the kinetic approximation throughout any interval on which
the corresponding WKE solution exists~\cite{deng2023long}.  Related
long-time combinatorial methods were subsequently adapted by Deng--Hani--Ma in
their derivation of the Boltzmann equation from hard-sphere dynamics and in
subsequent work on Hilbert's sixth problem
~\cite{deng2024long,deng2025hilbert}.

In the space-inhomogeneous setting, Ampatzoglou--Collot--Germain rigorously
established the validity of the three-wave kinetic description for a model
quadratic dispersive equation, up to an arbitrarily small polynomial loss from
the kinetic time scale
~\cite{ampatzoglou2025derivation}.

Several recent works address one-dimensional settings.  Vassilev justified
the kinetic prediction for the MMT model on sub-kinetic time scales, while Wu
treated a reduced \(\beta\)-FPUT evolution with the non-resonant terms removed.
Vassilev--Wu subsequently incorporated these terms into the diagrammatic
expansion for the full \(\beta\)-FPUT system, reaching times of order
\(T_{\mathrm{kin}}^{2/3}\)
~\cite{vassilev2025,wu2025,VassilevWu2026}.  For the two-dimensional gravity
water-wave system, with a one-dimensional interface,
Deng--Ionescu--Pusateri combined deterministic energy estimates, normal forms,
and propagation of randomness to construct random solutions on large tori up
to the sub-kinetic scale \(\varepsilon^{-8/3+}\)
~\cite{DengIonescuPusateriGravityI,DengIonescuPusateriGravityII}; these works
do not yet derive the water-wave WKE.  In these one-dimensional settings,
unfavorable lattice counting and the resulting growth of Feynman diagrams
prevent the current methods from reaching the full kinetic time; for water
waves this difficulty is compounded by quasilinear derivative loss.  A
derivation of the two-dimensional Boussinesq WKE would face additional
difficulties of a different kind, including its two-branch anisotropic
interaction, degree-zero dispersion, and singular three-wave resonance
geometry.  We therefore take the WKE as our starting point.

\medskip
\subsubsection*{Well-posedness and qualitative dynamics}

Once a WKE has been derived or formally proposed, a separate question is
whether its collision operator generates a meaningful nonlinear evolution.
For homogeneous four-wave equations with radial dispersion laws,
Germain--Ionescu--Tran established an optimal local well-posedness theory in
nearly critical weighted spaces~\cite{GermainIonescuTran2020}.  For the kinetic
MMT equation, Germain--La--Zhang developed a local well-posedness theory and
uncovered a nonlinear smoothing mechanism~\cite{GermainLaZhang2025}.

For the space-inhomogeneous WKE associated with cubic NLS, Ampatzoglou proved global well-posedness and stability for small mild solutions
near vacuum~\cite{ampatzoglou2022global}.  Ampatzoglou--Miller--Pavlovi\'c--
Taskovi\'c extended the small-data global theory to polynomially weighted
\(L^\infty\) spaces and established global well-posedness for the associated
hierarchy~\cite{AmpatzoglouMillerPavlovicTaskovic2025}, while
Ampatzoglou--L\'eger constructed small global strong dispersive solutions and
developed a scattering theory~\cite{ampatzoglou2024scattering}.

Beyond existence, Menegaki proved nonlinear \(L^2\)-stability near
Rayleigh--Jeans equilibria for a frequency-cutoff model and dispersions weakly
perturbed from the quadratic case~\cite{Menegaki2024}.  By contrast,
Escobedo--Menegaki proved nonlinear instability of the singular
Rayleigh--Jeans equilibrium through finite-time formation of a Dirac mass at
zero frequency, and analyzed the corresponding linearized concentration
dynamics~\cite{EscobedoMenegaki2024}.  Kolmogorov--Zakharov spectra instead
describe non-equilibrium cascades.  In the isotropic stationary setting,
Collot--Dietert--Germain proved nonlinear stability of the mass-cascade
Kolmogorov--Zakharov spectrum and constructed non-equilibrium steady states
exhibiting an inverse mass cascade and a direct energy cascade
~\cite{CollotDietertGermain2024}.

For the kinetic FPU equation with phonon wave number \(p\in\mathbb T\),
Lukkarinen--Spohn established anomalous decay of equilibrium energy-current
correlations through spectral analysis of the linearized collision operator, while Germain--La--Menegaki subsequently proved nonlinear stability of nonsingular Rayleigh--Jeans equilibria in the spatially homogeneous setting ~\cite{lukkarinen2008anomalous,germain2026stability}. Escobedo--Germain--La--Menegaki characterized entropy maximizers for kinetic wave equations, including condensation regimes ~\cite{escobedo2025entropy}. For the spatially inhomogeneous kinetic FPU equation, the author of the present
paper exploited dispersion of the transport flow to extend the lifespan of small solutions near the vacuum from the quadratic to the quartic time scale~\cite{xiang2025long}.

Very recently, Pan--Wu constructed local-in-time \(L^1\) strong solutions to
the four-wave gravity-water-wave WKE for initial data in suitably weighted
\(L^2\cap L^\infty\) spaces, using a sharp kernel bound in a highly nonlocal
regime and a refined structural decomposition of the collision operator
~\cite{PanWu2026}.  Although their radially nonlocal regime differs from our
angular zero-frequency endpoints, their \(L^1\)-based formulation suggests
exploring a non-cutoff zero-frequency evolution in \(L^1\), combined with the
energy structure identified in Section~\ref{sec:zero-frequency-cutoff}, rather
than a direct Banach fixed-point argument on \(L^\infty_m\).  This possible
route leaves the nonzero-frequency high--low--high cusp unaffected.

Ampatzoglou--L\'eger identified the sharp threshold \(\beta=\tfrac14\) between
local well-posedness and ill-posedness in weighted \(L^\infty\) spaces for a
family of scalar isotropic four-wave equations.  The gain-only and full
equations have the same threshold, but the full-equation counterexample uses
oscillatory initial data to overcome gain--loss cancellation
~\cite{AmpatzoglouLegerIllposed2025}.  Our mechanism is instead a three-wave,
anisotropic high--low--high interaction at a quadratic cusp of the resonance
curve at nonzero frequency.  Carefully chosen nonnegative, angularly localized
wave packets isolate a positive unbounded contribution in the full collision
operator, showing that the angular cut-off near
\(|\cos\theta|=\tfrac12\) is necessary for the collision map to define a
bounded quadratic vector field on the weighted \(L^\infty_m\) spaces.

We now turn to the two-dimensional Boussinesq WKE.  Its formal derivation from
the underlying Hamiltonian PDE is recorded in
Appendix~\ref{Appendix: Formal derivation}.  The next subsections define the
collision operator and the angular cut-off adapted to its zero-frequency
endpoints and nonzero-frequency high--low--high cusp, and then state the main
theorem; the position and contributions of the present work are discussed in
Section~\ref{subsec:organization-article}.
\subsection{The formal WKE}\label{section:WKE}

In this section we introduce the formal wave kinetic equation (WKE) associated
with two-dimensional internal waves.  The unknown is the wave-action density
\(n_\alpha(t)\), indexed by the multi-index
\[
    \alpha=(\sigma_\alpha,k_\alpha)
    =(\sigma_\alpha,k_\alpha^1,k_\alpha^2)
    \in\{\pm1\}\times\mathbb R^2,
\]
where \(\sigma_\alpha\in\{\pm1\}\) denotes the frequency branch and
\(k_\alpha\in\mathbb R^2\) is the wave vector.  We write
\[
    k_\alpha=|k_\alpha|(\cos\theta_\alpha,\sin\theta_\alpha),
    \qquad
    \theta_\alpha\in\mathbb R/2\pi\mathbb Z,
\]
and use the same convention for \(k_\beta\) and \(k_\gamma\).

Throughout the main text, we normalize the buoyancy frequency to one, which is
always possible after rescaling time.

Following Shavit--B\"uhler--Shatah~\cite{shavit2024}, the formal WKE is
\[
    \partial_t n_\alpha=\mathcal C(n)(\alpha),
\]
where the collision operator is
\begin{equation}\label{eq:WKE}
\begin{aligned}
\mathcal C(n)(\alpha)
    &=
    \int
    \Gamma_{\alpha\beta\gamma}^{\,2}\,\omega_\alpha
    \Bigl(
        \omega_\alpha n_\beta n_\gamma
        +\omega_\gamma n_\alpha n_\beta
        +\omega_\beta n_\alpha n_\gamma
    \Bigr)  \\
    &\qquad\qquad\times
    \delta(k_\alpha+k_\beta+k_\gamma)\,
    \delta(\omega_\alpha+\omega_\beta+\omega_\gamma)
    \,d\beta\,d\gamma .
\end{aligned}
\end{equation}
Here the integration runs over both frequency branches and all wave vectors:
\[
    \int d\alpha
    :=
    \sum_{\sigma_\alpha=\pm1}\int_{\mathbb R^2} dk_\alpha,
\]
The interaction coefficient is denoted by \(\Gamma_{\alpha\beta\gamma}\) and, with this normalization, is given by
\[
    \Gamma_{\alpha\beta\gamma}
    =
    \pi^2
    \bigl(
        \sigma_\alpha\sin\theta_\alpha
        +\sigma_\beta\sin\theta_\beta
        +\sigma_\gamma\sin\theta_\gamma
    \bigr)
    \bigl(
        \sigma_\alpha |k_\alpha|
        +\sigma_\beta |k_\beta|
        +\sigma_\gamma |k_\gamma|
    \bigr).
\]
When no confusion is possible, we write \(\Gamma\) for
\(\Gamma_{\alpha\beta\gamma}\).  The dispersion relation is
\[
    \omega_\alpha
    =
    \sigma_\alpha \frac{k_\alpha^1}{|k_\alpha|}
    =
    \sigma_\alpha \cos\theta_\alpha .
\]

We introduce the resonance functions
\[
    \Xi_2(\alpha,\beta,\gamma)
    :=
    k_\alpha+k_\beta+k_\gamma
\]
and
\[
    \Xi_1(\alpha,\beta,\gamma)
    :=\omega_\alpha+\omega_\beta+\omega_\gamma
   .
\]

Thus the delta functions in \eqref{eq:WKE} impose
\[
    \Xi_2=0,
    \qquad
    \Xi_1=0,
\]
corresponding respectively to momentum conservation and three-wave frequency
resonance.

At this point \eqref{eq:WKE} should be understood as a formal collision
integral.  The product of the two delta constraints and the possible
degeneracies of the resonant manifold have not yet been justified
analytically.  The angular cut-off formulation introduced below is designed to place
this collision operator in a regime where the co-area representation is
well defined and the resulting nonlinear operator can be estimated in weighted
\(L^\infty\) spaces.
%

\subsection{WKE with angular cut-off}
\label{cut-off}

We introduce the angular cut-off used throughout the paper.  The cut-off is
inserted symmetrically into the collision kernel, with one factor for each
member of a resonant triad, rather than imposed on the unknown \(n\).  This
preserves the permutation symmetries of the collision integrand and hence the
algebraic cancellations underlying the conservation laws and the
\(H\)-theorem.

We briefly describe the two geometric mechanisms removed by the cut-off.  The
sign configurations and the precise scaling of the resonant manifolds are
introduced in Sections~\ref{Symmetry of the resonance manifold}
and~\ref{Scaling and co–area representation}.

\paragraph{Collapsed endpoints.}

The resonance relations are invariant under a common dilation of the three
wave vectors.  Thus, along a resonant family with a large wave-number scale
\(R\), one may divide all three wave vectors by \(R\).  We call a limiting
point a \emph{collapsed endpoint} if one of the resulting normalized wave
vectors tends to zero.  Equivalently, one member of the triad is asymptotically
small relative to the other two.

Since the dispersion relation is homogeneous of degree zero and is not defined
at the zero wave vector, the angle of a collapsed mode denotes only a limiting
approach direction.  Different limiting angular values may therefore
correspond to the same collapsed Cartesian point.  Several such endpoints
appear in the explicit parametrizations below, but only the two mechanisms
described next require angular localization.

\paragraph{The zero-frequency localization.}

The first excluded set is
\[
    \cos\theta=0.
\]
Indeed, for every nonzero wave vector,
\[
    \omega_{\sigma,k}
    =
    \sigma\frac{k_1}{|k|}
    =
    \sigma\cos\theta,
\]
so every nonzero vertical wave vector has zero frequency.  Such modes carry no
fast oscillatory phase, and the averaging underlying the formal wave-kinetic
closure is not expected to remain uniform near this set.

Because the dispersion relation is homogeneous of degree zero, the
zero-frequency degeneration is angular rather than radial.  It occurs along
the entire vertical rays
\[
    \{(0,k_2):k_2\neq0\},
\]
and therefore persists at arbitrarily large wave-number magnitude; it cannot
be removed by radial decay alone.

The same degeneration has a more precise geometric manifestation in the
co--area representation.  At a representative non-collapsed zero-frequency
critical configuration, the reduced resonance function has an indefinite
quadratic part.  Consequently, two one-dimensional resonant branches cross at
a point where the co--area Jacobian \(J\) vanishes.  If \(\varrho\) denotes
the distance to this critical point, then along either branch
\[
    J\sim\varrho,
    \qquad
    d\mathcal H^1\sim d\varrho,
\]
and hence the co--area factor degenerates logarithmically:
\[
    \frac{d\mathcal H^1}{J}
    \sim
    \frac{d\varrho}{\varrho}.
\]

Zero-frequency directions also occur as limiting directions of certain
collapsed modes.  In these endpoint configurations, the leading high-pair
exchange and the associated coefficient estimates are not uniformly
controlled by the weighted \(L^\infty_m\) vector-field argument used in this
paper.  The non-collapsed saddle-type co--area degeneration and the
collapsed-endpoint high-pair exchange are analyzed in
Section~\ref{sec:zero-frequency-cutoff}; together they motivate the
localization away from \(\cos\theta=0\).

\paragraph{The finite-frequency high--low--high cusp.}

The second excluded set is
\[
    |\cos\theta|=\frac12.
\]
This is a finite-frequency degeneration.  One of the resonant families
classified in Section~\ref{Symmetry of the resonance manifold} contains, in a
representative labeling, a collapsed low mode \(k_\beta\).  After division by
the high scale \(|k_\alpha|\), the corresponding normalized low wave vector
reaches zero precisely when
\[
    |\cos\theta_\alpha|\le\frac12.
\]

For
\[
    |\cos\theta_\alpha|<\frac12,
\]
two distinct limiting values of \(\theta_\beta\) correspond to the same
collapsed point, and the incident arcs have different one-sided tangents.  We
refer to this nonsmooth collapsed point as a
\emph{high--low--high cusp}.  The exchanged labeling gives the analogous cusp
with \(k_\gamma\) as the low mode.

The terminology refers to wave-number magnitude.  Restoring the physical scale
\(R=|k_\alpha|\), a family approaching the cusp satisfies, up to exchanging
\(\beta\) and \(\gamma\),
\[
    |k_\alpha|\sim|k_\gamma|\sim R,
    \qquad
    |k_\beta|=O(1),
    \qquad
    R\to\infty.
\]

At the critical boundary
\[
    |\cos\theta_\alpha|=\frac12,
\]
the two limiting angular directions coalesce.  In the representative case
\[
    \theta_\alpha=\frac{\pi}{3},
    \qquad
    \theta_\beta\to\pi,
\]
set
\[
    \rho:=\frac{|k_\beta|}{|k_\alpha|},
    \qquad
    \eta:=\theta_\beta-\pi.
\]
The local resonance relation gives
\[
    \rho\sim\eta^2.
\]
Thus the low mode collapses quadratically.  This critical quadratic cusp is
responsible for the failure of the weighted \(L^\infty_m\) vector-field bound
proved in Section~\ref{Section: Unboundedness}.

The same resonant family also contains another collapsed endpoint, at which
the other non-output mode vanishes.  That endpoint is controlled by the
additional vanishing factors in the interaction coefficient and does not
require the localization near \(|\cos\theta|=\frac12\).  The remaining
collapsed endpoints are likewise either removed by the zero-frequency
localization or controlled in the branchwise estimates.

\paragraph{Definition of the cut-off operator.}

Fix \(0<\varepsilon\ll1\).  In polar coordinates
\[
    k=|k|(\cos\theta,\sin\theta),
\]
choose a smooth \(2\pi\)-periodic function
\[
    \chi_\varepsilon\in C^\infty(\mathbb R/2\pi\mathbb Z),
    \qquad
    0\le\chi_\varepsilon\le1,
\]
such that
\[
    \chi_\varepsilon(\theta)=0
    \quad\text{if}\quad
    |\cos\theta|<\varepsilon
    \quad\text{or}\quad
    \bigl||\cos\theta|-\tfrac12\bigr|<\varepsilon,
\]
and
\[
    \chi_\varepsilon(\theta)=1
    \quad\text{if}\quad
    |\cos\theta|>2\varepsilon
    \quad\text{and}\quad
    \bigl||\cos\theta|-\tfrac12\bigr|>2\varepsilon.
\]
Between these regions, \(\chi_\varepsilon\) is chosen to interpolate smoothly.
We take it to depend only on \(|\cos\theta|\).  In particular,
\[
    \chi_\varepsilon(-\theta)=\chi_\varepsilon(\theta),
    \qquad
    \chi_\varepsilon(\theta+\pi)=\chi_\varepsilon(\theta),
\]
so that the cut-off preserves the reflection and half-turn symmetries of the
resonance geometry.

The cut-off collision operator is defined by
\begin{align}
\label{eq:WKE-cutoff-symmetric}
\mathcal C_\varepsilon(n)(\alpha)
&=
\int
\Gamma_{\alpha\beta\gamma}^{2}\,\omega_\alpha\,
\chi_\varepsilon(\theta_\alpha)
\chi_\varepsilon(\theta_\beta)
\chi_\varepsilon(\theta_\gamma)
\Bigl(
    \omega_\alpha n_\beta n_\gamma
    +\omega_\gamma n_\alpha n_\beta
    +\omega_\beta n_\alpha n_\gamma
\Bigr)
\notag\\
&\qquad\qquad\times
\delta(k_\alpha+k_\beta+k_\gamma)\,
\delta(\omega_\alpha+\omega_\beta+\omega_\gamma)
\,d\beta\,d\gamma .
\end{align}
The corresponding cut-off WKE is
\[
    \partial_t n_\alpha
    =
    \mathcal C_\varepsilon(n)(\alpha).
\]
When \(\varepsilon\) is fixed, we often write
\(\mathcal C\) instead of \(\mathcal C_\varepsilon\).

The two localizations have different roles: the first excludes the
zero-frequency sector, while the second excludes the critical quadratic
high--low--high cusp.

This formulation is analogous in spirit to the classical Grad angular cut-off
for the Boltzmann equation.  In the Boltzmann setting, the angular restriction
is imposed on the collision kernel and not on the distribution function itself;
this preserves the algebraic conservation structure while removing the
singular angular regime.  Grad's angular cut-off is a standard hypothesis in
much of the classical mathematical theory of the Boltzmann equation; see, for
example, Grad~\cite{Grad1958}, Cercignani~\cite{Cercignani1988},
DiPerna--Lions~\cite{DiPernaLions1989}, and
Villani~\cite{Villani2002}.  The complementary non-cutoff theory, in which the
angular singularity is retained, has a different analytic character; see, for
instance,
Alexandre--Desvillettes--Villani--Wennberg
~\cite{AlexandreDesvillettesVillaniWennberg2000}.

All estimates below are for fixed \(\varepsilon>0\).  The constants may depend
on \(\varepsilon\) and may diverge as \(\varepsilon\to0\); no uniform removal
of either angular localization is proved here.


\subsection{Main Results}\label{Main Results}

\begin{definition}
For $m\in\mathbb{R}$ we define the weighted space
\[
L^\infty_m
:= \Bigl\{ n(\sigma,k):\{\pm1\}\times\mathbb{R}^2\to\mathbb{R}_{\ge0} \;\Big|\;
      \|n\|_{L^\infty_m}<\infty \Bigr\},
\]
with norm
\[
\|n\|_{L^\infty_m}
:= \sup_{\sigma=\pm1}\sup_{k\in\mathbb{R}^2}
   \langle k\rangle^{m}\,|n(\sigma,k)|.
\]
\end{definition}

\begin{theorem}[Boundedness of the collision operator]
\label{thm:bounded-C}
Consider the cut-off WKE~\eqref{eq:WKE-cutoff-symmetric} and let 
$\mathcal{C}$ denote the associated collision operator.  
Fix $0<\varepsilon\ll1$. Then for every $m>4$ the operator $\mathcal{C}$ is 
bounded on $L^\infty_m$ in the sense that
\[
\sup_{\sigma=\pm1}\sup_{k\in\mathbb{R}^2}
\langle k\rangle^{m}\,
|\mathcal{C}(n)(\sigma,k)|
\;\lesssim_\varepsilon\;
\|n\|_{L^\infty_m}^{\,2},
\qquad n\in L^\infty_m.
\]
In particular, $\mathcal{C}:L^\infty_m\to L^\infty_m$ for all $m>4$.
\end{theorem}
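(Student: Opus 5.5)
We will estimate the collision integral pointwise in \(\alpha\) by first integrating out the momentum delta, \(k_\gamma=-k_\alpha-k_\beta\), so that for fixed \(\alpha=(\sigma_\alpha,k_\alpha)\) and a fixed sign pair \((\sigma_\beta,\sigma_\gamma)\) the remaining integral runs over \(k_\beta\in\mathbb R^2\) against \(\delta(\Phi(k_\beta))\), where \(\Phi(k_\beta)=\sigma_\alpha\cos\theta_\alpha+\sigma_\beta\cos\theta_\beta+\sigma_\gamma\cos\theta_\gamma\). By the co-area formula this becomes an integral over the one-dimensional resonant curve \(\{\Phi=0\}\) with measure \(d\mathcal H^1/|\nabla\Phi|\). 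Since the constraint is invariant under common dilation, we rescale \(k_\beta=|k_\alpha|\,\kappa\); then \(\Phi\) depends only on \(\theta_\alpha\) and \(\kappa\), and \(|\nabla_{k_\beta}\Phi|=|k_\alpha|^{-1}|\nabla_\kappa\Phi|\). Thus each of the finitely many sign configurations reduces to an integral over a fixed, \(|k_\alpha|\)-independent family of normalized curves (parametrized by \(\theta_\alpha\)), multiplied by an explicit power of \(|k_\alpha|\). We then bound \(|\Gamma|^2\lesssim(|k_\alpha|+|k_\beta|+|k_\gamma|)^2\) and \(|\omega|\le1\). The three terms are treated separately: the gain term \(n_\beta n_\gamma\) and the two loss terms \(n_\alpha n_\beta\), \(n_\alpha n_\gamma\).

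The second step is the geometric input. On the support of \(\chi_\varepsilon(\theta_\alpha)\chi_\varepsilon(\theta_\beta)\chi_\varepsilon(\theta_\gamma)\), we must show that the normalized co-area factor \(1/|\nabla_\kappa\Phi|\) is bounded on the curve, uniformly in the admissible \(\theta_\alpha\), except near collapsed endpoints. The bound is needed both on compact pieces and along the unbounded ends, where \(|\kappa|\to\infty\) and \(k_\beta\approx-k_\gamma\). We will use the classification of resonant families from the later sections, and check the following:
\begin{itemize}[leftmargin=*]
\item The critical points of \(\Phi\), the saddles where \(\nabla\Phi=0\), occur only when some \(\cos\theta=0\), so the cut-off removes them.
\item The quadratic cusp arises only at \(|\cos\theta_\alpha|=\tfrac12\).
\item Away from these sets, the collapsed endpoints where \(\kappa\to0\) or \(\kappa\to-\hat k_\alpha\) are approached transversally, and in angular variables \(d\mathcal H^1/|\nabla\Phi|\sim|\kappa|\,d\theta\) is integrable.
\end{itemize}
The large-\(|\kappa|\) ends need \(\cos\theta_\beta\approx\cos\theta_\gamma\) with \(\sigma_\beta=-\sigma_\gamma\), which forces \(\cos\theta_\alpha=0\). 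That direction is excluded, so the ends either are absent or carry a co-area density \(\lesssim d|\kappa|\) that is controlled by the weights.

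With the geometry in hand, the weight counting goes as follows. Write \(R=|k_\alpha|\) and consider the gain term. On the curve, \(|k_\beta|+|k_\gamma|\gtrsim R\), and \(n_\beta n_\gamma\le\|n\|^2\langle k_\beta\rangle^{-m}\langle k_\gamma\rangle^{-m}\). In the high--high region, where both \(|k_\beta|,|k_\gamma|\sim R\), the integrand is \(\lesssim R^{2}\cdot R^{-2m}\) and the length element is \(\sim R\,d\theta\), which gives \(R^{3-2m}\lesssim R^{-m}\). In the high--low region, say \(|k_\beta|\ll R\), we have \(|k_\gamma|\sim R\) and \(d\mathcal H^1/|\nabla\Phi|\lesssim |k_\beta|\,d\theta_\beta\) (or \(d|k_\beta|\) transversally). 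This produces \(R^{2-m}\int\langle k_\beta\rangle^{-m}(\ldots)\), and the integral converges because \(m>4\) absorbs the \(\Gamma^2\) growth when \(|k_\beta|\) is large. For the loss terms, \(n_\alpha\lesssim\langle k_\alpha\rangle^{-m}\), and we need \(\int\Gamma^2\langle k_\beta\rangle^{-m}\,d\mathcal H^1/|\nabla\Phi|\lesssim1\) uniformly in \(R\). This is the dangerous term, because \(\Gamma^2\sim R^2\) is not compensated by decay of \(n_\alpha\). We will exploit the vanishing of \(\Gamma\) at collapse: when \(k_\beta\to0\) one has \(k_\gamma\to-k_\alpha\), so \(\sigma_\gamma|k_\gamma|\approx-\sigma_\alpha|k_\alpha|\) and the sine factor also cancels on the relevant family. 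This should give \(\Gamma=O(|k_\beta|)\), hence \(\Gamma^2\lesssim\langle k_\beta\rangle^2\) times a bounded factor, and the integral closes once \(m>4\) covers the remaining growth.

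The main obstacle is exactly this uniform high--low analysis. We must show, separately for each sign family, that the product of \(\Gamma\) and the first-order expansion of the curve near the collapsed endpoint yields \(\Gamma^2\,d\mathcal H^1/|\nabla\Phi|\lesssim\langle k_\beta\rangle^{C}\,d|k_\beta|\) with \(C\le m-1-\delta\), uniformly for \(\theta_\alpha\) in the support of \(\chi_\varepsilon\). Near \(|\cos\theta_\alpha|=\tfrac12\pm2\varepsilon\), the two limiting angles of \(\theta_\beta\) stay separated by a distance \(c(\varepsilon)\). We plan to use this separation to get a transversal, Lipschitz parametrization of each arc by \(|k_\beta|\), with constants depending on \(\varepsilon\). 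This is where the blow-up as \(\varepsilon\to0\) enters. We will verify it first in the representative family where the cusp occurs and then transfer it to the others by the reflection and half-turn symmetries.
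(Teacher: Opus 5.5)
Your architecture is the paper's. It has the scaled co-area representation with prefactor $|k_\alpha|^4$, unit-scale curves for each sign family, linear vanishing of $\Gamma$ at the surviving collapsed points, and the threshold $m>4$ coming from $\int y^3\langle y\rangle^{-m}\,dy$. The step you single out as the crux, however, is justified incorrectly and is false at some endpoints. The sine factor of $\Gamma$ does not cancel at a collapse, and $\sigma_\gamma|k_\gamma|\approx-\sigma_\alpha|k_\alpha|$ holds only when the two high legs carry opposite signs. When they carry the same sign, $\Gamma$ does not vanish. This happens at the endpoint $k_\gamma\to0$ (i.e.\ $\theta_\beta\to\pi+\theta_\alpha$) of the $(+,+,-)$ curve and at every collapsed endpoint of the $(+,+,+)$ curve. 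There one has $\Gamma^2\to4\pi^4|k_\alpha|^2$, while the density is still $\sim|k_{\mathrm{low}}|\,d|k_{\mathrm{low}}|$. The weighted bound for the terms containing the low leg is therefore of order $|k_\alpha|^2$; this is the exchange described in Section~\ref{sec:zero-frequency-cutoff}. Your transfer by symmetry cannot reach these points: no symmetry maps $(+,+,+)$ to $(+,+,-)$, and $\beta\leftrightarrow\gamma$ sends $(+,+,-)$ to $(+,-,+)$.

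The missing idea is the frequency dichotomy at a collapse $k_\beta\to0$. One has $\omega_\gamma\to\mp\omega_\alpha$ according as $\sigma_\gamma=\pm\sigma_\alpha$, hence $\omega_\beta\to0$ or $\omega_\beta\to-2\omega_\alpha$. In the first case the endpoint lies in the zero-frequency set, and $\chi_\varepsilon(\theta_{\mathrm{low}})$ keeps $|\varsigma_{\mathrm{low}}|\gtrsim\varepsilon$ on the support (Lemmas~\ref{prop:positivity-beta} and~\ref{Positivity of gamma}). In the second case one needs $|\cos\theta_\alpha|\le\tfrac12$, and the triangle inequality gives $\bigl|1\pm|\varsigma_\beta|-|\varsigma_\gamma|\bigr|\le2|\varsigma_\beta|$ (Lemma~\ref{lem:cusp-cancellation-++-}). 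You must also use this cancellation for the high--low \emph{gain} term. Otherwise your count $R^{2-m}\int\cdots$ becomes $R^2$ once multiplied by $\langle R\rangle^m$.

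Your treatment of the non-compact ends is reversed. Along an end $k_\gamma\approx-k_\beta$, so $\cos\theta_\gamma\approx-\cos\theta_\beta$. For $\sigma_\beta=\sigma_\gamma$ this forces $\cos\theta_\alpha=0$, which is why the $(+,+,+)$ and $(+,-,-)$ curves are compact on the support. For $\sigma_\beta=-\sigma_\gamma$ it only gives $\cos\theta_\beta=-\sigma_\alpha\sigma_\beta\cos\theta_\alpha/2$. Hence the $(+,+,-)$ curve has the two ends $\theta_\pm^\infty$ for every admissible $\theta_\alpha$. There $|\nabla_\kappa\Phi|\sim|\kappa|^{-1}$, so the density is $\sim|\kappa|\,d|\kappa|$, not $\lesssim d|\kappa|$; in the paper this appears as $\widetilde f^{-3}\sim s^{-3}$ with $|\varsigma_\beta|\sim s^{-1}$. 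The ends still close. The paper's $I_{1,1}$ and $I_2$ estimates combine $\bigl|1+|\varsigma_\beta|-|\varsigma_\gamma|\bigr|\le2$ with Lemmas~\ref{Right-hand linear asymptote} and~\ref{Left-hand linear asymptote}. Even your crude bound $\Gamma^2\lesssim|k_\beta|^2$ suffices, since it leads to $\int^\infty y^3\langle y\rangle^{-m}\,dy<\infty$ for $m>4$. Your high--high count also drops the factor $R$ coming from $1/|\nabla_{k_\beta}\Phi|$; the correct power $R^{4-2m}$ is still harmless for $m\ge4$. With these repairs, your compactness-in-$\theta_\alpha$ treatment of corners and ends would reproduce the paper's estimates. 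The only difference is non-explicit $\varepsilon$-dependence in place of the paper's explicit monotonicity lemmas.
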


\begin{corollary}[Local well-posedness in $L^\infty_m$]
\label{cor:LWP}
Let $m>4$ and $0<\varepsilon\ll1$ be fixed.  
Then the cut-off WKE~\eqref{eq:WKE-cutoff-symmetric}
\[
\partial_t n = \mathcal{C}(n)
\]
is locally well posed in $L^\infty_m$ in the following sense:  
for any initial data $n_0\in L^\infty_m$ there exists $T>0$ and a unique 
solution
\[
n \in C\bigl([0,T],L^\infty_m\bigr)
\]
with $n(0)=n_0$, and the map $n_0\mapsto n$ is continuous from $L^\infty_m$ to 
$C([0,T],L^\infty_m)$.  
Moreover, the lifespan $T$ can be chosen to depend only on 
$\varepsilon$, $m$ and $\|n_0\|_{L^\infty_m}$.
\end{corollary}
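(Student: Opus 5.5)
The plan is to read the corollary as an ODE in the Banach space \(L^\infty_m\), driven by a bounded quadratic vector field, and to apply the Picard--Lindelöf theorem (equivalently, the Banach fixed-point theorem on the Duhamel formulation). The one point needing care is positivity, since \(L^\infty_m\) as defined consists of nonnegative functions. So I will work in the linear space \(X_m\) of real-valued functions with finite \(\langle k\rangle^m\)-weighted sup norm. Once the solution is constructed there, I will show nonnegativity separately.

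\textbf{Step 1 (bilinear extension).} The integrand of \eqref{eq:WKE-cutoff-symmetric} is quadratic in \(n\), so it defines a symmetric bilinear form \(\mathcal B(n,\tilde n)\) with \(\mathcal C(n)=\mathcal B(n,n)\). The proof of \cref{thm:bounded-C} bounds \(|\mathcal C(n)|\) only through \(|n|\), because the kernel estimates concern \(\Gamma^2\), the cut-offs, and the co-area measure. The same argument should therefore give
\[
\|\mathcal B(n,\tilde n)\|_{X_m}\le C_\varepsilon\|n\|_{X_m}\|\tilde n\|_{X_m}
\]
for signed \(n,\tilde n\). This follows by applying \cref{thm:bounded-C} with the absolute kernel to \(|n|+|\tilde n|\) and polarizing. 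Consequently
\[
\|\mathcal C(n)-\mathcal C(\tilde n)\|_{X_m}\le C_\varepsilon(\|n\|_{X_m}+\|\tilde n\|_{X_m})\|n-\tilde n\|_{X_m}.
\]

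\textbf{Step 2 (existence, uniqueness, continuous dependence).} I take the map \(\Phi(n)(t)=n_0+\int_0^t\mathcal C(n(s))\,ds\) on the ball of radius \(2\|n_0\|\) in \(C([0,T],X_m)\). By Step 1 it is a contraction provided \(T\le c/(C_\varepsilon\|n_0\|)\). Its fixed point is the unique solution, and uniqueness extends to all of \(C([0,T],X_m)\) by the standard Gronwall argument. Continuity of \(n_0\mapsto n\) follows from Gronwall applied to the difference of two solutions with nearby data, whose lifespans can be taken uniform. The lifespan depends only on \(\varepsilon\), \(m\) and \(\|n_0\|\), as claimed.

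\textbf{Step 3 (nonnegativity), the main obstacle.} The collision operator splits into a gain term and a loss term:
\[
\mathcal C(n)(\alpha)=G(n)(\alpha)+n_\alpha L(n)(\alpha),\qquad G(n)=\int K\,\omega_\alpha^2\,n_\beta n_\gamma,
\]
where \(K\ge0\) collects \(\Gamma^2\), the cut-off factors and the delta measures. Since \(\omega_\alpha^2\ge0\), the gain \(G\) is nonnegative whenever \(n\ge0\). The remaining part is linear in \(n_\alpha\), with coefficient
\[
L(n)(\alpha)=\int K\,\omega_\alpha(\omega_\gamma n_\beta+\omega_\beta n_\gamma),
\]
which carries no definite sign. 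I need the bound \(|L(n)(\alpha)|\le C\|n\|_{X_m}\) uniformly in \(\alpha\), without the weight \(\langle k_\alpha\rangle^m\). I expect this to follow from the same branchwise kernel estimates used for \cref{thm:bounded-C}. Given that bound, I rewrite the equation in integrating-factor form:
\[
n(t)=e^{\int_0^tL}n_0+\int_0^t e^{\int_s^tL}G(n(s))\,ds.
\]
Iterating this map from a nonnegative starting point preserves \(n\ge0\) and converges to the same fixed point, so the solution stays in \(L^\infty_m\). Verifying the unweighted bound on \(L\) is the step I would check most carefully. If it fails, I would instead run the iteration for the truncated equation with \(\mathbb 1_{|k|<R}\) and let \(R\to\infty\), using the \(X_m\) contraction to pass to the limit.
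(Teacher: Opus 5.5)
Your Steps 1--2 are the paper's argument: the corollary is deduced from Theorem~\ref{thm:bounded-C} by standard Banach fixed-point (Picard) iteration for a bounded quadratic vector field, and the paper gives no further detail. Your Step~3 handles something the paper never addresses: its $L^\infty_m$ consists of nonnegative functions while $\mathcal C$ has a sign-indefinite loss part, and your argument works as written, because the paper estimates the three monomials separately and for the loss monomials $n_\alpha n_\beta$, $n_\alpha n_\gamma$ the weight $\langle k_\alpha\rangle^m$ is absorbed by $n_\alpha$, so those estimates are exactly the bound $\sup_\alpha|L(n)(\alpha)|\lesssim_\varepsilon\|n\|_{X_m}$ you need, which makes your integrating-factor map a contraction on a short time interval preserving the closed nonnegative cone and renders the truncation fallback unnecessary.
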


\paragraph{Contributions and scope.}

Theorem~\ref{thm:bounded-C} and Corollary~\ref{cor:LWP} place the
angularly cut-off Boussinesq WKE in a standard Banach-space ODE framework.
For each fixed cut-off scale and every \(m>4\), the collision operator is a
bounded quadratic map on \(L^\infty_m\), and the full two-component equation
is locally well posed there.  The proof is fundamentally geometric: it
requires output-uniform estimates for anisotropic resonance curves with
non-compact ends, collapsed endpoints, and a finite-frequency cusp.  The
symmetric placement of the cut-off also preserves the algebraic identities
behind the formal conservation laws, the \(H\)-theorem, and the
Rayleigh--Jeans equilibria; see
Section~\ref{sec:structural-properties-resonance-geometry}.

The paper also explains why the two angular localizations are not merely
technical.  Section~\ref{Section: Unboundedness} shows that removing the
finite-frequency localization near \(|\cos\theta|=\frac12\), while retaining
the zero-frequency cut-off, makes the collision map unbounded on the weighted
\(L^\infty_m\) spaces.  This obstruction comes from a critical quadratic
high--low--high cusp.  Section~\ref{sec:zero-frequency-cutoff} identifies a
different obstruction near \(\cos\theta=0\): the leading high--high exchange
is not a bounded \(L^\infty_m\) vector field, although its high-pair block has
a favorable unweighted \(L^2\) structure.  Treating this regime without a
cut-off would require a different, energy-level framework.

Finally, Section~\ref{sec:related-Hamiltonian-models} compares the analysis
with two related Hamiltonian Boussinesq models.  For the rotating--stratified
model, the frequency gap removes the zero-frequency wave degeneration, and
the remaining finite-frequency cusp suggests a natural rotating cut-off
theory for fixed \(0<f<\frac12\); the full rotating collision estimates are
left for future work.  For the hydrostatic Euler--Boussinesq model, we prove
the analogue of the cut-off \(L^\infty_m\) estimate and local well-posedness.
There the required angular localizations are near \(\cos\theta=0\) and
\(\sin\theta=0\), and no localization near
\(|\cos\theta|=\frac12\) is needed.  Thus the paper gives a first analytic
framework for the anisotropic two-branch Boussinesq WKE and shows how the
relevant angular obstructions change under related Hamiltonian variants.

\subsection{Organization of the article}
\label{subsec:organization-article}

Section~\ref{sec:structural-properties-resonance-geometry} records the
structural identities of the angularly cut-off WKE, including the conservation
laws, \(H\)-theorem, Rayleigh--Jeans equilibria, symmetry reductions, and the
scaled co--area representation.  Sections~\ref{Parametrization and Estimates
for the $(+,+,+)$ Resonance Manifold}--\ref{Parametrization and Estimates for
the $(+,-,-)$ Resonance Manifold} carry out the branch-by-branch resonance
analysis and prove the collision bounds for the three representative sign
configurations.  Section~\ref{Section: Unboundedness} proves the
high--low--high cusp obstruction when the cut-off near
\(|\cos\theta|=\frac12\) is removed.  Section~\ref{sec:zero-frequency-cutoff}
studies the distinct zero-frequency exchange mechanism.  Section
~\ref{sec:related-Hamiltonian-models} discusses the rotating and hydrostatic
Hamiltonian variants.  Appendix~\ref{Appendix: Formal derivation} gives the
formal derivations of the non-hydrostatic and hydrostatic WKEs, and
Appendix~\ref{appendix:additional-results} collects auxiliary calculus
estimates.

\section{Structural Properties and Resonance Geometry}

\label{sec:structural-properties-resonance-geometry}

\subsection{The two-dimensional Boussinesq system and its invariants}
\label{subsec:2d-boussinesq-invariants}

Let \(x=(x_1,x_2)\in\mathbb R^2\), where \(x_1\) is the horizontal
coordinate and \(x_2\) is the vertical coordinate.  We write the inviscid
Boussinesq equations in perturbation variables about a motionless, uniformly
stratified hydrostatic equilibrium.  After normalizing the buoyancy frequency
to one by rescaling time, the exact nonlinear dynamics in a vertical plane are
\begin{equation}
\label{eq:2d-boussinesq-system}
\begin{cases}
(\partial_t+u\cdot\nabla)u=-\nabla p+b\,e_2,\\[0.3em]
(\partial_t+u\cdot\nabla)b+u_2=0,\\[0.3em]
\nabla\cdot u=0,
\end{cases}
\end{equation}
where \(u=(u_1,u_2)\), \(b\) is the buoyancy perturbation, \(p\) is the
pressure perturbation, and \(e_2=(0,1)\).  Here ``perturbation'' refers only
to the subtraction of the background equilibrium: all nonlinear transport
terms are retained, and no linearization or small-amplitude approximation has
been made.  System~\eqref{eq:2d-boussinesq-system} is the vertical-plane
reduction of the usual three-dimensional stratified Boussinesq equations.

For smooth solutions with sufficient decay at spatial infinity,
\eqref{eq:2d-boussinesq-system} conserves the energy
\begin{equation}
\label{eq:2d-boussinesq-energy}
    \mathcal E_{\mathrm B}
    :=
    \frac12\int_{\mathbb R^2}
    \bigl(|u|^2+b^2\bigr)\,dx
\end{equation}
and, with our sign convention, the horizontal pseudo-momentum
\begin{equation}
\label{eq:2d-boussinesq-pseudomomentum}
    \mathcal P_{\mathrm B}
    :=
    \int_{\mathbb R^2}
    b\bigl(\partial_2u_1-\partial_1u_2\bigr)\,dx .
\end{equation}
The first is the vertical-plane restriction of the standard
three-dimensional Boussinesq energy, while the second is the corresponding
horizontal pseudo-momentum invariant for the two-dimensional directional-wave
dynamics.

The normal-mode calculation in
Appendix~\ref{app:boussinesq-modes} diagonalizes these two quadratic
invariants.  Away from the zero-frequency set \(k^1=0\), their modal weights
are, up to fixed normalization constants,
\[
    1
    \qquad\text{and}\qquad
    s_{\sigma,k}
    :=
    \frac{k^1}{\omega_{\sigma,k}}
    =
    \sigma|k|,
\]
respectively.  After ensemble averaging and passage to the continuum kinetic
description, the corresponding quantities are
\[
    \int n_\alpha\,d\alpha
    \qquad\text{and}\qquad
    \int s_\alpha n_\alpha\,d\alpha.
\]
Thus the collision invariants established below are precisely the kinetic
counterparts of the two quadratic invariants of the underlying nonlinear
Boussinesq system.

\subsection{Conservation laws, equilibria, and the \(H\)-theorem}
\label{Conservation Laws}

Having identified the PDE origin of the two quadratic invariants, we now
verify directly that the symmetric angular cutoff preserves their kinetic
counterparts, together with the \(H\)-theorem.

Set
\[
    \chi_\alpha:=\chi_\varepsilon(\theta_\alpha),
    \qquad
    \chi_\beta:=\chi_\varepsilon(\theta_\beta),
    \qquad
    \chi_\gamma:=\chi_\varepsilon(\theta_\gamma),
\]
and introduce the symmetric nonnegative resonant measure
\[
\begin{aligned}
d\mu_\varepsilon(\alpha,\beta,\gamma)
:={}&
\chi_\alpha\chi_\beta\chi_\gamma
\Gamma_{\alpha\beta\gamma}^{2}
\delta(\Xi_1)\delta(\Xi_2)
\,d\alpha\,d\beta\,d\gamma .
\end{aligned}
\]
Assume throughout this subsection that \(n\) is sufficiently regular,
strictly positive, and decays fast enough for the following computations to
be justified.

Since \(d\mu_\varepsilon\) is invariant under permutations of
\((\alpha,\beta,\gamma)\), symmetrization gives, for every sufficiently
regular test function \(\Psi_\alpha\),
\begin{equation}
\label{eq:CL-general-cutoff}
\begin{aligned}
\frac{d}{dt}\int \Psi_\alpha n_\alpha\,d\alpha
&=
\frac13
\iiint
\bigl(
    \omega_\alpha\Psi_\alpha
    +
    \omega_\beta\Psi_\beta
    +
    \omega_\gamma\Psi_\gamma
\bigr)
\\
&\qquad\qquad\times
\bigl(
    \omega_\alpha n_\beta n_\gamma
    +
    \omega_\beta n_\alpha n_\gamma
    +
    \omega_\gamma n_\alpha n_\beta
\bigr)
\,d\mu_\varepsilon(\alpha,\beta,\gamma).
\end{aligned}
\end{equation}
The factor \(1/3\) comes from averaging the three equivalent expressions
obtained by taking \(\alpha\), \(\beta\), or \(\gamma\) as the distinguished
output variable.

Consequently, every test function satisfying
\begin{equation}
\label{eq:collision-invariant-condition}
    \omega_\alpha\Psi_\alpha
    +
    \omega_\beta\Psi_\beta
    +
    \omega_\gamma\Psi_\gamma
    =
    0
\end{equation}
on the resonant set generates a conserved quantity.

\paragraph{Wave energy.}
Taking
\[
    \Psi_\alpha\equiv1,
\]
condition~\eqref{eq:collision-invariant-condition} is precisely the frequency
resonance relation
\[
    \omega_\alpha+\omega_\beta+\omega_\gamma=0.
\]
Hence the total wave energy
\begin{equation}
\label{eq:energy-conservation-cutoff}
    E(t):=\int n_\alpha(t)\,d\alpha
\end{equation}
is conserved:
\[
    \frac{dE}{dt}=0.
\]

\paragraph{Horizontal pseudo-momentum.}
Define the horizontal slowness by
\[
    s_\alpha:=\frac{k_\alpha^1}{\omega_\alpha}.
\]
Under the normalization \(N=1\), and away from the zero-frequency set,
\[
    s_\alpha
    =
    \frac{k_\alpha^1}
         {\sigma_\alpha k_\alpha^1/|k_\alpha|}
    =
    \sigma_\alpha|k_\alpha|.
\]
Since
\[
    \omega_\alpha s_\alpha=k_\alpha^1,
\]
the momentum resonance gives
\[
\begin{aligned}
    \omega_\alpha s_\alpha
    +
    \omega_\beta s_\beta
    +
    \omega_\gamma s_\gamma
    &=
    k_\alpha^1+k_\beta^1+k_\gamma^1
    \\
    &=0.
\end{aligned}
\]
Therefore the horizontal pseudo-momentum
\begin{equation}
\label{eq:pm-conservation-cutoff}
    PM(t):=\int s_\alpha n_\alpha(t)\,d\alpha
\end{equation}
is conserved:
\[
    \frac{d}{dt}PM(t)=0.
\]

\paragraph{\(H\)-theorem.}
For strictly positive \(n\), the same symmetrization gives
\begin{align}
\frac{d}{dt}\int \log n_\alpha\,d\alpha
&=
\frac13
\iiint
\left(
    \frac{\omega_\alpha}{n_\alpha}
    +
    \frac{\omega_\beta}{n_\beta}
    +
    \frac{\omega_\gamma}{n_\gamma}
\right)
\notag\\
&\qquad\qquad\times
\left(
    \omega_\alpha n_\beta n_\gamma
    +
    \omega_\beta n_\alpha n_\gamma
    +
    \omega_\gamma n_\alpha n_\beta
\right)
\,d\mu_\varepsilon .
\label{eq:entropy-general-cutoff}
\end{align}
Since
\[
\begin{aligned}
&\left(
    \frac{\omega_\alpha}{n_\alpha}
    +
    \frac{\omega_\beta}{n_\beta}
    +
    \frac{\omega_\gamma}{n_\gamma}
\right)
\left(
    \omega_\alpha n_\beta n_\gamma
    +
    \omega_\beta n_\alpha n_\gamma
    +
    \omega_\gamma n_\alpha n_\beta
\right)
\\
&\qquad
=
n_\alpha n_\beta n_\gamma
\left(
    \frac{\omega_\alpha}{n_\alpha}
    +
    \frac{\omega_\beta}{n_\beta}
    +
    \frac{\omega_\gamma}{n_\gamma}
\right)^2,
\end{aligned}
\]
we obtain
\begin{equation}
\label{eq:H-theorem-cutoff}
\begin{aligned}
\frac{d}{dt}\int \log n_\alpha\,d\alpha
&=
\frac13
\iiint
n_\alpha n_\beta n_\gamma
\left(
    \frac{\omega_\alpha}{n_\alpha}
    +
    \frac{\omega_\beta}{n_\beta}
    +
    \frac{\omega_\gamma}{n_\gamma}
\right)^2
\,d\mu_\varepsilon
\\
&\ge0.
\end{aligned}
\end{equation}
The nonnegative square structure relies on the symmetry of
\(\Gamma_{\alpha\beta\gamma}\) and on the symmetric placement of the cutoff
factor
\[
    \chi_\alpha\chi_\beta\chi_\gamma.
\]

\paragraph{Formal Rayleigh--Jeans equilibria.}
For \(A,B\in\mathbb R\), consider the algebraic ansatz
\begin{equation}
\label{eq:Rayleigh-Jeans-equilibrium}
    n_\alpha^{\mathrm{RJ}}
    =
    \frac{1}{A+B s_\alpha},
\end{equation}
wherever the denominator is positive.  Since
\[
    \frac{\omega_\alpha}{n_\alpha^{\mathrm{RJ}}}
    =
    A\omega_\alpha+B k_\alpha^1,
\]
the resonance relations imply
\[
\begin{aligned}
\frac{\omega_\alpha}{n_\alpha^{\mathrm{RJ}}}
+
\frac{\omega_\beta}{n_\beta^{\mathrm{RJ}}}
+
\frac{\omega_\gamma}{n_\gamma^{\mathrm{RJ}}}
&=
A(\omega_\alpha+\omega_\beta+\omega_\gamma)
\\
&\quad+
B(k_\alpha^1+k_\beta^1+k_\gamma^1)
\\
&=0.
\end{aligned}
\]
Thus the collision integrand vanishes formally.

For a bona fide nonnegative and nonsingular stationary profile on the full
unbounded two-branch spectral domain, however, one must take
\[
    B=0,
    \qquad
    A>0.
\]
Indeed,
\[
    s_{\sigma,k}=\sigma|k|
\]
takes arbitrarily large values of both signs, so \(A+B s_{\sigma,k}\) cannot
remain positive for every mode unless \(B=0\).  Consequently, the only
globally positive member of the Rayleigh--Jeans family is the
energy-equipartition profile
\[
    n_\alpha^{\mathrm{RJ}}=\frac1A.
\]
Profiles with \(B\neq0\) can represent equilibria only for a correspondingly
spectrally truncated model on which \(A+B s_\alpha>0\).

The constant \(B=0\) profile has infinite total wave energy and does not
belong to \(L^\infty_m\) for \(m>0\).  It therefore remains a formal
equilibrium relative to the solution class considered in this paper.

Thus the angular-cutoff WKE conserves total wave energy and horizontal
pseudo-momentum, satisfies the \(H\)-theorem
\eqref{eq:H-theorem-cutoff}, and admits the formal Rayleigh--Jeans family
\eqref{eq:Rayleigh-Jeans-equilibrium}, whose only globally positive member on
the full two-branch domain has \(B=0\).

We next record the geometric symmetries of the resonant set used in the
branch-by-branch parametrizations.

%


\subsection{Symmetry of the resonance manifold}
\label{Symmetry of the resonance manifold}

We first record the elementary symmetries of the resonance set.  The resonance
conditions are
\begin{equation}\label{manifold}
\begin{cases}
    k_\alpha+k_\beta+k_\gamma=0, \\[0.4ex]
    \displaystyle
    \sigma_\alpha\frac{k_\alpha^1}{|k_\alpha|}
    +\sigma_\beta\frac{k_\beta^1}{|k_\beta|}
    +\sigma_\gamma\frac{k_\gamma^1}{|k_\gamma|}
    =0 .
\end{cases}
\end{equation}
A priori there are \(2^3=8\) possible sign configurations
\[
    (\sigma_\alpha,\sigma_\beta,\sigma_\gamma)\in\{\pm1\}^3.
\]
However, the system is invariant under the simultaneous sign change
\[
    (\sigma_\alpha,\sigma_\beta,\sigma_\gamma)
    \longmapsto
    (-\sigma_\alpha,-\sigma_\beta,-\sigma_\gamma),
\]
since this only multiplies the frequency resonance equation by \(-1\).
Thus, for the geometric analysis, we may fix
\[
    \sigma_\alpha=+1.
\]
The system is also invariant under the exchange of the two integration
variables
\[
    \beta\leftrightarrow\gamma .
\]
Consequently, after fixing \(\sigma_\alpha=+1\), the four remaining choices
\[
    (+,+,+),\qquad (+,+,-),\qquad (+,-,+),\qquad (+,-,-)
\]
reduce, under the exchange \(\beta\leftrightarrow\gamma\), to the following
three non-equivalent representatives:
\[
    (+,+,-),\qquad (+,+,+),\qquad (+,-,-).
\]
The omitted case \((+,-,+)\) is equivalent to \((+,+,-)\) by exchanging
\(\beta\) and \(\gamma\).  The full collection of eight sign configurations is
therefore recovered from these three representatives by the global sign flip
and the \(\beta\leftrightarrow\gamma\) symmetry.

Throughout this section, the output mode
\[
    \alpha=(\sigma_\alpha,k_\alpha)
\]
is fixed.  Thus \(k_\alpha\) is regarded as a parameter, while the integration
variables are \(k_\beta\) and \(k_\gamma\).  Using momentum conservation, we
eliminate
\[
    k_\gamma=-k_\alpha-k_\beta .
\]
Consequently, for each fixed \(k_\alpha\) and each representative sign
configuration, the resonance set can be viewed as a subset of the
\(k_\beta\)-plane.  When the remaining frequency constraint is regular, this
subset is a one-dimensional resonance curve.

The three representative sign configurations give the following systems.

\paragraph{Manifold 1: representative \((+,+,-)\).}
\begin{equation}\label{manifold1}
\begin{cases}
k_\alpha^1+k_\beta^1+k_\gamma^1=0,\\
k_\alpha^2+k_\beta^2+k_\gamma^2=0,\\[0.4ex]
\displaystyle
\frac{k_\alpha^1}{|k_\alpha|}
+\frac{k_\beta^1}{|k_\beta|}
-\frac{k_\gamma^1}{|k_\gamma|}
=0.
\end{cases}
\end{equation}

\paragraph{Manifold 2: representative \((+,+,+)\).}
\begin{equation}\label{manifold2}
\begin{cases}
k_\alpha^1+k_\beta^1+k_\gamma^1=0,\\
k_\alpha^2+k_\beta^2+k_\gamma^2=0,\\[0.4ex]
\displaystyle
\frac{k_\alpha^1}{|k_\alpha|}
+\frac{k_\beta^1}{|k_\beta|}
+\frac{k_\gamma^1}{|k_\gamma|}
=0.
\end{cases}
\end{equation}

\paragraph{Manifold 3: representative \((+,-,-)\).}
\begin{equation}\label{manifold3}
\begin{cases}
k_\alpha^1+k_\beta^1+k_\gamma^1=0,\\
k_\alpha^2+k_\beta^2+k_\gamma^2=0,\\[0.4ex]
\displaystyle
\frac{k_\alpha^1}{|k_\alpha|}
-\frac{k_\beta^1}{|k_\beta|}
-\frac{k_\gamma^1}{|k_\gamma|}
=0.
\end{cases}
\end{equation}

The resonance conditions are homogeneous under simultaneous dilation: if
\((k_\alpha,k_\beta,k_\gamma)\) satisfies \eqref{manifold}, then so does
\[
    (\lambda k_\alpha,\lambda k_\beta,\lambda k_\gamma),
    \qquad \lambda>0.
\]
Thus the radial scale of the fixed vector \(k_\alpha\) may be normalized.  In
the geometric analysis we set
\[
    |k_\alpha|=1,
    \qquad
    k_\alpha=(\cos\theta_\alpha,\sin\theta_\alpha).
\]
The resonance set for general \(|k_\alpha|>0\) is recovered by the inverse
dilation.

For each representative sign configuration we define the corresponding
unit-scale resonance set
\[
    \mathcal R_j(\theta_\alpha)
    :=
    \left\{
        k_\beta\in\mathbb R^2:
        (k_\alpha,k_\beta,k_\gamma)
        \text{ satisfies the corresponding system above, with }
        k_\gamma=-k_\alpha-k_\beta
    \right\},
    \qquad j=1,2,3.
\]
Here \(\mathcal R_1\), \(\mathcal R_2\), and \(\mathcal R_3\) correspond
respectively to
\[
    (+,+,-),\qquad (+,+,+),\qquad (+,-,-).
\]

The systems are also compatible with mirror reflections of the spatial
variables.  Reflecting the \(k\)-variables maps the resonance set associated
with one direction of \(k_\alpha\) to the resonance set associated with the
reflected direction of \(k_\alpha\).  Therefore, for the purpose of estimating
the geometry of the resonance curves, it suffices to restrict the direction of
the fixed vector \(k_\alpha\) to the first quadrant:
\begin{equation}\label{eq:theta-first-quadrant}
    0\le \theta_\alpha\le \frac{\pi}{2}.
\end{equation}
The other angular sectors are obtained from this one by the corresponding
mirror symmetries.

Under this normalization, the first two representative sign configurations,
\[
    (+,+,-)
    \qquad\text{and}\qquad
    (+,+,+),
\]
give one-dimensional resonance curves, except at lower-dimensional degenerate
sets.  The third configuration,
\[
    (+,-,-),
\]
requires a further distinction.  For
\[
    \frac{\pi}{3}<\theta_\alpha\le \frac{\pi}{2},
\]
the set \(\mathcal R_3(\theta_\alpha)\) is a one-dimensional resonance curve.
For
\[
    0\le\theta_\alpha\le \frac{\pi}{3},
\]
the resonance set is degenerate; as shown by the explicit parametrization
below, it is either empty or consists of at most two points.  Therefore, in the
analysis of the third sign configuration, we restrict to
\[
    \frac{\pi}{3}<\theta_\alpha\le \frac{\pi}{2}.
\]
Having identified these symmetries, we now turn to the scaling and co--area
representation of the collision integral.


\subsection{Scaling and co--area representation}
\label{Scaling and co–area representation}

In order to give a precise meaning to the product of the two delta constraints,
we first appeal to the co--area formula.  The role of the co--area formula is
to interpret the formal factor
\[
    \delta(\Xi_1)\delta(\Xi_2)
\]
as integration over the regular part of the resonant manifold, where the
constraints are transversal.  The scaling factor \(|k_\alpha|^4\) used later
in the estimates is obtained separately, after using the momentum constraint to
eliminate \(k_\gamma\).

Fix the output mode \(\alpha=(\sigma_\alpha,k_\alpha)\).  Recall that
\[
    \Xi_2(\alpha,\beta,\gamma)
    :=
    k_\alpha+k_\beta+k_\gamma
    \in\mathbb R^2,
\]
and
\[
    \Xi_1(\alpha,\beta,\gamma)
    :=
    \sigma_\alpha\frac{k_\alpha^1}{|k_\alpha|}
    +\sigma_\beta\frac{k_\beta^1}{|k_\beta|}
    +\sigma_\gamma\frac{k_\gamma^1}{|k_\gamma|}
    \in\mathbb R .
\]
For fixed \(\alpha\), the map
\[
    (k_\beta,k_\gamma)
    \longmapsto
    (\Xi_1,\Xi_2)
\]
maps \(\mathbb R^2\times\mathbb R^2\) into \(\mathbb R^3\).  On the regular
set where this map has rank \(3\), the common zero set
\[
    \mathcal R_\alpha
    :=
    \{(k_\beta,k_\gamma):\Xi_1=0,\ \Xi_2=0\}
\]
is a one-dimensional resonant manifold.

Assume temporarily that \(n\) is smooth and compactly supported on a regular
portion of the resonant set.  By the co--area formula applied to
\[
    (k_\beta,k_\gamma)\mapsto(\Xi_1,\Xi_2),
\]
the cut-off collision operator can be represented as
\begin{align}
\mathcal C_\varepsilon(n)(\alpha)
   &=
   \sum_{\sigma_\beta=\pm1}
   \sum_{\sigma_\gamma=\pm1}
   \int_{\mathcal R_\alpha}
      \chi_\alpha\chi_\beta\chi_\gamma\,
      \Gamma_{\alpha\beta\gamma}^{2}\,\omega_\alpha
      \bigl(
         \omega_\alpha n_\beta n_\gamma
         + \omega_\gamma n_\alpha n_\beta
         + \omega_\beta n_\alpha n_\gamma
      \bigr)
      \notag\\
   &\qquad\qquad\qquad\qquad\times
      \frac{d\mathcal H^1(k_\beta,k_\gamma)}
      {J(\alpha,\beta,\gamma)} ,
\label{eq:coarea-representation}
\end{align}
where
\[
    J(\alpha,\beta,\gamma)
    :=
    \sqrt{
    \det\!\left(
       \nabla_{(k_\beta,k_\gamma)}(\Xi_1,\Xi_2)
       \nabla_{(k_\beta,k_\gamma)}(\Xi_1,\Xi_2)^{\top}
    \right)
    } .
\]
Here \(d\mathcal H^1\) denotes the one-dimensional Hausdorff measure induced on
the resonant manifold.

A direct computation gives
\begin{align}
J(\alpha,\beta,\gamma)
  =
  \sqrt{\;
      2\left(
        \frac{\sigma_\beta (k_\beta^2)^2}{|k_\beta|^3}
        -\frac{\sigma_\gamma (k_\gamma^2)^2}{|k_\gamma|^3}
      \right)^{2}
      +
      2\left(
        \frac{\sigma_\beta k_\beta^1 k_\beta^2}{|k_\beta|^3}
        -\frac{\sigma_\gamma k_\gamma^1 k_\gamma^2}{|k_\gamma|^3}
      \right)^{2}
     } .
\label{eq:coarea-Jacobian}
\end{align}
This formula is understood on the regular part where
\[
    k_\beta\neq0,
    \qquad
    k_\gamma\neq0.
\]
At such points, one checks from \eqref{eq:coarea-Jacobian} that \(J=0\) on the
resonant set can occur only when
\[
    k_\beta\parallel k_\gamma
\]
and, using the resonance constraints, this forces
\[
    k_\alpha^1=0.
\]
Thus, away from the zero-frequency output direction
\[
    \cos\theta_\alpha=0,
\]
the full co--area Jacobian is nonzero on the regular resonant branches
considered below.  This is one reason for imposing the angular cut-off near
\(\cos\theta=0\).

The collapsed endpoints
\[
    k_\beta=0
    \qquad\text{or}\qquad
    k_\gamma=0
\]
are not regular points of the full co--area chart, since the angle of the
collapsed vector is not defined there and the formula
\eqref{eq:coarea-Jacobian} is not meant to be evaluated at such points.  These
endpoints are treated instead through the explicit unit-scale parametrizations
below. In the explicit parametrizations below, the collapsed endpoints are treated as
limiting endpoints of the curves parametrized by \(\theta_\beta\).  They are
either excluded by the zero-frequency localization or appear as harmless corner
endpoints in the estimates.

We now derive the scaled representation used in the estimates.  This scaling
calculation does not come from the full co--area Jacobian
\eqref{eq:coarea-Jacobian}.  Instead, we first use the momentum constraint to
eliminate
\[
    k_\gamma=-k_\alpha-k_\beta .
\]
The remaining resonance condition is the scalar equation
\[
    \widetilde\Xi_1(k_\beta^1,k_\beta^2)=0,
\]
where
\[
    \widetilde\Xi_1(k_\beta)
    :=
    \sigma_\alpha\frac{k_\alpha^1}{|k_\alpha|}
    +\sigma_\beta\frac{k_\beta^1}{|k_\beta|}
    +\sigma_\gamma\frac{k_\gamma^1}{|k_\gamma|},
    \qquad
    k_\gamma=-k_\alpha-k_\beta .
\]
Thus the collision operator can be written locally as
\[
\mathcal C_\varepsilon(n)(\alpha)
=
\sum_{\sigma_\beta=\pm1}
\sum_{\sigma_\gamma=\pm1}
\int_{\mathbb R^2}
    I_\alpha(k_\beta)\,
    \delta\!\bigl(\widetilde\Xi_1(k_\beta)\bigr)
    \,dk_\beta,
\]
where
\[
\begin{aligned}
I_\alpha(k_\beta)
&=
\chi_\alpha\chi_\beta\chi_\gamma\,
\Gamma_{\alpha\beta\gamma}^{2}\,\omega_\alpha
\Bigl(
    \omega_\alpha n_\beta n_\gamma
    +\omega_\gamma n_\alpha n_\beta
    +\omega_\beta n_\alpha n_\gamma
\Bigr),
\\
&\hspace{7cm}
k_\gamma=-k_\alpha-k_\beta .
\end{aligned}
\]

On a regular patch where
\[
    \partial_{k_\beta^2}\widetilde\Xi_1\neq0,
\]
we parametrize the resonance curve
\[
    \widetilde\Xi_1(k_\beta^1,k_\beta^2)=0
\]
by
\[
    k_\beta=k_\beta(\theta_\beta)
    =
    \bigl(k_\beta^1(\theta_\beta),k_\beta^2(\theta_\beta)\bigr).
\]
Using the one-dimensional delta identity, we obtain
\begin{equation}\label{eq:delta-param-k}
\int_{\mathbb R^2}
    F(k_\beta)\,
    \delta\!\bigl(\widetilde\Xi_1(k_\beta)\bigr)
    \,dk_\beta
=
\int
    F(k_\beta(\theta_\beta))
    \frac{
        \left|\dfrac{d k_\beta^1}{d\theta_\beta}\right|
    }{
        \left|
        \partial_{k_\beta^2}
        \widetilde\Xi_1
        \bigl(k_\beta^1(\theta_\beta),k_\beta^2(\theta_\beta)\bigr)
        \right|
    }
    \,d\theta_\beta .
\end{equation}

We now introduce the scaling
\[
    k_\alpha=|k_\alpha|\varsigma_\alpha,
    \qquad
    k_\beta=|k_\alpha|\varsigma_\beta,
    \qquad
    k_\gamma=|k_\alpha|\varsigma_\gamma,
    \qquad
    |\varsigma_\alpha|=1,
\]
so that
\[
    \varsigma_\gamma=-\varsigma_\alpha-\varsigma_\beta .
\]
The variables \(\varsigma_\beta\) and \(\varsigma_\gamma\) are scaled wave
vectors and are not assumed to have unit length.  Since
\[
    |k_\beta|=|k_\alpha|\,|\varsigma_\beta|,
    \qquad
    |k_\gamma|=|k_\alpha|\,|\varsigma_\gamma|,
\]
the frequency constraint is homogeneous of degree zero:
\[
    \widetilde\Xi_1(k_\beta)
    =
    \widetilde\Xi_1(\varsigma_\beta),
\]
where
\[
    \widetilde\Xi_1(\varsigma_\beta)
    :=
    \sigma_\alpha\frac{\varsigma_\alpha^1}{|\varsigma_\alpha|}
    +\sigma_\beta\frac{\varsigma_\beta^1}{|\varsigma_\beta|}
    +\sigma_\gamma\frac{\varsigma_\gamma^1}{|\varsigma_\gamma|},
    \qquad
    \varsigma_\gamma=-\varsigma_\alpha-\varsigma_\beta .
\]

Although \(\widetilde\Xi_1\) is homogeneous of degree zero, its derivative with
respect to the unscaled variable has degree \(-1\):
\[
    \partial_{k_\beta^2}\widetilde\Xi_1(k_\beta)
    =
    |k_\alpha|^{-1}
    \partial_{\varsigma_\beta^2}
    \widetilde\Xi_1(\varsigma_\beta).
\]
Moreover,
\[
    \left|\frac{d k_\beta^1}{d\theta_\beta}\right|
    =
    |k_\alpha|
    \left|\frac{d \varsigma_\beta^1}{d\theta_\beta}\right|.
\]
Therefore the Jacobian ratio in \eqref{eq:delta-param-k} scales as
\begin{equation}\label{eq:jacobian-ratio-scaling}
\frac{
    \left|\dfrac{d k_\beta^1}{d\theta_\beta}\right|
}{
    \left|
    \partial_{k_\beta^2}\widetilde\Xi_1(k_\beta)
    \right|
}
=
|k_\alpha|^2
\frac{
    \left|\dfrac{d \varsigma_\beta^1}{d\theta_\beta}\right|
}{
    \left|
    \partial_{\varsigma_\beta^2}
    \widetilde\Xi_1(\varsigma_\beta)
    \right|
}.
\end{equation}

The interaction coefficient is homogeneous of degree one:
\[
    \Gamma_{\alpha\beta\gamma}
    =
    |k_\alpha|\,
    \Gamma(\varsigma_\alpha,\varsigma_\beta,\varsigma_\gamma),
\]
and hence
\[
    \Gamma_{\alpha\beta\gamma}^{2}
    =
    |k_\alpha|^2\,
    \Gamma^2(\varsigma_\alpha,\varsigma_\beta,\varsigma_\gamma).
\]
The frequencies are homogeneous of degree zero.  Combining this with
\eqref{eq:jacobian-ratio-scaling}, the parametrized collision operator becomes
\begin{equation}\label{eq:parametrized-collision}
\mathcal C_\varepsilon(n)(\alpha)
=
\sum_{\sigma_\beta=\pm1}
\sum_{\sigma_\gamma=\pm1}
|k_\alpha|^4
\int
    \widetilde I_\alpha(\theta_\beta)
    \frac{
        \left|\dfrac{d\varsigma_\beta^1}{d\theta_\beta}\right|
    }{
        \left|
        \partial_{\varsigma_\beta^2}
        \widetilde\Xi_1
        \bigl(
            \varsigma_\beta^1(\theta_\beta),
            \varsigma_\beta^2(\theta_\beta)
        \bigr)
        \right|
    }
    \,d\theta_\beta .
\end{equation}
Here
\[
    \varsigma_\gamma=-\varsigma_\alpha-\varsigma_\beta,
\]
and
\begin{align*}
\widetilde I_\alpha(\theta_\beta)
&=
\chi_\alpha\chi_\beta\chi_\gamma\,
\Gamma^2(\varsigma_\alpha,\varsigma_\beta,\varsigma_\gamma)
\,\omega(\sigma_\alpha,\varsigma_\alpha)
\\
&\quad\times
\Bigl[
    \omega(\sigma_\alpha,\varsigma_\alpha)
        n(\sigma_\beta,|k_\alpha|\varsigma_\beta)
        n(\sigma_\gamma,|k_\alpha|\varsigma_\gamma)
\\
&\qquad\quad
    +\omega(\sigma_\gamma,\varsigma_\gamma)
        n(\sigma_\alpha,|k_\alpha|\varsigma_\alpha)
        n(\sigma_\beta,|k_\alpha|\varsigma_\beta)
\\
&\qquad\quad
    +\omega(\sigma_\beta,\varsigma_\beta)
        n(\sigma_\alpha,|k_\alpha|\varsigma_\alpha)
        n(\sigma_\gamma,|k_\alpha|\varsigma_\gamma)
\Bigr].
\end{align*}
Thus the prefactor \(|k_\alpha|^4\) comes from two independent sources:
\(|k_\alpha|^2\) from the homogeneity of
\(\Gamma_{\alpha\beta\gamma}^{2}\), and \(|k_\alpha|^2\) from the scaling of
the one-dimensional delta Jacobian ratio in
\eqref{eq:jacobian-ratio-scaling}.

For later reference, since
\[
    \varsigma_\gamma=-\varsigma_\alpha-\varsigma_\beta,
\]
we have
\[
\nabla_{\varsigma_\beta}\widetilde\Xi_1
=
\sigma_\beta
\nabla\!\left(\frac{\varsigma_\beta^1}{|\varsigma_\beta|}\right)
-
\sigma_\gamma
\nabla\!\left(\frac{\varsigma_\gamma^1}{|\varsigma_\gamma|}\right).
\]
Consequently,
\[
\partial_{\varsigma_\beta^1}\widetilde\Xi_1
=
\sigma_\beta
\frac{(\varsigma_\beta^2)^2}{|\varsigma_\beta|^3}
-
\sigma_\gamma
\frac{(\varsigma_\gamma^2)^2}{|\varsigma_\gamma|^3},
\]
and
\[
\partial_{\varsigma_\beta^2}\widetilde\Xi_1
=
-\sigma_\beta
\frac{\varsigma_\beta^1\varsigma_\beta^2}{|\varsigma_\beta|^3}
+
\sigma_\gamma
\frac{\varsigma_\gamma^1\varsigma_\gamma^2}{|\varsigma_\gamma|^3}.
\]
The angular cut-offs restrict the analysis to regions where the co--area
representation and the parametrizations above are non-degenerate.  The
additional singularity near \(|\cos\theta|=\tfrac12\), which produces an
unbounded collision operator in the absence of angular localization, is treated
separately in Section~\ref{Section: Unboundedness}.

\section{Parametrization and Estimates for the $(+,+,+)$ Resonance Manifold}\label{Parametrization and Estimates for the $(+,+,+)$ Resonance Manifold}
\subsection{Unit--scale parametrization}
\label{Parametrization(+,+,+)}

In this section we study the unit--scale resonance curve for the
\((+,+,+)\) sign configuration.  By the homogeneity of the resonance
conditions, we fix
\[
    |k_\alpha|=1,
    \qquad
    k_\alpha=(\cos\theta_\alpha,\sin\theta_\alpha).
\]
By the mirror symmetries recorded in
Section~\ref{Symmetry of the resonance manifold}, it is enough to consider
\[
    0\le \theta_\alpha\le \frac{\pi}{2}.
\]
In the parametrization below we take
\[
    0\le \theta_\alpha<\frac{\pi}{2}.
\]
The endpoint \(\theta_\alpha=\frac{\pi}{2}\) belongs to the zero-frequency
degeneracy and is excluded by the angular cut-off.  When
\(\theta_\alpha=0\), some of the arcs below may degenerate; the formulas are
then understood in the limiting sense.

For fixed \(k_\alpha\), the \((+,+,+)\) resonance set is viewed as a curve in
the \(k_\beta\)-plane, with
\[
    k_\gamma=-k_\alpha-k_\beta .
\]

\begin{remark}[Central symmetry]
For fixed \(k_\alpha\), the map
\[
    k_\beta\longmapsto -k_\alpha-k_\beta
\]
is the half-turn about the point \(-k_\alpha/2\).  This map exchanges
\(k_\beta\) and \(k_\gamma\).  Since the \((+,+,+)\) resonance conditions are
symmetric under the exchange \(\beta\leftrightarrow\gamma\), the resonance
curve in the \(k_\beta\)-plane is centrally symmetric with respect to
\[
    -\frac{k_\alpha}{2}
    =
    \left(
        -\frac{\cos\theta_\alpha}{2},
        -\frac{\sin\theta_\alpha}{2}
    \right).
\]
This central symmetry is separate from the first-quadrant reduction for
\(\theta_\alpha\), which comes from the mirror symmetries of the full resonance
system.
\end{remark}
\begin{proposition}[Unit--scale parametrization of the \((+,+,+)\) resonance curve]
\label{prop:param-+++}
Fix
\[
    0\le \theta_\alpha<\frac{\pi}{2},
    \qquad
    k_\alpha=(\cos\theta_\alpha,\sin\theta_\alpha).
\]
The closures of the unit-scale \((+,+,+)\) resonance arcs are parametrized by
\[
    k_\beta
    =
    |k_\beta|(\cos\theta_\beta,\sin\theta_\beta),
\]
where
\[
|k_\beta|
=
\begin{cases}
\displaystyle
\frac{
    \sin\theta_\alpha(\cos\theta_\alpha+\cos\theta_\beta)
    -
    \cos\theta_\alpha
    \sqrt{1-(\cos\theta_\alpha+\cos\theta_\beta)^2}
}{
    -\sin\theta_\beta(\cos\theta_\alpha+\cos\theta_\beta)
    +
    \cos\theta_\beta
    \sqrt{1-(\cos\theta_\alpha+\cos\theta_\beta)^2}
},
&
\begin{array}{l}
\frac{\pi}{2}
\le
\theta_\beta
\le
\pi+\theta_\alpha,\\
\sin\theta_\gamma\le0,
\end{array}
\\[2.0em]
\displaystyle
\frac{
    \sin\theta_\alpha(\cos\theta_\alpha+\cos\theta_\beta)
    +
    \cos\theta_\alpha
    \sqrt{1-(\cos\theta_\alpha+\cos\theta_\beta)^2}
}{
    -\sin\theta_\beta(\cos\theta_\alpha+\cos\theta_\beta)
    -
    \cos\theta_\beta
    \sqrt{1-(\cos\theta_\alpha+\cos\theta_\beta)^2}
},
&
\begin{array}{l}
\pi+\theta_\alpha
\le
\theta_\beta
\le
2\pi-\arccos(1-\cos\theta_\alpha),\\
\sin\theta_\gamma\ge0,
\end{array}
\\[2.0em]
\displaystyle
\frac{
    \sin\theta_\alpha(\cos\theta_\alpha+\cos\theta_\beta)
    -
    \cos\theta_\alpha
    \sqrt{1-(\cos\theta_\alpha+\cos\theta_\beta)^2}
}{
    -\sin\theta_\beta(\cos\theta_\alpha+\cos\theta_\beta)
    +
    \cos\theta_\beta
    \sqrt{1-(\cos\theta_\alpha+\cos\theta_\beta)^2}
},
&
\begin{array}{l}
\frac{3\pi}{2}
\le
\theta_\beta
\le
2\pi-\arccos(1-\cos\theta_\alpha),\\
\sin\theta_\gamma\le0.
\end{array}
\end{cases}
\]
Here
\[
    \cos\theta_\gamma
    =
    -\cos\theta_\alpha-\cos\theta_\beta,
\]
and the sign of \(\sin\theta_\gamma\) distinguishes the two branches.

The endpoints are included only in the limiting sense.  At
\[
    \theta_\beta=\frac{\pi}{2}
    \qquad\text{and}\qquad
    \theta_\beta=\frac{3\pi}{2},
\]
one has
\[
    |k_\beta|=0,
\]
so these are collapsed endpoints with \(k_\beta=0\).  At
\[
    \theta_\beta=\pi+\theta_\alpha,
\]
one has
\[
    |k_\beta|=1,
    \qquad
    k_\beta=-k_\alpha,
    \qquad
    k_\gamma=0,
\]
so this is a collapsed endpoint with \(k_\gamma=0\).  Finally, at
\[
    \theta_\beta
    =
    2\pi-\arccos(1-\cos\theta_\alpha),
\]
the two choices \(\sin\theta_\gamma\le0\) and
\(\sin\theta_\gamma\ge0\) meet, since
\[
    \sqrt{1-(\cos\theta_\alpha+\cos\theta_\beta)^2}=0.
\]
\end{proposition}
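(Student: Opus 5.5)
The plan is to reduce the $(+,+,+)$ system \eqref{manifold2} at unit scale to a $2\times2$ linear system for the unknown radii $(|k_\beta|,|k_\gamma|)$, with the directions treated as parameters. Write $u_\beta=(\cos\theta_\beta,\sin\theta_\beta)$ and $u_\gamma=(\cos\theta_\gamma,\sin\theta_\gamma)$. The frequency equation in \eqref{manifold2} reads $\cos\theta_\alpha+\cos\theta_\beta+\cos\theta_\gamma=0$. So once $\theta_\beta$ is fixed, $\cos\theta_\gamma=-(\cos\theta_\alpha+\cos\theta_\beta)$ is determined. Then $\sin\theta_\gamma=\pm\sqrt{1-(\cos\theta_\alpha+\cos\theta_\beta)^2}$, which requires $|\cos\theta_\alpha+\cos\theta_\beta|\le1$. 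The two signs of $\sin\theta_\gamma$ give the two types of branch in the statement.

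The momentum equation becomes $|k_\beta|u_\beta+|k_\gamma|u_\gamma=-k_\alpha$. Taking the two-dimensional cross product with $u_\gamma$ and applying Cramer's rule gives
\[
|k_\beta|=\frac{\cos\theta_\gamma\sin\theta_\alpha-\sin\theta_\gamma\cos\theta_\alpha}{\cos\theta_\beta\sin\theta_\gamma-\sin\theta_\beta\cos\theta_\gamma}.
\]
There is an analogous formula for $|k_\gamma|$. Substituting $\cos\theta_\gamma$ and the chosen sign of $\sin\theta_\gamma$ should reproduce the displayed expressions exactly: the numerator is $\sin\theta_\alpha(\cos\theta_\alpha+\cos\theta_\beta)\mp\cos\theta_\alpha\sqrt{\cdot}$ after multiplying numerator and denominator by $-1$. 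This gives the formula whenever the denominator $\det(u_\beta,u_\gamma)$ is nonzero, i.e. whenever $k_\beta\not\parallel k_\gamma$. That degenerate case is excluded away from $\cos\theta_\alpha=0$ by the remark after \eqref{eq:coarea-Jacobian}.

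Next I determine the admissible ranges of $\theta_\beta$. These are the sets on which both $|k_\beta|\ge0$ and $|k_\gamma|\ge0$ hold for the given sign of $\sin\theta_\gamma$, together with $|\cos\theta_\alpha+\cos\theta_\beta|\le1$. I would track where the numerators and denominator of $|k_\beta|$ and $|k_\gamma|$ vanish as functions of $\theta_\beta\in[0,2\pi)$, for fixed $\theta_\alpha\in[0,\pi/2)$:
\begin{itemize}
\item $|k_\beta|=0$ exactly when $\theta_\gamma=\theta_\alpha+\pi$, i.e. $\cos\theta_\beta=0$. This gives $\theta_\beta=\pi/2,3\pi/2$.
\item $|k_\gamma|=0$ exactly when $u_\beta=-k_\alpha$, i.e. $\theta_\beta=\pi+\theta_\alpha$ with $|k_\beta|=1$.
\item The square root vanishes when $\cos\theta_\beta=1-\cos\theta_\alpha$, taken in the lower half-plane. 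This is $\theta_\beta=2\pi-\arccos(1-\cos\theta_\alpha)$, where the two sign choices of $\sin\theta_\gamma$ coincide.
\end{itemize}
Between consecutive such points the signs are constant, so one checks positivity at a single sample point per interval (or by a limiting argument at the endpoints). This should select exactly the three arcs $[\pi/2,\pi+\theta_\alpha]$ with $\sin\theta_\gamma\le0$, $[\pi+\theta_\alpha,\,2\pi-\arccos(1-\cos\theta_\alpha)]$ with $\sin\theta_\gamma\ge0$, and $[3\pi/2,\,2\pi-\arccos(1-\cos\theta_\alpha)]$ with $\sin\theta_\gamma\le0$. It should also show that all other $\theta_\beta$ give a negative radius. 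The central symmetry remark, $k_\beta\leftrightarrow k_\gamma$, serves as a consistency check: it should map the arcs into one another.

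The main obstacle is this sign bookkeeping. The signs of $\det(u_\beta,u_\gamma)$ and of the numerators change in ways that depend on $\theta_\alpha$. The arc $\theta_\beta\in(0,\pi/2)$ and the upper/lower choices need to be ruled out uniformly in $\theta_\alpha$. One also has to handle the degenerate limit $\theta_\alpha=0$, where $\arccos(1-\cos\theta_\alpha)=0$ and some arcs shrink. I would treat the generic case $0<\theta_\alpha<\pi/2$ first and pass to $\theta_\alpha=0$ by continuity, consistent with the limiting-sense convention in the statement.
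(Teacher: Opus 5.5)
Your approach is the paper's: solve the polar momentum system by Cramer's rule, set $\cos\theta_\gamma=-(\cos\theta_\alpha+\cos\theta_\beta)$, substitute the two signs of $\sin\theta_\gamma$, and select arcs by positivity of both radii. Your formula and the resulting displayed expressions are correct. The paper itself only asserts the positivity selection.

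The step that fails as you set it up is that selection. Your list of sign-change points is incomplete, so ``between consecutive such points the signs are constant'' is false. Three kinds of points are missing.

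First, the numerator $\sin(\theta_\gamma-\theta_\alpha)$ of $|k_\beta|$ also vanishes at $\theta_\gamma=\theta_\alpha$. This means $\cos\theta_\beta=-2\cos\theta_\alpha$ on the $\sin\theta_\gamma\ge0$ branch, i.e.\ $\theta_\beta=\pi\pm\arccos(2\cos\theta_\alpha)$, present once $\theta_\alpha\ge\pi/3$. There the $|k_\gamma|$ formula returns $-1$.

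Second, the numerator $\sin(\theta_\beta-\theta_\alpha)$ of $|k_\gamma|$ also vanishes at $\theta_\beta=\theta_\alpha$, which is in the domain $|\cos\theta_\alpha+\cos\theta_\beta|\le1$ iff $\theta_\alpha\ge\pi/3$. There $|k_\beta|=-1$.

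Third, the determinant $\sin(\theta_\beta-\theta_\gamma)$ vanishes when $\theta_\gamma=\theta_\beta$, i.e.\ $\cos\theta_\beta=-\tfrac12\cos\theta_\alpha$. This gives simple poles for every $\theta_\alpha\in[0,\pi/2)$: at $\theta_\beta=\pi-\arccos(\tfrac12\cos\theta_\alpha)$ on the $\sin\theta_\gamma\ge0$ branch, and at $\theta_\beta=\pi+\arccos(\tfrac12\cos\theta_\alpha)$ on the $\sin\theta_\gamma\le0$ branch.

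The co--area remark you cite only shows that $k_\beta\parallel k_\gamma$ cannot occur at an actual resonant point. That is enough for the formula to hold on the resonant set. It does not prevent $\det(u_\beta,u_\gamma)$ from vanishing and changing sign along the $\theta_\beta$-parametrization. For example, $(\pi+\theta_\alpha,3\pi/2)$ on the $\sin\theta_\gamma\le0$ branch contains a pole across which both radii flip sign. So your exclusion claim, that all other $\theta_\beta$ give a negative radius, is not justified by one sample per interval of your partition.

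The repair is routine. Add these points to the partition and check that all of them lie outside the three claimed arcs:
\begin{itemize}
\item $\pi+\arccos(\tfrac12\cos\theta_\alpha)>\pi+\theta_\alpha$, since $\tfrac12\cos\theta_\alpha<\cos\theta_\alpha$;
\item $\pi+\arccos(2\cos\theta_\alpha)<\pi+\theta_\alpha$, since $2\cos\theta_\alpha>\cos\theta_\alpha$;
\item $\pi-\arccos(\tfrac12\cos\theta_\alpha)$ and $\pi-\arccos(2\cos\theta_\alpha)$ lie in $(\tfrac{\pi}{2},\pi]$ on the $\sin\theta_\gamma\ge0$ branch, below $\pi+\theta_\alpha$;
\item $\theta_\beta=\theta_\alpha<\tfrac{\pi}{2}$.
\end{itemize}
Hence sampling on the three arcs is legitimate. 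For instance, as $\theta_\beta\to\pi+\theta_\alpha$ from the arc side on either branch, $|k_\beta|\to1$ and $|k_\gamma|\to0^+$.

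On the complementary subintervals, whose signs are now constant, check that at least one radius is negative. Two facts help: both radii flip sign together at a pole, and the partner radius equals $-1$ at each spurious zero. With this correction your argument is complete and supplies the detail the paper's proof omits.
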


\begin{proof}
We work in polar coordinates
\[
    k_j=|k_j|(\cos\theta_j,\sin\theta_j),
    \qquad
    j\in\{\alpha,\beta,\gamma\}.
\]

The momentum resonance
\[
    k_\alpha+k_\beta+k_\gamma=0
\]
is equivalent to
\[
    \cos\theta_\alpha
    +|k_\beta|\cos\theta_\beta
    +|k_\gamma|\cos\theta_\gamma=0,
\]
and
\[
    \sin\theta_\alpha
    +|k_\beta|\sin\theta_\beta
    +|k_\gamma|\sin\theta_\gamma=0.
\]
Solving this \(2\times2\) linear system gives
\[
    |k_\beta|
    =
    \frac{\sin(\theta_\gamma-\theta_\alpha)}
         {\sin(\theta_\beta-\theta_\gamma)},
    \qquad
    |k_\gamma|
    =
    \frac{\sin(\theta_\beta-\theta_\alpha)}
         {\sin(\theta_\gamma-\theta_\beta)}.
\]

For the \((+,+,+)\) sign configuration, the frequency resonance is
\[
    \cos\theta_\alpha+\cos\theta_\beta+\cos\theta_\gamma=0.
\]
Hence
\[
    \cos\theta_\gamma
    =
    -\cos\theta_\alpha-\cos\theta_\beta.
\]
The two possible choices of \(\theta_\gamma\) are therefore determined by
\[
    \sin\theta_\gamma
    =
    \pm
    \sqrt{1-(\cos\theta_\alpha+\cos\theta_\beta)^2}.
\]

If
\[
    \sin\theta_\gamma
    =
    -
    \sqrt{1-(\cos\theta_\alpha+\cos\theta_\beta)^2},
\]
then substituting this expression into the formula for \(|k_\beta|\) gives
\[
|k_\beta|
=
\frac{
    \sin\theta_\alpha(\cos\theta_\alpha+\cos\theta_\beta)
    -
    \cos\theta_\alpha
    \sqrt{1-(\cos\theta_\alpha+\cos\theta_\beta)^2}
}{
    -\sin\theta_\beta(\cos\theta_\alpha+\cos\theta_\beta)
    +
    \cos\theta_\beta
    \sqrt{1-(\cos\theta_\alpha+\cos\theta_\beta)^2}
}.
\]
The positivity conditions
\[
    |k_\beta|\ge0,
    \qquad
    |k_\gamma|\ge0
\]
select the two closed intervals
\[
    \frac{\pi}{2}
    \le
    \theta_\beta
    \le
    \pi+\theta_\alpha
\]
and
\[
    \frac{3\pi}{2}
    \le
    \theta_\beta
    \le
    2\pi-\arccos(1-\cos\theta_\alpha),
\]
with the endpoints interpreted in the limiting sense.

If
\[
    \sin\theta_\gamma
    =
    +
    \sqrt{1-(\cos\theta_\alpha+\cos\theta_\beta)^2},
\]
then substituting this expression into the formula for \(|k_\beta|\) gives
\[
|k_\beta|
=
\frac{
    \sin\theta_\alpha(\cos\theta_\alpha+\cos\theta_\beta)
    +
    \cos\theta_\alpha
    \sqrt{1-(\cos\theta_\alpha+\cos\theta_\beta)^2}
}{
    -\sin\theta_\beta(\cos\theta_\alpha+\cos\theta_\beta)
    -
    \cos\theta_\beta
    \sqrt{1-(\cos\theta_\alpha+\cos\theta_\beta)^2}
}.
\]
The positivity conditions select the closed interval
\[
    \pi+\theta_\alpha
    \le
    \theta_\beta
    \le
    2\pi-\arccos(1-\cos\theta_\alpha),
\]
again with endpoints interpreted in the limiting sense.

Combining the two choices of \(\sin\theta_\gamma\) gives the stated
parametrization.
\end{proof}

\begin{remark}[Orientation]
Proposition~\ref{prop:param-+++} describes the resonance curve as a union of
closed parametrized arcs.  In the co-area estimates, we use the corresponding
open intervals, since the endpoints are not regular co-area points.  If one
wants to traverse the compact curve with an orientation, the last arc is
traversed in the reverse direction:
\[
    \theta_\beta:
    2\pi-\arccos(1-\cos\theta_\alpha)
    \longrightarrow
    \frac{3\pi}{2}.
\]
This is only an orientation convention and does not affect the estimates,
which use unoriented integrals.
\end{remark}

\begin{figure}[htbp]
    \centering
    \includegraphics[width=\textwidth]{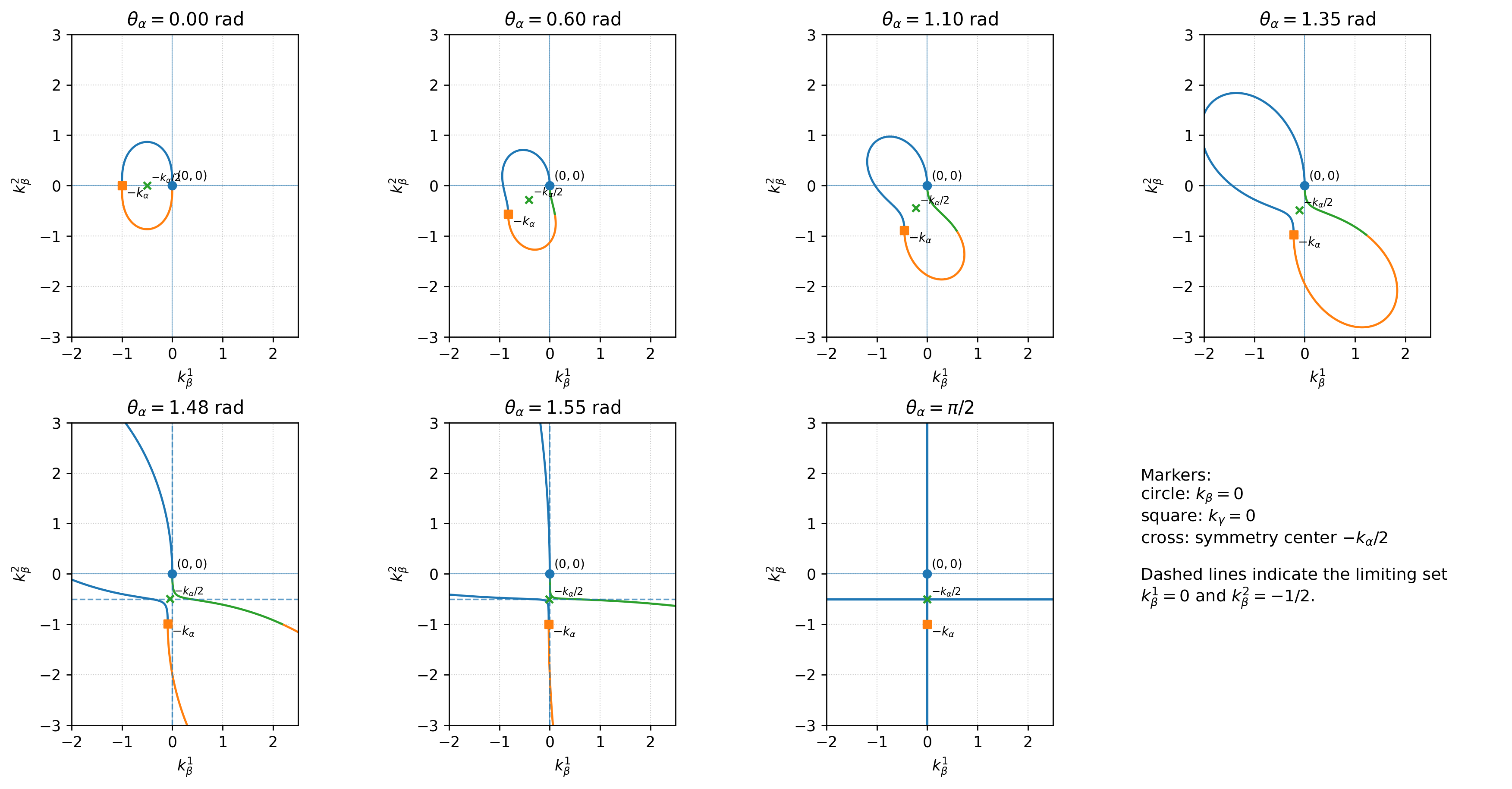}
\caption{
    Unit-scale \((+,+,+)\) resonance curve from
    Proposition~\ref{prop:param-+++}, plotted in the \(k_\beta\)-plane for
    increasing values of \(\theta_\alpha\).  We fix
    \(k_\alpha=(\cos\theta_\alpha,\sin\theta_\alpha)\) and
    \(k_\gamma=-k_\alpha-k_\beta\).  The marked points are
    \(0\), \(-k_\alpha\), and \(-k_\alpha/2\), corresponding respectively to
    \(k_\beta=0\), \(k_\gamma=0\), and the center of the central symmetry.
    The dashed lines indicate the limiting set
    \(k_\beta^1=0\) and \(k_\beta^2=-1/2\) at
    \(\theta_\alpha=\pi/2\).
}
    \label{fig:manifold-+++}
\end{figure}

\begin{remark}[Interpretation of Figure~\ref{fig:manifold-+++}]
Figure~\ref{fig:manifold-+++} illustrates the full, uncut resonance geometry.
As \(\theta_\alpha\to\pi/2\), the compact curve described in
Proposition~\ref{prop:param-+++} degenerates toward the non-compact limiting
set
\[
    k_\beta^1=0,
    \qquad
    k_\beta^2=-\frac12 .
\]
Indeed, at \(\theta_\alpha=\pi/2\), writing \(k_\beta=(x,y)\) gives
\[
    k_\gamma=(-x,-1-y),
\]
and the frequency constraint reduces to
\[
    \frac{x}{\sqrt{x^2+y^2}}
    -
    \frac{x}{\sqrt{x^2+(1+y)^2}}
    =0.
\]
Away from the collapsed endpoints \(k_\beta=0\) and \(k_\gamma=0\), this is
equivalent to
\[
    x=0
    \qquad\text{or}\qquad
    y=-\frac12 .
\]
The long branches in the last panels therefore represent the degeneration of
the uncut geometry, not the uniformly controlled region used in the estimates.

After applying the angular cut-off
\[
    \chi_\varepsilon(\theta_\alpha)
    \chi_\varepsilon(\theta_\beta)
    \chi_\varepsilon(\theta_\gamma),
\]
the admissible portion lies in a compact regular subset; the quantitative
lower bounds excluding the zero-frequency limiting directions are proved
below.
\end{remark}

\subsection{Co--area factor estimates}
\label{subsec:coarea-factor-+++}

We now estimate the one-dimensional co--area factor appearing in the
parametrized representation of the \((+,+,+)\) resonance curve.  Throughout
this subsection we work at unit scale:
\[
    |k_\alpha|=1,
    \qquad
    k_\alpha=(\cos\theta_\alpha,\sin\theta_\alpha),
    \qquad
    0\le \theta_\alpha<\frac{\pi}{2}.
\]

We split the unit-scale resonance curve into two halves using the central
symmetry center \(-k_\alpha/2\).  Define the upper half by
\[
    \mathcal R_{+++}^{\mathrm{up}}(\theta_\alpha)
    :=
    \left\{
        k_\beta\in\mathcal R_{+++}(\theta_\alpha):
        0\le
        \arg\left(k_\beta+\frac{k_\alpha}{2}\right)
        \le \pi
    \right\}.
\]
In the angular parametrization of Proposition~\ref{prop:param-+++}, this
corresponds to
\[
    \theta_\beta
    \in
    \left[\frac{\pi}{2},\,\pi+\theta_\alpha\right].
\]
The lower half is obtained from the upper half by the central symmetry
\[
    k_\beta\mapsto -k_\alpha-k_\beta,
\]
which exchanges \(k_\beta\) and \(k_\gamma\).

On the upper half, we have \(\sin\theta_\gamma\le0\).  We define
\[
    p(\theta_\alpha,\theta_\beta)
    :=
    \cos\theta_\alpha+\cos\theta_\beta,
\]
and
\[
    r(\theta_\alpha,\theta_\beta)
    :=
    \sqrt{1-p(\theta_\alpha,\theta_\beta)^2}.
\]
Thus
\[
    \cos\theta_\gamma=-p(\theta_\alpha,\theta_\beta),
    \qquad
    \sin\theta_\gamma=-r(\theta_\alpha,\theta_\beta).
\]
We also define
\[
    f(\theta_\alpha,\theta_\beta)
    :=
    -\sin\theta_\beta\,p(\theta_\alpha,\theta_\beta)
    +
    \cos\theta_\beta\,r(\theta_\alpha,\theta_\beta).
\]
Then the upper-half parametrization is
\[
    |k_\beta|
    =
    \frac{
        \sin\theta_\alpha\,p(\theta_\alpha,\theta_\beta)
        -
        \cos\theta_\alpha\,r(\theta_\alpha,\theta_\beta)
    }{
        f(\theta_\alpha,\theta_\beta)
    }.
\]

\begin{lemma}[Lower bound for the denominator]
\label{lem:f-lower-+++}
For every \(0\le\theta_\alpha<\pi/2\) and every
\[
    \theta_\beta
    \in
    \left[\frac{\pi}{2},\,\pi+\theta_\alpha\right],
\]
one has
\[
    |f(\theta_\alpha,\theta_\beta)|
    \ge
    \cos\theta_\alpha
    \sqrt{1-\frac{\cos^2\theta_\alpha}{4}}.
\]
Consequently,
\[
    |f(\theta_\alpha,\theta_\beta)|^{-1}
    \lesssim
    (\cos\theta_\alpha)^{-1},
\]
with an absolute implicit constant.
\end{lemma}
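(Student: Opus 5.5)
The plan is to first recognize \(f\) as a sine of an angle difference, and then reduce the estimate to a one-variable inequality. On the upper half we have \(\cos\theta_\gamma=-p\) and \(\sin\theta_\gamma=-r\). Hence
\[
    f=-\sin\theta_\beta\,p+\cos\theta_\beta\,r
    =\sin\theta_\beta\cos\theta_\gamma-\cos\theta_\beta\sin\theta_\gamma
    =\sin(\theta_\beta-\theta_\gamma).
\]
This is consistent with the \(2\times2\) solve in the proof of Proposition~\ref{prop:param-+++}, where the denominator is \(\sin(\theta_\beta-\theta_\gamma)\). Next I introduce half-sum and half-difference angles
\[
    \psi:=\frac{\theta_\beta+\theta_\gamma}{2},
    \qquad
    \delta:=\frac{\theta_\beta-\theta_\gamma}{2}.
\]
The frequency relation \(\cos\theta_\beta+\cos\theta_\gamma=-\cos\theta_\alpha\) becomes \(2\cos\psi\cos\delta=-\cos\theta_\alpha\), and \(f=\sin 2\delta=2\sin\delta\cos\delta\).

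Set \(c:=\tfrac12\cos\theta_\alpha\) and \(x:=|\cos\delta|\in[0,1]\). Since \(|\cos\psi|\le1\), the resonance relation gives \(x\ge c\) immediately, and
\[
    |f|=2x\sqrt{1-x^2}=:g(x).
\]
The function \(g\) is increasing on \([0,1/\sqrt2]\), decreasing on \([1/\sqrt2,1]\), and symmetric under \(x\mapsto\sqrt{1-x^2}\). Therefore
\[
    g(x)\ge g(c)=\cos\theta_\alpha\sqrt{1-\tfrac{\cos^2\theta_\alpha}{4}}
    \quad\text{if and only if}\quad
    x\in\bigl[c,\sqrt{1-c^2}\bigr],
\]
and the right-hand side is exactly the claimed bound. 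So the lemma reduces to the complementary upper bound \(x\le\sqrt{1-c^2}\), i.e.\ \(|\sin\delta|\ge c\). Geometrically, \(k_\beta\) and \(k_\gamma\) must stay uniformly away from being parallel or antiparallel, quantified by \(\cos\theta_\alpha\).

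The main obstacle is this upper bound, because the resonance relation alone does not exclude \(\delta\) near \(0\) or \(\pi\). I would use the sector information on the upper arc: \(\theta_\beta\in[\pi/2,\pi+\theta_\alpha]\) and \(\sin\theta_\gamma\le0\), so \(\theta_\gamma\in[\pi,2\pi]\). First I would check the endpoints. At \(\theta_\beta=\pi/2\) we get \(f=-\cos\theta_\alpha\), and at \(\theta_\beta=\pi+\theta_\alpha\) we get \(f=\cos\theta_\beta=-\cos\theta_\alpha\). In both cases \(|f|=\cos\theta_\alpha\ge g(c)\).

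For the interior I would argue by contradiction. Suppose \(|\sin\delta|<c\), so \(\theta_\beta\) and \(\theta_\gamma\) are within \(2\arcsin c\) of each other modulo \(\pi\). In the near-antiparallel case, \(\cos\theta_\beta+\cos\theta_\gamma\approx0\), which contradicts \(|\cos\theta_\beta+\cos\theta_\gamma|=\cos\theta_\alpha=2c\) once the deviation is quantified: \(|\cos\theta_\beta+\cos\theta_\gamma|=2|\sin\psi\sin\delta|<2c\). In the near-parallel case, both angles must lie in the overlap of the two sectors near \([\pi,\pi+\theta_\alpha]\). There \(\cos\theta_\beta\approx\cos\theta_\gamma\le-\cos\theta_\alpha\), so their sum has modulus near \(2\cos\theta_\alpha>\cos\theta_\alpha\), again contradicting resonance.

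If the quantitative bookkeeping in these two cases becomes messy, my fallback is brute force. I would write \(u=\cos\theta_\beta\) on the arc and express \(f^2\) as an explicit function of \(u\) and \(\cos\theta_\alpha\). Then I would check, via its critical points, that its minimum over the admissible \(u\)-interval is attained at an endpoint or equals \(g(c)^2\). The supporting facts are that \(f\) has no interior zero when \(\cos\theta_\alpha>0\), by the Jacobian discussion in Section~\ref{Scaling and co–area representation}, and that \(f=-\cos\theta_\alpha\) at both endpoints. The consequence \(|f|^{-1}\lesssim(\cos\theta_\alpha)^{-1}\) then follows, since \(\sqrt{1-\cos^2\theta_\alpha/4}\ge\sqrt3/2\).
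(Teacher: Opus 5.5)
Your reduction is correct and well chosen: on the upper arc $f=\sin(\theta_\beta-\theta_\gamma)$, the resonance relation gives $x=|\cos\delta|\ge c$, and since $g(x)=g(\sqrt{1-x^2})$ the lemma is equivalent to $|\sin\delta|\ge c$. The gap is that this last inequality is never proved, and the argument you sketch for it would not work.

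First, $|\sin\delta|<c$ means that $\theta_\beta-\theta_\gamma$ lies within $2\arcsin c$ of $0$ modulo $2\pi$. So only the near-parallel case arises; antiparallel configurations are already excluded by $x\ge c$.

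Second, in the near-parallel case the inequality is almost sharp. At the endpoint $\theta_\beta=\pi+\theta_\alpha$ one has $\theta_\gamma=3\pi/2$. Hence $|\sin\delta|=\sin(\frac{\pi}{4}-\frac{\theta_\alpha}{2})$, while $c=\sin(\frac{\pi}{4}-\frac{\theta_\alpha}{2})\cos(\frac{\pi}{4}-\frac{\theta_\alpha}{2})$, and the ratio tends to $1$ as $\theta_\alpha\to\pi/2$. In this configuration $\theta_\gamma$ lies outside the overlap sector $[\pi,\pi+\theta_\alpha]$, with $\cos\theta_\gamma=0$ and $\cos\theta_\beta=-\cos\theta_\alpha$. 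So $|\cos\theta_\beta+\cos\theta_\gamma|$ equals $\cos\theta_\alpha$ exactly, not something near $2\cos\theta_\alpha$.

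The angular tolerance $2\arcsin c\approx\cos\theta_\alpha$ is the same size as the margin you want to exploit. The approximation $\cos\theta_\beta\approx\cos\theta_\gamma\le-\cos\theta_\alpha$ therefore discards exactly the information needed, and no estimate with errors of order $\arcsin c$ can close the argument.

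Your fallback is essentially the paper's proof, but you only announce it. The paper differentiates $f$ in $\theta_\beta$, finds the interior critical point $\cos\theta_\beta=-\frac12\cos\theta_\alpha$, and compares with the endpoint values $-\cos\theta_\alpha$. That critical point is precisely your configuration $x=c$, $|\cos\psi|=1$. In the variable $u=\cos\theta_\beta$ the fallback would also need two branches, because $u$ folds at $\theta_\beta=\pi$.

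The missing step can be supplied exactly, without calculus. Split the arc at $\theta_\beta=\pi$ and use the identity $\cos^2\psi-\cos^2\delta=-\sin\theta_\beta\sin\theta_\gamma$.

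For $\theta_\beta\in[\pi/2,\pi]$ one has $\sin\theta_\beta\ge0\ge\sin\theta_\gamma$, so $|\cos\psi|\ge|\cos\delta|$. Then $c=|\cos\psi|\,|\cos\delta|\ge x^2$, which gives $x\le\sqrt c\le\sqrt{1-c^2}$ because $c\le\frac12$.

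For $\theta_\beta\in[\pi,\pi+\theta_\alpha]$ one has $\cos\theta_\beta\le-\cos\theta_\alpha$, so $\cos\theta_\gamma=-\cos\theta_\alpha-\cos\theta_\beta\ge0$. Together with $\sin\theta_\gamma\le0$ this places $\theta_\gamma\in[3\pi/2,2\pi]$. Hence $\theta_\gamma-\theta_\beta\in[\frac{\pi}{2}-\theta_\alpha,\pi]$, and $|\sin\delta|\ge\sin(\frac{\pi}{4}-\frac{\theta_\alpha}{2})\ge\frac12\cos\theta_\alpha=c$.

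With this inserted, your route is a complete and more transparent alternative to the paper's critical-point computation. It identifies the paper's interior minimum as the configuration saturating the resonance bound $x\ge c$. It also shows that the complementary bound $x\le\sqrt{1-c^2}$ becomes nearly saturated at the endpoint $\theta_\beta=\pi+\theta_\alpha$ as $\theta_\alpha\to\pi/2$.
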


\begin{proof}
For readability, write
\[
    p=p(\theta_\alpha,\theta_\beta),
    \qquad
    r=r(\theta_\alpha,\theta_\beta),
    \qquad
    f=f(\theta_\alpha,\theta_\beta)
\]
inside this proof.  A direct differentiation gives
\[
    \partial_{\theta_\beta} f
    =
    -\frac{
        (r-\sin\theta_\beta)
        \bigl(
            \cos\theta_\alpha\cos\theta_\beta
            +
            \cos^2\theta_\beta
            +
            \sin\theta_\beta\,r
        \bigr)
    }{r}.
\]
On the interval
\[
    \theta_\beta\in
    \left[\frac{\pi}{2},\,\pi+\theta_\alpha\right],
\]
the relevant interior critical point is
\[
    \theta_\beta
    =
    \arccos\left(-\frac{\cos\theta_\alpha}{2}\right).
\]
At the endpoints,
\[
    f\left(\theta_\alpha,\frac{\pi}{2}\right)
    =
    f(\theta_\alpha,\pi+\theta_\alpha)
    =
    -\cos\theta_\alpha.
\]
At the critical point,
\[
    f\left(
        \theta_\alpha,
        \arccos\left(-\frac{\cos\theta_\alpha}{2}\right)
    \right)
    =
    -\cos\theta_\alpha
    \sqrt{1-\frac{\cos^2\theta_\alpha}{4}}.
\]
Since \(f<0\) on
\[
    \left[\frac{\pi}{2},\,\pi+\theta_\alpha\right],
\]
the minimum of \(|f|\) on this interval is
\[
    \cos\theta_\alpha
    \sqrt{1-\frac{\cos^2\theta_\alpha}{4}}.
\]
This proves the claim.
\end{proof}

\begin{lemma}[Auxiliary angular quotient]
\label{lem:g-bound-+++}
On the upper half, define
\[
    G(\theta_\alpha,\theta_\beta)
    :=
    \left|
    \frac{
        \sin\theta_\alpha\,p(\theta_\alpha,\theta_\beta)
        -
        \cos\theta_\alpha\,r(\theta_\alpha,\theta_\beta)
    }{
        r(\theta_\alpha,\theta_\beta)
    }
    \right|.
\]
Then
\[
    G(\theta_\alpha,\theta_\beta)
    \lesssim
    \begin{cases}
        1,
        & 0\le \theta_\alpha\le \dfrac{\pi}{3},
        \\[0.6em]
        (\cos\theta_\alpha)^{-1/2},
        & \dfrac{\pi}{3}<\theta_\alpha<\dfrac{\pi}{2}.
    \end{cases}
\]
\end{lemma}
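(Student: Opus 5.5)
The plan is to remove the apparent singularity at $r=0$ with a trigonometric substitution, so that $G$ becomes a quotient of two sines. On the upper half, $\theta_\beta\in[\pi/2,\pi+\theta_\alpha]$, one has $\cos\theta_\beta\in[-1,0]$ on $[\pi/2,\pi]$, and then $\cos\theta_\beta$ increases to $-\cos\theta_\alpha$ on $[\pi,\pi+\theta_\alpha]$. Hence
\[
p=\cos\theta_\alpha+\cos\theta_\beta\in[\cos\theta_\alpha-1,\ \cos\theta_\alpha]\subset(-1,1]
\qquad\text{for } 0\le\theta_\alpha<\tfrac{\pi}{2}.
\]

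First I introduce $\psi:=\arccos p\in[0,\pi]$, so that $p=\cos\psi$ and $r=\sqrt{1-p^2}=\sin\psi$. Then the numerator is
\[
\sin\theta_\alpha\cos\psi-\cos\theta_\alpha\sin\psi=\sin(\theta_\alpha-\psi),
\qquad\text{so}\qquad
G=\frac{|\sin(\psi-\theta_\alpha)|}{\sin\psi}.
\]
Second, I locate $\psi$. From the range of $p$ and the monotonicity of $\arccos$,
\[
\theta_\alpha\le\psi\le\psi_{\max}:=\arccos(\cos\theta_\alpha-1)<\pi .
\]
This already shows that $0\le\psi-\theta_\alpha\le\psi<\pi$, so the numerator is nonnegative and $G=\sin(\psi-\theta_\alpha)/\sin\psi$.

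Third, I split according to the size of $\psi$.
\begin{itemize}
\item If $\psi\le\pi/2$, then $0\le\psi-\theta_\alpha\le\psi\le\pi/2$. Since $\sin$ is increasing on $[0,\pi/2]$, this gives $G\le1$. This case covers the endpoint $\theta_\beta=\pi/2$, where $r=\sin\theta_\alpha$ can be small for small $\theta_\alpha$. There the numerator vanishes to the same order, and this cancellation is exactly what the substitution captures; it is the only real subtlety.
\item If $\pi/2<\psi\le\psi_{\max}$, then $\sin\psi\ge\sin\psi_{\max}$. Bounding the numerator by $1$ gives $G\le1/\sin\psi_{\max}$.
\end{itemize}

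Finally, I compute the constant in the second case:
\[
\sin\psi_{\max}=\sqrt{1-(\cos\theta_\alpha-1)^2}=\sqrt{\cos\theta_\alpha(2-\cos\theta_\alpha)}\ge\sqrt{\cos\theta_\alpha}.
\]
Hence $G\le\max\{1,(\cos\theta_\alpha)^{-1/2}\}=(\cos\theta_\alpha)^{-1/2}$ for all $0\le\theta_\alpha<\pi/2$. For $0\le\theta_\alpha\le\pi/3$ we have $\cos\theta_\alpha\ge1/2$, so this bound is at most $\sqrt2\lesssim1$. This gives both cases of the lemma.

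The main obstacle is the apparent degeneration of $r$ at $\theta_\beta=\pi/2$ for $\theta_\alpha$ near $0$, and the substitution resolves it. The only point requiring care is checking that $p>-1$ strictly on the upper half. This holds because $\cos\theta_\alpha>0$, and it is precisely the step where the zero-frequency degeneration $\cos\theta_\alpha\to0$ produces the $(\cos\theta_\alpha)^{-1/2}$ loss.
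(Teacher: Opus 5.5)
Your proof is correct, and it takes a somewhat different route from the paper's.

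\textbf{The paper's route.} The paper never uses the angle-subtraction identity. It applies the triangle inequality, $G\le\cos\theta_\alpha+\sin\theta_\alpha|p|/r$, and then bounds $|p|$ and $r$ separately, splitting according to $\theta_\alpha$:
\begin{itemize}
\item for $\cos\theta_\alpha\ge\tfrac12$ it uses $|p|\le\cos\theta_\alpha$, hence $r\ge\sin\theta_\alpha$ and $G\le2\cos\theta_\alpha$;
\item for $\cos\theta_\alpha<\tfrac12$ it uses $|p|\le1-\cos\theta_\alpha$, hence $r\ge\sqrt{\cos\theta_\alpha(2-\cos\theta_\alpha)}$.
\end{itemize}

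\textbf{Your route.} You write $G=\sin(\psi-\theta_\alpha)/\sin\psi$ exactly and split according to $\psi$ instead. Note that on the upper half $\psi=\theta_\gamma-\pi$, so your identity reads $G=|\sin(\theta_\gamma-\theta_\alpha)|/|\sin\theta_\gamma|$. This is the same relation $F_1=\sin(\theta_\gamma-\theta_\alpha)$ that the paper uses later in the positivity lemma for $\varsigma_\beta$.

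\textbf{Comparison.} Both arguments rest on the same two facts: $p\in[\cos\theta_\alpha-1,\cos\theta_\alpha]$, and the lower bound $\sqrt{\cos\theta_\alpha(2-\cos\theta_\alpha)}$ for $r$ at the extreme $p=\cos\theta_\alpha-1$. Your version gains three things:
\begin{itemize}
\item a single uniform bound $G\le(\cos\theta_\alpha(2-\cos\theta_\alpha))^{-1/2}$, with no case split in $\theta_\alpha$;
\item the sign of the numerator;
\item the observation that $r=\sin\psi>0$ on the whole closed upper half whenever $\theta_\alpha>0$.
\end{itemize}
The paper's version is more mechanical and needs no angle bookkeeping.

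\textbf{A correction to your commentary.} The endpoint $\theta_\beta=\pi/2$ is not a genuine obstruction requiring cancellation. The paper's crude triangle bound already stays bounded there, because $r\ge\sin\theta_\alpha$ whenever $|p|\le\cos\theta_\alpha$, so $\sin\theta_\alpha|p|/r\le|p|$. The monotonicity comparison $\sin(\psi-\theta_\alpha)\le\sin\psi$ is needed only within your route, where bounding the numerator by $1$ would fail near $\psi=\theta_\alpha$.

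\textbf{A small omission.} As in the paper, the single point $\theta_\alpha=0$, $\theta_\beta=\pi/2$ has $\psi=0$, so $G$ is the indeterminate form $0/0$ there. It should be excluded explicitly as a non-regular endpoint.
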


\begin{proof}
Write
\[
    c:=\cos\theta_\alpha,
    \qquad
    s:=\sin\theta_\alpha,
    \qquad
    p:=c+\cos\theta_\beta,
    \qquad
    r:=\sqrt{1-p^2}.
\]
Since
\[
    \theta_\beta\in
    \left[\frac{\pi}{2},\,\pi+\theta_\alpha\right],
\]
we have
\[
    c-1\le p\le c.
\]
Moreover,
\[
    G(\theta_\alpha,\theta_\beta)
    \le
    c+\frac{s|p|}{r}.
\]

Assume first that
\[
    0<\theta_\alpha\le\frac{\pi}{3},
\]
so that \(c\ge\frac12\).  Then \(|p|\le c\) and
\[
    r\ge\sqrt{1-c^2}=s.
\]
Consequently,
\[
    G(\theta_\alpha,\theta_\beta)
    \le c+\frac{sc}{s}
    =2c
    \le2.
\]
For \(\theta_\alpha=0\), the same bound follows directly from
\(G(0,\theta_\beta)=1\) on the regular part of the arc.

Assume now that
\[
    \frac{\pi}{3}<\theta_\alpha<\frac{\pi}{2},
\]
so that \(0<c<\frac12\).  In this case,
\[
    |p|\le1-c,
    \qquad
    r\ge\sqrt{1-(1-c)^2}=\sqrt{c(2-c)}.
\]
Hence
\[
    G(\theta_\alpha,\theta_\beta)
    \le
    c+\frac{s(1-c)}{\sqrt{c(2-c)}}
    \lesssim
    c^{-1/2}.
\]
This proves the stated two-case estimate.
\end{proof}

After using momentum conservation to eliminate
\[
    k_\gamma=-k_\alpha-k_\beta,
\]
the reduced frequency constraint is
\[
    \widetilde\Xi_1(k_\beta)
    =
    \frac{k_\alpha^1}{|k_\alpha|}
    +
    \frac{k_\beta^1}{|k_\beta|}
    +
    \frac{k_\gamma^1}{|k_\gamma|}.
\]
On a regular patch parametrized by \(\theta_\beta\), the one-dimensional
delta identity gives the factor
\[
    \frac{
        \left|\dfrac{d k_\beta^1}{d\theta_\beta}\right|
    }{
        \left|
        \partial_{k_\beta^2}\widetilde\Xi_1
        \right|
    }.
\]

\begin{lemma}[Co--area factor on the upper half]
\label{lem:coarea-factor-upper-+++}
On the upper half
\[
    \theta_\beta
    \in
    \left(\frac{\pi}{2},\,\pi+\theta_\alpha\right),
\]
one has
\[
\begin{aligned}
    \frac{
        \left|\dfrac{d k_\beta^1}{d\theta_\beta}\right|
    }{
        \left|
        \partial_{k_\beta^2}\widetilde\Xi_1
        \right|
    }
    &=
    \left|
    \frac{
        \bigl[
        \sin\theta_\alpha\,p
        -
        \cos\theta_\alpha\,r
        \bigr]
        \bigl[
        \cos\theta_\alpha\sin\theta_\beta
        -
        \sin\theta_\alpha\cos\theta_\beta
        \bigr]
    }{
        r\,f^3
    }
    \right|,
\end{aligned}
\]
where
\[
    p=p(\theta_\alpha,\theta_\beta),
    \qquad
    r=r(\theta_\alpha,\theta_\beta),
    \qquad
    f=f(\theta_\alpha,\theta_\beta).
\]
Moreover,
\[
    \frac{
        \left|\dfrac{d k_\beta^1}{d\theta_\beta}\right|
    }{
        \left|
        \partial_{k_\beta^2}\widetilde\Xi_1
        \right|
    }
    \lesssim
    \begin{cases}
        1,
        & 0\le \theta_\alpha\le\dfrac{\pi}{3},
        \\[0.6em]
        (\cos\theta_\alpha)^{-7/2},
        & \dfrac{\pi}{3}<\theta_\alpha<\dfrac{\pi}{2}.
    \end{cases}
\]
The implicit constant is independent of \(\theta_\alpha\).
\end{lemma}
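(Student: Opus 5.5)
The plan has two parts: first derive the exact formula for the ratio, then bound it using Lemmas~\ref{lem:f-lower-+++} and \ref{lem:g-bound-+++}.

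\textbf{Step 1: the numerator.} Write the upper-half parametrization as $|k_\beta| = N/f$ with
\[
N := \sin\theta_\alpha\,p - \cos\theta_\alpha\,r.
\]
The resonance curve is then $k_\beta(\theta_\beta)=(N/f)(\cos\theta_\beta,\sin\theta_\beta)$, so
\[
\frac{dk_\beta^1}{d\theta_\beta}=\frac{(N'f-Nf')\cos\theta_\beta-Nf\sin\theta_\beta}{f^2},
\]
where primes denote $\partial_{\theta_\beta}$. We compute $N'=\sin\theta_\beta(-\sin\theta_\alpha-\cos\theta_\alpha\,p/r)$ from $p'=-\sin\theta_\beta$ and $r'=p\sin\theta_\beta/r$. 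The derivative $f'$ is already known from the proof of Lemma~\ref{lem:f-lower-+++}.

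\textbf{Step 2: the denominator.} Evaluate $\partial_{\varsigma_\beta^2}\widetilde\Xi_1$ on the curve using the formula at the end of Section~\ref{Scaling and co–area representation}. With $\sigma_\beta=\sigma_\gamma=+1$ this is
\[
\partial_{\varsigma_\beta^2}\widetilde\Xi_1=-\frac{\cos\theta_\beta\sin\theta_\beta}{|k_\beta|}+\frac{\cos\theta_\gamma\sin\theta_\gamma}{|k_\gamma|}.
\]
Substitute $\cos\theta_\gamma=-p$, $\sin\theta_\gamma=-r$, $|k_\beta|=N/f$, and
\[
|k_\gamma|=\frac{\sin(\theta_\beta-\theta_\alpha)}{\sin(\theta_\gamma-\theta_\beta)}.
\]
The last denominator expands to $\sin\theta_\gamma\cos\theta_\beta-\cos\theta_\gamma\sin\theta_\beta=-r\cos\theta_\beta+p\sin\theta_\beta=-f$. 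Hence $|k_\gamma|=\pm D/f$, with $D:=\cos\theta_\alpha\sin\theta_\beta-\sin\theta_\alpha\cos\theta_\beta$ up to sign.

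Both expressions become rational in $(\cos\theta_\beta,\sin\theta_\beta,p,r)$. Using $p^2+r^2=1$, the claim is that the quotient collapses to $|N D/(r f^3)|$. I would check this by clearing denominators and verifying a polynomial identity modulo $r^2=1-p^2$, preferably symbolically. This algebraic collapse is the main obstacle: it is the step most prone to sign errors, particularly the branch sign of $r$ and the sign of $f$. To stay consistent, I would verify the identity at a sample point, e.g. $\theta_\alpha=0$ or the critical point $\theta_\beta=\arccos(-\cos\theta_\alpha/2)$.

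\textbf{Step 3: the bound.} Factor the formula as
\[
\frac{|N|}{r}\cdot\frac{|D|}{|f|^3}=G\cdot|D|\cdot|f|^{-3},
\]
with $|D|=|\sin(\theta_\beta-\theta_\alpha)|\le1$.
\begin{itemize}
\item For $\pi/3<\theta_\alpha<\pi/2$: Lemma~\ref{lem:g-bound-+++} gives $G\lesssim(\cos\theta_\alpha)^{-1/2}$, and Lemma~\ref{lem:f-lower-+++} gives $|f|^{-3}\lesssim(\cos\theta_\alpha)^{-3}$. The product is $(\cos\theta_\alpha)^{-7/2}$.
\item For $0\le\theta_\alpha\le\pi/3$: here $\cos\theta_\alpha\ge\tfrac12$, so $|f|^{-3}\lesssim1$ by Lemma~\ref{lem:f-lower-+++}, and $G\lesssim1$ by Lemma~\ref{lem:g-bound-+++}. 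This gives the bound $\lesssim1$.
\end{itemize}
All constants come from those two lemmas and are absolute, so they are independent of $\theta_\alpha$. Finally, the apparent singularity $r\to0$ appears only through $G$, which is already controlled. This covers the open interval up to the endpoint $\theta_\beta\to\pi+\theta_\alpha$.
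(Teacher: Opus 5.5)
Your proposal follows the paper's proof: differentiate the upper-half parametrization, evaluate $\partial_{k_\beta^2}\widetilde\Xi_1$ on the curve, divide, and then bound the quotient by $G\,|\sin(\theta_\beta-\theta_\alpha)|\,|f|^{-3}$ using Lemmas~\ref{lem:g-bound-+++} and~\ref{lem:f-lower-+++}, exactly as in your Step~3. The algebraic collapse you leave to a symbolic check becomes immediate once you note that your Step~2 expression equals $-f\,J_1/(ND)$ and your Step~1 derivative simplifies to $dk_\beta^1/d\theta_\beta=J_1/(rf^2)$, where $J_1=\sin\theta_\beta\cos\theta_\beta\,D+pr\,N$ is a common factor that cancels in the quotient to give $|ND/(rf^3)|$ (this is how the paper records it).
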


\begin{proof}
For readability, write
\[
    p=p(\theta_\alpha,\theta_\beta),
    \qquad
    r=r(\theta_\alpha,\theta_\beta),
    \qquad
    f=f(\theta_\alpha,\theta_\beta).
\]
A direct differentiation of the parametrization gives
\[
    \frac{d k_\beta^1}{d\theta_\beta}
    =
    \frac{J_1}{r\,f^2},
\]
where
\[
\begin{aligned}
    J_1
    &:=
    \sin\theta_\beta\cos\theta_\beta
    \bigl(
        \cos\theta_\alpha\sin\theta_\beta
        -
        \sin\theta_\alpha\cos\theta_\beta
    \bigr)
    \\
    &\qquad
    +
    pr
    \bigl(
        \sin\theta_\alpha\,p
        -
        \cos\theta_\alpha\,r
    \bigr).
\end{aligned}
\]
On the other hand, differentiating the reduced constraint gives
\[
    \partial_{k_\beta^2}\widetilde\Xi_1
    =
    -f
    \frac{
        J_1
    }{
        \bigl(
            \sin\theta_\alpha\,p
            -
            \cos\theta_\alpha\,r
        \bigr)
        \bigl(
            \cos\theta_\alpha\sin\theta_\beta
            -
            \sin\theta_\alpha\cos\theta_\beta
        \bigr)
    }.
\]
Dividing these two identities gives
\[
\begin{aligned}
    \frac{
        \left|\dfrac{d k_\beta^1}{d\theta_\beta}\right|
    }{
        \left|
        \partial_{k_\beta^2}\widetilde\Xi_1
        \right|
    }
    &=
    \left|
    \frac{
        \bigl[
        \sin\theta_\alpha\,p
        -
        \cos\theta_\alpha\,r
        \bigr]
        \bigl[
        \cos\theta_\alpha\sin\theta_\beta
        -
        \sin\theta_\alpha\cos\theta_\beta
        \bigr]
    }{
        r\,f^3
    }
    \right|.
\end{aligned}
\]

We now estimate the right-hand side.  Since
\[
    \left|
        \cos\theta_\alpha\sin\theta_\beta
        -
        \sin\theta_\alpha\cos\theta_\beta
    \right|
    =
    |\sin(\theta_\beta-\theta_\alpha)|
    \le 1,
\]
we get
\[
\begin{aligned}
    \frac{
        \left|\dfrac{d k_\beta^1}{d\theta_\beta}\right|
    }{
        \left|
        \partial_{k_\beta^2}\widetilde\Xi_1
        \right|
    }
    &\le
    G(\theta_\alpha,\theta_\beta)
    |f(\theta_\alpha,\theta_\beta)|^{-3}.
\end{aligned}
\]
By Lemmas~\ref{lem:g-bound-+++} and~\ref{lem:f-lower-+++},
\[
    G(\theta_\alpha,\theta_\beta)
    \lesssim
    \begin{cases}
        1,
        & 0\le \theta_\alpha\le \dfrac{\pi}{3},
        \\[0.4em]
        (\cos\theta_\alpha)^{-1/2},
        & \dfrac{\pi}{3}<\theta_\alpha<\dfrac{\pi}{2},
    \end{cases}
\]
and
\[
    |f(\theta_\alpha,\theta_\beta)|^{-3}
    \lesssim
    (\cos\theta_\alpha)^{-3}.
\]
If \(0\le\theta_\alpha\le\pi/3\), then
\(\cos\theta_\alpha\ge1/2\), and hence
\[
    G(\theta_\alpha,\theta_\beta)
    |f(\theta_\alpha,\theta_\beta)|^{-3}
    \lesssim 1.
\]
If \(\pi/3<\theta_\alpha<\pi/2\), then
\[
    G(\theta_\alpha,\theta_\beta)
    |f(\theta_\alpha,\theta_\beta)|^{-3}
    \lesssim
    (\cos\theta_\alpha)^{-7/2}.
\]
This proves the desired estimate.
\end{proof}

\begin{lemma}[Co--area factor on the lower half]
\label{lem:coarea-factor-lower-+++}
On the lower half of the unit-scale \((+,+,+)\) resonance curve, one has
\[
    \frac{
        \left|\dfrac{d k_\gamma^1}{d\theta_\gamma}\right|
    }{
        \left|
        \partial_{k_\gamma^2}\overline\Xi_1
        \right|
    }
    \lesssim
    \begin{cases}
        1,
        & 0\le \theta_\alpha\le\dfrac{\pi}{3},
        \\[0.6em]
        (\cos\theta_\alpha)^{-7/2},
        & \dfrac{\pi}{3}<\theta_\alpha<\dfrac{\pi}{2}.
    \end{cases}
\]
Here
\[
    \overline\Xi_1(k_\gamma)
    =
    \frac{k_\alpha^1}{|k_\alpha|}
    +
    \frac{k_\beta^1}{|k_\beta|}
    +
    \frac{k_\gamma^1}{|k_\gamma|},
    \qquad
    k_\beta=-k_\alpha-k_\gamma .
\]
\end{lemma}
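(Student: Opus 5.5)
The plan is to reduce the lower-half estimate to the upper-half estimate of Lemma~\ref{lem:coarea-factor-upper-+++} via the central symmetry \(k_\beta\mapsto -k_\alpha-k_\beta\), which exchanges \(k_\beta\) and \(k_\gamma\). First we observe that, for the \((+,+,+)\) configuration, the reduced constraint \(\overline\Xi_1(k_\gamma)\) (obtained by eliminating \(k_\beta=-k_\alpha-k_\gamma\)) is the same function of \(k_\gamma\) as \(\widetilde\Xi_1\) is of \(k_\beta\). This holds because the frequency relation is symmetric in \(\beta\leftrightarrow\gamma\) when \(\sigma_\beta=\sigma_\gamma=+1\):
\[
\overline\Xi_1(x)=\frac{k_\alpha^1}{|k_\alpha|}+\frac{x^1}{|x|}+\frac{(-k_\alpha-x)^1}{|-k_\alpha-x|}=\widetilde\Xi_1(x),\qquad x\in\mathbb R^2 .
\]
Consequently, the zero set of \(\overline\Xi_1\) in the \(k_\gamma\)-plane coincides with the zero set of \(\widetilde\Xi_1\) in the \(k_\beta\)-plane, namely \(\mathcal R_{+++}(\theta_\alpha)\).

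Next we identify the lower half with the upper half under this relabeling. A point \(k_\beta\) lies in the lower half exactly when \(k_\gamma=-k_\alpha-k_\beta\) satisfies \(\arg(k_\gamma+k_\alpha/2)\in[0,\pi]\), i.e.\ when \(k_\gamma\) lies in \(\mathcal R^{\mathrm{up}}_{+++}(\theta_\alpha)\) viewed as a subset of the \(k_\gamma\)-plane. By Proposition~\ref{prop:param-+++} applied with the roles of \(\beta,\gamma\) exchanged, \(k_\gamma\) is then parametrized by \(\theta_\gamma\in(\pi/2,\pi+\theta_\alpha)\) through exactly the same formula
\[
|k_\gamma|=\frac{\sin\theta_\alpha\,p(\theta_\alpha,\theta_\gamma)-\cos\theta_\alpha\,r(\theta_\alpha,\theta_\gamma)}{f(\theta_\alpha,\theta_\gamma)},
\]
with \(\sin\theta_\beta\le0\) now playing the role of the branch selector.

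With these identifications, the quotient \(|dk_\gamma^1/d\theta_\gamma|/|\partial_{k_\gamma^2}\overline\Xi_1|\) equals, as a function of \(\theta_\gamma\), the expression computed in Lemma~\ref{lem:coarea-factor-upper-+++} with \(\theta_\beta\) replaced by \(\theta_\gamma\). It is therefore bounded by \(G(\theta_\alpha,\theta_\gamma)\,|f(\theta_\alpha,\theta_\gamma)|^{-3}\). Lemmas~\ref{lem:g-bound-+++} and~\ref{lem:f-lower-+++} hold uniformly for every angle in \([\pi/2,\pi+\theta_\alpha]\), so they give the claimed two-case bound with the same absolute constant.

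The only point needing care is confirming that the central symmetry maps the lower half \emph{onto} the upper half, including the part described by the second and third arcs of Proposition~\ref{prop:param-+++}, up to the measure-zero set of endpoints. This follows because the half-turn about \(-k_\alpha/2\) sends the half-plane condition \(\arg(k_\beta+k_\alpha/2)\in[\pi,2\pi]\) to \(\arg(k_\gamma+k_\alpha/2)\in[0,\pi]\). The endpoint identifications are consistent with this: \(k_\beta=0\) corresponds to \(k_\gamma=-k_\alpha\), and the collapsed endpoints are exchanged. Since the endpoints are excluded (open intervals), no regularity issue arises there. I expect this bookkeeping to be the main, though mild, obstacle; the analytic content is already contained in the upper-half lemma.
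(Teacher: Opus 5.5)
Your proposal is correct and is essentially the paper's own argument: the central symmetry \(k_\beta\mapsto -k_\alpha-k_\beta\) turns the lower half into the upper-half arc in the variable \(\theta_\gamma\), and Lemmas~\ref{lem:g-bound-+++} and~\ref{lem:f-lower-+++} then give the bound. Your observation that \(\overline\Xi_1\) and \(\widetilde\Xi_1\) are literally the same function explains why the paper's ``repeat the computation with \(\theta_\gamma\) in place of \(\theta_\beta\)'' needs no new work.

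For the bookkeeping, state the identification directly on the arcs. The swap \(\beta\leftrightarrow\gamma\) carries the arc \(\theta_\beta\in(\pi/2,\pi+\theta_\alpha)\), on which \(\sin\theta_\gamma\le0\), onto the other two arcs, splitting at \(\theta_\beta=\pi\). Avoid arguing through the half-plane condition on \(\arg(k_\beta+k_\alpha/2)\). For \(\theta_\alpha>0\) that half-plane does not cut the curve exactly along this arc, since points of the first arc near \(k_\beta=-k_\alpha\) have \(\arg(k_\beta+k_\alpha/2)\in(\pi,2\pi)\).
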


\begin{proof}
The lower half is obtained from the upper half by the central symmetry
\[
    k_\beta\mapsto -k_\alpha-k_\beta.
\]
This map exchanges \(k_\beta\) and \(k_\gamma\).  Therefore, on the lower half
we use \(\theta_\gamma\) as the angular parameter and eliminate
\[
    k_\beta=-k_\alpha-k_\gamma.
\]
The corresponding reduced frequency constraint is
\[
    \overline\Xi_1(k_\gamma)
    =
    \frac{k_\alpha^1}{|k_\alpha|}
    +
    \frac{k_\beta^1}{|k_\beta|}
    +
    \frac{k_\gamma^1}{|k_\gamma|}.
\]

Define
\[
    p_\gamma(\theta_\alpha,\theta_\gamma)
    :=
    \cos\theta_\alpha+\cos\theta_\gamma,
\]
\[
    r_\gamma(\theta_\alpha,\theta_\gamma)
    :=
    \sqrt{1-p_\gamma(\theta_\alpha,\theta_\gamma)^2},
\]
and
\[
    f_\gamma(\theta_\alpha,\theta_\gamma)
    :=
    -\sin\theta_\gamma\,
    p_\gamma(\theta_\alpha,\theta_\gamma)
    +
    \cos\theta_\gamma\,
    r_\gamma(\theta_\alpha,\theta_\gamma).
\]
Repeating the computation in Lemma~\ref{lem:coarea-factor-upper-+++}, with
\(\theta_\gamma\) replacing \(\theta_\beta\), gives
\[
\begin{aligned}
    \frac{
        \left|\dfrac{d k_\gamma^1}{d\theta_\gamma}\right|
    }{
        \left|
        \partial_{k_\gamma^2}\overline\Xi_1
        \right|
    }
    &=
    \left|
    \frac{
        \bigl[
        \sin\theta_\alpha\,p_\gamma
        -
        \cos\theta_\alpha\,r_\gamma
        \bigr]
        \bigl[
        \cos\theta_\alpha\sin\theta_\gamma
        -
        \sin\theta_\alpha\cos\theta_\gamma
        \bigr]
    }{
        r_\gamma\,f_\gamma^3
    }
    \right|.
\end{aligned}
\]
The lower bound for \(f_\gamma\) and the corresponding auxiliary angular
quotient estimate are identical to Lemmas~\ref{lem:f-lower-+++} and
\ref{lem:g-bound-+++}.  Hence the same bound follows:
\[
    \frac{
        \left|\dfrac{d k_\gamma^1}{d\theta_\gamma}\right|
    }{
        \left|
        \partial_{k_\gamma^2}\overline\Xi_1
        \right|
    }
    \lesssim
    \begin{cases}
        1,
        & 0\le \theta_\alpha\le\dfrac{\pi}{3},
        \\[0.6em]
        (\cos\theta_\alpha)^{-7/2},
        & \dfrac{\pi}{3}<\theta_\alpha<\dfrac{\pi}{2}.
    \end{cases}
\]
\end{proof}
\subsection{A priori boundedness estimate for the \((+,+,+)\) contribution}
\label{subsec:apriori-+++}

We now return from the unit-scale normalization to general
\(k_\alpha\neq0\).  Write
\[
    k_\alpha=|k_\alpha|\varsigma_\alpha,
    \qquad
    k_\beta=|k_\alpha|\varsigma_\beta,
    \qquad
    k_\gamma=|k_\alpha|\varsigma_\gamma,
    \qquad
    |\varsigma_\alpha|=1.
\]
The estimates in the previous subsection were obtained for the unit-scale
variables \(\varsigma_\beta,\varsigma_\gamma\).  By the scaling reduction
in Section~\ref{Scaling and co–area representation}, the \((+,+,+)\)
contribution has an overall factor \(|k_\alpha|^4\).

We denote by
\[
    \mathcal C_{+++}(n)(k_\alpha)
\]
the contribution to the collision operator with
\[
    \sigma_\alpha=\sigma_\beta=\sigma_\gamma=+1 .
\]
By the central symmetry of the unit-scale curve, it is enough to integrate over
the upper half and multiply by \(2\).  On this half,
\[
    \theta_\beta\in\left[\frac{\pi}{2},\,\pi+\theta_\alpha\right],
    \qquad
    \sin\theta_\gamma\le0,
\]
and we recall the notation
\[
    p=\cos\theta_\alpha+\cos\theta_\beta,
    \qquad
    r=\sqrt{1-p^2},
    \qquad
    f=-\sin\theta_\beta\,p+\cos\theta_\beta\,r .
\]
Thus
\[
    \varsigma_\alpha
    =
    (\cos\theta_\alpha,\sin\theta_\alpha),
\]
\[
    \varsigma_\beta
    =
    \frac{
        \sin\theta_\alpha\,p-\cos\theta_\alpha\,r
    }{
        f
    }
    (\cos\theta_\beta,\sin\theta_\beta),
\]
and
\[
    \varsigma_\gamma
    =
    \frac{
        \sin(\theta_\beta-\theta_\alpha)
    }{
        -f
    }
    (-p,-r).
\]

\begin{proposition}[A priori bound for the \((+,+,+)\) contribution]
\label{prop:apriori-+++}
Let \(m\ge4\), and fix the angular cut-off scale \(0<\varepsilon\ll1\).
Then, for every \(n\in L^\infty_m\),
\[
    \sup_{k_\alpha\in\mathbb R^2}
    \langle k_\alpha\rangle^m
    \left|
        \mathcal C_{+++}(n)(k_\alpha)
    \right|
    \lesssim_{\varepsilon,m}
    \|n\|_{L^\infty_m}^{2}.
\]
More precisely, the implicit constant may be chosen of order
\[
    \varepsilon^{-\left(\frac{11}{2}+2m\right)}.
\]
\end{proposition}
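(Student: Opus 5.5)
We start from the scaled representation \eqref{eq:parametrized-collision} restricted to \(\sigma_\beta=\sigma_\gamma=+1\). By the central symmetry, \(\mathcal C_{+++}(n)(k_\alpha)\) is twice the integral over the upper half \(\theta_\beta\in(\pi/2,\pi+\theta_\alpha)\), with an overall factor \(|k_\alpha|^4\). We then bound each ingredient of the integrand in turn.

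\emph{Step 1: coefficients and co--area factor.} The scaled coefficient \(\Gamma^2(\varsigma_\alpha,\varsigma_\beta,\varsigma_\gamma)\) is bounded by \(\lesssim(1+|\varsigma_\beta|+|\varsigma_\gamma|)^2\). On the cut-off support we have \(\cos\theta_\alpha\ge\varepsilon\), so Lemma~\ref{lem:f-lower-+++} gives
\[
|\varsigma_\beta|,\ |\varsigma_\gamma|\lesssim\varepsilon^{-1}.
\]
The numerators are bounded, since \(|\sin(\theta_\beta-\theta_\alpha)|\le1\). Hence \(\Gamma^2\lesssim\varepsilon^{-2}\). All frequencies are bounded by one. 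Lemma~\ref{lem:coarea-factor-upper-+++} bounds the co--area factor by \(\varepsilon^{-7/2}\). The \(\theta_\beta\)-interval has length \(\le\pi\).

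\emph{Step 2: small \(|k_\alpha|\).} When \(|k_\alpha|\le1\), we use \(n\le\|n\|_{L^\infty_m}\) directly. The result is a bound of the form \(\varepsilon^{-11/2}\|n\|^2\), which is consistent with the stated order of the constant.

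\emph{Step 3: large \(|k_\alpha|\).} Here we must beat the factor \(\langle k_\alpha\rangle^{m}|k_\alpha|^4\). Each term in the bracket contains at least one factor evaluated at a mode that is not the output.

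\begin{itemize}
\item Terms containing \(n_\alpha\) give \(\langle k_\alpha\rangle^{-m}\) directly. Such a term is paired with \(n_\beta\) or \(n_\gamma\), and we must absorb \(|k_\alpha|^4\) using that partner.
\item The term \(n_\beta n_\gamma\) needs \(\langle k_\beta\rangle^{-m}\langle k_\gamma\rangle^{-m}\gtrsim\) a decay of \(\langle k_\alpha\rangle^{-m-4}\). We get this because \(|k_\beta|+|k_\gamma|\ge|k_\alpha|\), so one of the two exceeds \(|k_\alpha|/2\).
\end{itemize}

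The difficulty is therefore the remaining \(|k_\alpha|^4\) when the other factor sits at a collapsed mode. Near the endpoints \(k_\beta=0\) (\(\theta_\beta=\pi/2\)) and \(k_\gamma=0\) (\(\theta_\beta=\pi+\theta_\alpha\)), the small mode has an almost vertical limiting direction. Indeed, at \(\theta_\beta=\pi/2\), the small \(k_\beta\) has \(\cos\theta_\beta\to0\); at the other end, \(\theta_\gamma\to\) a vertical direction, since \(\cos\theta_\gamma=-p\to0\) as \(\theta_\beta\to\pi+\theta_\alpha\). So the cut-off \(\chi_\beta\chi_\gamma\) vanishes there.

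The key claim, which is the main step, is quantitative: on the support of \(\chi_\beta\chi_\gamma\) we have
\[
|\varsigma_\beta|\gtrsim\varepsilon \quad\text{and}\quad |\varsigma_\gamma|\gtrsim\varepsilon.
\]
I would prove this from the explicit formula. Near \(\theta_\beta=\pi/2\), the numerator \(\sin\theta_\alpha p-\cos\theta_\alpha r\) vanishes linearly in \(\cos\theta_\beta\), with nondegenerate slope, so \(|\varsigma_\beta|\gtrsim|\cos\theta_\beta|\ge\varepsilon\). The \(\gamma\)-end is similar, using \(|\varsigma_\gamma|\) and \(|\cos\theta_\gamma|\).

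\emph{Step 4: combining.} With the claim, every mode satisfies \(\langle k_j\rangle\gtrsim\varepsilon\langle k_\alpha\rangle\). Each product \(n n\) is then bounded by \(\varepsilon^{-2m}\langle k_\alpha\rangle^{-2m}\|n\|^2\). Together with \(|k_\alpha|^4\) this gives
\[
\langle k_\alpha\rangle^m|\mathcal C_{+++}|\lesssim \varepsilon^{-11/2-2m}\langle k_\alpha\rangle^{4-m}\|n\|^2,
\]
which is bounded for \(m\ge4\).

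\emph{Main obstacle.} I expect the expected obstacle to be making the linear-vanishing claim of Step 3 uniform in \(\theta_\alpha\in[0,\pi/2-\varepsilon]\). In particular, the slope could degenerate at \(\theta_\alpha=0\), where the arcs degenerate, or as \(\cos\theta_\alpha\to\varepsilon\). If that happens, I would fall back on the extra vanishing of \(\Gamma\) at collapsed endpoints: \(\Gamma\propto(\sum\sin)(\sum|k|)\), and at \(k_\beta\to0\) with \(k_\gamma\approx-k_\alpha\) the first factor tends to \(\sin\theta_\alpha+\sin\theta_\beta+\sin\theta_\gamma\). That vanishing would compensate, at the cost of possibly a worse \(\varepsilon\)-power.
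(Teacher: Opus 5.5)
Your proposal is correct and is essentially the paper's proof. You reduce to the upper half by central symmetry, bound $\Gamma^2\lesssim\varepsilon^{-2}$ via Lemma~\ref{lem:f-lower-+++} and the co--area factor by $\varepsilon^{-7/2}$, and close with your key claim $|\varsigma_\beta|,|\varsigma_\gamma|\gtrsim\varepsilon$ (the paper's Lemmas~\ref{prop:positivity-beta} and~\ref{Positivity of gamma}, proved there by endpoint and monotonicity computations) together with $\langle\varepsilon|k_\alpha|\rangle^{-m}\le\varepsilon^{-m}\langle k_\alpha\rangle^{-m}$.

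The degeneration you fear does not occur, and your fallback would have failed anyway. In the variable $\cos\theta_\beta$ the numerator $\sin\theta_\alpha\,p-\cos\theta_\alpha\,r$ is monotone and has slope $1/\sin\theta_\alpha\ge1$ at $\theta_\beta=\pi/2$; at $\theta_\alpha=0$ it vanishes only like a square root. So the bound is uniform in $\theta_\alpha$. By contrast, on the $(+,+,+)$ branch $\Gamma$ does not vanish at the collapsed endpoints: its angular factor tends to $\pm1$ and its radial factor to $2$.
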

Before proving Proposition~\ref{prop:apriori-+++}, we record the two
cut-off positivity estimates needed below.

\begin{lemma}[Positivity of \(\varsigma_\beta\)]
\label{prop:positivity-beta}
Let \(0<\varepsilon\ll1\).  On the upper half of the unit-scale
\((+,+,+)\) curve, assume
\[
    |\cos\theta_\alpha|>\varepsilon,
    \qquad
    |\cos\theta_\beta|>\varepsilon .
\]
Then
\[
    |\varsigma_\beta|
    \gtrsim
    \varepsilon .
\]
More precisely,
\[
    \inf_{\substack{
        |\cos\theta_\alpha|>\varepsilon\\
        |\cos\theta_\beta|>\varepsilon
    }}
    |\varsigma_\beta|
    \ge
    \inf_{\substack{
        |\cos\theta_\alpha|>\varepsilon\\
        |\cos\theta_\beta|>\varepsilon
    }}
    \left|
        \sin\theta_\alpha\,p-\cos\theta_\alpha\,r
    \right|
    \gtrsim
    \varepsilon .
\]
\end{lemma}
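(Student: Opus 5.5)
The plan is to use the explicit upper-half parametrization
\[
    |\varsigma_\beta|
    =
    \frac{\sin\theta_\alpha\,p-\cos\theta_\alpha\,r}{f},
\]
and bound numerator and denominator separately. For the denominator, note that $|f|\le |p|+r$ and $p^2+r^2=1$, so $|f|\le\sqrt2$; indeed $|f|\le 1$, since $f=\cos(\theta_\beta+\phi)$ for the angle $\phi$ with $(\cos\phi,\sin\phi)=(r,p)$. Hence $|\varsigma_\beta|\ge|\sin\theta_\alpha\,p-\cos\theta_\alpha\,r|$, which is the first inequality in the statement. It then remains to show the numerator $N:=\sin\theta_\alpha\,p-\cos\theta_\alpha\,r$ is $\gtrsim\varepsilon$ under the two cut-off conditions.

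To handle $N$, write $p=-\cos\theta_\gamma$ and $r=-\sin\theta_\gamma$, so that
\[
    N=-\sin\theta_\alpha\cos\theta_\gamma+\cos\theta_\alpha\sin\theta_\gamma=\sin(\theta_\gamma-\theta_\alpha).
\]
This is consistent with the momentum formula $|k_\beta|=\sin(\theta_\gamma-\theta_\alpha)/\sin(\theta_\beta-\theta_\gamma)$. The vanishing of $N$ on the upper half therefore means $k_\gamma\parallel k_\alpha$, and since $k_\beta=-k_\alpha-k_\gamma$, this forces $k_\beta\parallel k_\alpha$ as well. From the proposition we know the zeros of $N$ on $[\pi/2,\pi+\theta_\alpha]$ occur only at the collapsed endpoints. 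At $\theta_\beta=\pi/2$ and $\theta_\beta=3\pi/2$, i.e.\ $\cos\theta_\beta=0$, we get $p=\cos\theta_\alpha$, $r=\sin\theta_\alpha$, and $N=0$. The interior zeros can be ruled out via the frequency relation: if all three vectors are collinear with $k_\beta\ne 0$, then $\cos\theta_\beta=\pm\cos\theta_\alpha$ and $\cos\theta_\gamma=\pm\cos\theta_\alpha$. The resonance then forces something like $\cos\theta_\alpha(1\pm1\pm1)=0$, which gives $\cos\theta_\alpha=0$ or a sign pattern that is excluded on the upper arc. So the zero set of $N$ on the closed upper arc, for $\theta_\alpha$ away from $\pi/2$, is exactly $\{\cos\theta_\beta=0\}$.

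For the quantitative step, I would compute $\partial_{\theta_\beta}N$ near $\theta_\beta=\pi/2$, using
\[
    \partial_{\theta_\beta}p=-\sin\theta_\beta,
    \qquad
    \partial_{\theta_\beta}r=\frac{p\sin\theta_\beta}{r}.
\]
This gives
\[
    \partial_{\theta_\beta}N=-\sin\theta_\beta\Bigl(\sin\theta_\alpha+\frac{\cos\theta_\alpha\,p}{r}\Bigr),
\]
which equals $-1/\sin\theta_\alpha$ at $\theta_\beta=\pi/2$. Its modulus is $\ge1$, so linearly $|N|\gtrsim|\theta_\beta-\pi/2|\sim|\cos\theta_\beta|>\varepsilon$ near that endpoint. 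The main obstacle is making this uniform in $\theta_\alpha$ over $\cos\theta_\alpha>\varepsilon$, together with the other endpoint $\theta_\beta=\pi+\theta_\alpha$ where $k_\gamma=0$. There $N=\sin(\theta_\gamma-\theta_\alpha)$, with $\theta_\gamma$ tending to the limiting direction of the collapsed $k_\gamma$; one must check that this limiting direction is not parallel to $k_\alpha$, i.e.\ that $N$ stays of size $\gtrsim\cos\theta_\alpha\gtrsim\varepsilon$ there. The direct check is $p=\cos\theta_\alpha-\cos\theta_\alpha=0$ and $r=1$, giving $N=-\cos\theta_\alpha$, so $|N|>\varepsilon$ at that endpoint.

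I would conclude by a compactness/monotonicity argument. Either show $N$ is monotone on the arc, using the derivative formula and the sign of $\sin\theta_\alpha\,r+\cos\theta_\alpha\,p$, which is $\sin(\theta_\alpha-\theta_\gamma+\pi)$-type and nonvanishing away from the endpoints. Or split the arc into a neighborhood of $\pi/2$ (linear lower bound), a neighborhood of $\pi+\theta_\alpha$ (value $\approx\cos\theta_\alpha$), and the middle, where a continuity bound in $\theta_\alpha\in[0,\arccos\varepsilon]$ holds. With monotonicity, $|N|\ge\min\bigl(|N(\theta_\beta^*)|,\cos\theta_\alpha\bigr)$, where $\theta_\beta^*$ is the first admissible angle with $|\cos\theta_\beta|=\varepsilon$, and both terms are $\gtrsim\varepsilon$.
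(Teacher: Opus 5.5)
Your reduction agrees with the paper's. The bound $|f|\le 1$ gives $|\varsigma_\beta|\ge|N|$, you identify $N=\sin(\theta_\gamma-\theta_\alpha)$, and the lemma reduces to the two endpoint values $|N(\arccos(-\varepsilon))|$ and $|N(\pi+\theta_\alpha)|=\cos\theta_\alpha$.

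One step fails as stated: $N$ is \emph{not} monotone on the upper arc. Write $(p,r)=(\cos\phi,\sin\phi)$; then your formula reads $\partial_{\theta_\beta}N=-\sin\theta_\beta\cos(\phi-\theta_\alpha)/r$. The factor $\cos(\phi-\theta_\alpha)$ is positive on the whole arc. Indeed $p\in[\cos\theta_\alpha-1,\cos\theta_\alpha]$ gives $\theta_\alpha\le\phi\le\arccos(\cos\theta_\alpha-1)<\theta_\alpha+\pi/2$, using $\cos\theta_\alpha+\sin\theta_\alpha>1$. However, $\sin\theta_\beta$ changes sign at $\theta_\beta=\pi$, which is interior whenever $\theta_\alpha>0$. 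So $N$ decreases on $[\pi/2,\pi]$ and increases on $[\pi,\pi+\theta_\alpha]$. For $\theta_\alpha=\pi/4$ one finds $N(\pi/2)=0$, $N(\pi)\approx-0.88$ and $N(5\pi/4)\approx-0.71$.

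What your derivative computation actually proves is that $|N|$ is unimodal: increasing, then decreasing. That is exactly what gives $|N|\ge\min\{|N(\arccos(-\varepsilon))|,\cos\theta_\alpha\}$ on the cut-off support. The paper asserts this endpoint reduction without justification, so the corrected argument fills a step it omits.

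Your fallback split does not replace this. Near $\theta_\beta=\pi+\theta_\alpha$, an additive approximation $|N|=\cos\theta_\alpha+O(\delta)$ gives nothing when $\cos\theta_\alpha\sim\varepsilon$. You need $|N|\ge\cos\theta_\alpha$ there, which is the one-sided monotonicity again. Once unimodality is in place, the collinearity analysis of the zero set is unnecessary (and $\theta_\beta=3\pi/2$ is not on the upper arc).

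For the endpoint near $\pi/2$ you take a genuinely different route. The paper writes the boundary value as the explicit function $F_2(\theta_\alpha)$ and differentiates in $\theta_\alpha$ to show it decreases on $[0,\arccos\varepsilon]$. It then evaluates at $\theta_\alpha=\arccos\varepsilon$, where $F_2$ equals exactly $\varepsilon$.

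You instead apply the mean value theorem in $\theta_\beta$ starting from $N(\pi/2)=0$. The uniformity you flagged as the main obstacle closes easily:
\begin{itemize}
\item On $[\pi/2,\arccos(-\varepsilon)]$ one has $p\in[\cos\theta_\alpha-\varepsilon,\cos\theta_\alpha]\subset[0,1]$.
\item Since $|\arccos'|$ is increasing on $[0,1)$, this gives $0\le\phi-\theta_\alpha\le\arccos(1-\varepsilon)$, hence $\cos(\phi-\theta_\alpha)\ge 1-\varepsilon$.
\item Together with $\sin\theta_\beta\ge\sqrt{1-\varepsilon^2}$ and $r\le 1$, this yields $|\partial_{\theta_\beta}N|\ge\tfrac12$ on the whole interval.
\item Therefore $|N(\arccos(-\varepsilon))|\ge\tfrac12\arcsin\varepsilon$, uniformly in $\theta_\alpha$.
\end{itemize}
Your version uses the single $\theta_\beta$-derivative for both the endpoint reduction and the quantitative bound, and avoids the $\theta_\alpha$-calculus. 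The paper's version gives the sharp constant.
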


\begin{proof}
On the upper half,
\[
    |\varsigma_\beta|
    =
    \left|
    \frac{\sin\theta_\alpha p-\cos\theta_\alpha r}{f}
    \right|.
\]
Since \(p^2+r^2=1\), we have
\[
    |f|
    =
    |-\sin\theta_\beta p+\cos\theta_\beta r|
    \le 1.
\]
Hence
\[
    |\varsigma_\beta|
    \ge
    |\sin\theta_\alpha p-\cos\theta_\alpha r|.
\]
It remains to obtain a lower bound for
\[
    F_1(\theta_\beta)
    :=
    \sin\theta_\alpha p(\theta_\beta)
    -
    \cos\theta_\alpha r(\theta_\beta).
\]

Since
\[
    p=-\cos\theta_\gamma,
    \qquad
    r=|\sin\theta_\gamma|,
    \qquad
    \sin\theta_\gamma\le0
\]
on the upper half, we have
\[
    F_1(\theta_\beta)
    =
    \sin(\theta_\gamma-\theta_\alpha).
\]
Moreover, on the upper half,
\[
    \theta_\beta\in\left[\frac{\pi}{2},\,\pi+\theta_\alpha\right],
\]
and the endpoint \(\theta_\beta=\pi/2\) corresponds to the zero-frequency
direction for \(\beta\), while \(\theta_\beta=\pi+\theta_\alpha\) corresponds
to the endpoint where \(\cos\theta_\gamma=0\).  After imposing
\[
    |\cos\theta_\beta|>\varepsilon,
\]
the closest allowed point to \(\theta_\beta=\pi/2\) is determined by
\[
    \cos\theta_\beta=-\varepsilon.
\]
Consequently,
\[
    p=\cos\theta_\alpha-\varepsilon,
    \qquad
    r=\sqrt{1-(\cos\theta_\alpha-\varepsilon)^2},
\]
and therefore the boundary value is
\[
    \sin\theta_\alpha(-\cos\theta_\alpha+\varepsilon)
    +
    \cos\theta_\alpha
    \sqrt{1-(\cos\theta_\alpha-\varepsilon)^2}.
\]
The other relevant endpoint gives the value \(\cos\theta_\alpha\).  Hence
\[
\begin{aligned}
    \inf_{\substack{
        |\cos\theta_\alpha|>\varepsilon\\
        |\cos\theta_\beta|>\varepsilon
    }}
    |F_1(\theta_\beta)|
    &\ge
    \inf_{|\cos\theta_\alpha|>\varepsilon}
    \min\Bigl\{
        \sin\theta_\alpha(-\cos\theta_\alpha+\varepsilon)
        +
        \cos\theta_\alpha
        \sqrt{1-(\cos\theta_\alpha-\varepsilon)^2},
        \\
        &\hspace{9em}
        \cos\theta_\alpha
    \Bigr\}.
\end{aligned}
\]
It remains to bound the first term from below.  Define
\[
    F_2(\theta_\alpha)
    :=
    \sin\theta_\alpha(-\cos\theta_\alpha+\varepsilon)
    +
    \cos\theta_\alpha
    \sqrt{1-(\cos\theta_\alpha-\varepsilon)^2}.
\]
A direct differentiation gives
\[
\frac{dF_2}{d\theta_\alpha}
=
\left(
    \frac{
        \cos\theta_\alpha(\cos\theta_\alpha-\varepsilon)
    }{
        \sqrt{1-(\cos\theta_\alpha-\varepsilon)^2}
    }
    +
    \sin\theta_\alpha
\right)
\left(
    \sin\theta_\alpha
    -
    \sqrt{1-(\cos\theta_\alpha-\varepsilon)^2}
\right).
\]
For
\[
    0\le\theta_\alpha\le\arccos\varepsilon,
\]
we have \(\cos\theta_\alpha-\varepsilon\ge0\), and the second factor is
non-positive.  Hence \(F_2\) is decreasing on this interval.  Therefore its
minimum on the cut-off support is attained at
\[
    \theta_\alpha=\arccos\varepsilon,
\]
and gives a lower bound comparable to \(\varepsilon\).  Since the second
candidate \(\cos\theta_\alpha\) is also bounded below by \(\varepsilon\) on
the support of the cut-off, we obtain
\[
    |\varsigma_\beta|\gtrsim\varepsilon .
\]
\end{proof}

\begin{lemma}[Positivity of \(\varsigma_\gamma\)]
\label{Positivity of gamma}
Let \(0<\varepsilon\ll1\).  On the upper half of the unit-scale
\((+,+,+)\) curve, assume
\[
    |\cos\theta_\alpha|>\varepsilon,
    \qquad
    |\cos\theta_\gamma|>\varepsilon .
\]
Then
\[
    |\varsigma_\gamma|
    \gtrsim
    \varepsilon .
\]
More precisely,
\[
    \inf_{\substack{
        |\cos\theta_\alpha|>\varepsilon\\
        |\cos\theta_\gamma|>\varepsilon
    }}
    |\varsigma_\gamma|
    \ge
    \inf_{\substack{
        |\cos\theta_\alpha|>\varepsilon\\
        |\cos\theta_\gamma|>\varepsilon
    }}
    |\sin(\theta_\beta-\theta_\alpha)|
    \gtrsim
    \varepsilon .
\]
\end{lemma}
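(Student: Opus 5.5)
The plan mirrors the proof of Lemma~\ref{prop:positivity-beta}.

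First, the reduction. On the upper half we have
\[
    \varsigma_\gamma=\frac{\sin(\theta_\beta-\theta_\alpha)}{-f}\,(-p,-r),
    \qquad p^2+r^2=1 .
\]
Hence
\[
    |\varsigma_\gamma|=\frac{|\sin(\theta_\beta-\theta_\alpha)|}{|f|}.
\]
Since \(|f|=|-\sin\theta_\beta\,p+\cos\theta_\beta\,r|\le1\) by Cauchy--Schwarz, this gives
\[
    |\varsigma_\gamma|\ge|\sin(\theta_\beta-\theta_\alpha)| .
\]
It therefore suffices to bound \(|\sin(\theta_\beta-\theta_\alpha)|\) from below on the set where \(\cos\theta_\alpha>\varepsilon\) and \(|\cos\theta_\gamma|>\varepsilon\).

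Next, locate the zero. On \(\theta_\beta\in[\pi/2,\pi+\theta_\alpha]\), the quantity \(\sin(\theta_\beta-\theta_\alpha)\) vanishes only at the right endpoint \(\theta_\beta=\pi+\theta_\alpha\). This is the collapsed endpoint \(k_\gamma=0\). There \(p=\cos\theta_\alpha-\cos\theta_\alpha=0\), so \(\cos\theta_\gamma=0\); this is exactly the zero-frequency direction for \(\gamma\). On the interior, \(\theta_\beta-\theta_\alpha\in(\pi/2-\theta_\alpha,\pi)\), so the sine is positive. Near the left endpoint it equals \(\cos\theta_\alpha>\varepsilon\).

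The cut-off \(|\cos\theta_\gamma|=|p|>\varepsilon\) keeps us away from the right endpoint. Write \(\theta_\beta=\pi+\theta_\alpha-\eta\) with \(\eta\ge0\). Then
\[
    \sin(\theta_\beta-\theta_\alpha)=\sin\eta,
    \qquad
    p=\cos\theta_\alpha-\cos(\theta_\alpha-\eta).
\]
By the mean value theorem, \(|p|\le\eta\), so \(|p|>\varepsilon\) forces \(\eta>\varepsilon\). Also \(\eta\le\pi/2+\theta_\alpha<\pi\) on the arc. I then check two regimes. If \(\eta\le\pi/2\), then \(\sin\eta\ge 2\eta/\pi\gtrsim\varepsilon\). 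If \(\eta\in[\pi/2,\pi/2+\theta_\alpha]\), then \(\pi-\eta\ge\pi/2-\theta_\alpha\), and \(\cos\theta_\alpha>\varepsilon\) gives \(\pi/2-\theta_\alpha\gtrsim\varepsilon\). Hence \(\sin\eta\ge\sin(\pi/2-\theta_\alpha)=\cos\theta_\alpha>\varepsilon\). Together these give \(|\sin(\theta_\beta-\theta_\alpha)|\gtrsim\varepsilon\).

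The main obstacle is the second regime, near the left endpoint \(\theta_\beta=\pi/2\) when \(\theta_\alpha\) is close to \(\pi/2\). There the hypothesis on \(\gamma\) gives no help, and the bound must come from \(\cos\theta_\alpha>\varepsilon\) alone. This regime corresponds to the endpoint value \(\cos\theta_\alpha\) in the proof of Lemma~\ref{prop:positivity-beta}, so I would check it through the explicit endpoint value \(\sin(\pi/2-\theta_\alpha)=\cos\theta_\alpha\). By monotonicity of \(\sin\) on each side of \(\pi/2\), the infimum over the constrained interval is attained at one of the two cut-off boundary points. Both of these are bounded below by a constant multiple of \(\varepsilon\).
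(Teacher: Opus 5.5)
Your proof is correct, and it gets the lower bound on $|\sin(\theta_\beta-\theta_\alpha)|$ by a different and more economical route than the paper.

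\textbf{Where you agree with the paper.} The reduction $|\varsigma_\gamma|\ge|\sin(\theta_\beta-\theta_\alpha)|$ via $|f|\le 1$ is the same.

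\textbf{The paper's route.} It uses unimodality of $\sin(\theta_\beta-\theta_\alpha)$ on $[\pi/2,\pi+\theta_\alpha]$ to reduce to endpoint values. It then locates the points where $|\cos\theta_\gamma|=\varepsilon$, splitting into the cases $\cos\theta_\alpha\ge 1-\varepsilon$ and $\varepsilon\le\cos\theta_\alpha\le 1-\varepsilon$. Finally it studies the resulting functions $F_3,F_4$ of $\theta_\alpha$ by derivative and monotonicity computations.

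\textbf{Your route.} You measure the angular distance $\eta$ to the collapsed endpoint. You use only that cosine is $1$-Lipschitz, so $|\cos\theta_\gamma|=|p|\le\eta$ and the cut-off forces $\eta>\varepsilon$. Then $\sin\eta\ge 2\eta/\pi$ on $[0,\pi/2]$, and $\sin\eta\ge\cos\theta_\alpha$ on $[\pi/2,\pi/2+\theta_\alpha]$. This is uniform in $\theta_\alpha$ and needs no case split.

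\textbf{What each buys.} The paper's computation gives explicit endpoint values. For example, it gives the sharper bound $\sqrt{2\varepsilon-\varepsilon^2}$ when $\cos\theta_\alpha\ge 1-\varepsilon$. Yours never needs to identify the exact cut-off boundary point, and that robustness is genuinely useful here. In the paper's second case the evaluation point $\arccos(-\cos\theta_\alpha-\varepsilon)$ lies in $(\pi/2,\pi)$. The boundary point adjacent to the collapsed endpoint is instead $\pi+\arccos(\cos\theta_\alpha+\varepsilon)$. There the value is $\sin\bigl(\theta_\alpha-\arccos(\cos\theta_\alpha+\varepsilon)\bigr)\ge 2\varepsilon/\pi$. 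So the paper's conclusion survives, but your argument covers that point automatically.

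\textbf{A small point.} Your last paragraph is redundant. The two-regime bound already completes the proof, and the regime near $\theta_\beta=\pi/2$ is not really an obstacle.
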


\begin{proof}
On the upper half,
\[
    |\varsigma_\gamma|
    =
    \left|
    \frac{\sin(\theta_\beta-\theta_\alpha)}{-f}
    \right|.
\]

Again, since \(|f|\le1\), it
suffices to prove a lower bound for
\[
    |\sin(\theta_\beta-\theta_\alpha)|.
\]

The function
\[
    \sin(\theta_\beta-\theta_\alpha)
\]
is positive on
\[
    \theta_\beta\in\left[\frac{\pi}{2},\,\pi+\theta_\alpha\right],
\]
increases on
\[
    \left[\frac{\pi}{2},\,\frac{\pi}{2}+\theta_\alpha\right],
\]
and decreases on
\[
    \left[\frac{\pi}{2}+\theta_\alpha,\,\pi+\theta_\alpha\right].
\]
Therefore the minimum on the cut-off support is attained at one of the
cut-off endpoints determined by
\[
    |\cos\theta_\gamma|=\varepsilon,
\]
or at the endpoint giving \(\cos\theta_\alpha\).

There are two cases, depending on whether the endpoint
\[
    \cos\theta_\gamma=\varepsilon
\]
is present.

First, when
\[
    \theta_\alpha\in[0,\arccos(1-\varepsilon)],
\]
we get the endpoint
\[
    \theta_\beta=\arccos(-\cos\theta_\alpha+\varepsilon).
\]
Define
\[
    F_3(\theta_\alpha)
    :=
    \sin\left(
        \arccos(-\cos\theta_\alpha+\varepsilon)
        -
        \theta_\alpha
    \right).
\]
Equivalently,
\[
    F_3(\theta_\alpha)
    =
    \cos\theta_\alpha
    \sqrt{1-(\cos\theta_\alpha-\varepsilon)^2}
    +
    \sin\theta_\alpha(\cos\theta_\alpha-\varepsilon).
\]
A direct differentiation gives
\[
\frac{dF_3}{d\theta_\alpha}
=
\frac{
    \left[
        \cos\theta_\alpha(\cos\theta_\alpha-\varepsilon)
        -
        \sin\theta_\alpha
        \sqrt{1-(\cos\theta_\alpha-\varepsilon)^2}
    \right]
    \left[
        \sqrt{1-(\cos\theta_\alpha-\varepsilon)^2}
        +
        \sin\theta_\alpha
    \right]
}{
    \sqrt{1-(\cos\theta_\alpha-\varepsilon)^2}
}.
\]
On the interval
\[
    \theta_\alpha\in[0,\arccos(1-\varepsilon)],
\]
the first bracket is nonnegative for \(0<\varepsilon\ll1\).  Hence
\(F_3\) is increasing, and therefore
\[
    F_3(\theta_\alpha)
    \ge
    F_3(0)
    =
    \sqrt{2\varepsilon-\varepsilon^2}
    \gtrsim
    \sqrt\varepsilon .
\]
In particular, this contribution is \(\gtrsim\varepsilon\).

Second, when
\[
    \theta_\alpha\in[\arccos(1-\varepsilon),\,\arccos\varepsilon],
\]
the relevant endpoint is
\[
    \theta_\beta=\arccos(-\cos\theta_\alpha-\varepsilon).
\]
Define
\[
    F_4(\theta_\alpha)
    :=
    \sin\left(
        \arccos(-\cos\theta_\alpha-\varepsilon)
        -
        \theta_\alpha
    \right).
\]
Then
\[
    F_4(\theta_\alpha)
    =
    \cos\theta_\alpha
    \sqrt{1-(\cos\theta_\alpha+\varepsilon)^2}
    +
    \sin\theta_\alpha(\cos\theta_\alpha+\varepsilon).
\]
Differentiating gives
\[
\frac{dF_4}{d\theta_\alpha}
=
\frac{
    \left[
        \sqrt{1-(\cos\theta_\alpha+\varepsilon)^2}
        +
        \sin\theta_\alpha
    \right]
    H(\theta_\alpha)
}{
    \sqrt{1-(\cos\theta_\alpha+\varepsilon)^2}
},
\]
where
\[
    H(\theta_\alpha)
    :=
    \cos\theta_\alpha(\cos\theta_\alpha+\varepsilon)
    -
    \sin\theta_\alpha
    \sqrt{1-(\cos\theta_\alpha+\varepsilon)^2}.
\]
Moreover,
\[
\frac{dH}{d\theta_\alpha}
=
-\frac{
    \left[
        \sqrt{1-(\cos\theta_\alpha+\varepsilon)^2}
        +
        \sin\theta_\alpha
    \right]
    \left[
        \varepsilon\sin\theta_\alpha
        +
        \cos\theta_\alpha
        \sqrt{1-(\cos\theta_\alpha+\varepsilon)^2}
        +
        \sin\theta_\alpha\cos\theta_\alpha
    \right]
}{
    \sqrt{1-(\cos\theta_\alpha+\varepsilon)^2}
}
\le0.
\]
Thus \(H\) is decreasing.  Since \(H\) vanishes at
\[
    \theta_\alpha
    =
    \arccos\left(
        \frac{\sqrt{2-\varepsilon^2}-\varepsilon}{2}
    \right),
\]
the function \(F_4\) increases before this point and decreases after this
point.  Hence its minimum on
\[
    [\arccos(1-\varepsilon),\,\arccos\varepsilon]
\]
is attained at one of the endpoints.  The endpoint values give
\[
    \inf_{\theta_\alpha\in[\arccos(1-\varepsilon),\,\arccos\varepsilon]}
    F_4(\theta_\alpha)
    \ge
    \min\left\{
        \sqrt{2\varepsilon-\varepsilon^2},
        \;
        \varepsilon\sqrt{1-4\varepsilon^2}
        +
        2\varepsilon\sqrt{1-\varepsilon^2}
    \right\}
    \gtrsim
    \varepsilon .
\]
Combining the two cases, and using also
\[
    \cos\theta_\alpha\ge\varepsilon
\]
on the cut-off support, gives
\[
    |\varsigma_\gamma|\gtrsim\varepsilon .
\]
\end{proof}


\begin{proof}[Proof of Proposition~\ref{prop:apriori-+++}]
It suffices to estimate the upper-half contribution.  The lower half is
obtained by the central symmetry
\[
    k_\beta\mapsto -k_\alpha-k_\beta,
\]
which exchanges \(k_\beta\) and \(k_\gamma\).

By the scaled co-area representation,
\[
\begin{aligned}
\left|
    \mathcal C_{+++}^{\mathrm{up}}(n)(k_\alpha)
\right|
&\lesssim
|k_\alpha|^4
\int_{\pi/2}^{\pi+\theta_\alpha}
    \chi_\alpha\chi_\beta\chi_\gamma\,
    |\widetilde I(\varsigma_\beta^1,\varsigma_\beta^2)|
\\
&\qquad\qquad\times
    \frac{
        \left|\dfrac{d k_\beta^1}{d\theta_\beta}\right|
    }{
        \left|
        \partial_{k_\beta^2}\widetilde\Xi_1
        \right|
    }
    d\theta_\beta .
\end{aligned}
\]
On the cut-off support, Lemma~\ref{lem:coarea-factor-upper-+++} gives
\[
    \frac{
        \left|\dfrac{d k_\beta^1}{d\theta_\beta}\right|
    }{
        \left|
        \partial_{k_\beta^2}\widetilde\Xi_1
        \right|
    }
    \lesssim
    \varepsilon^{-7/2}.
\]
Moreover, by the unit-scale radius upper bounds,
\[
    1+|\varsigma_\beta|+|\varsigma_\gamma|
    \lesssim
    \varepsilon^{-1},
\]
and by Lemmas~\ref{prop:positivity-beta} and~\ref{Positivity of gamma},
\[
    |\varsigma_\beta|\gtrsim\varepsilon,
    \qquad
    |\varsigma_\gamma|\gtrsim\varepsilon .
\]
Using the definition of \(\widetilde I\), we therefore obtain
\[
\begin{aligned}
|\widetilde I(\varsigma_\beta^1,\varsigma_\beta^2)|
&\lesssim
\varepsilon^{-2}
\|n\|_{L^\infty_m}^{2}
\Bigl(
    \langle |k_\alpha|\varsigma_\beta\rangle^{-m}
    \langle |k_\alpha|\varsigma_\gamma\rangle^{-m}
\\
&\qquad\qquad
    +
    \langle k_\alpha\rangle^{-m}
    \langle |k_\alpha|\varsigma_\beta\rangle^{-m}
    +
    \langle k_\alpha\rangle^{-m}
    \langle |k_\alpha|\varsigma_\gamma\rangle^{-m}
\Bigr).
\end{aligned}
\]
The factor \(\varepsilon^{-2}\) comes from
\[
    (1+|\varsigma_\beta|+|\varsigma_\gamma|)^2
    \lesssim \varepsilon^{-2}
\]
in the interaction coefficient; all remaining angular factors in
\(\widetilde I\) are uniformly bounded.

Combining the last two estimates gives
\[
\begin{aligned}
&\langle k_\alpha\rangle^m
\left|
    \mathcal C_{+++}^{\mathrm{up}}(n)(k_\alpha)
\right|
\\
&\qquad\lesssim
\varepsilon^{-11/2}
\|n\|_{L^\infty_m}^{2}
\langle k_\alpha\rangle^{m+4}
\int_{\pi/2}^{\pi+\theta_\alpha}
\Bigl(
    \langle |k_\alpha|\varsigma_\beta\rangle^{-m}
    \langle |k_\alpha|\varsigma_\gamma\rangle^{-m}
\\
&\hspace{13em}
    +
    \langle k_\alpha\rangle^{-m}
    \langle |k_\alpha|\varsigma_\beta\rangle^{-m}
    +
    \langle k_\alpha\rangle^{-m}
    \langle |k_\alpha|\varsigma_\gamma\rangle^{-m}
\Bigr)
d\theta_\beta .
\end{aligned}
\]
Since
\[
    |\varsigma_\beta|\gtrsim\varepsilon,
    \qquad
    |\varsigma_\gamma|\gtrsim\varepsilon,
\]
we have
\[
    \langle |k_\alpha|\varsigma_\beta\rangle^{-m},
    \;
    \langle |k_\alpha|\varsigma_\gamma\rangle^{-m}
    \lesssim
    \langle \varepsilon |k_\alpha|\rangle^{-m}.
\]
The integration interval has bounded length, and hence
\[
\begin{aligned}
\langle k_\alpha\rangle^m
\left|
    \mathcal C_{+++}^{\mathrm{up}}(n)(k_\alpha)
\right|
&\lesssim
\varepsilon^{-11/2}
\|n\|_{L^\infty_m}^{2}
\langle k_\alpha\rangle^{m+4}
\\
&\qquad\times
\left(
    \langle \varepsilon |k_\alpha|\rangle^{-2m}
    +
    2\langle k_\alpha\rangle^{-m}
    \langle \varepsilon |k_\alpha|\rangle^{-m}
\right).
\end{aligned}
\]
Finally, since \(0<\varepsilon<1\),
\[
    \langle \varepsilon |k_\alpha|\rangle^{-m}
    \le
    \varepsilon^{-m}
    \langle k_\alpha\rangle^{-m}.
\]
Thus
\[
\begin{aligned}
\langle k_\alpha\rangle^m
\left|
    \mathcal C_{+++}^{\mathrm{up}}(n)(k_\alpha)
\right|
&\lesssim
\varepsilon^{-\left(\frac{11}{2}+2m\right)}
\|n\|_{L^\infty_m}^{2}
\langle k_\alpha\rangle^{4-m}
\\
&\lesssim
\varepsilon^{-\left(\frac{11}{2}+2m\right)}
\|n\|_{L^\infty_m}^{2},
\end{aligned}
\]
because \(m\ge4\).  The lower-half contribution satisfies the same estimate by
central symmetry.  This proves the proposition.
\end{proof}

\section{Parametrization and Estimates for the $(+,+,-)$ Resonance Manifold}\label{Parametrization and Estimates for the $(+,+,-)$ Resonance Manifold}


\subsection{Unit--scale parametrization}
\label{Parametrization(+,+,-)}

We consider the sign configuration
\[
    (\sigma_\alpha,\sigma_\beta,\sigma_\gamma)=(+,+,-),
\]
corresponding to \eqref{manifold1}.  By homogeneity, we work at unit scale:
\[
    |k_\alpha|=1,
    \qquad
    k_\alpha=(\cos\theta_\alpha,\sin\theta_\alpha),
    \qquad
    0\le \theta_\alpha<\frac{\pi}{2}.
\]
The \((+,+,-)\) unit-scale resonance curve is non-compact.  It has two
asymptotic ends.  In addition, when
\[
    \frac{\pi}{3}\le \theta_\alpha<\frac{\pi}{2},
\]
the curve reaches \(k_\beta=0\).  This point is a cusp of the resonance curve
in the \(k_\beta\)-plane.

As before, write
\[
    k_j=|k_j|(\cos\theta_j,\sin\theta_j),
    \qquad
    j\in\{\alpha,\beta,\gamma\}.
\]
Solving the polar momentum system gives
\begin{equation}
\label{eq:polar-momentum-++-}
    |k_\beta|
    =
    \frac{\sin(\theta_\gamma-\theta_\alpha)}
         {\sin(\theta_\beta-\theta_\gamma)},
    \qquad
    |k_\gamma|
    =
    \frac{\sin(\theta_\beta-\theta_\alpha)}
         {\sin(\theta_\gamma-\theta_\beta)}.
\end{equation}
For the \((+,+,-)\) configuration, the frequency resonance is
\[
    \cos\theta_\alpha+\cos\theta_\beta-\cos\theta_\gamma=0,
\]
so that
\[
    \cos\theta_\gamma
    =
    \cos\theta_\alpha+\cos\theta_\beta.
\]
Substituting the two choices
\[
    \sin\theta_\gamma
    =
    \pm\sqrt{1-(\cos\theta_\alpha+\cos\theta_\beta)^2}
\]
into \eqref{eq:polar-momentum-++-}, and imposing
\[
    |k_\beta|\ge0,
    \qquad
    |k_\gamma|\ge0,
\]
gives the following parametrization.


\begin{proposition}[Unit--scale parametrization of the \((+,+,-)\) resonance curve]
\label{prop:param-++-}
Let \(0\le\theta_\alpha<\pi/2\).  Define the two asymptotic angles
\[
    \theta_-^\infty
    :=
    \pi-\arccos\left(\frac{\cos\theta_\alpha}{2}\right),
    \qquad
    \theta_+^\infty
    :=
    \pi+\arccos\left(\frac{\cos\theta_\alpha}{2}\right).
\]

\medskip
\noindent\textbf{Case 1.} If
\begin{equation}
\label{case1}
    0\le\theta_\alpha\le \frac{\pi}{3},
\end{equation}
then the unit-scale \((+,+,-)\) resonance curve is parametrized by
\[
    k_\beta(\theta_\beta)
    =
    |k_\beta|(\cos\theta_\beta,\sin\theta_\beta),
\]
where
\begin{equation}
\label{eq:param-++--case1}
|k_\beta|
=
\begin{cases}
\displaystyle
\frac{
    \sin\theta_\alpha(\cos\theta_\alpha+\cos\theta_\beta)
    +
    \cos\theta_\alpha
    \sqrt{1-(\cos\theta_\alpha+\cos\theta_\beta)^2}
}{
    -\sin\theta_\beta(\cos\theta_\alpha+\cos\theta_\beta)
    -
    \cos\theta_\beta
    \sqrt{1-(\cos\theta_\alpha+\cos\theta_\beta)^2}
},
&
\begin{array}{l}
\theta_\beta\in
\left(
    \theta_-^\infty,\,
    \pi+\theta_\alpha
\right],\\
\sin\theta_\gamma\le0,
\end{array}
\\[2.0em]
\displaystyle
\frac{
    \sin\theta_\alpha(\cos\theta_\alpha+\cos\theta_\beta)
    -
    \cos\theta_\alpha
    \sqrt{1-(\cos\theta_\alpha+\cos\theta_\beta)^2}
}{
    -\sin\theta_\beta(\cos\theta_\alpha+\cos\theta_\beta)
    +
    \cos\theta_\beta
    \sqrt{1-(\cos\theta_\alpha+\cos\theta_\beta)^2}
},
&
\begin{array}{l}
\theta_\beta\in
\left[
    \pi+\theta_\alpha,\,
    \theta_+^\infty
\right),\\
\sin\theta_\gamma\ge0.
\end{array}
\end{cases}
\end{equation}
The first line corresponds to \(\sin\theta_\gamma\le0\), while the second line
corresponds to \(\sin\theta_\gamma\ge0\).

For \(0\le\theta_\alpha<\pi/3\), this curve does not reach \(k_\beta=0\).
At the threshold
\[
    \theta_\alpha=\frac{\pi}{3},
\]
the cusp \(k_\beta=0\) appears in the degenerate limiting form, with the single
limiting direction
\[
    \theta_\beta=\pi.
\]

\medskip
\noindent\textbf{Case 2.} If
\begin{equation}
\label{case2}
    \frac{\pi}{3}<\theta_\alpha<\frac{\pi}{2},
\end{equation}
define the two limiting cusp directions
\[
    \theta_\beta^\ast
    :=
    \pi-\arccos(2\cos\theta_\alpha),
    \qquad
    \widetilde{\theta}_\beta^\ast
    :=
    \pi+\arccos(2\cos\theta_\alpha).
\]
Then the unit-scale \((+,+,-)\) resonance curve is parametrized by
\[
    k_\beta(\theta_\beta)
    =
    |k_\beta|(\cos\theta_\beta,\sin\theta_\beta),
\]
where
\begin{equation}
\label{eq:param-++--case2}
|k_\beta|
=
\begin{cases}
\displaystyle
\frac{
    \sin\theta_\alpha(\cos\theta_\alpha+\cos\theta_\beta)
    +
    \cos\theta_\alpha
    \sqrt{1-(\cos\theta_\alpha+\cos\theta_\beta)^2}
}{
    -\sin\theta_\beta(\cos\theta_\alpha+\cos\theta_\beta)
    -
    \cos\theta_\beta
    \sqrt{1-(\cos\theta_\alpha+\cos\theta_\beta)^2}
},
&
\begin{array}{l}
\theta_\beta\in
\left(
    \theta_-^\infty,\,
    \theta_\beta^\ast
\right]
\cup
\left[
    \widetilde{\theta}_\beta^\ast,\,
    \pi+\theta_\alpha
\right],\\
\sin\theta_\gamma\le0,
\end{array}
\\[2.0em]
\displaystyle
\frac{
    \sin\theta_\alpha(\cos\theta_\alpha+\cos\theta_\beta)
    -
    \cos\theta_\alpha
    \sqrt{1-(\cos\theta_\alpha+\cos\theta_\beta)^2}
}{
    -\sin\theta_\beta(\cos\theta_\alpha+\cos\theta_\beta)
    +
    \cos\theta_\beta
    \sqrt{1-(\cos\theta_\alpha+\cos\theta_\beta)^2}
},
&
\begin{array}{l}
\theta_\beta\in
\left[
    \pi+\theta_\alpha,\,
    \theta_+^\infty
\right),\\
\sin\theta_\gamma\ge0.
\end{array}
\end{cases}
\end{equation}
Again, the first line corresponds to \(\sin\theta_\gamma\le0\), while the
second line corresponds to \(\sin\theta_\gamma\ge0\).

The two values
\[
    \theta_\beta^\ast,
    \qquad
    \widetilde{\theta}_\beta^\ast
\]
both correspond to the same physical point
\[
    k_\beta=0.
\]
Since polar coordinates are singular at \(k_\beta=0\), these two angles should
be understood as the two limiting approach directions to the cusp.

In both cases, the point
\[
    \theta_\beta=\pi+\theta_\alpha
\]
corresponds to the collapsed endpoint
\[
    k_\gamma=0.
\]
Indeed, at this endpoint,
\[
    |k_\beta|=1,
    \qquad
    k_\beta=-k_\alpha.
\]

Finally, the two non-compact ends are given by
\[
    \lim_{\theta_\beta\downarrow\theta_-^\infty}
    |k_\beta(\theta_\beta)|
    =
    \infty,
    \qquad
    \lim_{\theta_\beta\uparrow\theta_+^\infty}
    |k_\beta(\theta_\beta)|
    =
    \infty.
\]
\end{proposition}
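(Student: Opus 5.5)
The plan follows the proof of Proposition~\ref{prop:param-+++}. The polar momentum system gives \eqref{eq:polar-momentum-++-}, and the $(+,+,-)$ frequency relation forces $\cos\theta_\gamma=c+\cos\theta_\beta$, with $c:=\cos\theta_\alpha$ and $s:=\sin\theta_\alpha$. Writing $p:=c+\cos\theta_\beta$, $r:=\sqrt{1-p^2}$ and $\sin\theta_\gamma=\mp r$, substitution into $|k_\beta|=\sin(\theta_\gamma-\theta_\alpha)/\sin(\theta_\beta-\theta_\gamma)$ produces exactly the two displayed quotients. It also gives
\[
|k_\gamma|=\frac{\sin(\theta_\beta-\theta_\alpha)}{\sin(\theta_\gamma-\theta_\beta)}.
\]
The real work is then to determine, for each sign choice, the set of $\theta_\beta$ where $|p|\le1$, $|k_\beta|\ge0$ and $|k_\gamma|\ge0$. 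I would carry this out by locating the zeros of the numerator $N_\mp:=s\,p\mp c\,r$, of the denominator $D_\mp:=-\sin\theta_\beta\,p\mp\cos\theta_\beta\,r$, and of $\sin(\theta_\beta-\theta_\alpha)$, and checking the signs in between.

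The key identities are as follows. The denominator is $D=\sin(\theta_\gamma-\theta_\beta)$, so $D=0$ exactly when $\theta_\gamma\equiv\theta_\beta \pmod \pi$. Combined with $\cos\theta_\gamma=c+\cos\theta_\beta$, this forces either $\theta_\gamma=\theta_\beta+\pi$ with $\cos\theta_\beta=-c/2$ (the case $\theta_\gamma=\theta_\beta$ would require $c=0$). That gives the asymptotic angles $\theta_\pm^\infty=\pi\mp\arccos(c/2)$, and at these angles the numerator is nonzero, so $|k_\beta|\to\infty$. The numerator is $\sin(\theta_\gamma-\theta_\alpha)$, which vanishes iff $\theta_\gamma\equiv\theta_\alpha \pmod\pi$. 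The choice $\theta_\gamma=\theta_\alpha$ gives $\cos\theta_\beta=0$, which is excluded or spurious. The choice $\theta_\gamma=\theta_\alpha+\pi$ gives $\cos\theta_\beta=-2c$, which is solvable iff $c\le\tfrac12$, i.e.\ $\theta_\alpha\ge\pi/3$. This produces the cusp directions $\theta_\beta^\ast,\widetilde\theta_\beta^\ast=\pi\mp\arccos(2c)$, where $k_\beta=0$, and they coalesce at $\theta_\beta=\pi$ when $\theta_\alpha=\pi/3$. Finally $|k_\gamma|=0$ iff $\theta_\beta=\theta_\alpha+\pi$ (the other root $\theta_\beta=\theta_\alpha$ is incompatible with positivity). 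At that point $\cos\theta_\gamma=0$ and $|k_\beta|=1$, $k_\beta=-k_\alpha$, which one verifies directly by substitution.

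Having these critical angles, I would do a sign analysis on the $\theta_\beta$-circle separately for $\sin\theta_\gamma\le0$ and $\sin\theta_\gamma\ge0$. It is convenient to use the continuity of the quotients together with test evaluations, for example at $\theta_\beta=\pi$, where $p=c-1$, and near $\theta_\pm^\infty$. In Case~1 ($c\ge\tfrac12$) the numerator has no interior zero. Positivity then holds on $(\theta_-^\infty,\pi+\theta_\alpha]$ for the first branch and on $[\pi+\theta_\alpha,\theta_+^\infty)$ for the second, and the two branches are glued at the $k_\gamma=0$ endpoint. On the complementary arcs one of the three sign conditions fails. In Case~2 ($c<\tfrac12$) the numerator of the first branch changes sign at $\theta_\beta^\ast$ and at $\widetilde\theta_\beta^\ast$. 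The admissible set for the first branch therefore loses the middle interval $(\theta_\beta^\ast,\widetilde\theta_\beta^\ast)$, on which $|k_\beta|<0$, while the second branch is unaffected. One should also confirm that the admissible interval $[\theta_-^\infty,\theta_+^\infty]$ lies inside the set where $|c+\cos\theta_\beta|\le1$. This holds because there $\cos\theta_\beta\le -c/2$, so $p\in[c-1,\,c/2]$.

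The main obstacle is the sign bookkeeping: it must be consistent across the two $\sin\theta_\gamma$ choices and must rule out spurious roots introduced by squaring. In particular, one has to show that the first-branch numerator really vanishes only at $\theta_\beta^\ast,\widetilde\theta_\beta^\ast$, and that the second branch stays positive on all of $[\pi+\theta_\alpha,\theta_+^\infty)$. I would handle this by rewriting each factor as a sine of an angle difference and tracking the angles monotonically in $\theta_\beta$, rather than manipulating the square roots directly.
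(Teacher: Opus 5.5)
Your proposal is correct and follows the paper's route. The paper obtains the parametrization by substituting $\sin\theta_\gamma=\mp r$ into the polar momentum formulas and imposing $|k_\beta|,|k_\gamma|\ge0$, reading off the cusp directions from $\cos\theta_\beta=-2\cos\theta_\alpha$ and the asymptotic angles from the zeros of the denominators; your explicit sign analysis just fills in that positivity step.

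When you carry it out, note the following. The zeros $\cos\theta_\beta=0$ of the numerator on the $\sin\theta_\gamma\ge0$ branch are nonphysical but are genuine sign changes and must be counted. In Case~2 they are exactly what excludes the arc $\arccos(1-\cos\theta_\alpha)\le\theta_\beta\le\theta_\alpha$ of that branch, where the formulas give $|k_\gamma|\ge0$ but $|k_\beta|<0$. Also fix two sign slips: the numerator is $s\,p\pm c\,r=\sin(\theta_\alpha-\theta_\gamma)$, and $\theta_\pm^\infty=\pi\pm\arccos(c/2)$.
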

%
For later use, we also record the corresponding expression for
\(|k_\gamma|\).  By \eqref{eq:polar-momentum-++-}, on the branch
\(\sin\theta_\gamma\le0\),
\begin{equation}
\label{eq:kgamma-++--minus}
|k_\gamma|
=
\frac{
    \sin(\theta_\beta-\theta_\alpha)
}{
    -\sin\theta_\beta(\cos\theta_\alpha+\cos\theta_\beta)
    -
    \cos\theta_\beta
    \sqrt{1-(\cos\theta_\alpha+\cos\theta_\beta)^2}
},
\end{equation}
while on the branch \(\sin\theta_\gamma\ge0\),
\begin{equation}
\label{eq:kgamma-++--plus}
|k_\gamma|
=
\frac{
    \sin(\theta_\beta-\theta_\alpha)
}{
    -\sin\theta_\beta(\cos\theta_\alpha+\cos\theta_\beta)
    +
    \cos\theta_\beta
    \sqrt{1-(\cos\theta_\alpha+\cos\theta_\beta)^2}
}.
\end{equation}
The \(\theta_\beta\)-intervals are the corresponding branch intervals listed
in Proposition~\ref{prop:param-++-}.  The endpoints are understood in the
limiting sense; in particular,
\[
    \theta_\beta=\pi+\theta_\alpha
\]
corresponds to the collapsed endpoint \(k_\gamma=0\).
\begin{figure}[htbp]
    \centering
    \includegraphics[width=\textwidth]{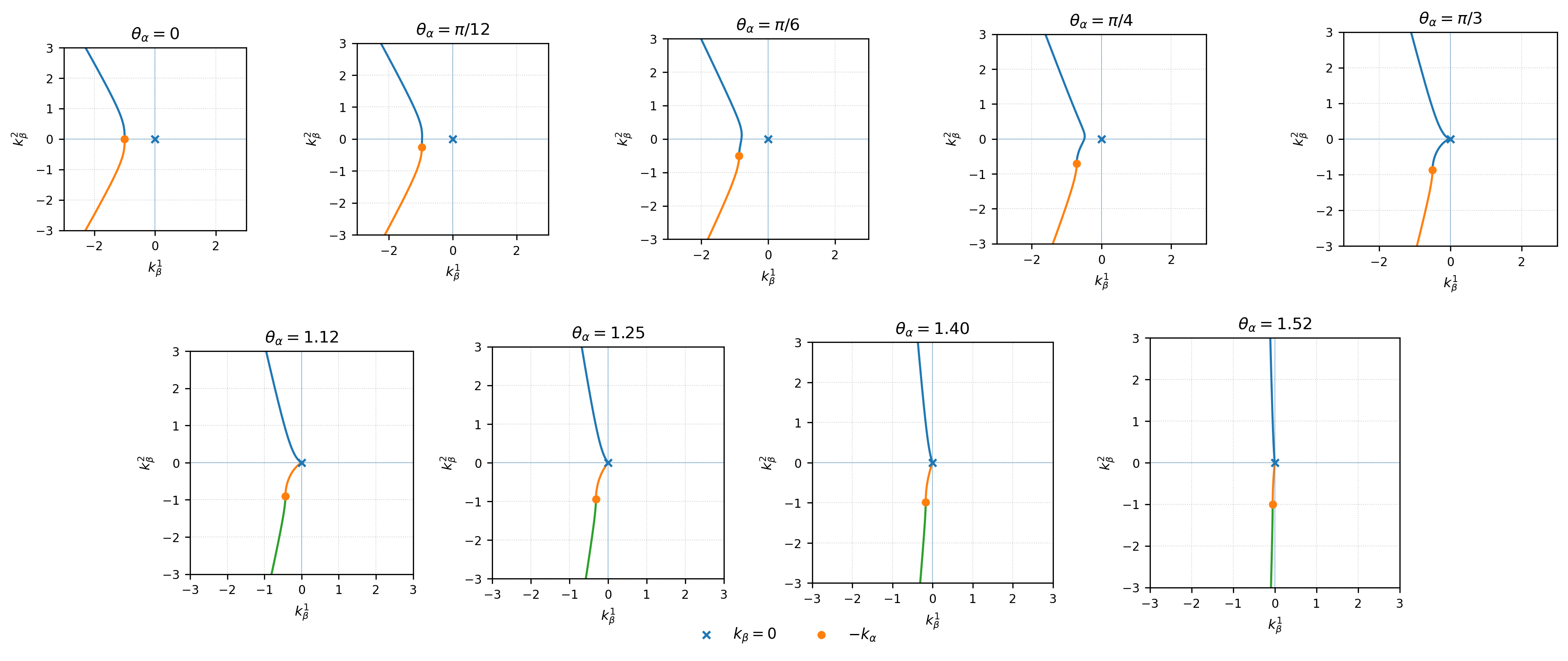}
    \caption{
        Unit-scale \((+,+,-)\) resonance curves in the \(k_\beta\)-plane.
        Top row: \(0\le\theta_\alpha\le\pi/3\).  For
        \(0\le\theta_\alpha<\pi/3\), the curve does not reach \(k_\beta=0\);
        at \(\theta_\alpha=\pi/3\), the limiting cusp direction is
        \(\theta_\beta=\pi\).  Bottom row:
        \(\pi/3<\theta_\alpha<\pi/2\), where the curve reaches
        \(k_\beta=0\) and develops a cusp.  The two non-compact ends
        correspond to \(\theta_\beta=\theta_-^\infty\) and
        \(\theta_\beta=\theta_+^\infty\).  In the cusp regime the two
        limiting approach directions are
        \(
            \theta_\beta^\ast
            =
            \pi-\arccos(2\cos\theta_\alpha),
            \qquad
            \widetilde{\theta}_\beta^\ast
            =
            \pi+\arccos(2\cos\theta_\alpha).
        \)
        The cross marks \(k_\beta=0\), and the dot marks
        \(k_\beta=-k_\alpha\), equivalently \(k_\gamma=0\).
    }
    \label{fig:manifold-++-}
\end{figure}

\begin{remark}[Cusp and asymptotic ends]
The cusp condition comes from
\[
    |k_\beta|=0.
\]
Using the \(\sin\theta_\gamma\le0\) branch formula, this is equivalent to
\[
    \cos\theta_\beta=-2\cos\theta_\alpha.
\]
This equation has real solutions precisely when
\[
    \cos\theta_\alpha\le\frac12,
    \qquad
    \text{i.e.}
    \qquad
    \theta_\alpha\ge\frac{\pi}{3}.
\]
At the threshold \(\theta_\alpha=\pi/3\), the two limiting cusp directions
coalesce at
\[
    \theta_\beta=\pi.
\]
For \(\theta_\alpha>\pi/3\), the two limiting approach directions are
\[
    \theta_\beta^\ast
    =
    \pi-\arccos(2\cos\theta_\alpha),
    \qquad
    \widetilde{\theta}_\beta^\ast
    =
    \pi+\arccos(2\cos\theta_\alpha),
\]
and both correspond to the same physical cusp \(k_\beta=0\).

The asymptotic angles
\[
    \theta_-^\infty
    =
    \pi-\arccos\left(\frac{\cos\theta_\alpha}{2}\right),
    \qquad
    \theta_+^\infty
    =
    \pi+\arccos\left(\frac{\cos\theta_\alpha}{2}\right)
\]
come from the zeros of the two denominators in
\eqref{eq:param-++--case1}--\eqref{eq:param-++--case2}.  They correspond to
the two non-compact ends of the \((+,+,-)\) resonance curve.
\end{remark}

\subsection{Co--area factor for the \((+,+,-)\) curve}
\label{subsec:coarea-factor-++-}

We now compute the one-dimensional co--area factor associated with the
unit-scale \((+,+,-)\) parametrization.  The identities below are understood
on the regular open arcs of Proposition~\ref{prop:param-++-}; in particular,
they are used away from the cusp \(k_\beta=0\), the collapsed endpoint
\(k_\gamma=0\), and the two non-compact ends.  Near such limiting points, the
formulas are applied on punctured neighborhoods and the estimates are obtained
by taking limits.
Throughout this subsection we use the
notation
\[
    p:=\cos\theta_\alpha+\cos\theta_\beta,
    \qquad
    r:=\sqrt{1-p^2},
\]
and
\[
    \widetilde f
    :=
    -\sin\theta_\beta\,p-\cos\theta_\beta\,r,
    \qquad
    f
    :=
    -\sin\theta_\beta\,p+\cos\theta_\beta\,r .
\]
We also set
\[
    \widetilde F_1(\theta_\alpha,\theta_\beta)
    :=
    \sin\theta_\alpha\,p+\cos\theta_\alpha\,r,
\]
and
\[
    F_1(\theta_\alpha,\theta_\beta)
    :=
    \sin\theta_\alpha\,p-\cos\theta_\alpha\,r.
\]
Thus, on the branch \(\sin\theta_\gamma\le0\),
\[
    |k_\beta|
    =
    \frac{\widetilde F_1}{\widetilde f},
    \qquad
    |k_\gamma|
    =
    \frac{\sin(\theta_\beta-\theta_\alpha)}{\widetilde f},
\]
while on the branch \(\sin\theta_\gamma\ge0\),
\[
    |k_\beta|
    =
    \frac{F_1}{f},
    \qquad
    |k_\gamma|
    =
    \frac{\sin(\theta_\beta-\theta_\alpha)}{f}.
\]

After eliminating
\[
    k_\gamma=-k_\alpha-k_\beta,
\]
the reduced frequency constraint for the \((+,+,-)\) configuration is
\[
    \widetilde\Xi_1(k_\beta)
    =
    \frac{k_\alpha^1}{|k_\alpha|}
    +
    \frac{k_\beta^1}{|k_\beta|}
    -
    \frac{k_\gamma^1}{|k_\gamma|}.
\]

\begin{lemma}[Co--area ratio for the \((+,+,-)\) curve]
\label{lem:coarea-ratio-++-}

On each regular open branch of the unit-scale \((+,+,-)\) resonance curve in
Proposition~\ref{prop:param-++-}, one has

\[
\frac{
    \left|\dfrac{d k_\beta^1}{d\theta_\beta}\right|
}{
    \left|\partial_{k_\beta^2}\widetilde\Xi_1\right|
}
=
\begin{cases}
\displaystyle
\left|
\frac{
    \bigl(\sin\theta_\alpha p+\cos\theta_\alpha r\bigr)
    \bigl(
        \cos\theta_\alpha\sin\theta_\beta
        -
        \sin\theta_\alpha\cos\theta_\beta
    \bigr)
}{
    r\,\widetilde f^{\,3}
}
\right|,
&
\sin\theta_\gamma\le0,
\\[1.4em]
\displaystyle
\left|
\frac{
    \bigl(\sin\theta_\alpha p-\cos\theta_\alpha r\bigr)
    \bigl(
        \cos\theta_\alpha\sin\theta_\beta
        -
        \sin\theta_\alpha\cos\theta_\beta
    \bigr)
}{
    r\,f^{3}
}
\right|,
&
\sin\theta_\gamma\ge0 .
\end{cases}
\]
\end{lemma}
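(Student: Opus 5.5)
The plan is to repeat the computation behind Lemma~\ref{lem:coarea-factor-upper-+++}, now for the $(+,+,-)$ constraint. The one structural change is that the frequency relation becomes $\cos\theta_\gamma=p$ instead of $-p$. We treat each branch separately. On a regular open arc, write $k_\beta=|k_\beta|(\cos\theta_\beta,\sin\theta_\beta)$, with $|k_\beta|=N/D$. On the branch $\sin\theta_\gamma\le0$ we take $N=\widetilde F_1$ and $D=\widetilde f$; on the branch $\sin\theta_\gamma\ge0$ we take $N=F_1$ and $D=f$. Both $N$ and $D$ depend on $\theta_\beta$ only through $\theta_\beta$ itself and through $p$ and $r$. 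We will use
\[
\frac{dp}{d\theta_\beta}=-\sin\theta_\beta,
\qquad
\frac{dr}{d\theta_\beta}=\frac{p\sin\theta_\beta}{r}.
\]

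The first step is to compute the numerator. Differentiating $k_\beta^1=N\cos\theta_\beta/D$ gives
\[
\frac{dk_\beta^1}{d\theta_\beta}=\frac{\widehat J_1}{r\,D^2},
\]
where $\widehat J_1$ is an explicit trigonometric polynomial in $\theta_\alpha,\theta_\beta,p,r$. It is the analogue of $J_1$, with $r\mapsto -r$ on the $\sin\theta_\gamma\le0$ branch. The factor $1/r$ comes from $dr/d\theta_\beta$. We simplify $\widehat J_1$ using $p^2+r^2=1$.

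The second step is to compute $\partial_{k_\beta^2}\widetilde\Xi_1$ from the formula recorded in Section~\ref{Scaling and co–area representation}, with $\sigma_\beta=+1$ and $\sigma_\gamma=-1$:
\[
\partial_{k_\beta^2}\widetilde\Xi_1
=
-\frac{\cos\theta_\beta\sin\theta_\beta}{|k_\beta|}
-\frac{\cos\theta_\gamma\sin\theta_\gamma}{|k_\gamma|}.
\]
Here the second term has the sign opposite to the $(+,+,+)$ case. We substitute $\cos\theta_\gamma=p$ and $\sin\theta_\gamma=\mp r$. We also substitute $|k_\beta|=N/D$ and $|k_\gamma|=\sin(\theta_\beta-\theta_\alpha)/D$, using \eqref{eq:kgamma-++--minus}--\eqref{eq:kgamma-++--plus}. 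This gives
\[
\partial_{k_\beta^2}\widetilde\Xi_1=\mp D\,\frac{Q}{N\sin(\theta_\beta-\theta_\alpha)},
\]
where $Q=-\cos\theta_\beta\sin\theta_\beta\sin(\theta_\beta-\theta_\alpha)\pm N p r$.

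The key identity to verify is $Q=\pm\widehat J_1$, possibly up to sign. It is the same identity that, in the $(+,+,+)$ proof, made $J_1$ appear in both the numerator and the denominator. Once it is established, the quotient collapses to
\[
\left|\frac{N\,\sin(\theta_\beta-\theta_\alpha)}{r\,D^3}\right|.
\]
Since $\sin(\theta_\beta-\theta_\alpha)=\cos\theta_\alpha\sin\theta_\beta-\sin\theta_\alpha\cos\theta_\beta$, this is exactly the stated formula on each branch.

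The main obstacle is the algebraic verification that the derivative of the parametrization and the bracket $Q$ coincide. I would check it by expanding both sides as polynomials in $(\cos\theta_\beta,\sin\theta_\beta,p,r)$, with $\cos\theta_\alpha=p-\cos\theta_\beta$ eliminated, and reducing modulo $p^2+r^2=1$. An alternative check is implicit differentiation: the frequency constraint reads $\widetilde\Xi_1(k_\beta(\theta_\beta))=0$, so
\[
\frac{dk_\beta^1}{d\theta_\beta}\,\partial_{k_\beta^1}\widetilde\Xi_1
+\frac{dk_\beta^2}{d\theta_\beta}\,\partial_{k_\beta^2}\widetilde\Xi_1=0.
\]
This identity can serve as a consistency check on the signs. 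Finally, regularity of the open arcs ensures $r\neq0$ and $D\neq0$ there. These are the arcs away from the cusp, the collapsed endpoint and the non-compact ends. So all divisions are legitimate, and the limiting points are handled separately, as stated in the preamble of the subsection.
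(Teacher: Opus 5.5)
Your proposal is correct and is essentially the paper's proof. You compute $dk_\beta^1/d\theta_\beta = J/(rD^2)$ and $\partial_{k_\beta^2}\widetilde\Xi_1 = \pm D J/(N\sin(\theta_\beta-\theta_\alpha))$ with the same bracket $J$, namely the paper's $J_2$ on $\sin\theta_\gamma\le0$ and $J_1$ on $\sin\theta_\gamma\ge0$ (your $Q$ equals $J_2$ and $-J_1$ there), and then divide, exactly as the paper does.

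The only slip is the prefactor $\mp$ in your expression for $\partial_{k_\beta^2}\widetilde\Xi_1$: direct substitution gives $+DQ/(N\sin(\theta_\beta-\theta_\alpha))$ on both branches. This is immaterial once absolute values are taken.
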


\begin{proof}
We only record the identities used in the computation.  A direct
differentiation of the parametrization gives
\[
   \frac{d k_\beta^1}{d\theta_\beta}
   =
   \begin{cases}
   \displaystyle
   \frac{J_2}{r\,\widetilde f^{\,2}},
   & \sin\theta_\gamma\le0,
   \\[1em]
   \displaystyle
   \frac{J_1}{r\,f^2},
   & \sin\theta_\gamma\ge0,
   \end{cases}
\]
where
\[
\begin{aligned}
J_1
&:=
\sin\theta_\beta\cos\theta_\beta
\bigl(
    \cos\theta_\alpha\sin\theta_\beta
    -
    \sin\theta_\alpha\cos\theta_\beta
\bigr)
+
pr
\bigl(
    \sin\theta_\alpha p-\cos\theta_\alpha r
\bigr),
\\
J_2
&:=
-\sin\theta_\beta\cos\theta_\beta
\bigl(
    \cos\theta_\alpha\sin\theta_\beta
    -
    \sin\theta_\alpha\cos\theta_\beta
\bigr)
+
pr
\bigl(
    \sin\theta_\alpha p+\cos\theta_\alpha r
\bigr).
\end{aligned}
\]
On the other hand, differentiating the reduced constraint gives
\[
\partial_{k_\beta^2}\widetilde\Xi_1
=
\begin{cases}
\displaystyle
-\widetilde f
\frac{
    J_2
}{
    \bigl(\sin\theta_\alpha p+\cos\theta_\alpha r\bigr)
    \bigl(
        \cos\theta_\alpha\sin\theta_\beta
        -
        \sin\theta_\alpha\cos\theta_\beta
    \bigr)
},
& \sin\theta_\gamma\le0,
\\[1.4em]
\displaystyle
f
\frac{
    J_1
}{
    \bigl(\sin\theta_\alpha p-\cos\theta_\alpha r\bigr)
    \bigl(
        \cos\theta_\alpha\sin\theta_\beta
        -
        \sin\theta_\alpha\cos\theta_\beta
    \bigr)
},
& \sin\theta_\gamma\ge0.
\end{cases}
\]
Dividing the two identities yields the stated formula.
\end{proof}

\begin{lemma}[Lower bounds for \(r\)]
\label{lem:r-lower-++-}
Let
\[
    r=\sqrt{1-(\cos\theta_\alpha+\cos\theta_\beta)^2}.
\]
On all regular branch intervals appearing in Proposition~\ref{prop:param-++-},
the factor \(r\) is bounded away from zero.  Consequently,
\[
    \frac1r\lesssim 1
\]
uniformly on the \((+,+,-)\) branches used below.
\end{lemma}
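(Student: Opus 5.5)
The plan is to express \(r\) through the frequency relation and reduce the claim to a one-variable inequality. For the \((+,+,-)\) configuration, \(p=\cos\theta_\alpha+\cos\theta_\beta=\cos\theta_\gamma\), so
\[
    r=|\sin\theta_\gamma| .
\]
Hence \(r\) vanishes only when \(\theta_\gamma\in\{0,\pi\}\), that is, when \(|p|=1\). It therefore suffices to show that \(|p|\) stays a definite distance below \(1\) on every branch interval of Proposition~\ref{prop:param-++-}. Throughout, write \(c:=\cos\theta_\alpha\in(0,1]\), since \(0\le\theta_\alpha<\pi/2\).

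\textbf{Step 1: locate \(\theta_\beta\).} In both Case 1 and Case 2, all branch intervals lie inside
\[
    (\theta_-^\infty,\theta_+^\infty)
    =
    \bigl(\pi-\arccos(c/2),\,\pi+\arccos(c/2)\bigr).
\]
The cusp gap \((\theta_\beta^\ast,\widetilde\theta_\beta^\ast)\) in Case 2 is simply removed from this interval, so it does not matter for the estimate. On this interval \(\cos\theta_\beta\) is symmetric about \(\pi\) and takes values in \([-1,-c/2)\). Consequently
\[
    c-1\le p< \frac c2 .
\]

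\textbf{Step 2: bound \(|p|\).} On the upper side, \(p<c/2\le\tfrac12\). On the lower side, \(p\ge c-1>-1\). Therefore
\[
    |p|\le\max\Bigl(\tfrac12,\,1-c\Bigr),
\]
which gives
\[
    r^2=1-p^2\ge\min\Bigl(\tfrac34,\;1-(1-c)^2\Bigr)=\min\Bigl(\tfrac34,\;c(2-c)\Bigr)\ge\min\Bigl(\tfrac34,\;c\Bigr).
\]

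\textbf{Step 3: uniformity (the main point).} The lower bound \(r\gtrsim\sqrt{c}\) degenerates only as \(\theta_\alpha\to\pi/2\), and only at \(\theta_\beta=\pi\), where \(p=c-1\to-1\). This degeneration is precisely the zero-frequency output direction. On the support of \(\chi_\varepsilon(\theta_\alpha)\) we have \(c\ge\varepsilon\), so \(r\gtrsim\sqrt{\varepsilon}\) and \(1/r\lesssim_\varepsilon 1\). For \(0\le\theta_\alpha\le\pi/3\), that is \(c\ge\tfrac12\), the bound \(r\ge\sqrt{3}/2\) is absolute.

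So the claim \(1/r\lesssim1\) holds with an absolute constant in Case 1. In Case 2 it holds with a constant \(\lesssim c^{-1/2}\le\varepsilon^{-1/2}\). I will record this \(\varepsilon\)-dependence explicitly, consistent with the convention that all constants may depend on \(\varepsilon\). The only delicate step is confirming that the endpoint values at \(\theta_\pm^\infty\) and at \(\theta_\beta=\pi\) are the extremes of \(p\); this follows from the monotonicity of \(\cos\theta_\beta\) on each half of the interval.
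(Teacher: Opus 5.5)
There is a genuine gap. It sits in your Step~1 remark that the cusp gap ``does not matter for the estimate'': that gap is exactly what makes the lemma hold with an absolute constant.

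In Case~2 ($c=\cos\theta_\alpha<\tfrac12$) the branch set is $(\theta_-^\infty,\theta_\beta^\ast]\cup[\widetilde\theta_\beta^\ast,\theta_+^\infty)$. The removed interval $(\theta_\beta^\ast,\widetilde\theta_\beta^\ast)$ is precisely the set where $\cos\theta_\beta<-2c$. In particular it contains $\theta_\beta=\pi$, the only point where your lower bound $p\ge c-1$ is attained. On the actual branches you have $\cos\theta_\beta\in[-2c,-c/2)$. Hence $p\in[-c,c/2)$, so $|p|\le c<\tfrac12$ and $r\ge\sqrt3/2$. On the $\sin\theta_\gamma\ge0$ branch one even has $p\in[0,c/2)$, so $r>\sqrt{15}/4$. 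In Case~1 the point $\theta_\beta=\pi$ does lie on the curve, but there $p\in[c-1,c/2)\subset[-\tfrac12,\tfrac12]$. Together this gives $r\ge\sqrt3/2$ for every $0\le\theta_\alpha<\pi/2$, independent of $\varepsilon$. This is what the paper obtains by evaluating $r$ at the branch endpoints and at $\theta_\beta=\pi$ (the latter present only in Case~1).

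Consequently, the degeneration you describe in Step~3, namely $p=c-1\to-1$ at $\theta_\beta=\pi$ as $\theta_\alpha\to\pi/2$, does not occur on the resonance curve. It is an artifact of replacing the branch set by its hull $(\theta_-^\infty,\theta_+^\infty)$. What you actually prove is $1/r\lesssim\varepsilon^{-1/2}$ on the cut-off support. That is strictly weaker than the statement, which asserts a bound uniform in $\theta_\alpha$ on all branch intervals of Proposition~\ref{prop:param-++-}, with no cut-off. The weaker bound would also change the $\varepsilon$-bookkeeping downstream, where $r\sim1$ is used in the cusp pieces $I_{1,3}$ and $I_{1,4}$ and in the explicit constant $\mathfrak C_{++-}$.

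Your reformulation $r=|\sin\theta_\gamma|$ and your Steps~1--2 are correct as inequalities. To close the gap, apply them on the branch intervals themselves rather than on their hull.
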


\begin{proof}
Since
\[
    r^2=1-(\cos\theta_\alpha+\cos\theta_\beta)^2,
\]
the possible minima on the relevant intervals occur at endpoints or at the
critical point \(\theta_\beta=\pi\).  Evaluating these finitely many
possibilities in Case~\ref{case1} and Case~\ref{case2} gives a positive lower
bound:
\[
    r\ge \frac{\sqrt3}{2}
\]
on the branches in Case~\ref{case1} and on the
\(\sin\theta_\gamma\le0\) branch in Case~\ref{case2}, while
\[
    r\ge \frac{\sqrt{15}}{4}
\]
on the \(\sin\theta_\gamma\ge0\) branch in Case~\ref{case2}.  This proves the
claim.
\end{proof}

\subsection{Control of \(f,\widetilde{f}\) near the asymptotic ends}
\label{subsec:asymptotic-ends-++-}

We next describe the behavior of the two denominators at the non-compact
ends.  Recall the asymptotic angles from Proposition~\ref{prop:param-++-}:
\[
    \theta_-^\infty
    =
    \pi-\arccos\left(\frac{\cos\theta_\alpha}{2}\right),
    \qquad
    \theta_+^\infty
    =
    \pi+\arccos\left(\frac{\cos\theta_\alpha}{2}\right).
\]
For convenience, define
\[
    \theta_-^0
    :=
    \pi-\arccos\left(
        \frac{\cos\theta_\alpha}{2}
        +
        \sqrt{\frac12-\frac{\cos^2\theta_\alpha}{4}}
    \right),
\]
and
\[
    \theta_+^0
    :=
    \pi+\arccos\left(
        \frac{\cos\theta_\alpha}{2}
        +
        \sqrt{\frac12-\frac{\cos^2\theta_\alpha}{4}}
    \right).
\]


\begin{lemma}[Right-hand linear asymptote]
\label{Right-hand linear asymptote}
For every \(0\le\theta_\alpha<\pi/2\), and every
\[
    \theta_\beta\in[\theta_-^\infty,\theta_-^0],
\]
one has
\[
    \frac{3}{\pi}
    \bigl(\theta_\beta-\theta_-^\infty\bigr)
    \le
    \widetilde f(\theta_\alpha,\theta_\beta)
    \le
    2\bigl(\theta_\beta-\theta_-^\infty\bigr).
\]
In particular,
\[
    \widetilde f(\theta_\alpha,\theta_\beta)
    \sim
    \theta_\beta-\theta_-^\infty,
\]
with constants independent of \(\theta_\alpha\) and of the cut-off scale
\(\varepsilon\).
\end{lemma}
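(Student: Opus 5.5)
The plan is to rewrite \(\widetilde f\) as a single sine. On the relevant range \(|p|\le1\), so I will set \(\varphi(\theta_\beta):=\arccos p\in[0,\pi]\), which gives \(p=\cos\varphi\) and \(r=\sin\varphi\). Then
\[
    \widetilde f=-(\sin\theta_\beta\cos\varphi+\cos\theta_\beta\sin\varphi)
    =-\sin(\theta_\beta+\varphi)
    =\sin\psi,
    \qquad
    \psi(\theta_\beta):=\theta_\beta+\varphi(\theta_\beta)-\pi .
\]
At \(\theta_\beta=\theta_-^\infty\) we have \(\cos\theta_\beta=-\tfrac{c}{2}\), where \(c:=\cos\theta_\alpha\). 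Hence \(p=\tfrac c2\) and \(\varphi=\arccos\tfrac c2=\pi-\theta_-^\infty\), so \(\psi(\theta_-^\infty)=0\). This matches the vanishing of the denominator at the asymptotic end.

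Next I will control \(\psi'\). Differentiating \(\cos\varphi=c+\cos\theta_\beta\) gives \(\varphi'=\sin\theta_\beta/r\), so \(\psi'=1+\sin\theta_\beta/r\). On \([\theta_-^\infty,\theta_-^0]\subset(\pi/2,\pi)\) we have \(\sin\theta_\beta>0\), hence \(\psi'\ge1\). Moreover \(\cos\theta_\beta\le-\tfrac c2\le0\) there, which gives \(|c+\cos\theta_\beta|\le|\cos\theta_\beta|\). Therefore \(r\ge\sin\theta_\beta\) and \(\psi'\le2\). Integrating from \(\theta_-^\infty\), with \(\Delta:=\theta_\beta-\theta_-^\infty\), yields
\[
    \Delta\le\psi\le2\Delta .
\]
The upper bound \(\widetilde f=\sin\psi\le\psi\le2\Delta\) follows immediately.

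For the lower bound I first need \(\psi\in[0,\pi/2]\) on the interval. I expect \(\theta_-^0\) to be exactly the point where \(\psi=\pi/2\), i.e.\ where \(\widetilde f=1\); I have checked that this holds at \(c=0\). I will verify it in general by solving \(\sin\theta_\beta\, p+\cos\theta_\beta\, r=-1\), which forces \(\varphi=\tfrac{3\pi}{2}-\theta_\beta\) and \(p=-\sin\theta_\beta\). The resulting quadratic in \(\cos\theta_\beta\) should produce the formula \(\tfrac c2+\sqrt{\tfrac12-\tfrac{c^2}{4}}\). Granting this, \(\widetilde f\ge0\) is increasing on the interval, and \(\widetilde f=\sin\psi\ge\sin\Delta\).

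The main obstacle is the constant \(3/\pi\). Jordan's inequality together with \(\psi\ge\Delta\) only gives \(\tfrac2\pi\Delta\) (or \(\tfrac{2\sqrt2}{\pi}\Delta\) when \(\Delta\le\pi/4\)), which is not quite enough. My first attempt will be to show that \(\widetilde f\) is concave in \(\theta_\beta\) on \([\theta_-^\infty,\theta_-^0]\). This amounts to \(\psi''\cos\psi-(\psi')^2\sin\psi\le0\); the second term is nonpositive, so I only need to control the sign of \(\psi''=\bigl(\sin\theta_\beta/r\bigr)'\), or show it is dominated. Concavity gives the chord bound
\[
    \widetilde f\ge\frac{\Delta}{\theta_-^0-\theta_-^\infty},
\]
and it then remains to check \(\theta_-^0-\theta_-^\infty\le\pi/3\) uniformly in \(c\in(0,1]\). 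The value \(\pi/4\) at \(c=0\) is encouraging, and I will do the general check by monotonicity in \(c\) of the explicit arccos expressions. If concavity fails, the fallback is to use \(\psi'\ge1+\sin\theta_\beta/r\) more sharply near \(\theta_-^\infty\), where \(\sin\theta_\beta/r\approx1\). This gives \(\psi\approx2\Delta\) for small \(\Delta\) and hence a constant close to \(2\cdot\tfrac2\pi\), with Jordan's inequality covering the rest. In any case the final constants are independent of \(\theta_\alpha\) and of \(\varepsilon\), since only \(c\in[0,1]\) enters.
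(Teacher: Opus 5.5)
Your proposal is correct, and the two items you left open both hold. For the lower bound it is the paper's own argument: concavity of \(\widetilde f\) on \([\theta_-^\infty,\theta_-^0]\), the chord to the right endpoint, and the width bound \(\theta_-^0-\theta_-^\infty\le\pi/3\).

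What differs is your phase form \(\widetilde f=\sin\psi\), which buys three things. First, it gives the upper bound with no convexity at all, from \(1\le\psi'\le2\) (via \(r\ge\sin\theta_\beta\)) and \(\sin\psi\le\psi\); the paper instead deduces it from concavity together with \(\partial_{\theta_\beta}\widetilde f(\theta_\alpha,\theta_-^\infty)=2\). Second, it identifies \(\theta_-^0\) as exactly the point where \(\widetilde f=1\), whereas the paper only uses \(\widetilde f(\theta_\alpha,\theta_-^0)\ge1\). Third, it turns the sign of the explicit second derivative, which the paper merely asserts, into a one-line check. With \(c=\cos\theta_\alpha\),
\[
    \psi''=\frac{\cos\theta_\beta\,r^2-\sin^2\theta_\beta\,p}{r^3}
    =-\frac{c\,(1+p\cos\theta_\beta)}{r^3}\le0,
\]
so \(\widetilde f''=\cos\psi\,\psi''-\sin\psi\,(\psi')^2\le0\) while \(0\le\psi\le\pi/2\).

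For the width, a direct check is quicker than monotonicity in \(c\). Since \(\theta_-^0-\theta_-^\infty=\arccos\frac{c}{2}-\arccos\bigl(\frac{c}{2}+\sqrt{\frac{1}{2}-\frac{c^2}{4}}\bigr)\), the bound \(\le\pi/3\) is equivalent to \(\frac{c}{2}+\sqrt{\frac{1}{2}-\frac{c^2}{4}}\le\cos\bigl(\arccos\frac{c}{2}-\frac{\pi}{3}\bigr)=\frac{c}{4}+\frac{\sqrt{3}}{2}\sqrt{1-\frac{c^2}{4}}\). After subtracting \(c/4\) and squaring twice, this becomes \((c^2-1)^2\ge0\).

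The equality case shows that \(3/\pi\) is attained at \(\theta_\alpha=0\), \(\theta_\beta=\theta_-^0=\pi\), so the constant is sharp. Your fallback therefore could not work as described: away from the endpoint, Jordan's inequality with \(\psi\ge\Delta\) gives only \(2/\pi\). Since the concavity route closes, you do not need it.
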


\begin{proof}
Recall
\[
    \widetilde f
    =
    -\sin\theta_\beta\,p-\cos\theta_\beta\,r,
    \qquad
    p=\cos\theta_\alpha+\cos\theta_\beta,
    \qquad
    r=\sqrt{1-p^2}.
\]
At
\[
    \theta_-^\infty
    =
    \pi-\arccos\left(\frac{\cos\theta_\alpha}{2}\right),
\]
we have
\[
    \widetilde f(\theta_\alpha,\theta_-^\infty)=0.
\]
A direct computation gives
\[
    \partial_{\theta_\beta}\widetilde f
    =
    \frac{
        (r+\sin\theta_\beta)
        (-\cos\theta_\beta p+\sin\theta_\beta r)
    }{r},
    \qquad
    \partial_{\theta_\beta}\widetilde f
    (\theta_\alpha,\theta_-^\infty)=2.
\]
Moreover,
\[
    \partial_{\theta_\beta}^2\widetilde f
    =
    \frac{
        r^2\cos\theta_\beta(-\cos\theta_\beta p+\sin\theta_\beta r)
        -2r^2\sin\theta_\beta\,\widetilde f
        -r^3\widetilde f
        +\sin^2\theta_\beta\cos\theta_\beta
    }{
        r^3
    }
    \le0
\]
on \([\theta_-^\infty,\theta_-^0]\).  Thus \(\widetilde f\) is increasing and
concave on this interval.

The upper bound follows from the endpoint derivative:
\[
    \widetilde f(\theta_\alpha,\theta_\beta)
    \le
    2(\theta_\beta-\theta_-^\infty).
\]
For the lower bound, concavity implies that \(\widetilde f\) lies above the
chord joining the endpoints.  Since
\[
    \widetilde f(\theta_\alpha,\theta_-^0)\ge1,
    \qquad
    \theta_-^0-\theta_-^\infty\le\frac{\pi}{3},
\]
the chord slope is at least \(3/\pi\).  Hence
\[
    \widetilde f(\theta_\alpha,\theta_\beta)
    \ge
    \frac{3}{\pi}
    (\theta_\beta-\theta_-^\infty).
\]
\end{proof}

\begin{lemma}[Left-hand linear asymptote]
\label{Left-hand linear asymptote}
For every \(0\le\theta_\alpha<\pi/2\), and every
\[
    \theta_\beta\in[\theta_+^0,\theta_+^\infty],
\]
one has
\[
    \frac{3}{\pi}
    \bigl(\theta_+^\infty-\theta_\beta\bigr)
    \le
    |f(\theta_\alpha,\theta_\beta)|
    \le
    2\bigl(\theta_+^\infty-\theta_\beta\bigr).
\]
In particular,
\[
    |f(\theta_\alpha,\theta_\beta)|
    \sim
    \theta_+^\infty-\theta_\beta,
\]
with constants independent of \(\theta_\alpha\) and of the cut-off scale
\(\varepsilon\).
\end{lemma}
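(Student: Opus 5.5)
The plan is to mirror the proof of Lemma~\ref{Right-hand linear asymptote}, either by a reflection argument or by repeating the concavity computation directly for \(f\).

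\textbf{Reduction by reflection.} Set \(\theta_\beta=2\pi-\phi\). Then \(\cos\theta_\beta=\cos\phi\), so \(p\) and \(r\) are unchanged, while \(\sin\theta_\beta=-\sin\phi\). This gives
\[
    f(\theta_\alpha,2\pi-\phi)
    =
    \sin\phi\,p+\cos\phi\,r
    =
    -\widetilde f(\theta_\alpha,\phi).
\]
The same substitution gives \(2\pi-\theta_+^\infty=\theta_-^\infty\) and \(2\pi-\theta_+^0=\theta_-^0\). Hence \(\theta_\beta\in[\theta_+^0,\theta_+^\infty]\) if and only if \(\phi\in[\theta_-^\infty,\theta_-^0]\), and
\[
    \theta_+^\infty-\theta_\beta=\phi-\theta_-^\infty,
    \qquad
    |f(\theta_\alpha,\theta_\beta)|=\widetilde f(\theta_\alpha,\phi),
\]
where the last equality uses \(\widetilde f\ge0\) on that interval by Lemma~\ref{Right-hand linear asymptote}. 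The two-sided bound then follows immediately from Lemma~\ref{Right-hand linear asymptote}, with the same constants \(3/\pi\) and \(2\). The constants are independent of \(\theta_\alpha\) and \(\varepsilon\), because no cut-off enters the argument.

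\textbf{Direct check, as a safeguard.} If the reflection bookkeeping needs confirmation, I would verify the ingredients directly.
\begin{enumerate}
\item[(i)] \(f(\theta_\alpha,\theta_+^\infty)=0\). At \(\cos\theta_\beta=-\cos\theta_\alpha/2\) one has \(p=\cos\theta_\alpha/2=-\cos\theta_\beta\) and \(r=|\sin\theta_\beta|\), and \(\sin\theta_\beta<0\) there.
\item[(ii)] \(\partial_{\theta_\beta}f(\theta_\alpha,\theta_+^\infty)=-2\). This follows from the derivative formula for \(f\) in Lemma~\ref{lem:f-lower-+++} (the formula is the same expression in \(p,r\)), or from the chain rule applied to the reflection.
\item[(iii)] \(f\) is concave-up-negative, i.e.\ \(-f\) is concave and decreasing on \([\theta_+^0,\theta_+^\infty]\). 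This is the reflected version of the second-derivative sign.
\item[(iv)] \(|f(\theta_\alpha,\theta_+^0)|\ge1\) and \(\theta_+^\infty-\theta_+^0\le\pi/3\).
\end{enumerate}
From these, the upper bound comes from the endpoint slope combined with concavity, and the lower bound comes from comparison with the chord.

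\textbf{Main obstacle.} The only delicate point is confirming that the sign and branch conventions match under the reflection. Specifically, \(\sin\theta_\beta\) changes sign, whereas \(r\) is always the nonnegative root, and it is exactly this that turns \(f\) into \(-\widetilde f\) rather than into \(\widetilde f\). Once that identity is verified, the rest is inherited from the right-hand lemma. In particular, the second-derivative sign and the endpoint value \(\ge1\) at \(\theta_-^0\) were already established there for all \(0\le\theta_\alpha<\pi/2\).
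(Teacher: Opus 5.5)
Your proof is correct. It differs from the paper's only in execution.

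\textbf{Comparison with the paper.} The paper also calls this the ``mirror analogue'' of Lemma~\ref{Right-hand linear asymptote}, but it re-derives every ingredient for \(f\) directly. It computes the formula for \(\partial_{\theta_\beta}f\), the values \(f(\theta_\alpha,\theta_+^\infty)=0\) and \(\partial_{\theta_\beta}f(\theta_\alpha,\theta_+^\infty)=2\), and the sign \(\partial_{\theta_\beta}^2f\ge0\) on \([\theta_+^0,\theta_+^\infty]\). It then repeats the tangent-line and chord arguments using \(|f(\theta_\alpha,\theta_+^0)|\ge1\) and \(\theta_+^\infty-\theta_+^0\le\pi/3\).

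You instead make the mirror symmetry exact. The identity \(f(\theta_\alpha,2\pi-\phi)=-\widetilde f(\theta_\alpha,\phi)\), together with \(2\pi-\theta_+^\infty=\theta_-^\infty\) and \(2\pi-\theta_+^0=\theta_-^0\), turns the statement into a literal corollary of the right-hand lemma. This transfers every ingredient at once (second-derivative sign, endpoint value, interval length). It also explains why the constants \(3/\pi\) and \(2\) are identical. The paper's version is self-contained but duplicates the computation.

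\textbf{A sign slip in your safeguard.} Item (ii) has the wrong sign. Since \(f\le0\) increases to \(0\) at \(\theta_+^\infty\) (equivalently, \(-f\) decreases, as you say in (iii)), one has \(\partial_{\theta_\beta}f(\theta_\alpha,\theta_+^\infty)=+2\). Your own identity gives the same value through the chain rule, \(\partial_{\theta_\beta}f(\theta_\alpha,2\pi-\phi)=\partial_\phi\widetilde f(\theta_\alpha,\phi)\). It is \(\partial_{\theta_\beta}|f|\) that equals \(-2\) there.

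Taken literally, \(\partial_{\theta_\beta}f=-2\) in the tangent-line bound for the concave function \(-f\) would give \(|f|\le2(\theta_\beta-\theta_+^\infty)\le0\), which is false. Your reflection argument never uses (ii), so the proof stands.
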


\begin{proof}
The proof is the mirror analogue of Lemma~\ref{Right-hand linear asymptote}.
Indeed,
\[
    f(\theta_\alpha,\theta_+^\infty)=0,
\]
and direct differentiation gives
\[
    \partial_{\theta_\beta}f
    =
    -\frac{
        (r-\sin\theta_\beta)
        (\cos\theta_\beta p+\sin\theta_\beta r)
    }{r},
    \qquad
    \partial_{\theta_\beta}f(\theta_\alpha,\theta_+^\infty)=2,
\]
as well as
\[
    \partial_{\theta_\beta}^2 f
    =
    \frac{
        r^2\cos\theta_\beta(\cos\theta_\beta p+\sin\theta_\beta r)
        +2r^2\sin\theta_\beta f
        -r^3 f
        -\sin^2\theta_\beta\cos\theta_\beta
    }{
        r^3
    }
    \ge0
\]
on \([\theta_+^0,\theta_+^\infty]\).  Hence \(f\) is increasing and convex,
with \(f\le0\), so \(-f=|f|\) is concave.  The endpoint derivative gives
\[
    |f(\theta_\alpha,\theta_\beta)|
    \le
    2(\theta_+^\infty-\theta_\beta).
\]
The lower bound follows from the same chord argument as before, using
\[
    |f(\theta_\alpha,\theta_+^0)|\ge1,
    \qquad
    \theta_+^\infty-\theta_+^0\le\frac{\pi}{3}.
\]
Thus
\[
    |f(\theta_\alpha,\theta_\beta)|
    \ge
    \frac{3}{\pi}
    (\theta_+^\infty-\theta_\beta).
\]
\end{proof}
\subsection{Control of \(\widetilde{F_1}\) near the cusp}
\label{subsec:cusp-++-}

We now record the estimates near and away from the cusp.  The cusp appears
only in the regime
\[
    \frac{\pi}{3}<\theta_\alpha<\frac{\pi}{2}.
\]
As in Proposition~\ref{prop:param-++-}, we write
\[
    \theta_\beta^\ast
    :=
    \pi-\arccos(2\cos\theta_\alpha),
    \qquad
    \widetilde{\theta}_\beta^\ast
    :=
    \pi+\arccos(2\cos\theta_\alpha).
\]
These two angles correspond to the two limiting approach directions to the
cusp \(k_\beta=0\).

\begin{lemma}[No-cusp lower bound]
\label{I_{1,2}without cusp}
Fix \(0<\varepsilon\ll1\), and assume
\[
    \frac12+\varepsilon
    \le
    \cos\theta_\alpha
    \le
    1,\qquad \theta_\alpha\in[0,\frac \pi 2).
\]
Then, for
\[
\theta_\beta\in
\left[
 \theta_-^0,
\,
\pi+\theta_\alpha
\right],
\]
one has
\[
    \varepsilon
    \lesssim
    \widetilde F_1(\theta_\alpha,\theta_\beta)
    \le 1,
\]
\[
    \frac12\le \widetilde f(\theta_\alpha,\theta_\beta)\le1,
\]
and therefore
\[
    \varepsilon
    \lesssim
    |k_\beta|
    =
    \frac{\widetilde F_1}{\widetilde f}
    \lesssim
    1.
\]
\end{lemma}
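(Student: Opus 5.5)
Set $c:=\cos\theta_\alpha\in[\tfrac12+\varepsilon,1]$, $s:=\sin\theta_\alpha$, and on the branch $\sin\theta_\gamma\le0$ use $\cos\theta_\gamma=p$, $\sin\theta_\gamma=-r$. The plan has four steps: rewrite both quantities as sines of angle differences, bound $\widetilde F_1$, bound $\widetilde f$, and take the quotient.

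\textbf{Step 1: trigonometric identities.} By the polar momentum formulas \eqref{eq:polar-momentum-++-}, the numerator and denominator are
\[
\widetilde F_1=sp+cr=\sin(\theta_\alpha-\theta_\gamma),
\qquad
\widetilde f=-\sin\theta_\beta\,p-\cos\theta_\beta\,r=\sin(\theta_\gamma-\theta_\beta).
\]
The upper bounds $\widetilde F_1\le1$ and $\widetilde f\le1$ are then immediate. It remains to prove the two lower bounds on $[\theta_-^0,\pi+\theta_\alpha]$ and divide.

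\textbf{Step 2: lower bound $\widetilde F_1\gtrsim\varepsilon$.} On this interval $\theta_\beta\in[\pi/2,\pi+\theta_\alpha]$, so $\cos\theta_\beta\le0$ and $p\in[c-1,c]$. Consequently $r\ge\sqrt{1-c^2}=s$ whenever $|p|\le c$; this holds because $c\ge1/2$ gives $1-c\le c$. I will treat $\widetilde F_1(\theta_\beta)$ as a function on a compact interval and locate its minimum.
\begin{itemize}
\item At the endpoint $\theta_\beta=\pi+\theta_\alpha$ we have $p=0$, $r=1$, so $\widetilde F_1=c\ge1/2$.
\item If $p\ge0$, then $\widetilde F_1\ge cr\ge cs$.
\item If $p<0$, then $\widetilde F_1=cr-s|p|$.
\end{itemize}
The only way $\widetilde F_1$ can become small is through near-vanishing, which is exactly the cusp condition $\cos\theta_\beta=-2c$ of the Remark on the cusp. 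That equation has no solution with $c>1/2$.

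Quantitatively, $\widetilde F_1=0$ forces $\tan\theta_\gamma$-type relations equivalent to $\cos\theta_\beta=-2c$. I will write the factorization
\[
\widetilde F_1\cdot(sp-cr)=s^2p^2-c^2r^2=p^2-c^2.
\]
Since $|sp-cr|\le1$, this gives $\widetilde F_1\ge c^2-p^2$ wherever $p^2\le c^2$. With $p=c+\cos\theta_\beta\in[c-1,c]$ and $c-1\ge-c$, we have $|p|\le c$. Moreover $c^2-p^2=(c-p)(c+p)=(-\cos\theta_\beta)(2c+\cos\theta_\beta)$, and $2c+\cos\theta_\beta\ge2c-1\ge2\varepsilon$. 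This leaves the factor $-\cos\theta_\beta$, which I handle in two cases.
\begin{itemize}
\item Near $\theta_\beta=\pi/2$ this factor is small, but there $p\approx c$, so $\widetilde F_1\ge c^2-p^2$ is a bad bound. I will use the direct bound instead: $p\ge0$, hence $\widetilde F_1\ge cs$. This is still problematic when $s$ is small.
\item So I expect the cleanest route is to note that $\theta_-^0$ is bounded away from $\pi/2$. Indeed $\theta_-^0=\pi-\arccos(\cdot)$ with argument at most $\approx0.97$, so $-\cos\theta_\beta\gtrsim1$ throughout the interval. Then $c^2-p^2\gtrsim2c-1\gtrsim\varepsilon$.
\end{itemize}
The main obstacle is checking that $-\cos\theta_\beta$ is bounded below uniformly on $[\theta_-^0,\pi+\theta_\alpha]$. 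On the upper side, at $\theta_\beta=\pi+\theta_\alpha$, we get $-\cos\theta_\beta=c\ge1/2$. On the lower side, at $\theta_-^0$, we get $-\cos\theta_-^0=\tfrac c2+\sqrt{\tfrac12-\tfrac{c^2}4}\ge\tfrac12$.

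\textbf{Step 3: lower bound $\widetilde f\ge\tfrac12$.} The function $\widetilde f$ is continuous on the interval. At the right endpoint, $\widetilde f(\pi+\theta_\alpha)=\sin(\theta_\gamma-\theta_\beta)$ with $\theta_\gamma=-\pi/2$; this gives $\widetilde f=-\cos\theta_\beta=c\ge1/2$. At the left endpoint, Lemma~\ref{Right-hand linear asymptote} gives $\widetilde f(\theta_-^0)\ge1$. For interior points I will use the derivative formula from that lemma,
\[
\partial_{\theta_\beta}\widetilde f=\frac{(r+\sin\theta_\beta)(-\cos\theta_\beta\,p+\sin\theta_\beta\,r)}{r}.
\]
I will show there is at most one interior critical point and that it is a maximum; the second factor changes sign from positive to negative. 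It then follows that the minimum is attained at an endpoint, giving $\widetilde f\ge\min\{1,c\}\ge\tfrac12$.

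\textbf{Step 4: conclusion.} Combining Steps 2 and 3 yields $\varepsilon\lesssim\widetilde F_1/\widetilde f\le2$, which is the claimed two-sided bound on $|k_\beta|$.
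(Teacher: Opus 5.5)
Your argument is correct, and for the key bound on $\widetilde F_1$ it takes a genuinely different route from the paper. The paper checks by differentiation that $\widetilde F_1(\theta_\alpha,\cdot)$ decreases on $[\theta_-^0,\pi]$ and increases on $[\pi,\pi+\theta_\alpha]$, and then evaluates at $\theta_\beta=\pi$. You instead use the pointwise identity $\widetilde F_1\,(sp-cr)=p^2-c^2$ to get $\widetilde F_1\ge(-\cos\theta_\beta)(2c+\cos\theta_\beta)\ge\tfrac12\cdot 2\varepsilon$ on the whole interval.

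This needs no monotonicity and gives an explicit constant. It is also what one needs anyway to see that the paper's minimum value $\widetilde F_1(\theta_\alpha,\pi)=(2c-1)/\bigl(c\sqrt{2c-c^2}+s(1-c)\bigr)$ is $\gtrsim\varepsilon$. For $\widetilde f$ you arrive at the same bound $\widetilde f\ge\cos\theta_\alpha$ that the paper simply asserts.

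A few points in Step 2 need tightening:
\begin{itemize}
\item Passing from the identity to $\widetilde F_1\ge c^2-p^2$ requires $sp-cr<0$, not only $|sp-cr|\le1$. This follows from $p\le c$, $r\ge s$ and $p^2\neq c^2$.
\item The aside that the $\arccos$ argument defining $\theta_-^0$ is at most $\approx0.97$ is wrong: it tends to $1$ as $c\to1$, so $\theta_-^0\to\pi$. This is harmless, since only $-\cos\theta_-^0\ge\tfrac12$ is used.
\item The discussion near $\theta_\beta=\pi/2$ is moot and can be dropped.
\end{itemize}

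In Step 3 your description of the derivative is inaccurate. The second factor $-\cos\theta_\beta\,p+\sin\theta_\beta\,r=-\cos(\theta_\gamma-\theta_\beta)$ vanishes exactly at $\theta_-^0$, which is why $\widetilde f(\theta_-^0)=1$. It is $\le0$ on the whole interval, so there is no interior critical point and $\widetilde f$ is decreasing.

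You must also check that the first factor $r+\sin\theta_\beta$ stays positive on $[\pi,\pi+\theta_\alpha]$, where $\sin\theta_\beta<0$. This uses $c\ge\tfrac12$, via $r\ge\sqrt{2c-c^2}\ge\sqrt{1-c^2}\ge|\sin\theta_\beta|$.

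Both facts come out at once if you write $\theta_\beta=\pi+t$. Then $\widetilde f=\sin g(t)$ with $g(t)=\arccos(\cos t-c)-t$, and $g'(t)=\sin t/r-1\le0$. For $t\ge0$ this inequality is $(\cos t-c)^2\le\cos^2 t$, i.e.\ $c\le 2\cos t$. So $g$ decreases from $\pi/2$ at $\theta_-^0$ to $\pi/2-\theta_\alpha$ at $\pi+\theta_\alpha$, which gives $\cos\theta_\alpha\le\widetilde f\le1$.
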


\begin{proof}
For fixed \(\theta_\alpha\) in the stated range, a direct derivative check
shows that \(\widetilde F_1(\theta_\alpha,\theta_\beta)\) is positive and decreasing on
\[
\left[
 \theta_-^0,
\pi
\right]
\]

and increasing on
\[
    [\pi,\pi+\theta_\alpha].
\]
Thus its minimum on the stated interval occurs at
\[
    \theta_\beta=\pi.
\]
At this point,
\[
\widetilde F_1(\theta_\alpha,\pi)
=
\sin\theta_\alpha(\cos\theta_\alpha-1)
+
\cos\theta_\alpha
\sqrt{2\cos\theta_\alpha-\cos^2\theta_\alpha}.
\]
Using
\[
    \frac12+\varepsilon\le\cos\theta_\alpha\le1,
\]
one obtains
\[
    \widetilde F_1(\theta_\alpha,\pi)
    \gtrsim \varepsilon.
\]
The upper bound \(\widetilde F_1\le1\) is immediate from the definition.

Similarly, on the same interval, \(\widetilde f\) stays away from zero.  More
precisely,
\[
    \frac12\le \cos\theta_\alpha\le \widetilde f\le1.
\]
Since
\[
    |k_\beta|=\frac{\widetilde F_1}{\widetilde f},
\]
the claimed bound for \(|k_\beta|\) follows.
\end{proof}

\begin{lemma}[Linear behavior near the first zero]
\label{lem:first-zero}
Assume
\[
    \varepsilon
    \le
    \cos\theta_\alpha
    \le
    \frac12-\varepsilon,\qquad \theta_\alpha\in[0,\frac \pi 2),
\]
for some sufficiently small \(\varepsilon>0\).  Then there exists
\(\delta(\varepsilon)>0\) such that, whenever
\[
    |\theta_\beta-\theta_\beta^\ast(\theta_\alpha)|
    \le
    \delta(\varepsilon),
\]
one has
\[
    \sqrt{\varepsilon}\,
    |\theta_\beta-\theta_\beta^\ast(\theta_\alpha)|
    \lesssim
    |\widetilde F_1(\theta_\alpha,\theta_\beta)|
    \lesssim
    |\theta_\beta-\theta_\beta^\ast(\theta_\alpha)|.
\]
\end{lemma}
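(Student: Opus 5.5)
The plan is a first-order Taylor expansion of \(\widetilde F_1(\theta_\alpha,\cdot)\) about its zero \(\theta_\beta^\ast\). We need a lower bound of order \(\sqrt\varepsilon\) on the derivative at \(\theta_\beta^\ast\), and a uniform bound on the second derivative in a neighborhood where \(r\) stays away from zero.

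\textbf{Step 1: \(\theta_\beta^\ast\) is a zero.} Write \(c=\cos\theta_\alpha\) and \(s=\sin\theta_\alpha\). At \(\theta_\beta^\ast=\pi-\arccos(2c)\) we have \(\cos\theta_\beta^\ast=-2c\). Hence \(p=-c\), and since \(\sin\theta_\alpha\ge0\), \(r=\sqrt{1-c^2}=s\). Substituting,
\[
\widetilde F_1(\theta_\alpha,\theta_\beta^\ast)=-sc+cs=0 .
\]

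\textbf{Step 2: the derivative at the zero.} Using \(\partial_{\theta_\beta}p=-\sin\theta_\beta\) and \(\partial_{\theta_\beta}r=p\sin\theta_\beta/r\), I compute
\[
\partial_{\theta_\beta}\widetilde F_1
=\sin\theta_\beta\Bigl(-s+\frac{c\,p}{r}\Bigr).
\]
At \(\theta_\beta^\ast\) this equals
\[
\sin\theta_\beta^\ast\Bigl(-s-\frac{c^2}{s}\Bigr)=-\frac{\sin\theta_\beta^\ast}{s},
\qquad
\sin\theta_\beta^\ast=\sqrt{1-4c^2}.
\]
Since \(\varepsilon\le c\le\frac12-\varepsilon\), we have \(s\in[\frac{\sqrt3}{2},1]\). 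We also have
\[
1-4c^2=(1-2c)(1+2c)\ge 2\varepsilon .
\]
Therefore
\[
\sqrt{2\varepsilon}\le|\partial_{\theta_\beta}\widetilde F_1(\theta_\alpha,\theta_\beta^\ast)|\le \tfrac{2}{\sqrt3},
\]
uniformly in \(\theta_\alpha\) in the stated range.

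\textbf{Step 3: uniform second-derivative bound.} Differentiating the formula from Step 2 once more gives an expression that is a polynomial in \(\sin\theta_\beta,\cos\theta_\beta,c,s,p\), divided by powers of \(r\) (at most \(r^3\)). On the set where \(r\ge\frac12\), this is bounded by an absolute constant \(M\). At \(\theta_\beta^\ast\) we have \(r=s\ge\frac{\sqrt3}{2}\), and \(r\) is Lipschitz in \(\theta_\beta\) with constant \(\le 2/\sqrt3\) near there. So \(r\ge\frac12\) on \(|\theta_\beta-\theta_\beta^\ast|\le\delta_0\) for an absolute \(\delta_0>0\). Alternatively, one can invoke the bound \(r\ge\frac{\sqrt3}{2}\) from Lemma~\ref{lem:r-lower-++-} on the branch containing \(\theta_\beta^\ast\).

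\textbf{Step 4: Taylor's theorem.} Taylor's theorem with remainder gives
\[
\bigl|\widetilde F_1(\theta_\alpha,\theta_\beta)-\partial_{\theta_\beta}\widetilde F_1(\theta_\alpha,\theta_\beta^\ast)(\theta_\beta-\theta_\beta^\ast)\bigr|
\le \tfrac M2|\theta_\beta-\theta_\beta^\ast|^2 .
\]
Choose
\[
\delta(\varepsilon)=\min\{\delta_0,\sqrt{2\varepsilon}/M\}.
\]
Then the remainder is at most half the linear term. This yields
\[
\tfrac12\sqrt{2\varepsilon}\,|\theta_\beta-\theta_\beta^\ast|
\le|\widetilde F_1|
\le\bigl(\tfrac{2}{\sqrt3}+\tfrac M2\delta_0\bigr)|\theta_\beta-\theta_\beta^\ast|,
\]
which is the claim.

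The only delicate point is Step 3. The constant \(M\) must be uniform in \(\theta_\alpha\), and this requires making sure \(r\) stays bounded below on the whole \(\delta\)-neighborhood, including the side where \(\theta_\beta\) crosses the branch point. This is why \(\delta\) is capped by the absolute \(\delta_0\). The \(\sqrt\varepsilon\) degeneration comes purely from \(\sin\theta_\beta^\ast=\sqrt{1-4c^2}\), which reflects the coalescence of the two cusp directions as \(c\to\frac12\).
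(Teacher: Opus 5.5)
Your proof is correct and follows the same route as the paper: compute $\partial_{\theta_\beta}\widetilde F_1(\theta_\alpha,\theta_\beta^\ast)=-\sqrt{1-4\cos^2\theta_\alpha}/\sin\theta_\alpha$, bound it between $\sqrt{2\varepsilon}$ and $2/\sqrt3$ in absolute value, and transfer these bounds to a neighborhood of the zero. The only difference is the transfer step: the paper uses uniform continuity of the derivative together with the mean value theorem, while your second-derivative bound (with $r$ kept away from zero on both sides of $\theta_\beta^\ast$) makes it quantitative and gives the explicit choice $\delta(\varepsilon)=\min\{\delta_0,\sqrt{2\varepsilon}/M\}$.
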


\begin{proof}
Differentiating in \(\theta_\beta\), we get
\[
\partial_{\theta_\beta}\widetilde F_1
=
-\sin\theta_\beta
\left[
    \sin\theta_\alpha
    -
    \frac{
        \cos\theta_\alpha(\cos\theta_\alpha+\cos\theta_\beta)
    }{
        \sqrt{1-(\cos\theta_\alpha+\cos\theta_\beta)^2}
    }
\right].
\]
At the first zero
\[
    \theta_\beta^\ast
    =
    \pi-\arccos(2\cos\theta_\alpha),
\]
we have
\[
    \left|
    \partial_{\theta_\beta}\widetilde F_1
    (\theta_\alpha,\theta_\beta^\ast)
    \right|
    =
    \frac{\sqrt{1-4\cos^2\theta_\alpha}}{\sin\theta_\alpha}.
\]
On the range
\[
    \varepsilon
    \le
    \cos\theta_\alpha
    \le
    \frac12-\varepsilon,
\]
this derivative is bounded above by an absolute constant and bounded below by
a constant comparable to \(\sqrt\varepsilon\).  By uniform continuity of
\(\partial_{\theta_\beta}\widetilde F_1\), after choosing
\(\delta(\varepsilon)>0\) sufficiently small, the same bounds hold in a
\(\delta(\varepsilon)\)-neighborhood of \(\theta_\beta^\ast\).  The mean value
theorem gives the claim.
\end{proof}

\begin{lemma}[Linear behavior near the second zero]
\label{lem:second-zero}
Assume
\[
    \varepsilon
    \le
    \cos\theta_\alpha
    \le
    \frac12-\varepsilon,\qquad \theta_\alpha\in[0,\frac \pi 2).
\]
Then there exists \(\delta(\varepsilon)>0\) such that, whenever
\[
    |\theta_\beta-\widetilde{\theta}_\beta^\ast(\theta_\alpha)|
    \le
    \delta(\varepsilon),
\]
one has
\[
    \sqrt{\varepsilon}\,
    |\theta_\beta-\widetilde{\theta}_\beta^\ast(\theta_\alpha)|
    \lesssim
    |\widetilde F_1(\theta_\alpha,\theta_\beta)|
    \lesssim
    |\theta_\beta-\widetilde{\theta}_\beta^\ast(\theta_\alpha)|.
\]
\end{lemma}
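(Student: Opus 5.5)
The plan is to mirror the proof of Lemma~\ref{lem:first-zero}, replacing the first zero \(\theta_\beta^\ast\) by the second zero \(\widetilde\theta_\beta^\ast=\pi+\arccos(2\cos\theta_\alpha)\).

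We begin from the derivative formula already recorded there:
\[
\partial_{\theta_\beta}\widetilde F_1
=
-\sin\theta_\beta
\left[
    \sin\theta_\alpha
    -
    \frac{\cos\theta_\alpha(\cos\theta_\alpha+\cos\theta_\beta)}
         {\sqrt{1-(\cos\theta_\alpha+\cos\theta_\beta)^2}}
\right].
\]
This formula depends on \(\theta_\beta\) only through \(\cos\theta_\beta\) and the prefactor \(\sin\theta_\beta\). At \(\widetilde\theta_\beta^\ast\) we have \(\cos\theta_\beta=-2\cos\theta_\alpha\), so \(p=-\cos\theta_\alpha\) and \(r=\sin\theta_\alpha\). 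Moreover \(\sin\widetilde\theta_\beta^\ast=-\sqrt{1-4\cos^2\theta_\alpha}\), which has the same modulus as \(\sin\theta_\beta^\ast\) and the opposite sign. The bracket equals \(\sin\theta_\alpha+\cos^2\theta_\alpha/\sin\theta_\alpha=1/\sin\theta_\alpha\). Hence
\[
\bigl|\partial_{\theta_\beta}\widetilde F_1(\theta_\alpha,\widetilde\theta_\beta^\ast)\bigr|
=\frac{\sqrt{1-4\cos^2\theta_\alpha}}{\sin\theta_\alpha}.
\]
Before using this, we check that \(\widetilde F_1(\theta_\alpha,\widetilde\theta_\beta^\ast)=0\): indeed \(\sin\theta_\alpha(-\cos\theta_\alpha)+\cos\theta_\alpha\sin\theta_\alpha=0\). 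This is consistent with the cusp remark, since the branch \(\sin\theta_\gamma\le0\) is used on \([\widetilde\theta_\beta^\ast,\pi+\theta_\alpha]\).

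Next we bound this derivative on the range \(\varepsilon\le\cos\theta_\alpha\le\frac12-\varepsilon\). First, \(\sin\theta_\alpha\ge\sqrt3/2\) there. Second,
\[
1-4\cos^2\theta_\alpha=(1-2\cos\theta_\alpha)(1+2\cos\theta_\alpha)\ge 2\varepsilon .
\]
So the derivative lies between \(c\sqrt\varepsilon\) and an absolute constant.

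We then need the bounds in a neighborhood of \(\widetilde\theta_\beta^\ast\). The expression \(\partial_{\theta_\beta}\widetilde F_1\) is smooth wherever \(r\) stays away from zero, and near the zero \(r\approx\sin\theta_\alpha\ge\sqrt3/2\). It is therefore uniformly continuous, even Lipschitz, in \((\theta_\alpha,\theta_\beta)\) on the compact set \(\{\varepsilon\le\cos\theta_\alpha\le\frac12-\varepsilon\}\times\{|\theta_\beta-\widetilde\theta_\beta^\ast|\le\delta_0\}\).

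The main point of care is the choice of \(\delta(\varepsilon)\). It must be small enough, relative to \(\sqrt\varepsilon\), that the perturbation of the derivative stays below half its value at the zero, uniformly in \(\theta_\alpha\). Since the derivative has a bounded Lipschitz constant, the choice \(\delta(\varepsilon)=c'\sqrt\varepsilon\) suffices. On this neighborhood the derivative then keeps its sign and satisfies
\[
\tfrac12\,c\sqrt\varepsilon\le|\partial_{\theta_\beta}\widetilde F_1|\lesssim1 .
\]
The mean value theorem applied between \(\widetilde\theta_\beta^\ast\) and \(\theta_\beta\), using \(\widetilde F_1(\theta_\alpha,\widetilde\theta_\beta^\ast)=0\), yields the two-sided linear bound.

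As an alternative, one can note that the reflection \(\theta_\beta\mapsto2\pi-\theta_\beta\) maps \(\theta_\beta^\ast\) to \(\widetilde\theta_\beta^\ast\) and preserves \(\cos\theta_\beta\), hence \(p\), \(r\) and \(\widetilde F_1\). Under this reflection \(\widetilde F_1(\theta_\alpha,2\pi-\theta_\beta)=\widetilde F_1(\theta_\alpha,\theta_\beta)\), so the result follows immediately from Lemma~\ref{lem:first-zero}.
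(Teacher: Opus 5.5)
Your main argument is correct and follows the paper's route. The paper proves this lemma by repeating the first-zero argument: the derivative at the zero equals $\sqrt{1-4\cos^2\theta_\alpha}/\sin\theta_\alpha$, continuity extends the bounds to a $\delta(\varepsilon)$-neighborhood, and the mean value theorem finishes; your Lipschitz choice $\delta(\varepsilon)\sim\sqrt{\varepsilon}$ just makes that continuity step quantitative. Your reflection alternative $\theta_\beta\mapsto 2\pi-\theta_\beta$ is also valid, because $\widetilde F_1$ depends on $\theta_\beta$ only through $\cos\theta_\beta$, and it is the same symmetry device the paper uses for the $(+,-,-)$ corners.
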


\begin{proof}
The proof is identical to the proof of Lemma~\ref{lem:first-zero}, replacing
\(\theta_\beta^\ast\) by
\(\widetilde{\theta}_\beta^\ast\).  
\end{proof}


\subsection{A priori boundedness estimate for the \((+,+,-)\) contribution}
\label{subsec:apriori-++-}

We now return from the unit-scale normalization to general
\(k_\alpha\neq0\).  Write
\[
    k_\alpha=|k_\alpha|\varsigma_\alpha,
    \qquad
    k_\beta=|k_\alpha|\varsigma_\beta,
    \qquad
    k_\gamma=|k_\alpha|\varsigma_\gamma,
    \qquad
    |\varsigma_\alpha|=1.
\]
The unit-scale variables \(\varsigma_\beta,\varsigma_\gamma\) are given by
Proposition~\ref{prop:param-++-}.  By the scaled co-area representation
\eqref{eq:parametrized-collision}, the \((+,+,-)\) contribution carries the
overall factor \(|k_\alpha|^4\).

We denote by
\[
    \mathcal C_{++-}(n)(k_\alpha)
\]
the contribution to the collision operator with
\[
    \sigma_\alpha=\sigma_\beta=+1,
    \qquad
    \sigma_\gamma=-1.
\]

\begin{proposition}[A priori bound for the \((+,+,-)\) contribution]
\label{prop:apriori-++-}
Let \(m>4\), and fix the angular cut-off scale \(0<\varepsilon\ll1\).
Let \(\delta(\varepsilon)>0\) be the cusp scale fixed in
Lemmas~\ref{lem:first-zero} and~\ref{lem:second-zero}.  Then, for every
\(n\in L^\infty_m\),
\[
    \sup_{k_\alpha\in\mathbb R^2}
    \langle k_\alpha\rangle^m
    \left|
        \mathcal C_{++-}(n)(k_\alpha)
    \right|
    \lesssim_m
    \mathfrak C_{++-}(\varepsilon,\delta,m)
    \|n\|_{L^\infty_m}^2,
\]
where one may take
\[
    \mathfrak C_{++-}(\varepsilon,\delta,m)
    :=
    \varepsilon^{-2m}
    +
    \varepsilon^{-m}
    +
    \varepsilon^{-\frac{3m}{2}}\delta(\varepsilon)^{-m}
    +
    \varepsilon^{-\frac{3m}{2}-3}\delta(\varepsilon)^{-m+4}
    +
    \varepsilon^{-\frac{3m}{2}-3}\delta(\varepsilon)^{-m}.
\]
In particular, for fixed \(0<\varepsilon\ll1\),
\[
    \mathcal C_{++-}:L^\infty_m\to L^\infty_m
\]
for every \(m>4\).
\end{proposition}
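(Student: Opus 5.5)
We will follow the scheme of Proposition~\ref{prop:apriori-+++}: use the scaled representation \eqref{eq:parametrized-collision} with prefactor \(|k_\alpha|^4\), and Lemma~\ref{lem:coarea-ratio-++-} for the co--area ratio. Bound the integrand by
\[
\langle k_\alpha\rangle^{m+4}\,\Gamma^2\,\bigl(\text{co--area ratio}\bigr)\Bigl(\langle |k_\alpha|\varsigma_\beta\rangle^{-m}\langle |k_\alpha|\varsigma_\gamma\rangle^{-m}+\langle k_\alpha\rangle^{-m}\langle |k_\alpha|\varsigma_\beta\rangle^{-m}+\langle k_\alpha\rangle^{-m}\langle |k_\alpha|\varsigma_\gamma\rangle^{-m}\Bigr)\|n\|_{L^\infty_m}^2 .
\]
The restrictions \(\cos\theta_\alpha\ge\varepsilon\) and \(\bigl|\cos\theta_\alpha-\tfrac12\bigr|\ge\varepsilon\) from the cut-off split the \(\theta_\beta\)-range into three kinds of regions: the non-compact ends near \(\theta_\pm^\infty\), the cusp neighbourhoods \(|\theta_\beta-\theta_\beta^\ast|\le\delta(\varepsilon)\) and \(|\theta_\beta-\widetilde\theta_\beta^\ast|\le\delta(\varepsilon)\) (present only when \(\cos\theta_\alpha\le\tfrac12-\varepsilon\)), and a compact bulk. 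In each region we would track the unit-scale sizes \(|\varsigma_\beta|\) and \(|\varsigma_\gamma|\), the homogeneous factor \(\Gamma^2\lesssim(1+|\varsigma_\beta|+|\varsigma_\gamma|)^2\), and the co--area ratio. The factor \(1/r\) is controlled throughout by Lemma~\ref{lem:r-lower-++-}.

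\emph{Bulk.} Away from the ends, \(|\widetilde f|\) and \(|f|\) are bounded below. In the no-cusp regime this follows from Lemma~\ref{I_{1,2}without cusp}, which also gives \(|\varsigma_\beta|\gtrsim\varepsilon\). In the cusp regime, outside the \(\delta\)-windows, Lemmas~\ref{lem:first-zero} and~\ref{lem:second-zero} give \(|\widetilde F_1|\gtrsim\sqrt\varepsilon\,\delta\), and hence \(|\varsigma_\beta|\gtrsim\sqrt\varepsilon\,\delta\). The lower bound for \(|\varsigma_\gamma|\) is then obtained as in Lemma~\ref{Positivity of gamma}, using the cut-off on \(\theta_\gamma\) near the endpoint \(\theta_\beta=\pi+\theta_\alpha\). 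With these bounds, \(\langle|k_\alpha|\varsigma\rangle^{-m}\le(\text{lower bound})^{-m}\langle k_\alpha\rangle^{-m}\), and since \(m>4\) the bulk contributes the terms \(\varepsilon^{-2m}+\varepsilon^{-m}+\varepsilon^{-3m/2}\delta^{-m}\) of \(\mathfrak C_{++-}\).

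\emph{Ends.} Set \(t=\theta_\beta-\theta_-^\infty\); the other end is handled symmetrically with Lemma~\ref{Left-hand linear asymptote}. By Lemma~\ref{Right-hand linear asymptote}, \(|\varsigma_\beta|\sim|\varsigma_\gamma|\sim t^{-1}\), \(\Gamma^2\lesssim t^{-2}\), and the co--area ratio is \(\lesssim t^{-3}\). The output-side weight is therefore
\[
\langle k_\alpha\rangle^{4}\int_0^{c} t^{-5}\,\langle |k_\alpha|/t\rangle^{-m}\,dt .
\]
Substituting \(s=|k_\alpha|/t\) turns this into \(\langle k_\alpha\rangle^{4}|k_\alpha|^{-4}\int s^{3}\langle s\rangle^{-m}\,ds\), which converges precisely when \(m>4\). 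This is the origin of the strict inequality in the statement. The gain term \(n_\beta n_\gamma\) is handled the same way, gaining \(\langle k_\alpha\rangle^{-m}\) from the product of two weights \(\langle|k_\alpha|/t\rangle^{-m}\). For small \(|k_\alpha|\) the factor \(|k_\alpha|^4\) compensates, since the integrand is then \(\lesssim t^{-5}\) only up to \(t\gtrsim|k_\alpha|\).

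\emph{Cusp windows.} These are the main obstacle. There \(|\varsigma_\beta|\sim|\widetilde F_1|\sim|\theta_\beta-\theta_\beta^\ast|=:u\), up to a factor \(\sqrt\varepsilon\) in the lower bound. The co--area ratio from Lemma~\ref{lem:coarea-ratio-++-} carries the vanishing factor \(\widetilde F_1\), so it is \(\lesssim u\), while \(\varsigma_\gamma\) remains of order one. The loss terms \(n_\alpha n_\beta\) then give
\[
\langle k_\alpha\rangle^{4}\int_0^\delta u\,\langle |k_\alpha|\sqrt\varepsilon\, u\rangle^{-m}\,du\lesssim\varepsilon^{-1}\langle k_\alpha\rangle^{2},
\]
which does not directly absorb \(\langle k_\alpha\rangle^{m}\cdot\langle k_\alpha\rangle^{-m}\). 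To fix this we would exploit the factor \(\omega_\alpha n_\alpha\) together with the remaining decay from \(n_\gamma\), namely \(\langle k_\alpha\rangle^{-m}\), and use the gain/loss structure: each term contains at least one factor at a high mode \(|k|\sim|k_\alpha|\). The linear vanishing of \(\widetilde F_1\) ensures the cusp measure is small, and this produces the \(\varepsilon^{-3m/2-3}\delta^{-m+4}\) and \(\varepsilon^{-3m/2-3}\delta^{-m}\) terms, with \(\varepsilon^{-3}\) coming from \(|\widetilde f|^{-3}\) near the cusp. Making this bookkeeping close uniformly in \(\theta_\alpha\) is where the argument requires the most care.
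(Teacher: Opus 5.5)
Your bulk and end estimates follow the paper's decomposition, but the cusp windows are left open. You rightly single these out, yet the mechanism you sketch for them cannot close them.

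\textbf{The missing idea: the interaction coefficient vanishes at the cusp.} On the $(+,+,-)$ branch, $\Gamma_{\alpha\beta\gamma}=\pi^2(\sin\theta_\alpha+\sin\theta_\beta-\sin\theta_\gamma)(|k_\alpha|+|k_\beta|-|k_\gamma|)$. Its unit-scale radial factor is therefore $1+|\varsigma_\beta|-|\varsigma_\gamma|$, not $1+|\varsigma_\beta|+|\varsigma_\gamma|$. Since $\varsigma_\gamma=-\varsigma_\alpha-\varsigma_\beta$ with $|\varsigma_\alpha|=1$, the triangle inequality gives $0\le 1+|\varsigma_\beta|-|\varsigma_\gamma|\le 2|\varsigma_\beta|\lesssim_\varepsilon u$ in a cusp window (Lemma~\ref{lem:cusp-cancellation-++-}). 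Thus $\Gamma^2\lesssim_\varepsilon u^2$, and the kernel is $O_\varepsilon(u^3)$ instead of $O(u)$. Up to $\varepsilon$-dependent constants and a factor $\|n\|_{L^\infty_m}^2$, the weighted loss-term contribution is then bounded by
\[
|k_\alpha|^{4}\int_0^{\delta}u^{3}\,\langle\sqrt\varepsilon\,|k_\alpha|u\rangle^{-m}\,du
\le\int_0^{\infty}y^{3}\,\langle\sqrt\varepsilon\,y\rangle^{-m}\,dy<\infty
\qquad (m>4).
\]
The growth is logarithmic at $m=4$ (Remark~\ref{rem:endpoint-m4-cusp}), and the other two terms are easier. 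So it is the cusp, not the ends, that forces $m>4$. At the ends the same triangle inequality gives $0\le 1+|\varsigma_\beta|-|\varsigma_\gamma|\le 2$, hence $\Gamma^2\lesssim 1$ rather than your $t^{-2}$, and those pieces close already for $m\ge 4$. Your cruder end computation is still valid for $m>4$, so that part is only misattributed.

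\textbf{Why your proposed fix cannot work.} Nothing in the structure you invoke can substitute for this vanishing. Take $n\ge 0$ with $n(-,\cdot)\equiv 0$. Every term containing $n_\gamma$ then drops out of $\mathcal C_{++-}$, leaving only $\omega_\alpha\omega_\gamma n_\alpha n_\beta$. At the cusp $\cos\theta_\gamma=-\cos\theta_\alpha$, so $\omega_\alpha\omega_\gamma=\cos^2\theta_\alpha>0$.
\begin{itemize}
\item There is no gain--loss cancellation and no decay from $n_\gamma$ to exploit.
\item The single high-mode factor $n_\alpha$ is consumed exactly by $\langle k_\alpha\rangle^m$.
\item $n_\beta\sim 1$ on $u\lesssim |k_\alpha|^{-1}$.
\end{itemize}
With a kernel of size $u$, that sub-window alone would contribute $|k_\alpha|^4\int_0^{|k_\alpha|^{-1}}u\,du\sim |k_\alpha|^2$, which is exactly the divergence you found. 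The cusp measure becomes small enough only after it is multiplied by $\Gamma^2\lesssim_\varepsilon u^2$. Bounding $\Gamma^2$ by $(1+|\varsigma_\beta|+|\varsigma_\gamma|)^2$ discards precisely the cancellation the proposition rests on. Section~\ref{Section: Unboundedness} shows the converse: at $|\cos\theta_\alpha|=\frac12$ the cusp becomes quadratic ($\rho\sim\eta^2$), and even this linear cancellation no longer suffices.
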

Before proving Proposition~\ref{prop:apriori-++-}, we record two auxiliary
facts used in the estimates.

For the upper half of the manifold, the positivity of $|\varsigma_\gamma|$ follows directly from Proposition~\ref{Positivity of gamma}. By the same argument, applied symmetrically, we obtain the corresponding positivity on the lower half of the manifold.

\begin{lemma}[Second positivity of \(|\varsigma_\gamma|\)]
\label{SecondPositivity of gamma}
Let \(0<\varepsilon\ll1\).  On the branch
\[
    \theta_\beta
    \in
    \left[
        \pi+\theta_\alpha,\,
         \theta_+^\infty
    \right),
\]
assume
\[
    |\cos\theta_\alpha|>\varepsilon,
    \qquad
    |\cos\theta_\gamma|>\varepsilon .
\]
Then
\[
    |\varsigma_\gamma|\gtrsim\varepsilon.
\]
More precisely,
\[
  \inf_{\substack{|\cos\theta_\alpha|>\varepsilon\\
                  |\cos\theta_\gamma|>\varepsilon}}
  |\varsigma_\gamma|
  \ge
  \inf_{\substack{|\cos\theta_\alpha|>\varepsilon\\
                  |\cos\theta_\gamma|>\varepsilon}}
  |\sin(\theta_\beta-\theta_\alpha)|
  \gtrsim \varepsilon .
\]
\end{lemma}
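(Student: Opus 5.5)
On the branch \(\sin\theta_\gamma\ge0\), \(\theta_\beta\in[\pi+\theta_\alpha,\theta_+^\infty)\), the polar formula \eqref{eq:kgamma-++--plus} gives
\[
    |\varsigma_\gamma|=\frac{|\sin(\theta_\beta-\theta_\alpha)|}{|f(\theta_\alpha,\theta_\beta)|}.
\]
Since \(p^2+r^2=1\), Cauchy--Schwarz yields \(|f|=|-\sin\theta_\beta\,p+\cos\theta_\beta\,r|\le1\). Hence \(|\varsigma_\gamma|\ge|\sin(\theta_\beta-\theta_\alpha)|\), and the first inequality in the lemma follows. It then suffices to bound \(|\sin(\theta_\beta-\theta_\alpha)|\) from below on the cut-off support, as in Lemma~\ref{Positivity of gamma}.

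Set \(\phi:=\theta_\beta-\theta_\alpha\). On the branch, \(\phi\) runs over \([\pi,\,\theta_+^\infty-\theta_\alpha)\). Moreover, \(\theta_+^\infty-\theta_\alpha=\pi+\arccos(\cos\theta_\alpha/2)-\theta_\alpha\) lies strictly between \(\pi\) and \(3\pi/2\), because \(\arccos(\cos\theta_\alpha/2)\in(\pi/3,\pi/2]\) and it exceeds \(\theta_\alpha\) whenever \(\cos\theta_\alpha<1\). So \(|\sin\phi|\) increases along the branch from \(0\), and small values occur only near the endpoint \(\theta_\beta=\pi+\theta_\alpha\). That endpoint is the collapsed point \(k_\gamma=0\), where \(\cos\theta_\gamma\) tends to \(\cos\theta_\alpha+\cos(\pi+\theta_\alpha)=0\). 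The cut-off \(|\cos\theta_\gamma|>\varepsilon\) therefore removes a neighbourhood of this endpoint. Concretely, on the branch \(\cos\theta_\gamma=\cos\theta_\alpha+\cos\theta_\beta\), which equals \(0\) at \(\theta_\beta=\pi+\theta_\alpha\) and increases in \(\theta_\beta\) there, since its derivative is \(-\sin\theta_\beta>0\) for \(\theta_\beta\in(\pi,2\pi)\). The closest admissible point is thus fixed by \(\cos\theta_\beta=-\cos\theta_\alpha+\varepsilon\), i.e. \(\theta_\beta=2\pi-\arccos(\varepsilon-\cos\theta_\alpha)\). This is the mirror of the endpoint used in \(F_3\).

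At that point
\[
    |\sin(\theta_\beta-\theta_\alpha)|
    =\cos\theta_\alpha\sqrt{1-(\cos\theta_\alpha-\varepsilon)^2}+\sin\theta_\alpha(\cos\theta_\alpha-\varepsilon),
\]
which is exactly \(F_3(\theta_\alpha)\) from Lemma~\ref{Positivity of gamma}. Equivalently, the reflection \(\theta_\beta\mapsto 2\pi-\theta_\beta\), \(\theta_\alpha\mapsto-\theta_\alpha\) maps this branch onto the one treated there. I would reuse that monotonicity analysis. For \(\theta_\alpha\in[0,\arccos\varepsilon]\), \(F_3\) is minimized at an endpoint, with \(F_3(0)\gtrsim\sqrt\varepsilon\). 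At \(\cos\theta_\alpha=\varepsilon\), the value \(\cos\theta_\alpha\sqrt{1-0}=\varepsilon\) gives the worst case \(\gtrsim\varepsilon\).

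The main obstacle is the monotonicity of \(F_3\) over the whole range \(\cos\theta_\alpha\in(\varepsilon,1]\). The earlier lemma argued this only on a subinterval. I would therefore check directly that \(F_3=\sin(\psi-\theta_\alpha)\) with \(\psi=\arccos(\varepsilon-\cos\theta_\alpha)\in(\pi/2,\pi)\). Here \(\psi-\theta_\alpha\in(0,\pi)\) stays at distance \(\gtrsim\varepsilon\) from \(\{0,\pi\}\), using \(\cos\theta_\alpha>\varepsilon\) and \(\psi\ge\pi/2+\varepsilon\). This closes the bound with \(\gtrsim\varepsilon\), since \(\sin\) is concave on \([0,\pi]\).
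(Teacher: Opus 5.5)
Your reduction is correct and follows the route the paper intends; the paper gives no separate proof, only that the argument of Lemma~\ref{Positivity of gamma} applies ``symmetrically''. The steps you have right are: $|\varsigma_\gamma|=|\sin(\theta_\beta-\theta_\alpha)|/|f|$ with $|f|\le1$; $|\sin(\theta_\beta-\theta_\alpha)|$ increases along the branch; $\cos\theta_\gamma=\cos\theta_\alpha+\cos\theta_\beta$ increases from $0$; and the binding cut-off point is $\theta_\beta^\varepsilon:=2\pi-\arccos(\varepsilon-\cos\theta_\alpha)=\pi+\arccos(\cos\theta_\alpha-\varepsilon)$.

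The gap is the evaluation at that point. Since $\cos\theta_\beta^\varepsilon=\varepsilon-\cos\theta_\alpha$ and $\sin\theta_\beta^\varepsilon=-\sqrt{1-(\cos\theta_\alpha-\varepsilon)^2}$,
\[
|\sin(\theta_\beta^\varepsilon-\theta_\alpha)|
=\cos\theta_\alpha\sqrt{1-(\cos\theta_\alpha-\varepsilon)^2}
-\sin\theta_\alpha(\cos\theta_\alpha-\varepsilon)
=\sin\bigl(\arccos(\cos\theta_\alpha-\varepsilon)-\theta_\alpha\bigr),
\]
with a minus sign. This is $F_3(-\theta_\alpha)$, not $F_3(\theta_\alpha)$. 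Your own reflection sends $\theta_\alpha\mapsto-\theta_\alpha$, which lies outside the range on which $F_3$ was analysed. The two functions are not comparable: for $\cos\theta_\alpha=0.6$ and $\varepsilon=0.01$ the true value is about $0.012$, whereas $F_3\approx0.96$.

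Consequently the endpoint values $F_3(0)$ and $F_3(\arccos\varepsilon)$, and your closing argument that $\psi-\theta_\alpha$ stays away from $\{0,\pi\}$, all bound the wrong function. In your $\psi$ notation the relevant quantity is $\sin(\psi+\theta_\alpha)$. What must be shown is that $\psi+\theta_\alpha$ stays $\gtrsim\varepsilon$ below $\pi$, and the facts $\cos\theta_\alpha>\varepsilon$ and $\psi\ge\pi/2+\varepsilon$ do not give this. Your ``worst case'' picture is also misleading: the true minimum is of order $\varepsilon$ for every $\theta_\alpha$ bounded away from $0$, not only near $\cos\theta_\alpha=\varepsilon$.

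The repair is one line.
\begin{itemize}
\item Because $\cos\theta_\alpha-\varepsilon>0$, the angle $\arccos(\cos\theta_\alpha-\varepsilon)-\arccos(\cos\theta_\alpha)$ lies in $(0,\pi/2)$.
\item This angle is at least $\varepsilon$, since $|\tfrac{d}{dx}\arccos x|=(1-x^2)^{-1/2}\ge1$.
\item Using $\sin x\ge 2x/\pi$ on $[0,\pi/2]$, you get $|\sin(\theta_\beta-\theta_\alpha)|\ge 2\varepsilon/\pi$ on the cut-off support, hence $|\varsigma_\gamma|\ge 2\varepsilon/\pi$.
\end{itemize}
When $\cos\theta_\alpha\le2\varepsilon$ the support on this branch is empty, because $\cos\theta_\gamma<\cos\theta_\alpha/2$ there, so nothing needs checking. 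This bound avoids the $F_3$/$F_4$ monotonicity calculus entirely and is cleaner than transplanting it by reflection.
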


\begin{lemma}[cusp cancellation]
\label{lem:cusp-cancellation-++-}
In the cusp regime
\[
    \varepsilon\le \cos\theta_\alpha\le \frac12-\varepsilon,
\]
one has, near the first cusp direction,
\[
    1+|\varsigma_\beta|-|\varsigma_\gamma|
    \lesssim
    |\theta_\beta-\theta_\beta^\ast|,
\]
and, near the second cusp direction,
\[
    1+|\varsigma_\beta|-|\varsigma_\gamma|
    \lesssim
    |\theta_\beta-\widetilde\theta_\beta^\ast|.
\]
\end{lemma}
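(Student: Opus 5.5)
The plan is to reduce the cusp cancellation to the linear vanishing of \(\widetilde F_1\) already established in Lemmas~\ref{lem:first-zero} and~\ref{lem:second-zero}, using only the triangle inequality for the momentum constraint. At unit scale we have \(\varsigma_\gamma=-\varsigma_\alpha-\varsigma_\beta\) with \(|\varsigma_\alpha|=1\). Hence
\[
|\varsigma_\gamma|\ge 1-|\varsigma_\beta|,
\qquad\text{so}\qquad
0\le 1+|\varsigma_\beta|-|\varsigma_\gamma|\le 2|\varsigma_\beta|.
\]
The lower bound \(0\le\cdot\) follows from \(|\varsigma_\gamma|\le1+|\varsigma_\beta|\). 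The whole statement therefore reduces to showing that \(|\varsigma_\beta|\lesssim|\theta_\beta-\theta_\beta^\ast|\) near the first cusp direction, and the analogous bound near \(\widetilde\theta_\beta^\ast\).

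Both cusp directions lie on the branch \(\sin\theta_\gamma\le0\) of Proposition~\ref{prop:param-++-} in Case~2. On that branch \(|\varsigma_\beta|=\widetilde F_1/\widetilde f\). Lemma~\ref{lem:first-zero} gives \(|\widetilde F_1|\lesssim|\theta_\beta-\theta_\beta^\ast|\) on the \(\delta(\varepsilon)\)-neighborhood, and Lemma~\ref{lem:second-zero} gives the same near \(\widetilde\theta_\beta^\ast\). It thus remains to bound \(\widetilde f\) from below on these neighborhoods.

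For the lower bound on \(\widetilde f\), I would evaluate it at the cusp. There \(\cos\theta_\beta=-2\cos\theta_\alpha\), so
\[
p=-\cos\theta_\alpha,\qquad r=\sin\theta_\alpha,
\]
and hence
\[
\widetilde f=\sin\theta_\beta\cos\theta_\alpha-\cos\theta_\beta\sin\theta_\alpha .
\]
At \(\theta_\beta^\ast\) we have \(\sin\theta_\beta=\sqrt{1-4\cos^2\theta_\alpha}\). This gives
\[
\widetilde f=\cos\theta_\alpha\sqrt{1-4\cos^2\theta_\alpha}+2\cos\theta_\alpha\sin\theta_\alpha\ge 2\cos\theta_\alpha\sin\theta_\alpha\gtrsim\varepsilon .
\]
At \(\widetilde\theta_\beta^\ast\), the sign of \(\sin\theta_\beta\) flips and \(\widetilde f=-\cos\theta_\alpha\sqrt{1-4c^2}+2cs\). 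This quantity is still \(\ge c\,(2s-\sqrt{1-4c^2})\), which is \(\gtrsim\varepsilon\) in the range \(\varepsilon\le c\le\tfrac12-\varepsilon\), because \(2s>1>\sqrt{1-4c^2}\) uniformly there.

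Next I would use that \(\partial_{\theta_\beta}\widetilde f\) is bounded; this uses Lemma~\ref{lem:r-lower-++-}, since \(1/r\lesssim1\). Shrinking \(\delta(\varepsilon)\) if necessary (to something like \(\lesssim\varepsilon\)), we keep \(\widetilde f\gtrsim\varepsilon\) on the whole neighborhood. Combining, we obtain \(1+|\varsigma_\beta|-|\varsigma_\gamma|\lesssim_\varepsilon|\theta_\beta-\theta_\beta^\ast|\), and likewise near \(\widetilde\theta_\beta^\ast\).

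The main obstacle is the uniformity of the implicit constant: the argument naturally yields an \(\varepsilon^{-1}\) loss from the lower bound on \(\widetilde f\). I expect this to be acceptable, because Proposition~\ref{prop:apriori-++-} already allows \(\varepsilon\)-dependent constants, so I would state the constant as \(\varepsilon\)-dependent. If a sharper, \(\varepsilon\)-free constant were desired, I would instead Taylor-expand \(|\varsigma_\gamma|^2=1+2\varsigma_\alpha\cdot\varsigma_\beta+|\varsigma_\beta|^2\). This gives \(1-|\varsigma_\gamma|\approx-\varsigma_\alpha\cdot\varsigma_\beta\), so that \(1+|\varsigma_\beta|-|\varsigma_\gamma|\approx|\varsigma_\beta|(1-\cos(\theta_\beta-\theta_\alpha))\). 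One would then track the factor \(\widetilde f\) more carefully; the shrinking of \(\delta(\varepsilon)\) and this sign check are the only delicate points.
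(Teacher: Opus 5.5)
Your argument is correct and is the paper's proof. The triangle inequality gives $1+|\varsigma_\beta|-|\varsigma_\gamma|\le 2|\varsigma_\beta|$, and on the $\sin\theta_\gamma\le0$ branch $|\varsigma_\beta|=\widetilde F_1/\widetilde f$ with $|\widetilde F_1|\lesssim|\theta_\beta-\theta_\beta^\ast|$ from Lemma~\ref{lem:first-zero} (resp.\ Lemma~\ref{lem:second-zero}); the paper simply asserts that $\widetilde f$ is bounded away from zero near the cusp, whereas your explicit bound $\widetilde f\gtrsim\cos\theta_\alpha\gtrsim\varepsilon$ (with $\delta(\varepsilon)\lesssim\varepsilon$) makes the resulting $\varepsilon^{-1}$ dependence of the constant precise. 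That loss is genuine rather than an artifact, so your optional $\varepsilon$-free refinement would fail near $\widetilde\theta_\beta^\ast$: when $\cos\theta_\alpha\sim\varepsilon$, one has $\widetilde f\sim\cos\theta_\alpha$ there and $\varsigma_\beta$ is nearly antiparallel to $\varsigma_\alpha$, so $1+|\varsigma_\beta|-|\varsigma_\gamma|\approx 2|\varsigma_\beta|\sim\varepsilon^{-1}|\theta_\beta-\widetilde\theta_\beta^\ast|$.
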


\begin{proof}
We prove the estimate near \(\theta_\beta^\ast\).  The proof near
\(\widetilde\theta_\beta^\ast\) is identical, using
Lemma~\ref{lem:second-zero} instead of Lemma~\ref{lem:first-zero}.

At unit scale, momentum conservation gives
\[
    \varsigma_\alpha+\varsigma_\beta+\varsigma_\gamma=0,
    \qquad
    |\varsigma_\alpha|=1.
\]
Hence
\[
    |\varsigma_\gamma|
    =
    |\varsigma_\alpha+\varsigma_\beta|.
\]
By the triangle inequality,
\[
    |\varsigma_\alpha+\varsigma_\beta|
    \ge
    |\varsigma_\alpha|-|\varsigma_\beta|
    =
    1-|\varsigma_\beta|.
\]
Therefore
\[
    1+|\varsigma_\beta|-|\varsigma_\gamma|
    \le
    2|\varsigma_\beta|.
\]

On the branch \(\sin\theta_\gamma\le0\),
\[
    |\varsigma_\beta|
    =
    \frac{\widetilde F_1}{\widetilde f}.
\]
In a fixed neighborhood of the cusp direction \(\theta_\beta^\ast\), the
denominator \(\widetilde f\) is bounded above and below away from zero.  Hence
\[
    |\varsigma_\beta|
    \lesssim
    |\widetilde F_1|.
\]
By Lemma~\ref{lem:first-zero},
\[
    |\widetilde F_1(\theta_\alpha,\theta_\beta)|
    \lesssim
    |\theta_\beta-\theta_\beta^\ast|.
\]
Thus
\[
    1+|\varsigma_\beta|-|\varsigma_\gamma|
    \lesssim
    |\theta_\beta-\theta_\beta^\ast|.
\]
\end{proof}
We now prove Proposition~\ref{prop:apriori-++-}. 
We use the scaled co--area formula as before and write
\[
    \mathcal C_{++-}(n)(k_\alpha)=I_1+I_2,
\]
where \(I_1\) is the contribution from the branch
\[
    \sin\theta_\gamma\le0,
\]
and \(I_2\) is the contribution from the branch
\[
    \sin\theta_\gamma\ge0.
\]
The corresponding co--area factors are given by
Lemma~\ref{lem:coarea-ratio-++-}.  On the \(I_1\)-branch,
\[
    |k_\beta|
    =
    |k_\alpha|\frac{\widetilde F_1}{\widetilde f},
    \qquad
    |k_\gamma|
    =
    |k_\alpha|
    \frac{\sin(\theta_\beta-\theta_\alpha)}{\widetilde f},
\]
whereas on the \(I_2\)-branch,
\[
    |k_\beta|
    =
    |k_\alpha|\frac{F_1}{f},
    \qquad
    |k_\gamma|
    =
    |k_\alpha|
    \frac{\sin(\theta_\beta-\theta_\alpha)}{f}.
\]

For notational convenience, we write
\[
    I_1
    =
    |k_\alpha|^4
    \int_{\{\sin\theta_\gamma\le0\}}
        \mathfrak k^1_{++-}(\theta_\beta)
    \,d\theta_\beta,
    \qquad
    I_2
    =
    |k_\alpha|^4
    \int_{\{\sin\theta_\gamma\ge0\}}
        \mathfrak k^2_{++-}(\theta_\beta)
    \,d\theta_\beta.
\]
Here \(\mathfrak k^1_{++-}\) and \(\mathfrak k^2_{++-}\) denote the full
unit-scale integrands on the two branches.  More precisely,
\[
    \mathfrak k^1_{++-}
    =
    \widetilde I_{++-}(\theta_\beta)
    \left|
    \frac{
        \widetilde F_1
        \bigl(
            \cos\theta_\alpha\sin\theta_\beta
            -
            \sin\theta_\alpha\cos\theta_\beta
        \bigr)
    }{
        r\,\widetilde f^{\,3}
    }
    \right|,
\]
and
\[
    \mathfrak k^2_{++-}
    =
    \widetilde I_{++-}(\theta_\beta)
    \left|
    \frac{
        F_1
        \bigl(
            \cos\theta_\alpha\sin\theta_\beta
            -
            \sin\theta_\alpha\cos\theta_\beta
        \bigr)
    }{
        r\,f^{3}
    }
    \right|.
\]
The factor \(\widetilde I_{++-}\) is the unit-scale collision integrand,
including the angular cut-off, the interaction coefficient, and the quadratic
products of \(n\).

The proof is divided into three cases.


\subsubsection{Case 1: the no-cusp regime}
\label{subsubsec:++-case1-nocusp}

Assume
\[
    \cos\theta_\alpha\ge \frac12+\varepsilon .
\]
In this regime, the resonance curve does not pass through \(k_\beta=0\).  We
decompose
\[
    I_1=I_{1,1}+I_{1,2},
\]
where
\[
    I_{1,1}
    =
    |k_\alpha|^4
    \int_{\theta_-^\infty}^{\theta_-^0}
        \mathfrak k^1_{++-}(\theta_\beta)
    \,d\theta_\beta,
\]
and
\[
    I_{1,2}
    =
    |k_\alpha|^4
    \int_{\theta_-^0}^{\pi+\theta_\alpha}
        \mathfrak k^1_{++-}(\theta_\beta)
    \,d\theta_\beta.
\]
Here \(\mathfrak k^1_{++-}\) denotes the full integrand on the branch
\(\sin\theta_\gamma\le0\), including the angular cut-off, the unit-scale
interaction coefficient, the quadratic products of \(n\), and the co-area
factor.

We first estimate \(I_{1,1}\).  On this interval,
\[
    \theta_\beta\in[\theta_-^\infty,\theta_-^0],
\]
Lemma~\ref{Right-hand linear asymptote} gives
\[
    \widetilde f
    \sim
    \theta_\beta-\theta_-^\infty.
\]
Moreover, \(\widetilde F_1(\theta_\alpha,\cdot)\) is decreasing on this
interval, and Lemma~\ref{lem:psi_decreasing} in
Appendix~\ref{appendix:additional-results} gives
\[
    c_0
    \le
 \widetilde F_1(\theta_\alpha,\theta_-^0)
    \le
    \widetilde F_1(\theta_\alpha,\theta_\beta)
    \le
    1,
\]
where
\[
    c_0
    :=
    \frac{\sqrt3}{2}
    \left(\frac14-\frac{\sqrt7}{4}\right)
    +
    \frac12
    \sqrt{\frac12+\frac{\sqrt7}{8}}
    >0.
\]
Equivalently,
\[
    \widetilde F_1\sim1
\]
on \(I_{1,1}\), with an absolute implicit constant.  By the cut-off positivity
estimate, see Lemma~\ref{Positivity of gamma},
\[
    \varepsilon
    \lesssim
    \sin(\theta_\beta-\theta_\alpha)
    \le1.
\]
Therefore
\[
    |k_\beta|
    =
    |k_\alpha|
    \frac{\widetilde F_1}{\widetilde f}
    \sim
    \frac{|k_\alpha|}{\theta_\beta-\theta_-^\infty},
\]
and
\[
    |k_\gamma|
    =
    |k_\alpha|
    \frac{\sin(\theta_\beta-\theta_\alpha)}{\widetilde f}
    \gtrsim
    \varepsilon
    \frac{|k_\alpha|}{\theta_\beta-\theta_-^\infty}.
\]
Also, by momentum conservation and the triangle inequality,
\[
    1+|\varsigma_\beta|-|\varsigma_\gamma|
    \le
    1+|\varsigma_\beta+\varsigma_\gamma|
    =
    1+|\varsigma_\alpha|
    =
    2.
\]

We now split the estimate of \(I_{1,1}\) according to the size of
\(|k_\alpha|\).

First assume \(|k_\alpha|\le1\).  We split the integration interval into
\[
    (\theta_-^\infty,\theta_-^\infty+|k_\alpha|]
    \cup
    [\theta_-^\infty+|k_\alpha|,\theta_-^0].
\]
On the first interval,
\[
    0<\theta_\beta-\theta_-^\infty\le |k_\alpha|,
\]
we have
\[
    \langle k_\alpha\rangle^{-m}\sim1,
\qquad
    \langle k_\beta\rangle^{-m}
    \sim
    \frac{(\theta_\beta-\theta_-^\infty)^m}{|k_\alpha|^m},
\qquad
    \langle k_\gamma\rangle^{-m}
    \lesssim
    \varepsilon^{-m}
    \frac{(\theta_\beta-\theta_-^\infty)^m}{|k_\alpha|^m}.
\]
On the second interval,
\[
[\theta_-^\infty+|k_\alpha|,\theta_-^0],
 \]
we have for \(m\ge 0\),
\[
\langle k_\alpha\rangle^{-m}\sim 1,\qquad
\langle k_\beta\rangle^{-m}\sim 1,\qquad
\langle k_\gamma\rangle^{-m}\lesssim 1.
\]
Using the co-area factor
\[
    \frac1{\widetilde f^3}
    \lesssim
    \frac1{(\theta_\beta-\theta_-^\infty)^3},
\]
we obtain
\[
\begin{aligned}
I_{1,1}
&\lesssim
|k_\alpha|^4
\int_{\theta_-^\infty}^{\theta_-^\infty+|k_\alpha|}
\varepsilon^{-m}
\left(
    \frac{(\theta_\beta-\theta_-^\infty)^{m-3}}{|k_\alpha|^m}
    +
    \frac{(\theta_\beta-\theta_-^\infty)^{2m-3}}{|k_\alpha|^{2m}}
\right)
d\theta_\beta
\\
&\quad
+
|k_\alpha|^4
\int_{\theta_-^\infty+|k_\alpha|}^{\theta_-^0}
\frac{d\theta_\beta}
     {(\theta_\beta-\theta_-^\infty)^3}.
\end{aligned}
\]
Hence
\[
\begin{aligned}
I_{1,1}
&\lesssim
\varepsilon^{-m}
|k_\alpha|^{4-m}|k_\alpha|^{m-2}
+
\varepsilon^{-m}
|k_\alpha|^{4-2m}|k_\alpha|^{2m-2}
+
|k_\alpha|^4\frac{1}{|k_\alpha|^2}
\\
&\lesssim
\varepsilon^{-m}|k_\alpha|^2+|k_\alpha|^2
\lesssim
\varepsilon^{-m},
\end{aligned}
\]
provided
\[
    m-3>-1,
    \qquad
    2m-3>-1,
    \qquad
    0<\varepsilon\ll1.
\]
Thus, for \(|k_\alpha|\le1\),
\[
    \langle k_\alpha\rangle^m |I_{1,1}|
    \lesssim
    \varepsilon^{-m}\|n\|_{L^\infty_m}^2.
\]

Now assume \(|k_\alpha|>1\).  Since
\[
    \theta_-^0-\theta_-^\infty
    =
    \arccos\left(\frac{\cos\theta_\alpha}{2}\right)
    -
    \arccos\left(
        \frac{\cos\theta_\alpha}{2}
        +
        \sqrt{\frac12-\frac{\cos^2\theta_\alpha}{4}}
    \right)
    \lesssim1
    \le |k_\alpha|,
\]
we are always in the large-radius regime on \(I_{1,1}\).  Thus
\[
    \langle k_\alpha\rangle^{-m}\sim |k_\alpha|^{-m},
\qquad
    \langle k_\beta\rangle^{-m}
    \sim
    \frac{(\theta_\beta-\theta_-^\infty)^m}{|k_\alpha|^m},
\qquad
    \langle k_\gamma\rangle^{-m}
    \lesssim
    \varepsilon^{-m}
    \frac{(\theta_\beta-\theta_-^\infty)^m}{|k_\alpha|^m}.
\]
Setting
\[
    s:=\theta_\beta-\theta_-^\infty,
\]
we get
\[
\begin{aligned}
|k_\alpha|^m |I_{1,1}|
&\lesssim
|k_\alpha|^{m+4}
\int_{0\le s\lesssim1}
\varepsilon^{-m}
\left(
    \frac{s^{m-3}}{|k_\alpha|^{2m}}
    +
    \frac{s^{2m-3}}{|k_\alpha|^{2m}}
\right)
ds
\\
&\lesssim
\varepsilon^{-m}|k_\alpha|^{-m+4}
\lesssim
\varepsilon^{-m},
\end{aligned}
\]
where we used \(m\ge4, \quad0<\varepsilon\ll 1\).  Hence
\[
    \langle k_\alpha\rangle^m |I_{1,1}|
    \lesssim
    \varepsilon^{-m}\|n\|_{L^\infty_m}^2
\]
for all \(k_\alpha\).

We next estimate \(I_{1,2}\).  On
\[
    \theta_\beta\in[\theta_-^0,\pi+\theta_\alpha],
\]
Lemma~\ref{I_{1,2}without cusp} gives
\[
    \frac1{\widetilde f}\sim1,
    \qquad
    \varepsilon\lesssim|\varsigma_\beta|\lesssim1,
    \qquad
    \varepsilon\lesssim|\varsigma_\gamma|\lesssim1.
\]
Thus
\[
\begin{aligned}
\langle k_\alpha\rangle^m |I_{1,2}|
&\lesssim
\langle k_\alpha\rangle^m |k_\alpha|^4
\int_{\theta_-^0}^{\pi+\theta_\alpha}
\langle k_\alpha\rangle^{-2m}
\left(\varepsilon^{-m}+\varepsilon^{-2m}\right)
d\theta_\beta
\\
&\lesssim
\varepsilon^{-2m}
|k_\alpha|^4
\langle k_\alpha\rangle^{-m}
\|n\|_{L^\infty_m}^2
\\
&\lesssim
\varepsilon^{-2m}
\|n\|_{L^\infty_m}^2,
\end{aligned}
\]
provided \(m\ge4,\quad0<\varepsilon\ll 1\).

It remains to estimate \(I_2\).  On the \(I_2\)-branch,
\[
    \theta_\beta\in[\pi+\theta_\alpha,\theta_+^\infty],
\]
and Lemma~\ref{Left-hand linear asymptote} gives
\[
    |f|
    \sim
    \theta_+^\infty-\theta_\beta.
\]
Moreover,  $F_1(\theta_\alpha, \cdot)$ is negative and increasing on this interval.  By
Lemma~\ref{lem:positivity_decreasing_term} in
Appendix~\ref{appendix:additional-results},
\[
    c_1
    \le
    |F_1|
    \le
    \cos\theta_\alpha
    \le1,
\]
where
\[
    c_1
    :=
    \frac{-\sqrt3+\sqrt{15}}{8}
    >0.
\]
By Lemma~\ref{SecondPositivity of gamma},
\[
    \varepsilon
    \lesssim
    |\sin(\theta_\beta-\theta_\alpha)|
    \le1.
\]
Therefore
\[
    |k_\beta|
    =
    |k_\alpha|\frac{|F_1|}{|f|}
    \sim
    \frac{|k_\alpha|}{\theta_+^\infty-\theta_\beta},
\]
and
\[
    |k_\gamma|
    =
    |k_\alpha|
    \frac{|\sin(\theta_\beta-\theta_\alpha)|}{|f|}
    \gtrsim
    \varepsilon
    \frac{|k_\alpha|}{\theta_+^\infty-\theta_\beta}.
\]
Also, by the triangle inequality,
\[
    1+|\varsigma_\beta|-|\varsigma_\gamma|
    \le
    1+|\varsigma_\beta+\varsigma_\gamma|
    =
    1+|\varsigma_\alpha|
    =
    2.
\]

We first consider \(|k_\alpha|\le1\).  Set
\[
    s:=\theta_+^\infty-\theta_\beta.
\]
Splitting the interval at \(s=|k_\alpha|\), we have on \(0<s\le|k_\alpha|\),
\[
    \langle k_\alpha\rangle^{-m}\sim1,
\qquad
    \langle k_\beta\rangle^{-m}
    \sim
    \frac{s^m}{|k_\alpha|^m},
\qquad
    \langle k_\gamma\rangle^{-m}
    \lesssim
    \varepsilon^{-m}
    \frac{s^m}{|k_\alpha|^m}.
\]
On \({|k_\alpha|}<s\le {\theta_+^\infty-(\pi+\theta_\alpha)}\),
we have
\[ \langle k_\alpha\rangle^{-m}\sim1,
\qquad
 \langle k_\beta\rangle^{-m}\sim1,
 \qquad
  \langle k_\gamma\rangle^{-m}\lesssim1,\]
Using
\[
    \frac1{|f|^3}
    \lesssim
    \frac1{s^3},
\]
we obtain
\[
\begin{aligned}
I_2
&\lesssim
|k_\alpha|^4
\int_{0}^{|k_\alpha|}
\varepsilon^{-m}
\left(
    \frac{s^{m-3}}{|k_\alpha|^m}
    +
    \frac{s^{2m-3}}{|k_\alpha|^{2m}}
\right)
ds
\\
&\quad
+
|k_\alpha|^4
\int_{|k_\alpha|}^{\theta_+^\infty-(\pi+\theta_\alpha)}
\frac{ds}{s^3}.
\end{aligned}
\]
Hence
\[
\begin{aligned}
I_2
&\lesssim
\varepsilon^{-m}
|k_\alpha|^{4-m}|k_\alpha|^{m-2}
+
\varepsilon^{-m}
|k_\alpha|^{4-2m}|k_\alpha|^{2m-2}
+
|k_\alpha|^4\frac1{|k_\alpha|^2}
\\
&\lesssim
\varepsilon^{-m}|k_\alpha|^2+|k_\alpha|^2
\lesssim
\varepsilon^{-m},
\end{aligned}
\]
provided \(m>2,\quad 0<\varepsilon\ll 1\).
Thus, for \(|k_\alpha|\le1\),
\[
    \langle k_\alpha\rangle^m |I_2|
    \lesssim
    \varepsilon^{-m}
    \|n\|_{L^\infty_m}^2.
\]

Now assume \(|k_\alpha|>1\).  Since
\[
    \theta_+^\infty-(\pi+\theta_\alpha)
    \lesssim1
    \le |k_\alpha|,
\]
we are always in the large-radius regime.  Hence
\[
    \langle k_\alpha\rangle^{-m}\sim |k_\alpha|^{-m},
\qquad
    \langle k_\beta\rangle^{-m}
    \sim
    \frac{s^m}{|k_\alpha|^m},
\qquad
    \langle k_\gamma\rangle^{-m}
    \lesssim
    \varepsilon^{-m}
    \frac{s^m}{|k_\alpha|^m}.
\]
Therefore
\[
\begin{aligned}
|k_\alpha|^m |I_2|
&\lesssim
|k_\alpha|^{m+4}
\int_{0\le s\lesssim1}
\varepsilon^{-m}
\left(
    \frac{s^{m-3}}{|k_\alpha|^{2m}}
    +
    \frac{s^{2m-3}}{|k_\alpha|^{2m}}
\right)
ds
\\
&\lesssim
\varepsilon^{-m}|k_\alpha|^{-m+4}
\lesssim
\varepsilon^{-m},
\end{aligned}
\]
because \(m\ge4,\quad 0<\varepsilon\ll 1\).  Hence
\[
    \langle k_\alpha\rangle^m |I_2|
    \lesssim
    \varepsilon^{-m}
    \|n\|_{L^\infty_m}^2.
\]

Combining the estimates for \(I_{1,1}\), \(I_{1,2}\), and \(I_2\), we obtain
\[
    \langle k_\alpha\rangle^m
    |\mathcal C_{++-}(n)(k_\alpha)|
    \lesssim
    \varepsilon^{-2m}
    \|n\|_{L^\infty_m}^2.
\]

\subsubsection{Case 2: separated asymptotic and cusp regions}
\label{subsubsec:++-case2-separated}

Assume
\[
    \varepsilon\le \cos\theta_\alpha\le\frac12-\varepsilon,
\]
and suppose
\[
    \theta_-^0
    <
    \theta_\beta^\ast-\delta(\varepsilon).
\]
Under this condition we have 
\[
\frac{1}{\sqrt{5}} \;\le\; \cos\theta_\alpha 
\;\le\; \frac{1}{2}-\varepsilon .
\]
Thus the asymptotic region near \(\theta_-^\infty\) and the cusp region
near \(\theta_\beta^\ast\) are separated.

We decompose
\[
    I_1
    =
    I_{1,1}+I_{1,2}+I_{1,3}+I_{1,4}+I_{1,5},
\]
where
\[
    I_{1,1}
    =
    |k_\alpha|^4
    \int_{\theta_-^\infty}^{\theta_-^0}
        \mathfrak k^1_{++-}(\theta_\beta)
    \,d\theta_\beta,
\]
\[
    I_{1,2}
    =
    |k_\alpha|^4
    \int_{\theta_-^0}^{\theta_\beta^\ast-\delta(\varepsilon)}
        \mathfrak k^1_{++-}(\theta_\beta)
    \,d\theta_\beta,
\]
\[
    I_{1,3}
    =
    |k_\alpha|^4
    \int_{\theta_\beta^\ast-\delta(\varepsilon)}^{\theta_\beta^\ast}
        \mathfrak k^1_{++-}(\theta_\beta)
    \,d\theta_\beta,
\]
\[
    I_{1,4}
    =
    |k_\alpha|^4
    \int_{\widetilde\theta_\beta^\ast}^{\widetilde\theta_\beta^\ast+\delta(\varepsilon)}
        \mathfrak k^1_{++-}(\theta_\beta)
    \,d\theta_\beta,
\]
and
\[
    I_{1,5}
    =
    |k_\alpha|^4
    \int_{\widetilde\theta_\beta^\ast+\delta(\varepsilon)}^{\pi+\theta_\alpha}
        \mathfrak k^1_{++-}(\theta_\beta)
    \,d\theta_\beta.
\]

The estimate of \(I_{1,1}\) is the same asymptotic-end estimate as in
Case~\ref{subsubsec:++-case1-nocusp}, except that the numerator
\(\widetilde F_1\) is no longer uniformly bounded below by an absolute
constant.  Since the interval is separated from the first cusp by
\(\delta(\varepsilon)\), Lemma~\ref{lem:first-zero} gives
\[
    1\ge\widetilde F_1
    \gtrsim
    \sqrt{\varepsilon}\,\delta(\varepsilon).
\]
Together with Lemma~\ref{Right-hand linear asymptote} and the monotonicity of \(\sin(\cdot-\theta_\alpha)\) for fixed \(\theta_\alpha\) on the interval under consideration,
\[
    \widetilde f
    \sim
    \theta_\beta-\theta_-^\infty,
    \qquad
    \frac{\sqrt{19}+2}{10}\le \sin(\arccos(\frac{\cos\theta_\alpha}{2})+\theta_\alpha)\le
    \sin(\theta_\beta-\theta_\alpha)
    \le1.
\]
Hence the same one-dimensional estimate as in Case~1 yields
\[
    \langle k_\alpha\rangle^m |I_{1,1}|
    \lesssim
    \varepsilon^{-\frac{m}{2}}
    \delta(\varepsilon)^{-m}
    \|n\|_{L^\infty_m}^2,
\]
provided that \(m\ge4\) and \(0<\varepsilon\ll1\).

On \(I_{1,2}\), the curve is away from both the asymptotic end and the cusp.
Indeed,
\[
    1\ge \widetilde F_1
    \gtrsim
    \sqrt{\varepsilon}\,\delta(\varepsilon),
    \]
    and
    \[
    \frac{\sqrt{19}+2}{10}\le \sin(\theta_\alpha+\arccos(\frac{\cos\theta_\alpha}{2}))=\min\{\sin(\theta_\alpha+\arccos(2\cos\theta_\alpha)),\sin(\theta_\alpha+\arccos(\frac{\cos\theta_\alpha}{2}))\}\le \sin(\theta_\beta-\theta_\alpha)\le 1.
\]
By the monotonicity of \(\widetilde f\) on the relevant subinterval of
\((\theta_-^0,\theta_+^\infty)\), we have
\[
    \frac{\sqrt3}{2}
    \le
    \widetilde f(\theta_\alpha,\theta_\beta^\ast)
    \le
    \widetilde f(\theta_\alpha,\theta_\beta^\ast-\delta(\varepsilon))
    \le
    \widetilde f(\theta_\alpha,\theta_\beta)
    \le1.
\]
Thus \(\widetilde f\sim1\) on \(I_{1,2}\).
Therefore
\[
    |\varsigma_\beta|
    =
    \frac{\widetilde F_1}{\widetilde f}
    \gtrsim
    \sqrt{\varepsilon}\,\delta(\varepsilon),
    \qquad
    |\varsigma_\gamma|
    =
    \frac{\sin(\theta_\beta-\theta_\alpha)}{\widetilde f}
    \gtrsim1.
\]
The bounded-region estimate then gives
\[
    \langle k_\alpha\rangle^m |I_{1,2}|
    \lesssim
    \varepsilon^{-\frac m2}
    \delta(\varepsilon)^{-m}
    \|n\|_{L^\infty_m}^2,
\]
provided that \(m\ge4\)  and \(0<\varepsilon\ll1\).

We now estimate the first cusp piece \(I_{1,3}\).  Set
\[
    s:=\theta_\beta^\ast-\theta_\beta,
    \qquad
    0<s\le\delta(\varepsilon).
\]
By Lemma~\ref{lem:first-zero},
\[
    \sqrt{\varepsilon}\,s
    \lesssim
    |\widetilde F_1(\theta_\alpha,\theta_\beta)|
    \lesssim
    s.
\]
Moreover, on this cusp neighborhood,
\[
    \widetilde f\sim1,
    \qquad
    r\sim1,
    \qquad
    \sin(\theta_\beta-\theta_\alpha)\sim1.
\]
Hence
\[
    |\varsigma_\beta|
    =
    \frac{|\widetilde F_1|}{\widetilde f}
    \gtrsim
    \sqrt{\varepsilon}\,s,
    \qquad
    |\varsigma_\gamma|
    =
    \frac{\sin(\theta_\beta-\theta_\alpha)}{\widetilde f}
    \sim1.
\]
The co-area factor satisfies
\[
    \left|
    \frac{
        \widetilde F_1
        \bigl(
            \cos\theta_\alpha\sin\theta_\beta
            -
            \sin\theta_\alpha\cos\theta_\beta
        \bigr)
    }{
        r\,\widetilde f^{\,3}
    }
    \right|
    \lesssim
    s.
\]
Furthermore, by 
Lemma~\ref{lem:cusp-cancellation-++-},
\[
    1+|\varsigma_\beta|-|\varsigma_\gamma|
    \lesssim
    s.
\]
Thus the factor
\[
    (1+|\varsigma_\beta|-|\varsigma_\gamma|)^2
\]
contributes \(s^2\).  Therefore, using the weighted \(L^\infty_m\) bound for
\(n\), the integrand in \(I_{1,3}\) is bounded by
\[
\begin{aligned}
\mathfrak k^1_{++-}(\theta_\beta)
&\lesssim
s^3
\|n\|_{L^\infty_m}^2
\Bigl(
    \langle |k_\alpha|\varsigma_\beta\rangle^{-m}
    \langle |k_\alpha|\varsigma_\gamma\rangle^{-m}
\\
&\qquad\qquad
    +
    \langle k_\alpha\rangle^{-m}
    \langle |k_\alpha|\varsigma_\beta\rangle^{-m}
    +
    \langle k_\alpha\rangle^{-m}
    \langle |k_\alpha|\varsigma_\gamma\rangle^{-m}
\Bigr).
\end{aligned}
\]
Since
\[
    |\varsigma_\beta|\gtrsim\sqrt{\varepsilon}\,s,
    \qquad
    |\varsigma_\gamma|\sim1,
\]
we have
\[
    \langle |k_\alpha|\varsigma_\beta\rangle^{-m}
    \lesssim
    \langle \sqrt{\varepsilon}\,|k_\alpha|s\rangle^{-m},
    \qquad
    \langle |k_\alpha|\varsigma_\gamma\rangle^{-m}
    \lesssim
    \langle |k_\alpha|\rangle^{-m}.
\]

First assume \(|k_\alpha|\le1\).  Then
\[
    \langle |k_\alpha|\rangle^{-m}\sim1,
    \qquad
    \langle \sqrt{\varepsilon}\,|k_\alpha|s\rangle^{-m}\lesssim1
\]
on \(0<s\le\delta(\varepsilon)\).  Hence
\[
\begin{aligned}
|I_{1,3}|
&\lesssim
|k_\alpha|^4
\int_0^{\delta(\varepsilon)}
s^3\,ds\,
\|n\|_{L^\infty_m}^2
\\
&\lesssim
\|n\|_{L^\infty_m}^2.
\end{aligned}
\]
Thus
\[
    \langle k_\alpha\rangle^m |I_{1,3}|
    \lesssim
    \|n\|_{L^\infty_m}^2,
    \qquad |k_\alpha|\le1.
\]

Now assume \(|k_\alpha|>1\).  We split the cusp integral at
\[
    s=|k_\alpha|^{-1}.
\]
On \(0<s\le |k_\alpha|^{-1}\), we use
\[
    \langle \sqrt{\varepsilon}\,|k_\alpha|s\rangle^{-m}\lesssim1,
    \qquad
    \langle |k_\alpha|\rangle^{-m}\sim |k_\alpha|^{-m}.
\]
On \(|k_\alpha|^{-1}\le s\le\delta(\varepsilon)\), we use
\[
    \langle \sqrt{\varepsilon}\,|k_\alpha|s\rangle^{-m}
    \lesssim
    \varepsilon^{-\frac m2}|k_\alpha|^{-m}s^{-m},
    \qquad
    \langle |k_\alpha|\rangle^{-m}\sim |k_\alpha|^{-m}.
\]
Therefore
\[
\begin{aligned}
\langle k_\alpha\rangle^m |I_{1,3}|
&\lesssim
|k_\alpha|^{m+4}
\int_0^{|k_\alpha|^{-1}}
s^3 |k_\alpha|^{-m}\,ds\,
\|n\|_{L^\infty_m}^2
\\
&\quad
+
|k_\alpha|^{m+4}
\int_{|k_\alpha|^{-1}}^{\delta(\varepsilon)}
s^3\,
\varepsilon^{-\frac m2}
|k_\alpha|^{-m}s^{-m}
|k_\alpha|^{-m}
\,ds\,
\|n\|_{L^\infty_m}^2
\\
&\lesssim
\left[
1
+
\varepsilon^{-\frac m2}
|k_\alpha|^{4-m}
\int_{|k_\alpha|^{-1}}^{\delta(\varepsilon)}
s^{3-m}\,ds
\right]
\|n\|_{L^\infty_m}^2.
\end{aligned}
\]
For \(m>4\), and \(0<\varepsilon\ll 1\),
\[
    \int_{|k_\alpha|^{-1}}^{\delta(\varepsilon)}
    s^{3-m}\,ds
    \lesssim
    |k_\alpha|^{m-4}
    +
    \delta(\varepsilon)^{-m+4}.
\]

The estimate of \(I_{1,4}\) is identical.  Indeed, set
\[
    \widetilde s
    :=
    \theta_\beta-\widetilde\theta_\beta^\ast,
    \qquad
    0<\widetilde s\le\delta(\varepsilon).
\]
Using Lemma~\ref{lem:second-zero} in place of Lemma~\ref{lem:first-zero} and
the second estimate in Lemma~\ref{lem:cusp-cancellation-++-}, the same
argument gives
\[
    \langle k_\alpha\rangle^m |I_{1,4}|
    \lesssim
    \varepsilon^{-\frac m2}
    \delta(\varepsilon)^{-m+4}
    \|n\|_{L^\infty_m}^2,
\]
again provided that \(m>4\) and \(0<\varepsilon\ll1\).

Finally, on \(I_{1,5}\), the curve is away from the second cusp and away
from the asymptotic end.  We have
\[ \frac{1}{\sqrt{5}}\le\cos\theta_\alpha=\tilde{f}(\theta_\alpha,\theta_\alpha+\pi)\le\tilde{f}\le 1,
    \qquad
    \widetilde F_1
    \gtrsim
    \sqrt{\varepsilon}\,\delta(\varepsilon),
\]
and by the cut-off positivity estimate,
\[
    \varepsilon
    \lesssim
    \sin(\theta_\beta-\theta_\alpha)
    \le1.
\]
Thus
\[
    |\varsigma_\beta|
    \gtrsim
    \sqrt{\varepsilon}\,\delta(\varepsilon),
    \qquad
    |\varsigma_\gamma|
    \gtrsim
    \varepsilon.
\]
The bounded-region estimate gives
\[
    \langle k_\alpha\rangle^m |I_{1,5}|
    \lesssim
    \varepsilon^{-\frac{3m}{2}}
    \delta(\varepsilon)^{-m}
    \|n\|_{L^\infty_m}^2,
\]
provided that \(m\ge4\) and \(0<\varepsilon\ll1\).

The \(I_2\)-branch is estimated exactly as in Case~\ref{subsubsec:++-case1-nocusp},
using Lemma~\ref{Left-hand linear asymptote},
Lemma~\ref{SecondPositivity of gamma}, and observing
\[\frac{\sqrt{19}-2}{10}\le -\frac{\sin\theta_\alpha\cos\theta_\alpha}{2}+\cos\theta_\alpha\sqrt{1-\frac{\cos^2\theta_\alpha}{4}}\le |F_1|\le 1.\]
Hence
\[
    \langle k_\alpha\rangle^m |I_2|
    \lesssim
    \varepsilon^{-m}
    \|n\|_{L^\infty_m}^2,
\]
provided that \(m\ge4\) and \(0<\varepsilon\ll1\).
Combining the estimates above, we obtain in Case~2

\[
\begin{aligned}
\langle k_\alpha\rangle^m
|\mathcal C_{++-}(n)(k_\alpha)|
& \lesssim_{\varepsilon,m}
\|n\|_{L^\infty_m}^2 ,
\end{aligned}
\]
provided that \(m>4\) and \(0<\varepsilon\ll1\).


\subsubsection{Case 3: overlapping asymptotic and cusp regions}
\label{subsubsec:++-case3-overlap}

Assume
\[
    \varepsilon\le \cos\theta_\alpha\le \frac12-\varepsilon,
\]
and suppose
\[
    \theta_-^0
    \ge
    \theta_\beta^\ast-\delta(\varepsilon).
\]
In this case, the asymptotic region overlaps with the first cusp
neighborhood.  Thus the transition piece \(I_{1,2}\) from
Case~\ref{subsubsec:++-case2-separated} is absent, and we decompose
\[
    I_1=I_{1,1}+I_{1,3}+I_{1,4}+I_{1,5},
\]
where
\[
    I_{1,1}
    =
    |k_\alpha|^4
    \int_{\theta_-^\infty}^{\theta_\beta^\ast-\delta(\varepsilon)}
        \mathfrak k^1_{++-}(\theta_\beta)
    \,d\theta_\beta,
\]
and \(I_{1,3}, I_{1,4}, I_{1,5}\) are defined as in
Case~\ref{subsubsec:++-case2-separated}.

On \(I_{1,1}\), we use the same asymptotic estimate as in Case~2.  Namely,
by Lemma~\ref{Right-hand linear asymptote},
\[
    \widetilde f
    \sim
    \theta_\beta-\theta_-^\infty.
\]
Moreover, since the interval stops at
\(\theta_\beta^\ast-\delta(\varepsilon)\), Lemma~\ref{lem:first-zero} gives
\[
    \widetilde F_1
    \gtrsim
    \sqrt{\varepsilon}\,\delta(\varepsilon),
\]
and monotonicity yields
\[\varepsilon\lesssim\sin(\arccos(\frac{\varepsilon}{2})+\arccos({\varepsilon}))\le \sin(\arccos(\frac{\cos\theta_\alpha}{2})+\theta_\alpha)\le\sin(\theta_\beta-\theta_\alpha)\le 1.\]

Therefore
\[
    |k_\beta|
    =
    |k_\alpha|\frac{\widetilde F_1}{\widetilde f}
    \gtrsim
    \sqrt{\varepsilon}\,\delta(\varepsilon)
    \frac{|k_\alpha|}{\theta_\beta-\theta_-^\infty},
\]
and
\[
    |k_\gamma|
    =
    |k_\alpha|
    \frac{\sin(\theta_\beta-\theta_\alpha)}{\widetilde f}
    \gtrsim
    \varepsilon
    \frac{|k_\alpha|}{\theta_\beta-\theta_-^\infty}.
\]
Thus the same calculation as for the corresponding asymptotic piece in
Case~2 gives
\[
    \langle k_\alpha\rangle^m |I_{1,1}|
    \lesssim
    \varepsilon^{-\frac{3m}{2}}
    \delta(\varepsilon)^{-m}
    \|n\|_{L^\infty_m}^2,
\]
provided that \(m\ge4\) and \(0<\varepsilon\ll1\).

We next estimate the first cusp piece \(I_{1,3}\).  Set
\[
    s:=\theta_\beta^\ast-\theta_\beta,
    \qquad
    0<s\le\delta(\varepsilon).
\]
By Lemma~\ref{lem:first-zero},
\[
    \sqrt{\varepsilon}\,s
    \lesssim
    |\widetilde F_1|
    \lesssim
    s.
\]
In the present Case~3 regime, \(\widetilde f\) is controlled only by the
cut-off lower bound:
\[
    \widetilde f\gtrsim\varepsilon,
    \qquad
    \frac{1}{\widetilde f^3}
    \lesssim
    \varepsilon^{-3}.
\]
Moreover,
\[
    \sin(\theta_\beta-\theta_\alpha)\sim1
\]
near the cusp, and by Lemma~\ref{lem:cusp-cancellation-++-},
\[
    1+|\varsigma_\beta|-|\varsigma_\gamma|
    \lesssim s.
\]
Therefore the cusp estimate is the same as in Case~2, except with the
additional loss \(\varepsilon^{-3}\) coming from
\(\widetilde f^{-3}\).  Hence
\[
    \langle k_\alpha\rangle^m |I_{1,3}|
    \lesssim
    \varepsilon^{-\frac{3m}{2}-3}
    \delta(\varepsilon)^{-m+4}
    \|n\|_{L^\infty_m}^2.
\]
Here, as in Case~2, the estimate is obtained by splitting the cusp integral
at
\[
    s=|k_\alpha|^{-1}
\]
when \(|k_\alpha|>1\).  The borderline case \(m=4\) gives the logarithmic loss
discussed in Remark~\ref{rem:endpoint-m4-cusp}; the displayed bound is the
clean form for \(m>4\).

The estimate of \(I_{1,4}\) is analogous. Indeed, set
\[
    \widetilde s
    :=
    \theta_\beta-\widetilde\theta_\beta^\ast,
    \qquad
    0<\widetilde s\le\delta(\varepsilon).
\]
Thus the same cusp calculation gives
\[
    \langle k_\alpha\rangle^m |I_{1,4}|
    \lesssim
    \varepsilon^{-\frac{3m}{2}-3}
    \delta(\varepsilon)^{-m+4}
    \|n\|_{L^\infty_m}^2,
\]
again with the endpoint \(m=4\) understood as in
Remark~\ref{rem:endpoint-m4-cusp}.

We now estimate \(I_{1,5}\).  On this interval,
\[
    \theta_\beta\in
    [\widetilde\theta_\beta^\ast+\delta(\varepsilon),\,\pi+\theta_\alpha].
\]
Hence the curve is away from the second cusp, and Lemma~\ref{lem:second-zero}
gives
\[
    \widetilde F_1
    \gtrsim
    \sqrt{\varepsilon}\,\delta(\varepsilon).
\]
In Case~3, the denominator satisfies
\[
    \varepsilon
    \le
    \cos\theta_\alpha
    =
    \widetilde f(\theta_\alpha,\pi+\theta_\alpha)
    \le
    \widetilde f
    \le 1,
\]
so that
\[
    \frac{1}{\widetilde f^3}
    \lesssim
    \varepsilon^{-3}.
\]
By the cut-off positivity estimate,
\[
    \varepsilon
    \lesssim
    \sin(\theta_\beta-\theta_\alpha)
    \le1.
\]
Therefore
\[
    |\varsigma_\beta|
    =
    \frac{\widetilde F_1}{\widetilde f}
    \gtrsim
    \sqrt{\varepsilon}\,\delta(\varepsilon),
\]
and
\[
    |\varsigma_\gamma|
    =
    \frac{\sin(\theta_\beta-\theta_\alpha)}{\widetilde f}
    \gtrsim
    \varepsilon.
\]
The bounded-region estimate then gives
\[
    \langle k_\alpha\rangle^m |I_{1,5}|
    \lesssim
    \varepsilon^{-\frac{3m}{2}-3}
    \delta(\varepsilon)^{-m}
    \|n\|_{L^\infty_m}^2,
\]
provided that \(m\ge4\) and \(0<\varepsilon\ll1\).

Finally, the \(I_2\)-branch is estimated as in
Case~\ref{subsubsec:++-case1-nocusp}.  By
Lemma~\ref{Left-hand linear asymptote},
Lemma~\ref{SecondPositivity of gamma}, and the cut-off lower bound for
\(F_1\),
\[\varepsilon\lesssim -\frac{\sin\theta_\alpha\cos\theta_\alpha}{2}+\cos\theta_\alpha\sqrt{1-\frac{\cos^2\theta_\alpha}{4}}\le |F_1|\le 1,\]
we obtain
\[
    \langle k_\alpha\rangle^m |I_2|
    \lesssim
    \varepsilon^{-2m}
    \|n\|_{L^\infty_m}^2,
\]
provided that \(m\ge4\) and \(0<\varepsilon\ll1\).

Combining the estimates for \(I_{1,1},I_{1,3},I_{1,4},I_{1,5}\), and \(I_2\),
we obtain, in Case~3,
\[
\begin{aligned}
\langle k_\alpha\rangle^m
|\mathcal C_{++-}(n)(k_\alpha)|
&\lesssim
\Bigl(
    \varepsilon^{-\frac{3m}{2}}
        \delta(\varepsilon)^{-m}
    +
    \varepsilon^{-\frac{3m}{2}-3}
        \delta(\varepsilon)^{-m+4}
\\
&\qquad
    +
    \varepsilon^{-\frac{3m}{2}-3}
        \delta(\varepsilon)^{-m}
    +
    \varepsilon^{-2m}
\Bigr)
\|n\|_{L^\infty_m}^2 .
\end{aligned}
\]
provided that \(m>4\) and \(0<\varepsilon\ll1\).
\begin{proof}[Completion of Proposition~\ref{prop:apriori-++-}]
The angular cut-off excludes
\[
    |\cos\theta_\alpha|<\varepsilon
\]
and
\[
    \left||\cos\theta_\alpha|-\frac12\right|<\varepsilon.
\]
After the first-quadrant reduction, the support of the cut-off is covered by
the three cases above:
\[
    \cos\theta_\alpha\ge\frac12+\varepsilon,
\]
or
\[
    \varepsilon\le\cos\theta_\alpha\le\frac12-\varepsilon,
\]
with the latter split according to whether
\[
    \theta_-^0<\theta_\beta^\ast-\delta(\varepsilon)
\]
or
\[
    \theta_-^0\ge\theta_\beta^\ast-\delta(\varepsilon).
\]
Combining the three cases gives
\[
    \langle k_\alpha\rangle^m
    \left|
        \mathcal C_{++-}(n)(k_\alpha)
    \right|
    \lesssim_m
    \mathfrak C_{++-}(\varepsilon,\delta,m)
    \|n\|_{L^\infty_m}^2,
\]
provided that \(m>4\) and \(0<\varepsilon\ll1\).
Taking the supremum over \(k_\alpha\) proves the proposition.
\end{proof}

\begin{remark}[Endpoint \(m=4\)]\label{rem:endpoint-m4-cusp}
The proof above closes the \((+,+,-)\) estimate for \(m>4\).  The only
obstruction to the endpoint \(m=4\) in the present absolute-value argument
comes from the cusp pieces \(I_{1,3}\) and \(I_{1,4}\).  In those estimates
one encounters the integral
\[
    \int_{|k_\alpha|^{-1}}^{\delta(\varepsilon)}
        s^{3-m}\,ds .
\]
For \(m>4\), this is bounded by
\[
    |k_\alpha|^{m-4}
    +
    \delta(\varepsilon)^{-m+4},
\]
which is sufficient for the weighted \(L^\infty_m\) estimate.  However, at
\(m=4\) the same integral becomes
\[
    \int_{|k_\alpha|^{-1}}^{\delta(\varepsilon)}
        \frac{ds}{s}
    =
    \log\bigl(\delta(\varepsilon)|k_\alpha|\bigr),
\]
which is not uniformly bounded in \(k_\alpha\).  Thus the present proof does
not give a uniform bound in the plain space \(L^\infty_4\).

One possible way to treat the endpoint would be to use a logarithmically
stronger weight, for example
\[
    \|n\|_{L^\infty_{4,\ell}}
    :=
    \sup_{\sigma=\pm1}\sup_{k\in\mathbb R^2}
    \langle k\rangle^4
    \bigl(\log(e+|k|)\bigr)^\ell
    |n(\sigma,k)|,
    \qquad
    \ell>1.
\]
Indeed, the logarithmic weight would replace the borderline integral by
\[
    \int_{|k_\alpha|^{-1}}^{\delta(\varepsilon)}
    \frac{ds}
    {s\bigl(\log(e+|k_\alpha|s)\bigr)^\ell},
\]
which is uniformly bounded for \(\ell>1\).  Alternatively, one would need an
additional cancellation in the cusp contribution to recover the endpoint
\(m=4\) in the unmodified \(L^\infty_4\) space.
\end{remark}

\section{Parametrization and Estimates for the \((+,-,-)\) Resonance Manifold}
\label{Parametrization and Estimates for the $(+,-,-)$ Resonance Manifold}
\subsection{Unit--scale parametrization}
\label{Parametrization(+,-,-)}
We consider the sign configuration
\[
    (\sigma_\alpha,\sigma_\beta,\sigma_\gamma)=(+,-,-),
\]
corresponding to~\eqref{manifold3}.  By homogeneity, we work at unit
scale:
\[
    |k_\alpha|=1,
    \qquad
    k_\alpha=(\cos\theta_\alpha,\sin\theta_\alpha),
    \qquad
    0\le \theta_\alpha<\frac{\pi}{2}.
\]
As before, write
\[
    k_j=|k_j|(\cos\theta_j,\sin\theta_j),
    \qquad
    j\in\{\alpha,\beta,\gamma\}.
\]
Solving the polar momentum system
\[
    k_\alpha+k_\beta+k_\gamma=0
\]
gives
\begin{equation}
\label{eq:polar-momentum-+--}
    |k_\beta|
    =
    \frac{\sin(\theta_\gamma-\theta_\alpha)}
         {\sin(\theta_\beta-\theta_\gamma)},
    \qquad
    |k_\gamma|
    =
    \frac{\sin(\theta_\beta-\theta_\alpha)}
         {\sin(\theta_\gamma-\theta_\beta)}.
\end{equation}
For the \((+,-,-)\) configuration, the frequency resonance is
\[
    \cos\theta_\alpha-\cos\theta_\beta-\cos\theta_\gamma=0,
\]
so that
\[
    \cos\theta_\gamma
    =
    \cos\theta_\alpha-\cos\theta_\beta.
\]
Substituting the two choices
\[
    \sin\theta_\gamma
    =
    \pm
    \sqrt{1-(\cos\theta_\alpha-\cos\theta_\beta)^2}
\]
into \eqref{eq:polar-momentum-+--}, and imposing
\[
    |k_\beta|\ge0,
    \qquad
    |k_\gamma|\ge0,
\]
gives the following parametrization.
\begin{proposition}[Unit--scale parametrization of the \((+,-,-)\) resonance curve]
\label{prop:param-+--}
If
\[
    0\le \theta_\alpha\le\frac{\pi}{3},
\]
then the \((+,-,-)\) resonant set has no nontrivial one-dimensional component.
At \(\theta_\alpha=\pi/3\), its closure degenerates to the two collapsed
points
\[
    k_\beta=0,
    \qquad
    k_\beta=-k_\alpha.
\]

Assume now that
\[
    \frac{\pi}{3}<\theta_\alpha<\frac{\pi}{2}.
\]
Define
\[
    \theta_\beta^1
    :=
    \arccos(2\cos\theta_\alpha),
    \qquad
    \theta_\beta^2
    :=
    -\arccos(2\cos\theta_\alpha),
\]
and
\[
    \theta_\beta^3
    :=
    \pi+\arccos(1-\cos\theta_\alpha),
    \qquad
    \theta_\beta^4
    :=
    \pi+\theta_\alpha.
\]
Then
\[
    \theta_\beta^2<\theta_\beta^1,
    \qquad
    \theta_\beta^3<\theta_\beta^4.
\]

The closures of the unit-scale \((+,-,-)\) resonance arcs are parametrized by
\[
    k_\beta
    =
    |k_\beta|(\cos\theta_\beta,\sin\theta_\beta),
\]
where
\[
|k_\beta|
=
\begin{cases}
\displaystyle
\frac{
\sin\theta_\alpha(\cos\theta_\alpha-\cos\theta_\beta)
+
\cos\theta_\alpha
\sqrt{1-(\cos\theta_\alpha-\cos\theta_\beta)^2}
}{
-\sin\theta_\beta(\cos\theta_\alpha-\cos\theta_\beta)
-\cos\theta_\beta
\sqrt{1-(\cos\theta_\alpha-\cos\theta_\beta)^2}
},
&
\begin{array}{l}
\theta_\beta^2\le \theta_\beta\le \theta_\beta^1,\\
\sin\theta_\gamma\le0,
\end{array}
\\[2.0em]
\displaystyle
\frac{
\sin\theta_\alpha(\cos\theta_\alpha-\cos\theta_\beta)
+
\cos\theta_\alpha
\sqrt{1-(\cos\theta_\alpha-\cos\theta_\beta)^2}
}{
-\sin\theta_\beta(\cos\theta_\alpha-\cos\theta_\beta)
-\cos\theta_\beta
\sqrt{1-(\cos\theta_\alpha-\cos\theta_\beta)^2}
},
&
\begin{array}{l}
\theta_\beta^3\le \theta_\beta\le \theta_\beta^4,\\
\sin\theta_\gamma\le0,
\end{array}
\\[2.0em]
\displaystyle
\frac{
\sin\theta_\alpha(\cos\theta_\alpha-\cos\theta_\beta)
-
\cos\theta_\alpha
\sqrt{1-(\cos\theta_\alpha-\cos\theta_\beta)^2}
}{
-\sin\theta_\beta(\cos\theta_\alpha-\cos\theta_\beta)
+
\cos\theta_\beta
\sqrt{1-(\cos\theta_\alpha-\cos\theta_\beta)^2}
},
&
\begin{array}{l}
\theta_\beta^3\le \theta_\beta\le \theta_\beta^4,\\
\sin\theta_\gamma\ge0.
\end{array}
\end{cases}
\]
Here
\[
    \cos\theta_\gamma
    =
    \cos\theta_\alpha-\cos\theta_\beta,
\]
and the sign of \(\sin\theta_\gamma\) distinguishes the two branches.

The first line gives the component whose closure contains the collapsed (or corner)
endpoint \(k_\beta=0\).  Indeed, both limiting directions
\[
    \theta_\beta=\theta_\beta^1,
    \qquad
    \theta_\beta=\theta_\beta^2
\]
correspond to \(k_\beta=0\).  The second and third lines form the other
closed component.  At
\[
    \theta_\beta=\theta_\beta^4=\pi+\theta_\alpha,
\]
both formulas give
\[
    |k_\beta|=1,
    \qquad
    k_\beta=-k_\alpha,
\]
and hence
\[
    k_\gamma=0.
\]
The two components are exchanged by the central symmetry
\[
    k_\beta\longmapsto -k_\alpha-k_\beta,
\]
that is, by the half-turn around
\[
    -\frac{k_\alpha}{2}
    =
    \left(
        -\frac{\cos\theta_\alpha}{2},
        -\frac{\sin\theta_\alpha}{2}
    \right).
\]
\end{proposition}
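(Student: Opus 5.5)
The proof follows the template of Propositions~\ref{prop:param-+++} and~\ref{prop:param-++-}. From the polar momentum formulas \eqref{eq:polar-momentum-+--} and $\cos\theta_\gamma=\cos\theta_\alpha-\cos\theta_\beta$, substituting $\sin\theta_\gamma=\mp\sqrt{1-(\cos\theta_\alpha-\cos\theta_\beta)^2}$ gives directly the two algebraic expressions displayed in the statement. For these, $|k_\beta|=\sin(\theta_\gamma-\theta_\alpha)/\sin(\theta_\beta-\theta_\gamma)$ and $|k_\gamma|=\sin(\theta_\beta-\theta_\alpha)/\sin(\theta_\gamma-\theta_\beta)$. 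The whole task is therefore to determine, for each sign choice, the set of $\theta_\beta$ on which both quantities are nonnegative, with the admissibility condition $|\cos\theta_\alpha-\cos\theta_\beta|\le1$.

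\textbf{Step 1: zeros of the numerators.} I will first locate the zeros of each numerator, since these are the candidate endpoints.
\begin{itemize}
\item The $|k_\beta|$ numerator is $\sin(\theta_\gamma-\theta_\alpha)$. It vanishes iff $\theta_\gamma=\theta_\alpha$ or $\theta_\gamma=\theta_\alpha+\pi$. Combined with the frequency relation, this gives $\cos\theta_\beta=0$ (excluded, degenerate) or $\cos\theta_\beta=2\cos\theta_\alpha$.
\item The second equation has solutions only when $\cos\theta_\alpha\le\tfrac12$, i.e.\ $\theta_\alpha\ge\pi/3$. Its solutions are $\theta_\beta^{1,2}=\pm\arccos(2\cos\theta_\alpha)$, which are the collapsed directions $k_\beta=0$.
\item The $|k_\gamma|$ numerator $\sin(\theta_\beta-\theta_\alpha)$ vanishes at $\theta_\beta=\pi+\theta_\alpha=\theta_\beta^4$, giving $k_\gamma=0$ and $k_\beta=-k_\alpha$.
\item The radicand vanishes when $\cos\theta_\beta=\cos\theta_\alpha-1$, i.e.\ at $\theta_\beta^3=\pi+\arccos(1-\cos\theta_\alpha)$. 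This is where the two $\sin\theta_\gamma$ branches meet.
\end{itemize}

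\textbf{Step 2: sign analysis of the denominators.} Next I analyse the sign of the denominators $\mp\sin\theta_\beta\,p\mp\cos\theta_\beta\, r$, where $p=\cos\theta_\alpha-\cos\theta_\beta$. Each can be written as $\sin(\theta_\gamma-\theta_\beta)$-type expressions, so they vanish only when $k_\beta\parallel k_\gamma$, i.e.\ $\theta_\gamma=\theta_\beta$ or $\theta_\gamma=\theta_\beta+\pi$.
\begin{itemize}
\item $\theta_\gamma=\theta_\beta$ forces $2\cos\theta_\beta=\cos\theta_\alpha$. Then the momentum relation makes $k_\alpha$ parallel to $k_\beta$, giving $\theta_\alpha=\theta_\beta$ modulo $\pi$, which is impossible unless $\cos\theta_\alpha=0$.
\item $\theta_\gamma=\theta_\beta+\pi$ forces $\cos\theta_\alpha=0$, which is outside the range.
\end{itemize}
So the denominators have constant sign on each connected admissible $\theta_\beta$-interval. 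In particular, unlike the $(+,+,-)$ case, there are no asymptotic ends and the curve is compact. Checking signs at one sample point per interval between the consecutive zeros from Step~1 then gives the components:
\begin{itemize}
\item $[\theta_\beta^2,\theta_\beta^1]$ on the branch $\sin\theta_\gamma\le0$;
\item $[\theta_\beta^3,\theta_\beta^4]$ on both branches, joined at $\theta_\beta^3$ where $r=0$ and at $\theta_\beta^4$ where both formulas give $|k_\beta|=1$.
\end{itemize}
I would confirm the ordering $\theta_\beta^3<\theta_\beta^4$ from $\arccos(1-\cos\theta_\alpha)<\theta_\alpha$, which is equivalent to $1-\cos\theta_\alpha>\cos\theta_\alpha$, i.e.\ $\theta_\alpha>\pi/3$.

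\textbf{Step 3: the regime $\theta_\alpha\le\pi/3$.} The same sign bookkeeping applies here.
\begin{itemize}
\item For $\theta_\alpha<\pi/3$ the zero set $\cos\theta_\beta=2\cos\theta_\alpha$ is empty.
\item For $\theta_\alpha<\pi/3$ we also have $\theta_\beta^3\ge\theta_\beta^4$.
\item On every admissible interval, one of $|k_\beta|,|k_\gamma|$ is therefore negative, so the set has no one-dimensional component.
\item At $\theta_\alpha=\pi/3$ both intervals shrink to single points, $\theta_\beta=0$ (giving $k_\beta=0$) and $\theta_\beta^3=\theta_\beta^4$ (giving $k_\gamma=0$).
\end{itemize}

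\textbf{Step 4: central symmetry.} Finally, the map $k_\beta\mapsto-k_\alpha-k_\beta$ swaps $\beta$ and $\gamma$ and preserves the $(+,-,-)$ system, so it maps the resonance set to itself. It sends $k_\beta=0$ to $k_\beta=-k_\alpha$. Hence it must exchange the two components, since each component contains exactly one of these collapsed points.

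\textbf{Main obstacle.} The step I expect to be delicate is the sign bookkeeping of Step~2 near the endpoints $\theta_\beta^{1,2}$ and $\theta_\beta^3$. There both numerator and branch choice change, and one must verify that no spurious arc appears, for example on the $\sin\theta_\gamma\ge0$ branch over $[\theta_\beta^2,\theta_\beta^1]$. I would handle this by evaluating both branches at $\theta_\beta=0$ and at $\theta_\beta=\pi+\theta_\alpha/2$, and using continuity together with the zero structure from Step~1.
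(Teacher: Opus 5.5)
Your route is the paper's own: the paper only substitutes \(\sin\theta_\gamma=\pm\sqrt{1-(\cos\theta_\alpha-\cos\theta_\beta)^2}\) into the polar momentum formulas and imposes \(|k_\beta|,|k_\gamma|\ge0\). The sign bookkeeping you add, however, rests on a false claim in Step~2.

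\textbf{The denominators do vanish.} They vanish inside the admissible range \(\cos\theta_\beta\ge\cos\theta_\alpha-1\). The condition \(\theta_\gamma=\theta_\beta\) reduces to \(\cos\theta_\beta=\tfrac12\cos\theta_\alpha\), which always has solutions. Matching the sign of \(\sin\theta_\beta\) to the branch gives a zero at \(\theta_\beta=-\arccos(\tfrac12\cos\theta_\alpha)\) on the \(\sin\theta_\gamma\le0\) branch, and at \(\theta_\beta=+\arccos(\tfrac12\cos\theta_\alpha)\) on the \(\sin\theta_\gamma\ge0\) branch. Your argument that ``the momentum relation makes \(k_\alpha\) parallel to \(k_\beta\)'' only shows that no \emph{resonant} point has \(k_\beta\parallel k_\gamma\). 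At these parameter values the \(2\times2\) momentum system is singular, so the momentum relation cannot be invoked.

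Concretely, the \(\sin\theta_\gamma\le0\) denominator equals \(-\sqrt{1-(1-\cos\theta_\alpha)^2}<0\) at \(\theta_\beta=0\). It is positive on \((\theta_\beta^3,\theta_\beta^4)\), which lies on the same admissible arc. This sign flip is exactly what makes that arc admissible on the \(\sin\theta_\gamma\le0\) branch; propagating the denominator sign from \(\theta_\beta=0\) would wrongly discard it.

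\textbf{Repair.} Add these poles to your break points. Note that \(|k_\beta|=\sin(\theta_\gamma-\theta_\alpha)/\sin(\theta_\beta-\theta_\gamma)\) and \(|k_\gamma|=\sin(\theta_\beta-\theta_\alpha)/\sin(\theta_\gamma-\theta_\beta)\) have opposite denominators. Their numerators coincide, and are nonzero, when \(\theta_\gamma=\theta_\beta\). Hence the two radii have opposite signs on both sides of each pole. That, not the absence of poles, is why no admissible arc ends there and the curve is compact.

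\textbf{Incomplete break-point list.} Step~1's list is also incomplete, so ``one sample point between consecutive zeros'' is not yet justified. The missing points are:
\(\sin(\theta_\beta-\theta_\alpha)\) also vanishes at \(\theta_\beta=\theta_\alpha\);
the radicand also vanishes at \(\pi-\arccos(1-\cos\theta_\alpha)\), the other end of the admissible arc;
and \(\cos\theta_\beta=0\) is not ``excluded''. On the \(\sin\theta_\gamma\ge0\) branch the numerator of \(|k_\beta|\) has a simple zero, hence a genuine sign change, at \(\theta_\beta=\pm\pi/2\), where \(|k_\gamma|=-1\).

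The same complete list is needed to justify the ``therefore'' in Step~3. For \(\theta_\alpha<\pi/3\) on the \(\sin\theta_\gamma\le0\) branch, between \(-\pi+\arccos(1-\cos\theta_\alpha)\) and the pole, it is \(|k_\gamma|\), not \(|k_\beta|\), that is negative.

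\textbf{Sample point.} Your sample point \(\pi+\theta_\alpha/2\) lies outside the admissible set whenever \(\arccos(1-\cos\theta_\alpha)>\theta_\alpha/2\), e.g.\ at \(\theta_\alpha=7\pi/18\). Use the midpoint of \([\theta_\beta^3,\theta_\beta^4]\) instead.

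With these corrections the classification of the arcs goes through. Your ordering \(\theta_\beta^3<\theta_\beta^4\) and the central-symmetry argument of Step~4 are fine as written.
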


For later use, we only work on the first component,
\[
    \theta_\beta\in(\theta_\beta^2,\theta_\beta^1),
\]
since the second component follows by the central symmetry above.
\begin{figure}[H]
    \centering
    \includegraphics[width=0.78\textwidth]{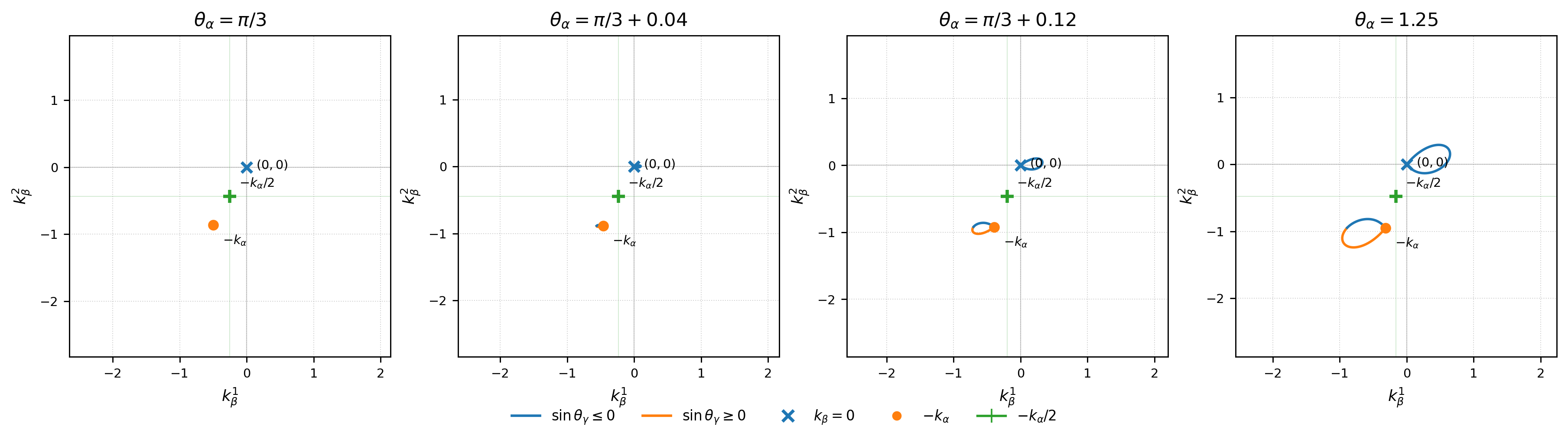}
    \caption{
        Unit-scale \((+,-,-)\) resonance set in the \(k_\beta\)-plane near
        the threshold \(\theta_\alpha=\pi/3\).  The window is centered at the
        symmetry point \(-k_\alpha/2\).  At \(\theta_\alpha=\pi/3\), the set
        degenerates to \(0\) and \(-k_\alpha\); for
        \(\theta_\alpha>\pi/3\), these points open into two closed curves.
        Blue and orange indicate the branches
        \(\sin\theta_\gamma\le0\) and \(\sin\theta_\gamma\ge0\), respectively.
    }
    \label{fig:manifold3-threshold}
\end{figure}

\begin{figure}[H]
    \centering
    \includegraphics[width=0.78\textwidth]{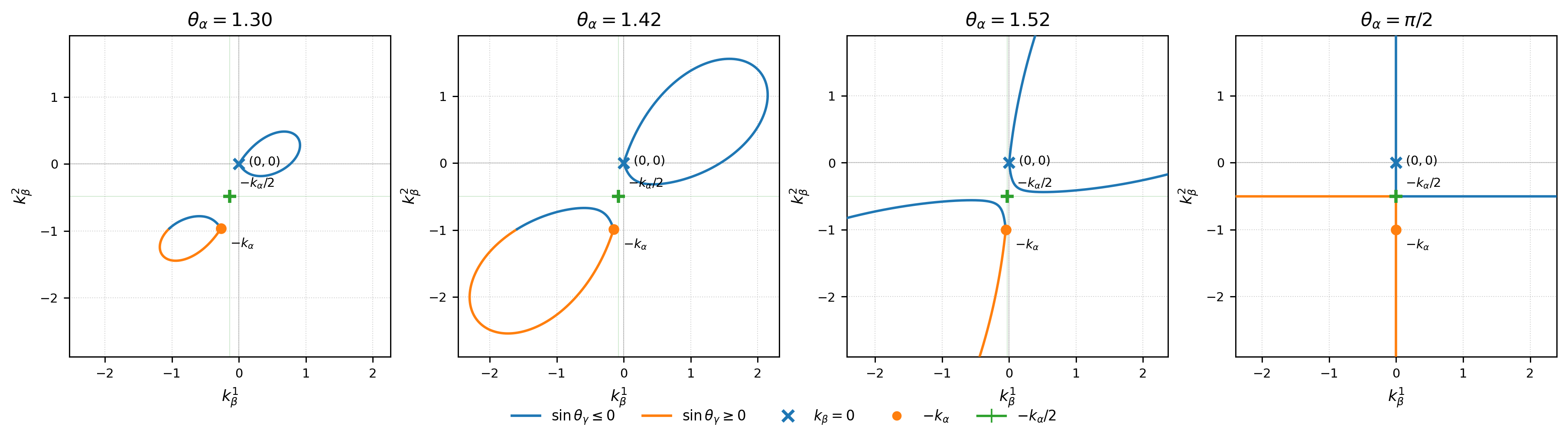}
    \caption{
        Unit-scale \((+,-,-)\) resonance set as
        \(\theta_\alpha\to\pi/2\).  The final panel shows the limiting
        degenerate set
        \(
            k_\beta^1=0
            \quad\text{and}\quad
            k_\beta^2=-\frac12,
        \)
        intersecting at the symmetry center \(-k_\alpha/2=(0,-1/2)\).
        Blue and orange indicate the branches
        \(\sin\theta_\gamma\le0\) and \(\sin\theta_\gamma\ge0\), respectively.
    }
    \label{fig:manifold3-vertical}
\end{figure}

\subsection{Co--area factor and corner estimates}
\label{subsec:coarea-corner-+--}

We work on the component
\[
    \theta_\beta\in(\theta_\beta^2,\theta_\beta^1),
\]
where
\[
    \theta_\beta^1=\arccos(2\cos\theta_\alpha),
    \qquad
    \theta_\beta^2=-\arccos(2\cos\theta_\alpha).
\]
Throughout this subsection, we use the notation
\[
    u:=\cos\theta_\alpha-\cos\theta_\beta,
    \qquad
    \overline r:=|\sin\theta_\gamma|
    =
    \sqrt{1-u^2},
\]
and
\[
    \overline f
    :=
    -\sin\theta_\beta\,u
    -
    \cos\theta_\beta\,\overline r.
\]
We also set
\[
    \overline F_1(\theta_\alpha,\theta_\beta)
    :=
    \sin\theta_\alpha\,u
    +
    \cos\theta_\alpha\,\overline r.
\]
Thus, on this component,
\[
    |k_\beta|
    =
    \frac{\overline F_1}{\overline f},
    \qquad
    |k_\gamma|
    =
    \frac{\sin(\theta_\beta-\theta_\alpha)}{\overline f}.
\]

After eliminating
\[
    k_\gamma=-k_\alpha-k_\beta,
\]
the reduced frequency constraint is
\[
    \widetilde\Xi_1(k_\beta)
    =
    \frac{k_\alpha^1}{|k_\alpha|}
    -
    \frac{k_\beta^1}{|k_\beta|}
    -
    \frac{k_\gamma^1}{|k_\gamma|}.
\]

\begin{lemma}[Co--area factor for the \((+,-,-)\) curve]
\label{lem:coarea-ratio-+--}
On the unit-scale \((+,-,-)\) resonance curve,
one has
\[
    \frac{
        \left|\dfrac{d k_\beta^1}{d\theta_\beta}\right|
    }{
        \left|\partial_{k_\beta^2}\widetilde\Xi_1\right|
    }
    =
    \left|
    \frac{
        \overline F_1(\theta_\alpha,\theta_\beta)
        \sin(\theta_\beta-\theta_\alpha)
    }{
        \overline r\,\overline f^{\,3}
    }
    \right|.
\]
\end{lemma}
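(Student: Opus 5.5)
We follow the computation of Lemma~\ref{lem:coarea-ratio-++-}, adapted to the $(+,-,-)$ sign pattern. On the component $\theta_\beta\in(\theta_\beta^2,\theta_\beta^1)$ we write $k_\beta=(\overline F_1/\overline f)(\cos\theta_\beta,\sin\theta_\beta)$ and use $\cos\theta_\gamma=u$, $\sin\theta_\gamma=-\overline r$. The derivatives needed are
\[
\partial_{\theta_\beta}u=\sin\theta_\beta,\qquad
\partial_{\theta_\beta}\overline r=-\frac{u\sin\theta_\beta}{\overline r}.
\]

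\emph{Numerator.} Differentiating $k_\beta^1=\overline F_1\cos\theta_\beta/\overline f$ gives
\[
\frac{dk_\beta^1}{d\theta_\beta}=\frac{\overline J}{\overline r\,\overline f^{\,2}}.
\]
Here $\overline J$ is the analogue of $J_2$: a combination of $\sin\theta_\beta\cos\theta_\beta\sin(\theta_\beta-\theta_\alpha)$ and $u\,\overline r\,\overline F_1$, with signs adjusted because $u$ enters with $-\cos\theta_\beta$. To organize the computation we use two identities.
\begin{itemize}
\item Since $\sin\theta_\gamma=-\overline r$, we have $\overline f=\sin(\theta_\gamma-\theta_\beta)$ and $\overline F_1=\sin(\theta_\gamma-\theta_\alpha)$, up to the sign conventions fixed by the polar momentum formulas~\eqref{eq:polar-momentum-+--}.
\item Hence $\partial_{\theta_\beta}$ of each of these is $\cos(\cdot)$ times $(\partial_{\theta_\beta}\theta_\gamma-1)$, respectively $\cos(\cdot)\,\partial_{\theta_\beta}\theta_\gamma$, where
\[
\partial_{\theta_\beta}\theta_\gamma=\frac{\sin\theta_\beta}{\overline r},
\]
obtained from $\cos\theta_\gamma=\cos\theta_\alpha-\cos\theta_\beta$.
\end{itemize}
These make the factor $1/\overline r$ transparent.

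\emph{Denominator.} We compute $\partial_{k_\beta^2}\widetilde\Xi_1$ from the general formula of Section~\ref{Scaling and co–area representation} with $\sigma_\beta=\sigma_\gamma=-1$:
\[
\partial_{k_\beta^2}\widetilde\Xi_1=\frac{\cos\theta_\beta\sin\theta_\beta}{|k_\beta|}-\frac{\cos\theta_\gamma\sin\theta_\gamma}{|k_\gamma|}.
\]
Substitute $|k_\beta|=\overline F_1/\overline f$, $|k_\gamma|=\sin(\theta_\beta-\theta_\alpha)/\overline f$, $\cos\theta_\gamma=u$ and $\sin\theta_\gamma=-\overline r$, and put everything over the common denominator $\overline F_1\sin(\theta_\beta-\theta_\alpha)$. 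The resulting numerator is
\[
\overline f\,\bigl[\sin\theta_\beta\cos\theta_\beta\sin(\theta_\beta-\theta_\alpha)+u\,\overline r\,\overline F_1\bigr].
\]

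\emph{Main obstacle.} The key step, and the one I expect to be the real work, is showing that this bracket equals $\pm\overline J$. This is exactly the cancellation behind Lemmas~\ref{lem:coarea-factor-upper-+++} and~\ref{lem:coarea-ratio-++-}. I would check it by expanding both sides in $\cos\theta_\beta,\sin\theta_\beta,u,\overline r$ and reducing with $u^2+\overline r^2=1$ and $u=\cos\theta_\alpha-\cos\theta_\beta$. If the direct expansion is messy, the fallback is the geometric meaning: both quantities equal, up to the factors already extracted, the tangential derivative of the frequency constraint along the curve. That is, both are proportional to $\nabla\widetilde\Xi_1\cdot k_\beta'(\theta_\beta)^{\perp}$, so they must agree, since $\partial_{k^1}\widetilde\Xi_1\,dk^1+\partial_{k^2}\widetilde\Xi_1\,dk^2=0$ along the curve.

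\emph{Conclusion.} Once $\partial_{k_\beta^2}\widetilde\Xi_1=\pm\overline f\,\overline J/(\overline F_1\sin(\theta_\beta-\theta_\alpha))$ is established, dividing the two identities cancels $\overline J$ and yields
\[
\frac{|dk_\beta^1/d\theta_\beta|}{|\partial_{k_\beta^2}\widetilde\Xi_1|}=\left|\frac{\overline F_1\sin(\theta_\beta-\theta_\alpha)}{\overline r\,\overline f^{\,3}}\right|.
\]
This is valid on the open arc where $\overline J\neq0$, and it extends by continuity to the whole open arc.
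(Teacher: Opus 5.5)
Your proposal is correct and follows the paper's proof. Your bracket $\sin\theta_\beta\cos\theta_\beta\sin(\theta_\beta-\theta_\alpha)+u\,\overline r\,\overline F_1$ is exactly the paper's $J_3$, and the paper likewise writes $\frac{dk_\beta^1}{d\theta_\beta}=J_3/(\overline r\,\overline f^{\,2})$ and $\partial_{k_\beta^2}\widetilde\Xi_1=\overline f J_3/(\overline F_1\sin(\theta_\beta-\theta_\alpha))$ and then divides. The identity you deferred does hold, with the plus sign, and your angle identities give it in two lines once you use $\overline F_1=\sin(\theta_\alpha-\theta_\gamma)$ (not $\sin(\theta_\gamma-\theta_\alpha)$), $\overline f=\sin(\theta_\gamma-\theta_\beta)$, and $\overline r\,\partial_{\theta_\beta}\theta_\gamma=\sin\theta_\beta$; so the vaguer geometric fallback is unnecessary.
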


\begin{proof}
All identities in this lemma are understood on the open component
\[
    \theta_\beta^2<\theta_\beta<\theta_\beta^1,
\]
away from the collapsed endpoints.  The endpoint behavior is obtained by
taking limits.
A direct computation gives
\[
    \partial_{k_\beta^2}\widetilde\Xi_1
    =
    \frac{
        \overline f\,J_3
    }{
        \sin(\theta_\beta-\theta_\alpha)\,
        \overline F_1
    },
\]
and
\[
    \frac{d k_\beta^1}{d\theta_\beta}
    =
    \frac{J_3}{\overline r\,\overline f^{\,2}},
\]
where
\[
\begin{aligned}
J_3
&=
u\overline r
(\sin\theta_\alpha u+\cos\theta_\alpha\overline r)
\\
&\qquad
+
\sin\theta_\beta\cos\theta_\beta
(\cos\theta_\alpha\sin\theta_\beta
-\sin\theta_\alpha\cos\theta_\beta).
\end{aligned}
\]
Dividing these two identities gives the claim.
\end{proof}

\begin{lemma}[Cut-off lower bounds on the \((+,-,-)\) component]
\label{lem:cutoff-lower-+--}
Assume
\[
    \varepsilon<\cos\theta_\alpha<\frac12-\varepsilon,
    \qquad
    \theta_\beta^2<\theta_\beta<\theta_\beta^1.
\]
Then on the support of the angular cut-off,
\[
    |\overline f|\gtrsim\varepsilon,
    \qquad
    |\sin(\theta_\beta-\theta_\alpha)|\gtrsim\varepsilon,
    \qquad
    \overline r\gtrsim\sqrt\varepsilon.
\]
Consequently,
\[
    \frac{1}{\overline r\,|\overline f|^3}
    \lesssim
    \varepsilon^{-7/2}.
\]
\end{lemma}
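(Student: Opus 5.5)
The plan is to express each of the three quantities through the angles of the triad and then read the lower bounds off the cut-off conditions \(|\cos\theta_j|>\varepsilon\) and \(\bigl||\cos\theta_j|-\tfrac12\bigr|>\varepsilon\) for \(j=\alpha,\beta,\gamma\). On the component \(\theta_\beta^2<\theta_\beta<\theta_\beta^1\) we have \(\sin\theta_\gamma\le0\), \(\cos\theta_\gamma=u\), \(\sin\theta_\gamma=-\overline r\). Hence
\[
\overline f=-\sin\theta_\beta\cos\theta_\gamma+\cos\theta_\beta\sin\theta_\gamma=\sin(\theta_\gamma-\theta_\beta),
\]
and likewise \(\overline F_1=-\sin(\theta_\gamma-\theta_\alpha)\). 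Thus \(\overline f\) is the sine of the angle between \(k_\beta\) and \(k_\gamma\), and \(\sin(\theta_\beta-\theta_\alpha)\) is the sine of the angle between \(k_\alpha\) and \(k_\beta\). By the triangle \(k_\alpha+k_\beta+k_\gamma=0\) (law of sines), these two sines are proportional to \(|k_\alpha|=1\) and \(|k_\gamma|\), respectively. This matches \(|k_\gamma|=\sin(\theta_\beta-\theta_\alpha)/\overline f\).

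\emph{Lower bound for \(\overline r\).} We have \(\overline r=|\sin\theta_\gamma|\), and \(\overline r\) vanishes only where \(|u|=1\). On this component \(u=\cos\theta_\alpha-\cos\theta_\beta\), with \(\cos\theta_\beta\ge2\cos\theta_\alpha\) because \(|\theta_\beta|\le\arccos(2\cos\theta_\alpha)\). Hence \(u\in[\cos\theta_\alpha-1,\,-\cos\theta_\alpha]\), so \(|u|\le1-\cos\theta_\alpha\le1-\varepsilon\). This gives \(\overline r\ge\sqrt{1-(1-\varepsilon)^2}\gtrsim\sqrt\varepsilon\). The argument is identical to the second case of Lemma~\ref{lem:g-bound-+++}.

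\emph{Lower bound for \(|\sin(\theta_\beta-\theta_\alpha)|\).} I would argue by monotonicity, as in Lemma~\ref{Positivity of gamma}. On \((\theta_\beta^2,\theta_\beta^1)\) the angle \(\theta_\beta-\theta_\alpha\) ranges over an interval strictly inside \((-\pi,0)\). The reason is that \(\theta_\beta^1<\pi/2<\theta_\alpha+\pi/2\), and \(\theta_\beta^2-\theta_\alpha>-\pi\) because \(\arccos(2\cos\theta_\alpha)+\theta_\alpha<\pi\). Since \(\sin\) is unimodal there, the minimum of \(|\sin|\) is attained at the boundary of the cut-off support. There are two kinds of boundary point. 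At the endpoint \(\theta_\beta=\theta_\beta^1\), the value is \(\sin(\theta_\alpha-\arccos(2\cos\theta_\alpha))\); this quantity is an explicit function of \(c=\cos\theta_\alpha\) that vanishes only at \(c=1/2\), and the restriction \(c\le\tfrac12-\varepsilon\) gives a bound \(\gtrsim\varepsilon\) after a first-order expansion in \(\tfrac12-c\). At the other endpoint \(\theta_\beta^2\), the value is \(\sin(\theta_\alpha+\arccos 2c)\), which is bounded below by \(\sim c\gtrsim\varepsilon\) near \(c\to0\). The same analysis applies to the cut-off points \(|\cos\theta_\beta|=\varepsilon\) if they fall inside the interval.

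\emph{Lower bound for \(|\overline f|\).} Here I use \(|\overline f|=|\sin(\theta_\gamma-\theta_\beta)|\). By the law of sines, \(|\overline f|=|\sin(\theta_\beta-\theta_\alpha)|/|k_\gamma|\). I would bound \(|k_\gamma|\lesssim1\) on this component: it is a compact closed curve through \(k_\beta=0\), and \(|k_\gamma|\le1+|k_\beta|\). I would control \(|k_\beta|=\overline F_1/\overline f\) near the degenerate point as in Lemma~\ref{lem:first-zero}. A cleaner alternative is direct: \(\overline f=0\) would force \(k_\beta\parallel k_\gamma\), and with \(\cos\theta_\gamma=u\) and \(\cos\theta_\beta\) this requires \(\cos\theta_\beta=\pm u\). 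The option \(\cos\theta_\beta=u\) means \(\cos\theta_\alpha=2\cos\theta_\beta\), while the option \(\cos\theta_\beta=-u\) forces \(\cos\theta_\alpha=0\). So I would study the explicit function \(\overline f\) near these two configurations. The first is excluded quantitatively by \(\bigl||\cos\theta_\beta|-\tfrac12\bigr|\) and \(|\cos\theta_\alpha|\) cut-offs, and the second by \(\cos\theta_\alpha>\varepsilon\). A first-order Taylor expansion then gives \(|\overline f|\gtrsim\varepsilon\). I expect this step to be the main obstacle, because the zero set of \(\overline f\) must be located uniformly in \(\theta_\alpha\), including near \(\theta_\alpha\to\pi/2\), where the component stretches toward the degenerate cross of Figure~\ref{fig:manifold3-vertical}. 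I would first try the derivative/convexity argument of Lemma~\ref{lem:f-lower-+++}: compute \(\partial_{\theta_\beta}\overline f\), find its single interior critical point, and evaluate \(\overline f\) at the critical point and at the cut-off endpoints.

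Combining \(\overline r\gtrsim\sqrt\varepsilon\) with \(|\overline f|^3\gtrsim\varepsilon^3\) gives \((\overline r|\overline f|^3)^{-1}\lesssim\varepsilon^{-7/2}\).
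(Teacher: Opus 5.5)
The real gap is the bound $|\overline f|\gtrsim\varepsilon$, which is the step that produces the $\varepsilon^{-3}$, and none of your three routes establishes it.

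\emph{Law-of-sines route.} This route needs $|k_\gamma|\lesssim1$ uniformly, and that is false. Write $c:=\cos\theta_\alpha$. For fixed $\theta_\beta\in(0,\pi/2)$ one finds, as $c\to0$, that $\overline f=-c/\sin\theta_\beta+O(c^2)$ and $|k_\gamma|\approx\sin\theta_\beta\cos\theta_\beta/c$. For example, $|k_\gamma|\approx1/(2c)$ at $\theta_\beta=\pi/4$, which lies well inside the cut-off support. So the component has diameter of order $c^{-1}$. Combining the separate bounds $|\sin(\theta_\beta-\theta_\alpha)|\gtrsim c$ and $|k_\gamma|\lesssim c^{-1}$ then gives only $|\overline f|\gtrsim c^2$. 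In fact the law of sines just turns the claim into the pointwise bound $|k_\gamma|\lesssim|\sin(\theta_\beta-\theta_\alpha)|/\varepsilon$, which is the same statement.

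\emph{Zero-set route.} You correctly show that $\overline f=\sin(\theta_\gamma-\theta_\beta)$ can vanish only if $\cos\theta_\beta=c/2$ or $c=0$. However, the first option is excluded by the component itself ($\cos\theta_\beta\ge2c$ there), not by the cut-off near $|\cos\theta|=\tfrac12$. More importantly, excluding zeros is not a quantitative bound. The natural quantitative form $|\sin(a-b)|\ge\tfrac12|\cos^2a-\cos^2b|$ yields only $|\overline f|\ge\tfrac12c\,(2\cos\theta_\beta-c)\ge\tfrac32c^2$.

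\emph{Derivative route.} This is the right one, but you leave it unexecuted, and that is where the whole content of the lemma lies.

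What is needed is the computation behind the paper's ``by monotonicity on the component'':
\[
\partial_{\theta_\beta}\overline f
=\frac{(\overline r-\sin\theta_\beta)\,(\overline r\sin\theta_\beta-u\cos\theta_\beta)}{\overline r}.
\]
The first factor is positive on the component, since it could vanish only if $u=\pm\cos\theta_\beta$. Also $\overline f<0$ throughout, because by your zero-set computation it never vanishes.

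Since $u\le-c<0<\cos\theta_\beta$, the second factor is positive on $[0,\theta_\beta^1)$, so $\overline f$ increases there. On $(\theta_\beta^2,0)$ the second factor is strictly increasing. Hence there is at most one interior critical point, present only when $c<1/\sqrt5$. That point is a local minimum of $\overline f$, i.e.\ a local maximum of $|\overline f|$.

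So $|\overline f|$ is bounded below by the smaller of its two endpoint values. The value $|\overline f(\theta_\beta^2)|=c\bigl(2\sqrt{1-c^2}+\sqrt{1-4c^2}\bigr)$ is the larger one. The minimum therefore sits at the collapsed endpoint $\theta_\beta^1$, where $k_\beta\to0$ and $|k_\gamma|\to1$, giving
\[
|\overline f(\theta_\beta^1)|=|\sin(\theta_\beta^1-\theta_\alpha)|=c\bigl(2\sqrt{1-c^2}-\sqrt{1-4c^2}\bigr)\ge(\sqrt3-1)\,c .
\]
This is why the paper reads both bounds off ``the same endpoint expression''.

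This also corrects your sine step. The endpoint value equals $\sqrt3/2$ at $c=\tfrac12$ and vanishes only as $c\to0$. So the bound $\gtrsim\varepsilon$ comes from $\cos\theta_\alpha>\varepsilon$, not from an expansion in $\tfrac12-c$.

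Apart from that, your unimodality argument for the sine and your bound $\overline r\ge\sqrt{2c-c^2}$ coincide with the paper's. No cut-off on $\theta_\beta$ or $\theta_\gamma$ is actually needed.
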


\begin{proof}
By monotonicity on the component,
\[
    |\overline f|
    \ge
    |\overline f(\theta_\alpha,\theta_\beta^1)|
    =
    -\cos\theta_\alpha\sqrt{1-4\cos^2\theta_\alpha}
    +
    2\cos\theta_\alpha\sin\theta_\alpha
    \gtrsim\varepsilon.
\]
The same endpoint expression gives
\[
    |\sin(\theta_\beta-\theta_\alpha)|
    \ge
    |\sin(\theta_\beta^1-\theta_\alpha)|
    \gtrsim\varepsilon.
\]
Moreover,
\[
    \overline r
    \ge
    \sqrt{2\cos\theta_\alpha-\cos^2\theta_\alpha}
    \gtrsim\sqrt\varepsilon.
\]
Therefore
\[
    \frac{1}{\overline r\,|\overline f|^3}
    \lesssim
    \varepsilon^{-1/2}\varepsilon^{-3}
    =
    \varepsilon^{-7/2}.
\]
\end{proof}
\begin{lemma}[Left-hand linear behavior near the corner]
\label{lem:left-zero}
Let \(\overline F_1\) be as above.  If
\[
    \varepsilon\le \cos\theta_\alpha\le\frac12-\varepsilon,
    \qquad
    \varepsilon>0 \text{ sufficiently small},
\]
then there exists \(\delta_1(\varepsilon)>0\) such that, whenever
\[
    |\theta_\beta-\theta_\beta^1(\theta_\alpha)|
    \le\delta_1(\varepsilon),
\]
one has
\[
    \sqrt\varepsilon\,
    |\theta_\beta-\theta_\beta^1(\theta_\alpha)|
    \lesssim
    |\overline F_1(\theta_\alpha,\theta_\beta)|
    \lesssim
    |\theta_\beta-\theta_\beta^1(\theta_\alpha)|.
\]
\end{lemma}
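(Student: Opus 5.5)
The plan mirrors the proof of Lemma~\ref{lem:first-zero}. I would first check that \(\theta_\beta^1\) is a zero of \(\overline F_1\), then compute the derivative \(\partial_{\theta_\beta}\overline F_1\) there, bound it above and below on the cusp range, and finally propagate these bounds to a \(\delta_1(\varepsilon)\)-neighbourhood and integrate by the mean value theorem.

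\emph{Step 1: the zero and the derivative.} Write \(c=\cos\theta_\alpha\) and \(s=\sin\theta_\alpha\). At \(\theta_\beta=\theta_\beta^1=\arccos(2c)\) we have \(\cos\theta_\beta=2c\). Hence
\[
u=-c,\qquad \overline r=\sqrt{1-c^2}=s,
\]
and so \(\overline F_1(\theta_\alpha,\theta_\beta^1)=-sc+cs=0\). Next, \(\partial_{\theta_\beta}u=\sin\theta_\beta\) and \(\partial_{\theta_\beta}\overline r=-u\sin\theta_\beta/\overline r\). This gives
\[
\partial_{\theta_\beta}\overline F_1
=\sin\theta_\beta\Bigl[\sin\theta_\alpha-\frac{\cos\theta_\alpha\,u}{\overline r}\Bigr].
\]
At \(\theta_\beta^1\) the bracket equals \(s+c^2/s=1/s\), and \(\sin\theta_\beta^1=\sqrt{1-4c^2}\). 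Therefore
\[
\bigl|\partial_{\theta_\beta}\overline F_1(\theta_\alpha,\theta_\beta^1)\bigr|
=\frac{\sqrt{1-4\cos^2\theta_\alpha}}{\sin\theta_\alpha},
\]
which is the same expression as in Lemma~\ref{lem:first-zero}.

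\emph{Step 2: two-sided bounds at the zero.} On \(\varepsilon\le c\le\frac12-\varepsilon\) we have \(1-4c^2=(1-2c)(1+2c)\ge 2\varepsilon\), so the derivative is \(\gtrsim\sqrt\varepsilon\). Also \(s\ge\sqrt3/2\), so the derivative is \(\le 2/\sqrt3\).

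\emph{Step 3: extension to a neighbourhood.} This is the step requiring care, since \(\delta_1\) must be chosen uniformly in \(\theta_\alpha\). I would bound \(\partial_{\theta_\beta}^2\overline F_1\) uniformly near \(\theta_\beta^1\). The only possible singularity is a factor \(\overline r^{-3}\), and near the corner \(\overline r\approx s\ge\sqrt3/2\); more precisely, \(\overline r\ge\frac12\) for \(|\theta_\beta-\theta_\beta^1|\) below an absolute constant. Hence \(|\partial^2_{\theta_\beta}\overline F_1|\le C\) with \(C\) absolute. Choosing \(\delta_1(\varepsilon)=c_0\sqrt\varepsilon\) with \(c_0\) small, the derivative stays in \([\tfrac12 d,\,2d]\) on the \(\delta_1\)-neighbourhood, where \(d\) denotes its value at \(\theta_\beta^1\).

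\emph{Step 4: conclusion.} Since \(\overline F_1(\theta_\alpha,\theta_\beta^1)=0\), the mean value theorem gives
\[
\sqrt\varepsilon\,|\theta_\beta-\theta_\beta^1|\lesssim|\overline F_1|\lesssim|\theta_\beta-\theta_\beta^1|
\]
on this neighbourhood, which is the claim.
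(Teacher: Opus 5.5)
Your proof is correct, but it takes a more hands-on route than the paper. The paper does no new computation. It sets $\vartheta_\beta=\pi-\theta_\beta$ and observes that $u(\theta_\alpha,\theta_\beta)=p(\theta_\alpha,\vartheta_\beta)$ and $\overline r(\theta_\alpha,\theta_\beta)=r(\theta_\alpha,\vartheta_\beta)$, hence $\overline F_1(\theta_\alpha,\theta_\beta)=\widetilde F_1(\theta_\alpha,\pi-\theta_\beta)$. It also notes that $\theta_\beta^\ast=\pi-\theta_\beta^1$. The statement is then literally Lemma~\ref{lem:first-zero}, with $\delta_1(\varepsilon)=\delta(\varepsilon)$.

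You noticed that the derivative at the zero coincides with the one in Lemma~\ref{lem:first-zero}, but you re-derived everything instead of using the reflection, which is what explains that coincidence.

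Your Step~3, however, is sharper than what the paper's chain of reasoning provides. Lemma~\ref{lem:first-zero} passes from the zero to a neighbourhood by ``uniform continuity'' of $\partial_{\theta_\beta}\widetilde F_1$. That is valid, by joint continuity on a compact set where $r$ is bounded below, but it gives no explicit $\delta$. You instead use an absolute bound on $\partial_{\theta_\beta}^2\overline F_1$, available because $\overline r\ge\tfrac12$ near the corner, together with $d\ge\sqrt{1-4\cos^2\theta_\alpha}\ge\sqrt{2\varepsilon}$. This yields the explicit, $\theta_\alpha$-uniform choice $\delta_1(\varepsilon)=c_0\sqrt\varepsilon$. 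Since the constant in Proposition~\ref{prop:apriori-+--} contains $\delta_1(\varepsilon)^{-m}$, this makes the $\varepsilon$-dependence of that bound fully explicit.

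In short, the paper's route buys brevity and a single scale $\delta=\delta_1$ shared by the $(+,+,-)$ cusp and the $(+,-,-)$ corner. Yours buys self-containedness and quantitative control of the neighbourhood size.
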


\begin{proof}
Set
\[
    \vartheta_\beta:=\pi-\theta_\beta.
\]
By the definitions of \(p,r\) in Section~4 and \(u,\overline r\) in the
present section,
\[
    u(\theta_\alpha,\theta_\beta)
    =
    p(\theta_\alpha,\vartheta_\beta),
    \qquad
    \overline r(\theta_\alpha,\theta_\beta)
    =
    r(\theta_\alpha,\vartheta_\beta).
\]
Consequently,
\[
    \overline F_1(\theta_\alpha,\theta_\beta)
    =
    \widetilde F_1(\theta_\alpha,\vartheta_\beta).
\]
Moreover,
\[
    \theta_\beta^\ast(\theta_\alpha)
    =
    \pi-\theta_\beta^1(\theta_\alpha),
\]
and hence
\[
    \left|
        \vartheta_\beta-\theta_\beta^\ast(\theta_\alpha)
    \right|
    =
    \left|
        \theta_\beta-\theta_\beta^1(\theta_\alpha)
    \right|.
\]
The result therefore follows directly from
Lemma~\ref{lem:first-zero}, with
\(\delta_1(\varepsilon)=\delta(\varepsilon)\).
\end{proof}

Since \(\overline F_1\) is even in \(\theta_\beta\), the other corner is
symmetric.

\begin{lemma}[Right-hand linear behavior near the corner]
\label{lem:right-zero}
Under the same hypotheses as in Lemma~\ref{lem:left-zero}, there exists
\(\delta_1(\varepsilon)>0\) such that, whenever
\[
    |\theta_\beta-\theta_\beta^2(\theta_\alpha)|
    \le\delta_1(\varepsilon),
\]
one has
\[
    \sqrt\varepsilon\,
    |\theta_\beta-\theta_\beta^2(\theta_\alpha)|
    \lesssim
    |\overline F_1(\theta_\alpha,\theta_\beta)|
    \lesssim
    |\theta_\beta-\theta_\beta^2(\theta_\alpha)|.
\]
\end{lemma}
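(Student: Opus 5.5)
The plan is to reduce Lemma~\ref{lem:right-zero} to Lemma~\ref{lem:left-zero} through the reflection \(\theta_\beta\mapsto-\theta_\beta\). This uses the remark just before the statement that \(\overline F_1\) is even in \(\theta_\beta\).

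First I will check the evenness directly. The quantities
\[
    u=\cos\theta_\alpha-\cos\theta_\beta,
    \qquad
    \overline r=\sqrt{1-u^2}
\]
depend on \(\theta_\beta\) only through \(\cos\theta_\beta\). Hence
\(\overline F_1=\sin\theta_\alpha\,u+\cos\theta_\alpha\,\overline r\) satisfies
\[
    \overline F_1(\theta_\alpha,-\theta_\beta)=\overline F_1(\theta_\alpha,\theta_\beta).
\]
By definition,
\[
    \theta_\beta^2(\theta_\alpha)=-\arccos(2\cos\theta_\alpha)=-\theta_\beta^1(\theta_\alpha).
\]
So if \(|\theta_\beta-\theta_\beta^2|\le\delta_1(\varepsilon)\), the point \(\theta_\beta':=-\theta_\beta\) satisfies
\[
    |\theta_\beta'-\theta_\beta^1|=|\theta_\beta-\theta_\beta^2|\le\delta_1(\varepsilon).
\]
This is precisely the hypothesis of Lemma~\ref{lem:left-zero}.

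Applying Lemma~\ref{lem:left-zero} at \(\theta_\beta'\) gives
\[
    \sqrt\varepsilon\,|\theta_\beta'-\theta_\beta^1|\lesssim|\overline F_1(\theta_\alpha,\theta_\beta')|\lesssim|\theta_\beta'-\theta_\beta^1|.
\]
Evenness replaces \(\overline F_1(\theta_\alpha,\theta_\beta')\) by \(\overline F_1(\theta_\alpha,\theta_\beta)\), and the distances are equal. This yields the claim with the same \(\delta_1(\varepsilon)=\delta(\varepsilon)\) and the same implicit constants.

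No genuine obstacle is expected. The only care needed is that Lemma~\ref{lem:left-zero} is a statement about the analytic function \(\overline F_1\) on a full two-sided neighborhood of \(\theta_\beta^1\), not just on the component \((\theta_\beta^2,\theta_\beta^1)\). The reflection maps a two-sided neighborhood of \(\theta_\beta^2\) onto a two-sided neighborhood of \(\theta_\beta^1\), so no restriction is lost.

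As a sanity check, I could instead differentiate directly:
\[
    \partial_{\theta_\beta}\overline F_1=\sin\theta_\beta\Bigl[\sin\theta_\alpha-\frac{\cos\theta_\alpha\,u}{\overline r}\Bigr],
\]
which is odd in \(\theta_\beta\). Its absolute value at \(\theta_\beta^2\) equals its value at \(\theta_\beta^1\), namely \(\sqrt{1-4\cos^2\theta_\alpha}/\sin\theta_\alpha\sim_\varepsilon\) between \(\sqrt\varepsilon\) and \(1\). The mean value theorem then gives the same bounds, as in Lemma~\ref{lem:first-zero}.
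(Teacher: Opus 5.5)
Your proposal is correct and is essentially the paper's proof: the paper also deduces the lemma immediately from Lemma~\ref{lem:left-zero}, using \(\theta_\beta^2=-\theta_\beta^1\) and the evenness of \(\overline F_1(\theta_\alpha,\cdot)\). Your alternative check is also valid: the derivative of \(\overline F_1\) in \(\theta_\beta\) is odd and has absolute value \(\sqrt{1-4\cos^2\theta_\alpha}/\sin\theta_\alpha\) at \(\theta_\beta^2\), so the mean value theorem argument of Lemma~\ref{lem:first-zero} applies verbatim.
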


\begin{proof}
Since
\[
    \theta_\beta^2=-\theta_\beta^1
\]
and \(\overline F_1(\theta_\alpha,\cdot)\) is even, this follows
immediately from Lemma~\ref{lem:left-zero}.
\end{proof}
\subsection{A priori boundedness estimate for the \((+,-,-)\) contribution}
\label{subsec:apriori-+--}

We return to general \(k_\alpha\neq0\), writing
\[
    k_\alpha=|k_\alpha|\varsigma_\alpha,
    \qquad
    k_\beta=|k_\alpha|\varsigma_\beta,
    \qquad
    k_\gamma=|k_\alpha|\varsigma_\gamma.
\]
By central symmetry of the two components, it is enough to estimate the
component
\[
    \theta_\beta\in(\theta_\beta^2,\theta_\beta^1)
\]
and multiply by \(2\).

We denote by
\[
    \mathcal C_{+--}(n)(k_\alpha)
\]
the contribution to the collision operator with
\[
    \sigma_\alpha=+1,
    \qquad
    \sigma_\beta=\sigma_\gamma=-1.
\]

\begin{proposition}[A priori bound for the \((+,-,-)\) contribution]
\label{prop:apriori-+--}
Let \(m>4\), and fix \(0<\varepsilon\ll1\).  Let
\(\delta_1(\varepsilon)>0\) be the corner scale fixed in
Lemmas~\ref{lem:left-zero} and~\ref{lem:right-zero}.  Then, for every
\(n\in L^\infty_m\),
\[
    \sup_{k_\alpha\in\mathbb R^2}
    \langle k_\alpha\rangle^m
    \left|
        \mathcal C_{+--}(n)(k_\alpha)
    \right|
    \lesssim_m
    \varepsilon^{-\frac{3m+7}{2}}
    \left(
        \delta_1(\varepsilon)^{-m}
        +
        \delta_1(\varepsilon)^{-m+4}
    \right)
    \|n\|_{L^\infty_m}^2.
\]
\end{proposition}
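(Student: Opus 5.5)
The plan is to follow the template of the cusp estimates for \((+,+,-)\), now on the compact component \(\theta_\beta\in(\theta_\beta^2,\theta_\beta^1)\); the second component follows by the central symmetry. First I reduce the range of \(\theta_\alpha\). For \(0\le\theta_\alpha\le\pi/3\) the resonant set has no one-dimensional component by Proposition~\ref{prop:param-+--}, so the contribution vanishes. The cut-off \(\chi_\varepsilon(\theta_\alpha)\) removes \(|\cos\theta_\alpha|<\varepsilon\) and \(|\cos\theta_\alpha-\tfrac12|<\varepsilon\), so it suffices to take \(\varepsilon\le\cos\theta_\alpha\le\tfrac12-\varepsilon\). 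By the scaled co--area representation \eqref{eq:parametrized-collision} and Lemma~\ref{lem:coarea-ratio-+--},
\[
\mathcal C_{+--}(n)(k_\alpha)=2|k_\alpha|^4\int_{\theta_\beta^2}^{\theta_\beta^1}\widetilde I_{+--}(\theta_\beta)\left|\frac{\overline F_1\sin(\theta_\beta-\theta_\alpha)}{\overline r\,\overline f^{\,3}}\right|d\theta_\beta .
\]
Lemma~\ref{lem:cutoff-lower-+--} bounds the Jacobian part by \(\varepsilon^{-7/2}|\overline F_1|\). It also gives \(|\varsigma_\gamma|=|\sin(\theta_\beta-\theta_\alpha)|/|\overline f|\gtrsim\varepsilon\). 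Since \(|\overline f|\gtrsim\varepsilon\), we have \(|\varsigma_\beta|,|\varsigma_\gamma|\lesssim\varepsilon^{-1}\), so the angular part of \(\Gamma^2\) is bounded.

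Next I split the interval into three pieces: the two corner pieces \(|\theta_\beta-\theta_\beta^j|\le\delta_1(\varepsilon)\) for \(j=1,2\), and the middle piece. On the middle piece, Lemmas~\ref{lem:left-zero} and~\ref{lem:right-zero} give \(\overline F_1\gtrsim\sqrt\varepsilon\,\delta_1\), after a monotonicity check that \(\overline F_1\) has no interior zero. Hence \(|\varsigma_\beta|\gtrsim\sqrt\varepsilon\,\delta_1\) and \(|\varsigma_\gamma|\gtrsim\varepsilon\). The bounded-region argument from \(I_{1,5}\) then applies: each product of two \(n\)'s is bounded by \(\langle k_\alpha\rangle^{-m}\) times \(\varepsilon^{-m}(\sqrt\varepsilon\,\delta_1)^{-m}\langle k_\alpha\rangle^{-m}\). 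Combined with the \(|k_\alpha|^4\) prefactor and \(m\ge4\), this gives a bound \(\varepsilon^{-7/2-3m/2}\delta_1^{-m}\) (up to \(\varepsilon^{-2}\) from \(\Gamma\), which is absorbed).

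On a corner piece, set \(s=|\theta_\beta-\theta_\beta^1|\). Lemma~\ref{lem:left-zero} gives \(\sqrt\varepsilon\,s\lesssim|\overline F_1|\lesssim s\). Here \(|\overline f|\), \(\overline r\) and \(\sin(\theta_\beta-\theta_\alpha)\) are bounded above and below by powers of \(\varepsilon\), so \(|\varsigma_\beta|\sim_\varepsilon s\) while \(|\varsigma_\gamma|\gtrsim\varepsilon\). I also need a cancellation in the interaction coefficient, the analogue of Lemma~\ref{lem:cusp-cancellation-++-}. With signs \((+,-,-)\), the radial factor is \(1-|\varsigma_\beta|-|\varsigma_\gamma|\). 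The triangle inequality \(\bigl||\varsigma_\gamma|-1\bigr|\le|\varsigma_\beta|\) gives \(|1-|\varsigma_\beta|-|\varsigma_\gamma||\le2|\varsigma_\beta|\lesssim|\overline F_1|/|\overline f|\lesssim\varepsilon^{-1}s\). This factor is squared.

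The integrand is then bounded by \(\varepsilon^{-C}s^3\) times the weight products. From this point the computation of \(I_{1,3}\) in Case~2 repeats verbatim. For \(|k_\alpha|\le1\) the bound is trivial. For \(|k_\alpha|>1\), I split at \(s=|k_\alpha|^{-1}\) and use
\[
\langle|k_\alpha|\varsigma_\beta\rangle^{-m}\lesssim\varepsilon^{-m/2}(|k_\alpha|s)^{-m},
\qquad
\langle|k_\alpha|\varsigma_\gamma\rangle^{-m}\lesssim\varepsilon^{-m}\langle k_\alpha\rangle^{-m}.
\]
This yields
\[
\int_{|k_\alpha|^{-1}}^{\delta_1}s^{3-m}\,ds\lesssim|k_\alpha|^{m-4}+\delta_1^{-m+4},
\]
which produces the \(\delta_1^{-m+4}\) term; this is where \(m>4\) is needed. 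The terms \(\langle k_\alpha\rangle^{-m}n_\beta\) and \(\langle k_\alpha\rangle^{-m}n_\gamma\) are easier, because \(\langle k_\alpha\rangle^{-m}\) already absorbs the prefactor \(|k_\alpha|^{4}\). Collecting the powers of \(\varepsilon\) should give \(\varepsilon^{-(3m+7)/2}\).

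I expect the main obstacle to be the corner bookkeeping. The delicate points are, first, confirming that \(\overline f\) and \(\overline r\) stay comparable to fixed powers of \(\varepsilon\) uniformly near both collapsed endpoints. Second, the cancellation factor must be tracked carefully: without the \(s^2\) gain from \(\Gamma^2\) the integral \(\int s^{1-m}\) would demand \(m>2\) but lose the correct \(|k_\alpha|\) power, so this gain is exactly what makes the \(|k_\alpha|^4\) prefactor close. If the triangle-inequality cancellation proves insufficient, I would fall back on the explicit expansion of \(\overline F_1\) and \(\sin(\theta_\beta-\theta_\alpha)\) at \(\theta_\beta^1\).
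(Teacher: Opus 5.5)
Your proposal is correct and follows essentially the same route as the paper. The shared steps are the central-symmetry reduction to $(\theta_\beta^2,\theta_\beta^1)$, the corner/interior split at scale $\delta_1(\varepsilon)$, the triangle-inequality bound $\bigl|1-|\varsigma_\beta|-|\varsigma_\gamma|\bigr|\le 2|\varsigma_\beta|$ that produces the $s^3$ factor at the corners, and the split at $s=|k_\alpha|^{-1}$ when $|k_\alpha|>1$.

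One small correction: the $n_\alpha n_\beta$ term is not easier than $n_\beta n_\gamma$. Its factor $\langle k_\alpha\rangle^{-m}$ is used up by the output weight, so the prefactor $|k_\alpha|^4$ must again be beaten by the decay of $n_\beta$ through the same corner computation. Only $n_\alpha n_\gamma$ is genuinely cheap, but your main-term computation covers $n_\alpha n_\beta$ verbatim.
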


\begin{proof}
Using the scaled co-area representation and the central symmetry, we write
\[
    \mathcal C_{+--}(n)(k_\alpha)
    =
    2|k_\alpha|^4
    \int_{\theta_\beta^2}^{\theta_\beta^1}
        \mathfrak k_{+--}(\theta_\beta)
    \,d\theta_\beta,
\]
where
\[
\begin{aligned}
\mathfrak k_{+--}
&=
\chi_\alpha\chi_\beta\chi_\gamma
{\pi^4}
(\sin\theta_\alpha-\sin\theta_\beta+\overline r)^2
(1-|\varsigma_\beta|-|\varsigma_\gamma|)^2
\cos\theta_\alpha
\\
&\quad\times
\Bigl(
\cos\theta_\alpha
n(-,|k_\alpha|\varsigma_\beta)
n(-,|k_\alpha|\varsigma_\gamma)
-
u
n(+,|k_\alpha|\varsigma_\alpha)
n(-,|k_\alpha|\varsigma_\beta)
\\
&\qquad
-
\cos\theta_\beta
n(+,|k_\alpha|\varsigma_\alpha)
n(-,|k_\alpha|\varsigma_\gamma)
\Bigr)
\left|
\frac{
    \overline F_1(\theta_\alpha,\theta_\beta)
    \sin(\theta_\beta-\theta_\alpha)
}{
    \overline r\,\overline f^3
}
\right|.
\end{aligned}
\]

We split the integral into two corner pieces and one interior piece:
\[
    \mathcal C_{+--}(n)(k_\alpha)
    =
    I_1+I_2+I_3,
\]
where
\[
I_1
=
2|k_\alpha|^4
\int_{\theta_\beta^2}^{\theta_\beta^2+\delta_1(\varepsilon)}
\mathfrak k_{+--}(\theta_\beta)\,d\theta_\beta,
\]
\[
I_2
=
2|k_\alpha|^4
\int_{\theta_\beta^2+\delta_1(\varepsilon)}
     ^{\theta_\beta^1-\delta_1(\varepsilon)}
\mathfrak k_{+--}(\theta_\beta)\,d\theta_\beta,
\]
and
\[
I_3
=
2|k_\alpha|^4
\int_{\theta_\beta^1-\delta_1(\varepsilon)}^{\theta_\beta^1}
\mathfrak k_{+--}(\theta_\beta)\,d\theta_\beta.
\]

We first estimate \(I_1\).  Set
\[
    \tau:=\theta_\beta-\theta_\beta^2,
    \qquad
    0<\tau\le\delta_1(\varepsilon).
\]
By Lemma~\ref{lem:right-zero},
\[
    \sqrt{\varepsilon}\,\tau
    \lesssim
    |\overline F_1|
    \lesssim
    \tau.
\]
By Lemma~\ref{lem:cutoff-lower-+--},
\[
    \frac{1}{\overline r\,|\overline f|^3}
    \lesssim
    \varepsilon^{-7/2}.
\]

As in the \((+,+,-)\) corner estimate, the triangle inequality and
\(\varsigma_\alpha+\varsigma_\beta+\varsigma_\gamma=0\) give
\[
    \bigl|1-|\varsigma_\beta|-|\varsigma_\gamma|\bigr|
    \le 2|\varsigma_\beta|.
\]
Since
\[
    |\varsigma_\beta|
    =
    \left|\frac{\overline F_1}{\overline f}\right|
    \lesssim_\varepsilon
    |\theta_\beta-\theta_\beta^j|,
    \qquad j=1,2,
\]
we obtain
\[
    \bigl|1-|\varsigma_\beta|-|\varsigma_\gamma|\bigr|
    \lesssim_\varepsilon
    |\theta_\beta-\theta_\beta^j|.
\]
Thus the co-area factor and the interaction factor together contribute
\[
    \varepsilon^{-7/2}\tau^3.
\]

The remaining estimate is the same corner calculation as in the \((+,+,-)\)
case.  For \(|k_\alpha|>1\), one splits the corner integral at the stopping
point
\[
    \tau=|k_\alpha|^{-1}.
\]
The piece \(0<\tau<|k_\alpha|^{-1}\) is estimated using
\[
    \langle k_\beta\rangle^{-m}\lesssim1,
    \qquad
    \langle k_\gamma\rangle^{-m}
    \lesssim
    \varepsilon^{-m}|k_\alpha|^{-m},
\]
while the piece \(|k_\alpha|^{-1}\le\tau\le\delta_1(\varepsilon)\) is estimated
using
\[
    \langle k_\beta\rangle^{-m}
    \lesssim
    \varepsilon^{-m/2}|k_\alpha|^{-m}\tau^{-m},
    \qquad
    \langle k_\gamma\rangle^{-m}
    \lesssim
    \varepsilon^{-m}|k_\alpha|^{-m}.
\]
This gives, for \(m>4\),
\[
    \langle k_\alpha\rangle^m |I_1|
    \lesssim
    \varepsilon^{-\frac{3m}{2}-\frac72}
    \delta_1(\varepsilon)^{-m+4}
    \|n\|_{L^\infty_m}^2.
\]
If \(|k_\alpha|\le1\), the direct estimate on
\(0<\tau\le\delta_1(\varepsilon)\) is bounded by the same right-hand side.

On the interior piece \(I_2\), we are away from both corners.  Hence
\[
    \sqrt\varepsilon\,\delta_1(\varepsilon)
    \lesssim
    |\overline F_1|
    \le1.
\]
Together with Lemma~\ref{lem:cutoff-lower-+--}, this gives
\[
    \langle |k_\alpha|\varsigma_\beta\rangle^{-m}
    \lesssim
    (\sqrt\varepsilon\,\delta_1(\varepsilon))^{-m}
    \langle k_\alpha\rangle^{-m},
\]
and
\[
    \langle |k_\alpha|\varsigma_\gamma\rangle^{-m}
    \lesssim
    \varepsilon^{-m}
    \langle k_\alpha\rangle^{-m}.
\]
Therefore
\[
\begin{aligned}
\langle k_\alpha\rangle^m |I_2|
&\lesssim
\langle k_\alpha\rangle^m
|k_\alpha|^4
\varepsilon^{-7/2}
\langle k_\alpha\rangle^{-2m}
\\
&\quad\times
\left(
    \varepsilon^{-m}
    +
    (\sqrt\varepsilon\,\delta_1(\varepsilon))^{-m}
    +
    \varepsilon^{-3m/2}\delta_1(\varepsilon)^{-m}
\right)
\|n\|_{L^\infty_m}^2
\\
&\lesssim
\varepsilon^{-\frac{3m+7}{2}}
\delta_1(\varepsilon)^{-m}
\|n\|_{L^\infty_m}^2,
\end{aligned}
\]
provided that \(m\ge4\).

Finally, \(I_3\) is symmetric to \(I_1\).  Near \(\theta_\beta^1\),
Lemma~\ref{lem:left-zero} gives
\[
    \sqrt{\varepsilon}\,
    |\theta_\beta-\theta_\beta^1|
    \lesssim
    |\overline F_1|
    \lesssim
    |\theta_\beta-\theta_\beta^1|,
\]
and the same triangle-inequality cancellation as in the \((+,+,-)\) corner
estimate gives the required linear factor at the corner.  Thus the identical
argument yields
\[
    \langle k_\alpha\rangle^m |I_3|
    \lesssim
    \varepsilon^{-\frac{3m}{2}-\frac72}
    \delta_1(\varepsilon)^{-m+4}
    \|n\|_{L^\infty_m}^2,
    \qquad m>4.
\]

Combining the estimates for \(I_1,I_2,I_3\), we obtain
\[
    \langle k_\alpha\rangle^m
    \left|
        \mathcal C_{+--}(n)(k_\alpha)
    \right|
    \lesssim
    \varepsilon^{-\frac{3m+7}{2}}
    \left(
        \delta_1(\varepsilon)^{-m}
        +
        \delta_1(\varepsilon)^{-m+4}
    \right)
    \|n\|_{L^\infty_m}^2,
\]
for \(m>4\).  This proves the proposition.
\end{proof}


\section{Unboundedness of the Collision Operator Without a Cut-off Near
\texorpdfstring{$|\cos\theta|=\tfrac12$}{the Degenerate Angle}}
\label{Section: Unboundedness}

In this section we show that the angular localization away from
\(|\cos\theta|=\tfrac12\) is necessary for the weighted
\(L^\infty_m\) boundedness proved above.  We keep the zero-frequency cut-off
away from \(\cos\theta=0\), since the construction below stays uniformly away
from that set, but we remove the cut-off near \(|\cos\theta|=\tfrac12\).

More precisely, let \(\mathcal C^{(0)}\) denote the collision operator obtained
from \eqref{eq:WKE-cutoff-symmetric} by replacing
\(\chi_\varepsilon\) with \(\chi_{0,\varepsilon}\), i.e. with the cut-off away
from \(\cos\theta=0\) only.  Thus no angular localization is imposed near
\(|\cos\theta|=\tfrac12\).  Since all angles used in the counterexample below
are bounded away from \(\cos\theta=0\), the factor \(\chi_{0,\varepsilon}\) is
identically equal to one on the relevant region for all sufficiently large
\(n\).

\begin{lemma}[Unboundedness without the \(|\cos\theta|=\tfrac12\) cut-off]
\label{lem:unbounded-no-cutoff}
Let \(m>4\).  The operator \(\mathcal C^{(0)}\) is not a bounded map from
\(L^\infty_m\) to \(L^\infty_m\).  More precisely, there exists a sequence
\(\{\eta_n\}_{n\gg1}\subset L^\infty_m\) such that
\[
    \sup_n \|\eta_n\|_{L^\infty_m}<\infty,
\]
but
\[
    \sup_{\sigma=\pm1}\sup_{k\in\mathbb R^2}
    \langle k\rangle^m
    \left|\mathcal C^{(0)}(\eta_n)(\sigma,k)\right|
    \longrightarrow \infty
    \qquad\text{as } n\to\infty .
\]
\end{lemma}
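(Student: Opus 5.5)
We will contradict boundedness by testing $\mathcal C^{(0)}$ at the critical output direction $\theta_\alpha=\pi/3$, where $\cos\theta_\alpha=\tfrac12$ exactly. Here the $(+,+,-)$ curve of Proposition~\ref{prop:param-++-} reaches the collapsed point $k_\beta=0$ with the single limiting direction $\theta_\beta=\pi$. Set $R=R_n\to\infty$ and take the output $k_\alpha=R(\cos\tfrac\pi3,\sin\tfrac\pi3)$, $\sigma_\alpha=+1$. Define the data by
\[
\eta_n(+,k)=\langle k\rangle^{-m}\bigl(\mathbf 1_{A_n}(k)+\mathbf 1_{B}(k)\bigr),
\qquad
\eta_n(-,k)=0 ,
\]
where $A_n$ is a small angular sector around $\theta=\pi/3$ with $|k|\in[R/2,2R]$, and $B$ is a small angular neighbourhood of $\theta=\pi$ with $|k|\le 1$. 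Smoothed nonnegative bumps would work equally well. Then $\sup_n\|\eta_n\|_{L^\infty_m}\le 2$, and all angles involved stay away from $\cos\theta=0$, so $\chi_{0,\varepsilon}\equiv1$ on the relevant region.

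\textbf{Step 1: isolate one positive term of the $(+,+,-)$ contribution.} Because $\eta_n(-,\cdot)\equiv0$, the terms $\omega_\alpha n_\beta n_\gamma$ and $\omega_\beta n_\alpha n_\gamma$ vanish, since each contains $n(-,k_\gamma)$. Only $\omega_\alpha\,\omega_\gamma\,n_\alpha n_\beta$ survives. Near the cusp, $\omega_\alpha=\tfrac12$, and $\theta_\gamma\approx\pi+\tfrac\pi3$, so $\omega_\gamma=-\cos\theta_\gamma\approx\tfrac12$. This term is therefore strictly positive.

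\textbf{Step 2: compute the local size near the cusp.} With $\eta:=\theta_\beta-\pi$ and $\rho:=|\varsigma_\beta|$, the local relation gives $\rho\sim\eta^2$, since $\widetilde F_1$ now has a double zero. Three factors then control the integrand.
\begin{itemize}
\item Lemma~\ref{lem:coarea-ratio-++-}, with $\widetilde f,r,\sin(\theta_\beta-\theta_\alpha)\sim1$, gives a co--area factor $\sim\widetilde F_1\sim\eta^2$.
\item In $\Gamma$, expanding $|\varsigma_\gamma|=|\varsigma_\alpha+\varsigma_\beta|\approx1+\varsigma_\alpha\cdot\varsigma_\beta=1-\rho/2$ gives $1+|\varsigma_\beta|-|\varsigma_\gamma|\approx\tfrac32\rho$. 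So the cancellation of Lemma~\ref{lem:cusp-cancellation-++-} is sharp and not better.
\item The angular factor in $\Gamma$ is $\sin\theta_\alpha+\sin\theta_\beta-\sin\theta_\gamma\approx\sqrt3\neq0$.
\end{itemize}
Hence $\Gamma^2\sim R^2\eta^4$ at scale $R$. Restricting to $|\eta|\lesssim R^{-1/2}$, where $|k_\beta|=R\rho\lesssim1$ lies in $B$ and $n_\beta\sim1$, and using $n_\alpha=\langle k_\alpha\rangle^{-m}\sim R^{-m}$, the scaled representation \eqref{eq:parametrized-collision} gives
\[
\langle k_\alpha\rangle^m\,\mathcal C^{(0)}_{++-}(\eta_n)(k_\alpha)\gtrsim R^m\cdot R^2\cdot R^{-m}\int_{|\eta|\lesssim R^{-1/2}}\eta^4\cdot\eta^2\,d\eta \sim R^{2-7/2}\cdot R^{2},
\]
where the two factors $R^2$ come from $\Gamma^2$ and from the Jacobian ratio, and we used $\langle k_\alpha\rangle^m n_\alpha\sim1$. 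This is $\sim R^{1/2}\to\infty$. The exchanged labelling, with $k_\gamma$ low and the configuration $(+,-,+)$, contributes nothing because its low mode sits on the $-$ branch.

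\textbf{Main obstacle: the remaining contributions.} The step needing most care is showing that the other sign configurations and the rest of the $(+,+,-)$ curve cannot cancel this growth. The choice $\eta_n(-,\cdot)=0$ already kills every configuration with $\sigma_\beta=\sigma_\gamma=-$. It also kills, in the mixed configurations, every term containing a $-$ factor. For $(+,+,+)$ we will check that the resonance curve at $\theta_\alpha=\pi/3$ meets $B$ and $A_n$ only away from its collapsed endpoints $\theta_\beta=\pi/2,3\pi/2$. Its possible contribution is then bounded, uniformly in $n$, by the argument of Proposition~\ref{prop:apriori-+++}, since that argument does not use the $\tfrac12$-cut-off. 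Where necessary we will shrink the angular widths of $A_n$ and $B$ so that these contributions vanish identically. The non-cusp parts of the $(+,+,-)$ curve are bounded by the estimates of Section~\ref{Parametrization and Estimates for the $(+,+,-)$ Resonance Manifold} away from $\theta_\beta=\pi$. Since the divergent part is positive and of order $R_n^{1/2}$, the claimed unboundedness follows.
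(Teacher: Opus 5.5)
Your construction is correct. It reaches the same $R^{1/2}$ growth by a leaner route than the paper's.

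\textbf{How the paper does it.} The paper evaluates at the off-critical output $\theta_{\alpha,n}=\frac\pi3-a_0n^{-1}$. This lies on the side $\cos\theta_\alpha>\frac12$, where the $(+,+,-)$ curve does not even reach $k_\beta=0$. It uses three packets, including a high packet $C_n$ on the $-$ branch near $(n,\frac{4\pi}{3})$. The offset makes $\widetilde F_1\approx 2a_0n^{-1}$ dominate the $O(\eta^2)$ term, so $|k_\beta|\sim1$ uniformly on the window $|\eta|<b_0n^{-1/2}$. Both $\omega_\alpha n_\beta n_\gamma$ and $\omega_\gamma n_\alpha n_\beta$ are active and positive. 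The negative term $\omega_\beta n_\alpha n_\gamma=O(n^{-2m})$ is negligible. The isolation step then needs geometric separation arguments, plus the emptiness of the $(+,-,-)$ curve for $\theta_\alpha<\frac\pi3$.

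\textbf{How you do it.} You sit exactly at $\cos\theta_\alpha=\frac12$. So $\rho\approx\frac23\eta^2$ varies across the window, and the bound comes from $R^4\int_{|\eta|\lesssim R^{-1/2}}\eta^6\,d\eta$. Because you put no mass on the $-$ branch, $(+,-,-)$ dies trivially. In $(+,+,-)$ only the monomial $\omega_\alpha\omega_\gamma n_\alpha n_\beta$ survives, and its positivity is automatic.

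\textbf{What each buys.} Your version is simpler: no $C_n$, and no tuning of $a_0$, $b_0$, $M$. It also shows the loss already in the part of $\mathcal C^{(0)}$ linear in $n_\alpha$. The paper's version shows the loss is not tied to the single critical direction. It persists at angular distance $O(n^{-1})$ from it, on the non-cusp side, which fits the need for a cut-off band rather than the removal of one angle. For the lemma itself this is not needed: with the paper's pointwise $\sup$-norm, one output point suffices.

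\textbf{Two small corrections.}

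First, the $(+,-,+)$ configuration does not contribute zero. It is the $\beta\leftrightarrow\gamma$ mirror of $(+,+,-)$. Its critical cusp has the low mode $\gamma$ on the $+$ branch near direction $\pi$, not on the $-$ branch. Its surviving term $\omega_\alpha\omega_\beta n_\alpha n_\gamma$ reproduces exactly your $(+,+,-)$ contribution. This is harmless: it merely doubles your lower bound.

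Second, do not appeal to Section~4 for the rest of the $(+,+,-)$ curve. Its case analysis is carried out only for $|\cos\theta_\alpha-\frac12|\ge\varepsilon$, so it does not cover your output. You do not need it. At $\theta_\alpha=\frac\pi3$ the curve has $\theta_\beta\in(\theta_-^\infty,\theta_+^\infty)$ with $\theta_\pm^\infty=\pi\pm\arccos\frac14$. This interval avoids a neighbourhood of $\frac\pi3$, and the curve meets $B$ only in the cusp window. Hence every other $(+,+,-)$ piece vanishes identically, and so does $(+,+,+)$ once the sectors are narrow.
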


\begin{proof}
Let \(m>4\).  We construct a uniformly bounded sequence in \(L^\infty_m\)
whose full collision output diverges.  The construction is localized near the
high--low--high configuration
\[
    (n,\pi/3),\qquad (1,\pi),\qquad (n,4\pi/3),
\]
which is resonant on the \((+,+,-)\) branch since the corresponding signed
frequencies are
\[
    \frac12,\qquad -1,\qquad \frac12.
\]
Thus the frequency resonance is compatible with a nonlocal triad consisting of
two high wave numbers and one order-one wave number.

Fix a small constant \(a_0>0\).  We shall choose \(b_0>0\) sufficiently small,
depending on \(a_0\), and then choose \(M>1\) sufficiently large; all these
constants are independent of \(n\).  Define
\[
A_n
:=
\left\{
    k:
    \bigl||k|-n\bigr|\le 1,
    \quad
    0<\frac\pi3-\theta<2a_0 n^{-1}
\right\},
\]
\[
B_n
:=
\left\{
    k:
    M^{-1}\le |k|\le M,
    \quad
    |\theta-\pi|<b_0 n^{-1/2}
\right\},
\]
and
\[
C_n
:=
\left\{
    k:
    \bigl||k|-n\bigr|\le M,
    \quad
    \left|\theta-\frac{4\pi}{3}\right|<M n^{-1}
\right\}.
\]
Set
\[
    \eta_{n,+}(k)
    :=
    n^{-m}\mathbf 1_{A_n}(k)+\mathbf 1_{B_n}(k),
    \qquad
    \eta_{n,-}(k)
    :=
    n^{-m}\mathbf 1_{C_n}(k),
\]
that is,
\[
    \eta_n(+,k)=\eta_{n,+}(k),
    \qquad
    \eta_n(-,k)=\eta_{n,-}(k).
\]
We use characteristic functions only for notational simplicity.  Replacing
these indicators by nonnegative smooth bumps supported in the same packets
gives the same lower bound.  Since the high-frequency packets carry the factor
\(n^{-m}\), while the low-frequency packet has size \(O(1)\), we have
\[
    \|\eta_n\|_{L^\infty_m}\lesssim 1 .
\]

Set
\[
    \theta_{\alpha,n}:=\frac\pi3-a_0n^{-1},
    \qquad
    k_{\alpha,n}:=
    n(\cos\theta_{\alpha,n},\sin\theta_{\alpha,n}),
    \qquad
    \varsigma_{\alpha,n}:=\frac{k_{\alpha,n}}{n}.
\]
We evaluate the output on the \(+\)-branch at \(k_{\alpha,n}\).  Then
\(k_{\alpha,n}\in A_n\), and all angular supports used below stay uniformly
away from \(\cos\theta=0\).  Hence the retained zero-frequency cut-off
\(\chi_{0,\varepsilon}\) is identically equal to one on the relevant region
for all sufficiently large \(n\).

\medskip
\noindent\textbf{Isolation of the resonant high--low--high channel.}
We first record why, at the fixed output point \(k_{\alpha,n}\in A_n\), the
only packet contribution which can produce the selected high--low--high corner
is the \((+,+,-)\) packet
\[
    k_\alpha\in A_n,\qquad
    k_\beta\in B_n,\qquad
    k_\gamma\in C_n,
\]
together with its \(\beta\leftrightarrow\gamma\) exchange.

On the \((+,+,-)\) branch, the branch-specific support gives
\[
    \eta_{n,+}\text{ supported in }A_n\cup B_n,
    \qquad
    \eta_{n,-}\text{ supported in }C_n.
\]
At the fixed output \(k_{\alpha,n}\in A_n\), the only high-frequency packet
which can almost cancel \(A_n\) under momentum conservation is \(C_n\); the
remaining vector is then of order one and points near \(\pi\).  Thus the
high--low--high packet assignment is
\[
    k_\beta\in B_n,\qquad k_\gamma\in C_n.
\]
The alternative \(k_\beta\in A_n\) is separated from the \((+,+,-)\) resonance
near this output direction and does not meet the selected corner.

The \((+,-,+)\) branch gives the exchanged packet assignment
\[
    k_\alpha\in A_n,\qquad
    k_\beta\in C_n,\qquad
    k_\gamma\in B_n.
\]
It is obtained from the previous one by interchanging the roles of
\(\beta\) and \(\gamma\).  By the \(\beta\leftrightarrow\gamma\) symmetry of
the integrand, it has the same leading sign on the exchanged packet and hence
cannot cancel the positive \((+,+,-)\) contribution.

We next exclude the other representative sign configurations.  On the
\((+,+,+)\) branch, both input factors are evaluated on the \(+\)-component
whenever they occur, and the \(+\)-component is supported only in
\(A_n\cup B_n\).  In particular there is no \(+\)-branch high packet near
\[
    (n,\tfrac{4\pi}{3}).
\]
Equivalently, the unit-scale polar parametrization of the \((+,+,+)\)
resonance manifold shows that a zero-mode endpoint can occur only when
\[
    \theta_\beta,\theta_\gamma
    \in
    \left\{\frac{\pi}{2},\frac{3\pi}{2}\right\},
\]
not in the low-packet direction \(\theta=\pi\).  Hence the \((+,+,+)\) branch
is disjoint from the selected high--low--high support.

Finally consider the \((+,-,-)\) branch.  Since the negative component of
\(\eta_n\) is supported only in \(C_n\), the term involving
\(\eta_{n,-}(k_\beta)\eta_{n,-}(k_\gamma)\) would require both
\[
    k_\beta,k_\gamma\in C_n,
\]
which is incompatible with momentum conservation at the fixed output
\(k_{\alpha,n}\in A_n\).  The mixed terms require only one negative mode to lie
in \(C_n\), but they would have to lie on the regular \((+,-,-)\) resonance
curve with
\[
    \theta_{\alpha,n}
    =
    \frac{\pi}{3}-a_0n^{-1}
    <
    \frac{\pi}{3}.
\]
By the unit-scale parametrization of the \((+,-,-)\) manifold, for
\[
    0\le \theta_\alpha\le \frac{\pi}{3}
\]
there is no nontrivial one-dimensional resonance curve.  Therefore the
\((+,-,-)\) branch has no regular co-area component through the selected output
direction and gives no contribution on the chosen supports.

Consequently, at the fixed output point \(k_{\alpha,n}\in A_n\), every
nonzero packet contribution to the full collision output is either supported
on the \((+,+,-)\) channel estimated below, supported on its
\(\beta\leftrightarrow\gamma\) symmetric counterpart, or is disjoint from the
chosen polar supports.

\medskip
\noindent\textbf{Growth of the isolated \((+,+,-)\) cusp contribution.}
Let \(I^n_{++-}\) denote the contribution of the \((+,+,-)\) branch restricted
to the component \(\sin\theta_\gamma\le0\) and to the interval
\[
    |\theta_\beta-\pi|<b_0n^{-1/2}.
\]
We estimate \(I^n_{++-}\) from below.  Write, at unit scale,
\[
    k_{\alpha,n}=n\varsigma_{\alpha,n},
    \qquad
    k_\beta=n\varsigma_\beta,
    \qquad
    k_\gamma=n\varsigma_\gamma,
    \qquad
    \varsigma_\gamma=-\varsigma_{\alpha,n}-\varsigma_\beta .
\]
On the \(\sin\theta_\gamma\le0\) branch of the \((+,+,-)\) resonance curve, use
the notation introduced earlier, evaluated at
\(\theta_\alpha=\theta_{\alpha,n}\):
\[
    p=\cos\theta_{\alpha,n}+\cos\theta_\beta,
    \qquad
    r=\sqrt{1-p^2},
\]
\[
    \widetilde f
    =
    -\sin\theta_\beta\,p-\cos\theta_\beta\,r,
    \qquad
    \widetilde F_1
    =
    \sin\theta_{\alpha,n}\,p+\cos\theta_{\alpha,n}\,r.
\]
Then
\[
    |\varsigma_\beta|
    =
    \frac{\widetilde F_1}{\widetilde f},
    \qquad
    |\varsigma_\gamma|
    =
    \frac{\sin(\theta_\beta-\theta_{\alpha,n})}{\widetilde f}.
\]
Let
\[
    \theta_\beta
    =
    \pi+\eta,
    \qquad
    |\eta|<b_0 n^{-1/2}.
\]
A Taylor expansion at
\[
    (\theta_\alpha,\theta_\beta)=\left(\frac\pi3,\pi\right)
\]
gives
\[
    \widetilde F_1
    =
    2a_0n^{-1}+O(\eta^2)+O(n^{-2})+O(|\eta|^3),
\]
while
\[
    \widetilde f\sim1,
    \qquad
    r\sim1,
    \qquad
    \sin(\theta_\beta-\theta_{\alpha,n})\sim1.
\]
Since \(|\eta|<b_0n^{-1/2}\), the error \(O(\eta^2)\) is
\(O(b_0^2n^{-1})\), and \(O(|\eta|^3)=O(b_0^3n^{-3/2})\).  We choose \(b_0\)
sufficiently small, depending on \(a_0\), so that these errors are dominated by
the leading term \(2a_0n^{-1}\).  Then, uniformly for
\[
    |\eta|<b_0n^{-1/2},
\]
we have
\begin{equation}
\label{eq:unbd-basic-asymptotics}
    \widetilde F_1\sim n^{-1},
    \qquad
    |\varsigma_\beta|\sim n^{-1},
    \qquad
    |\varsigma_\gamma|\sim1.
\end{equation}
In particular,
\[
    |k_\beta|=n|\varsigma_\beta|\sim1,
    \qquad
    |k_\gamma|=n|\varsigma_\gamma|=n+O(1),
\]
and the same expansion gives
\[
    \theta_\gamma=\frac{4\pi}{3}+O(n^{-1}).
\]
We now choose \(M\) sufficiently large, depending only on the constants in the
preceding estimates, so that
\[
    k_\beta(\theta_\beta)\in B_n,
    \qquad
    k_\gamma(\theta_\beta)\in C_n
\]
for every \(|\theta_\beta-\pi|<b_0n^{-1/2}\) and all sufficiently large \(n\).
This is the only use of the large constant \(M\).  Hence, on this whole
interval,
\begin{equation}
\label{eq:unbd-indicators-active}
    \mathbf 1_{A_n}(k_{\alpha,n})
    \mathbf 1_{B_n}(k_\beta(\theta_\beta))
    \mathbf 1_{C_n}(k_\gamma(\theta_\beta))
    =1.
\end{equation}
Consequently,
\[
    \eta_{n,+}(k_{\alpha,n})=n^{-m},
    \qquad
    \eta_{n,+}(k_\beta)=1,
    \qquad
    \eta_{n,-}(k_\gamma)=n^{-m}.
\]

We next estimate the radial cancellation factor in the interaction coefficient.
On the \((+,+,-)\) branch,
\[
\Gamma_{\alpha\beta\gamma}
    =
    \pi^2
    (\sin\theta_{\alpha,n}+
     \sin\theta_\beta-
     \sin\theta_\gamma)
    (|k_{\alpha,n}|+|k_\beta|-|k_\gamma|),
\]
so the corresponding unit-scale radial factor is
\[
    1+|\varsigma_\beta|-|\varsigma_\gamma|.
\]
Set
\[
    \rho:=|\varsigma_\beta|,
    \qquad
    d:=\theta_\beta-\theta_{\alpha,n}.
\]
Since
\[
    \varsigma_\gamma=-\varsigma_{\alpha,n}-\varsigma_\beta,
    \qquad
    |\varsigma_{\alpha,n}|=1,
\]
we have
\[
    |\varsigma_\gamma|^2
    =
    1+\rho^2+2\rho\cos d .
\]
On the selected packet,
\[
    d=\frac{2\pi}{3}+O(n^{-1/2}),
\]
and hence \(\cos d\le -c_0<0\) for all sufficiently large \(n\).  Since
\(\rho\sim n^{-1}\),
\[
\begin{aligned}
1-|\varsigma_\gamma|
&=
\frac{1-|\varsigma_\gamma|^2}{1+|\varsigma_\gamma|}
=
\frac{-2\rho\cos d-\rho^2}{1+|\varsigma_\gamma|}
\ge c\rho>0 .
\end{aligned}
\]
Therefore
\[
    1+\rho-|\varsigma_\gamma|
    =
    \rho+(1-|\varsigma_\gamma|)
    \ge c\rho .
\]
The reverse inequality follows from
\[
    \bigl||\varsigma_\gamma|-1\bigr|
    =
    \bigl||\varsigma_{\alpha,n}+\varsigma_\beta|-|\varsigma_{\alpha,n}|\bigr|
    \le |\varsigma_\beta|=\rho,
\]
and hence
\[
    1+\rho-|\varsigma_\gamma|\le 2\rho.
\]
Thus
\begin{equation}
\label{eq:unbd-cancellation-factor}
    1+|\varsigma_\beta|-|\varsigma_\gamma|
    \sim
    |\varsigma_\beta|
    \sim n^{-1}.
\end{equation}
The remaining angular factor in \(\Gamma\) is harmless:
\[
    (\sin\theta_{\alpha,n}+
     \sin\theta_\beta-
     \sin\theta_\gamma)^2
    \sim1.
\]

The nonlinear factor has a fixed positive sign on the selected packet.  Indeed,
\[
    \omega_\alpha=\cos\theta_{\alpha,n}=\frac12+o(1),
    \qquad
    \omega_\beta=\cos\theta_\beta=-1+o(1),
\]
and, since \(\sigma_\gamma=-1\),
\[
    \omega_\gamma=-\cos\theta_\gamma=\frac12+o(1).
\]
Using \eqref{eq:unbd-indicators-active}, the expression inside the parentheses
in the collision integrand satisfies
\[
\begin{aligned}
&\omega_\alpha\eta_{n,+}(k_\beta)\eta_{n,-}(k_\gamma)
+
\omega_\gamma\eta_{n,+}(k_{\alpha,n})\eta_{n,+}(k_\beta)
+
\omega_\beta\eta_{n,+}(k_{\alpha,n})\eta_{n,-}(k_\gamma)
\\
&\qquad
=
\left(\frac12+o(1)\right)n^{-m}
+
\left(\frac12+o(1)\right)n^{-m}
-
(1+o(1))n^{-2m}
\ge c n^{-m}.
\end{aligned}
\]
Since \(\omega_\alpha=\frac12+o(1)>0\), the full nonlinear factor is also
bounded below by \(c n^{-m}\).

Finally, by the \((+,+,-)\) co-area ratio computed earlier,
\[
    \left|
    \frac{
        \widetilde F_1
        \bigl(
            \cos\theta_{\alpha,n}\sin\theta_\beta
            -
            \sin\theta_{\alpha,n}\cos\theta_\beta
        \bigr)
    }{
        r\,\widetilde f^{3}
    }
    \right|
    \sim n^{-1}.
\]
Combining this with \eqref{eq:unbd-cancellation-factor}, the interaction
coefficient and the co-area factor together contribute
\[
    (1+|\varsigma_\beta|-|\varsigma_\gamma|)^2
    \left|
    \frac{
        \widetilde F_1
        \bigl(
            \cos\theta_{\alpha,n}\sin\theta_\beta
            -
            \sin\theta_{\alpha,n}\cos\theta_\beta
        \bigr)
    }{
        r\,\widetilde f^{3}
    }
    \right|
    \sim n^{-3}.
\]
The scaled co-area representation carries the prefactor
\[
    |k_{\alpha,n}|^4=n^4.
\]
Therefore
\[
\begin{aligned}
\langle k_{\alpha,n}\rangle^m I^n_{++-}
&\ge
c\,n^m n^4
\int_{|\theta_\beta-\pi|<b_0n^{-1/2}}
    n^{-3}n^{-m}
\,d\theta_\beta
\\
&\ge c n^{1/2}.
\end{aligned}
\]

By the isolation step above, every nonzero monomial in the full integrand at
the fixed output point \((+,k_{\alpha,n})\) is either supported on the
\((+,+,-)\) channel just estimated, supported on its
\(\beta\leftrightarrow\gamma\) symmetric counterpart, or is disjoint from the
chosen polar supports.  The symmetric counterpart has the same leading sign.
Hence the full collision output inherits the lower bound for \(I^n_{++-}\):
\[
\begin{aligned}
\langle k_{\alpha,n}\rangle^m
\left|
    \mathcal C^{(0)}(\eta_n)(+,k_{\alpha,n})
\right|
&\ge
\langle k_{\alpha,n}\rangle^m I^n_{++-}
\\
&\ge c n^{1/2}.
\end{aligned}
\]
Since
\[
    \|\eta_n\|_{L^\infty_m}\lesssim1,
\]
this proves that \(\mathcal C^{(0)}\) cannot be bounded from
\(L^\infty_m\) to \(L^\infty_m\).
\end{proof}

\begin{remark}[Geometric meaning of the \(|\cos\theta|=\tfrac12\) cut-off]
The obstruction above is not a coordinate singularity.  It is a cusp-type loss
of transversality of the \((+,+,-)\) resonance curve at the low-frequency
corner.

To see this, work at unit scale and write
\[
    \theta_\alpha=a,
    \qquad
    \theta_\beta=\pi+\eta,
    \qquad
    |\varsigma_\beta|=\rho,
    \qquad
    \varsigma_\gamma=-\varsigma_\alpha-\varsigma_\beta .
\]

On the \((+,+,-)\) branch, the frequency resonance
\[
    \Xi_1(\alpha,\beta,\gamma)=0
\]
reads
\[
    \cos a+\cos\theta_\beta-\cos\theta_\gamma=0.
\]
After imposing the momentum constraint through
\(\varsigma_\gamma=-\varsigma_\alpha-\varsigma_\beta\), a Taylor expansion near
\[
    (a,\eta,\rho)=\left(\frac\pi3,0,0\right)
\]
gives
\[
    \Xi_1(\alpha,\beta,\gamma)
    =
    (2\cos a-1)
    +
    \frac{\eta^2}{2}
    -
    \sin^2 a\,\rho
    +
    O(\rho|\eta|+\rho^2+|\eta|^4).
\]
Thus, when \(\cos a=1/2\),
\[
    \rho\sim\eta^2.
\]
At physical scale \(|k_\alpha|=n\), the condition \(|k_\beta|\sim1\) becomes
\(\rho\sim n^{-1}\), and hence
\[
    |\theta_\beta-\pi|\sim n^{-1/2}.
\]
This is larger than the generic transverse window \(n^{-1}\), and the extra
phase space is precisely the \(n^{1/2}\) growth in the counterexample.
Figure~\ref{fig:high-low-high-cusp} illustrates this high--low--high cusp.
\end{remark}

\begin{figure}[t]
    \centering
    \includegraphics[
        height=0.22\textheight,
        keepaspectratio
    ]{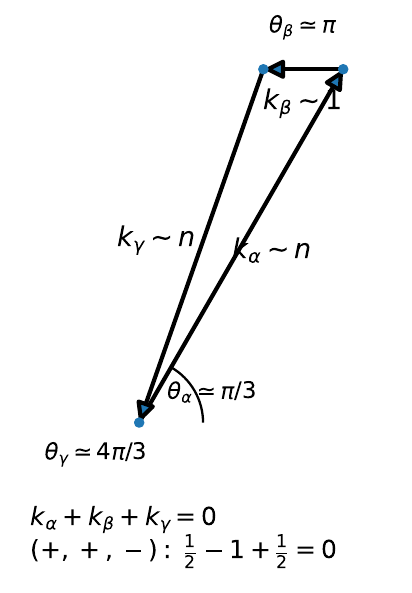}
    \hspace{0.07\textwidth}
    \raisebox{0.02\textheight}{
    \includegraphics[
        height=0.18\textheight,
        keepaspectratio
    ]{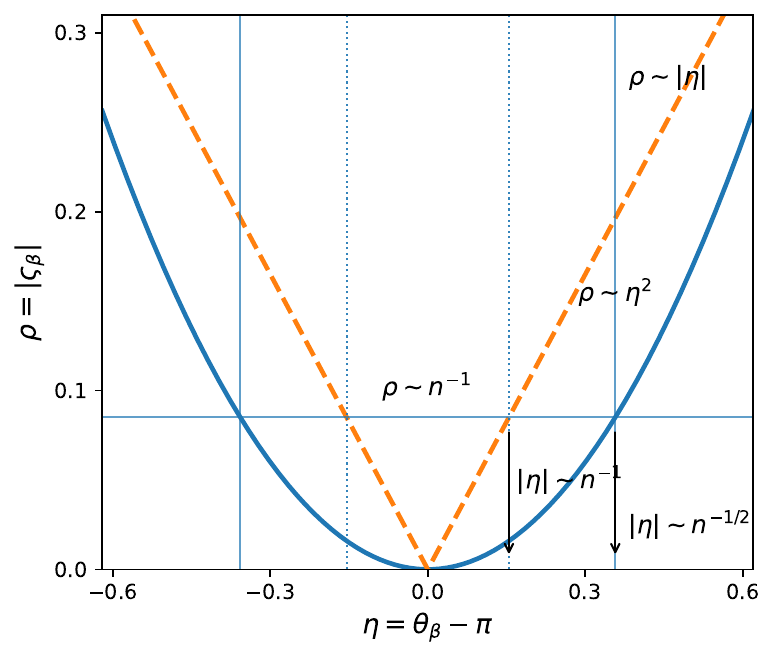}
    }

    \caption{
    The high--low--high cusp at \(|\cos\theta|=\tfrac12\).
    Left: the \((+,+,-)\) triad with directions
    \(\pi/3,\pi,4\pi/3\).  Right: the quadratic contact
    \(|\varsigma_\beta|\sim|\theta_\beta-\pi|^2\), which gives the angular
    window \(n^{-1/2}\) when \(|k_\alpha|=n\) and \(|k_\beta|\sim1\).
    }
    \label{fig:high-low-high-cusp}
\end{figure}
\begin{remark}
The particular counterexample constructed above would also be ruled out by an
additional symmetric cut-off near
\[
    |\cos\theta|=1,
\]
since the low mode in the \((+,+,-)\) packet satisfies
\[
    \theta_\beta\to\pi,
    \qquad
    |\cos\theta_\beta|\to1.
\]
By the symmetric placement of the cut-off in the kernel, excluding this low
mode would remove the whole triad.  This, however, is a different and stronger
angular truncation.  It removes the bad interaction indirectly through the
low horizontal mode.

The obstruction exhibited in the proof is the finite-frequency cusp of the
high--frequency pair:
\[
    |\cos\theta_\alpha|=\frac12,
    \qquad
    |\cos\theta_\gamma|=\frac12
\]
on the \((+,+,-)\) packet.  On the exchanged \((+,-,+)\) packet, the second
high-frequency mode is \(\beta\) instead of \(\gamma\).  Thus the natural
symmetric localization associated with this obstruction is the cut-off near
\[
    |\cos\theta|=\frac12
\]
for each triad member.
\end{remark}

\section{On the Zero-Frequency Cut-off}\label{sec:zero-frequency-cutoff}
\subsection{The zero-frequency cut-off and the singular exchange structure}
\label{subsec:zero-frequency-cutoff}

We close this section by recording the role of the angular localization away
from the zero-frequency set
\[
    \cos\theta=0.
\]
This localization is different in nature from the localization away from
\(|\cos\theta|=\tfrac12\).  The latter removes the high--low--high positive
unbounded contribution exhibited in Section~\ref{Section: Unboundedness}.
The zero-frequency set, by contrast, corresponds to modes for which
\[
    \omega_{\sigma,k}=\sigma\frac{k_1}{|k|}=0.
\]
Such modes do not carry a fast oscillatory phase.  Thus, from the viewpoint of
formal wave-turbulence derivations, they should not automatically be treated
as ordinary propagating waves.  A derivation which includes this set would
likely have to treat the zero-frequency component as a slow or non-oscillatory
mode, rather than as part of the purely propagating two-branch WKE considered
here.

There is also a distinct analytic issue.  If the cut-off near
\(\cos\theta=0\) is removed, the collision operator should not be expected to
remain a bounded vector field on the weighted space \(L^\infty_m\).  Near
zero-frequency endpoints of the resonance curves, the collision operator
contains unbounded high--high exchange rates.  These leading exchange terms do
not have the same sign-coherent positive-gain structure as the
\(|\cos\theta|=\tfrac12\) obstruction.  Instead, at the level of an
unweighted \(L^2\) energy calculation, their leading high-frequency pair
matrix has a dissipative--antisymmetric--remainder structure.  We record this
structure below only as a local structural observation.  It is not used in the
proof of the main well-posedness theorem, and it does not by itself remove the
zero-frequency cut-off in the \(L^\infty_m\) framework of this paper.

Throughout the following discussion we use the same scaled variables as before:
\[
    k_j=|k_\alpha|\varsigma_j,
    \qquad j\in\{\alpha,\beta,\gamma\},
    \qquad |\varsigma_\alpha|=1.
\]
The local co-area density in the angular parametrization is denoted by
\[
    d\nu_{\rm loc}
    :=
    \frac{
        \left|\dfrac{d\varsigma_\beta^1}{d\theta_\beta}\right|
    }{
        \left|\partial_{\varsigma_\beta^2}\widetilde\Xi_1\right|
    }
    d\theta_\beta .
\]
For instance, at a collapsed zero-mode endpoint
\(\rho:=|\varsigma_{\rm low}|\to0\), one has the local behavior
\[
    d\nu_{\rm loc}\sim \rho\,d\rho .
\]
This is the same co-area density used throughout the preceding parametrized
estimates; no new co-area convention is introduced here.

\paragraph{The \((+,+,+)\) branch.}

On the upper half of the unit-scale \((+,+,+)\) resonance curve,
\[
    \theta_\beta\in
    \left[\frac{\pi}{2},\,\pi+\theta_\alpha\right],
\]
the two limiting endpoints correspond to
\[
    k_\beta\to0
    \qquad\text{and}\qquad
    k_\gamma\to0,
\]
respectively.  More precisely,
\[
    \theta_\beta\to\frac{\pi}{2},
    \qquad
    |\varsigma_\beta|\to0,
\]
at the first endpoint, while
\[
    \theta_\beta\to\pi+\theta_\alpha,
    \qquad
    |\varsigma_\gamma|\to0,
\]
at the second.  These angular values are limiting directions along the
punctured resonance curve; the angle of the collapsed zero vector itself is
not defined.
Near \(k_\beta=0\), we freeze the low-frequency factor as a nonnegative
coefficient \(g_\beta\) and retain the variation only on the high-frequency
pair:
\[
    n_\beta\leadsto g_\beta,
    \qquad
    n_\alpha\leadsto h_\alpha,
    \qquad
    n_\gamma\leadsto h_\gamma.
\]
This defines the frozen-low-mode high-pair block considered below.

Keeping the terms that contain the frozen factor \(g_\beta\), the
\(\alpha\)-equation contributes
\[
\begin{aligned}
\omega_\alpha
\left(
    \omega_\alpha g_\beta h_\gamma
    +
    \omega_\gamma h_\alpha g_\beta
\right)
&=
g_\beta
\left(
    \omega_\alpha\omega_\gamma h_\alpha
    +
    \omega_\alpha^2h_\gamma
\right).
\end{aligned}
\]
By the symmetry between the two high legs, the corresponding contribution to
the \(\gamma\)-equation is
\[
    g_\beta
    \left(
        \omega_\gamma^2h_\alpha
        +
        \omega_\alpha\omega_\gamma h_\gamma
    \right).
\]
Thus, after suppressing the common nonnegative factor
\(\Gamma_{\alpha\beta\gamma}^2g_\beta\) and the local co--area density, the
principal high-pair block is
\[
    \mathsf M_{\alpha\gamma}
    :=
    \begin{pmatrix}
        \omega_\alpha\omega_\gamma & \omega_\alpha^2\\
        \omega_\gamma^2 & \omega_\alpha\omega_\gamma
    \end{pmatrix}.
\]
Equivalently, at the level of this frozen endpoint model,
\[
    \binom{(\partial_t h)_\alpha}{(\partial_t h)_\gamma}
    \sim
    \Gamma_{\alpha\beta\gamma}^2g_\beta\,
    \mathsf M_{\alpha\gamma}
    \binom{h_\alpha}{h_\gamma}.
\]
Here \(h_\gamma\) remains a function of the low-mode parameter:
\[
    h_\gamma
    =
    h\bigl(+,|k_\alpha|
       \varsigma_\gamma(\alpha,\rho)\bigr),
    \qquad
    \rho:=|\varsigma_\beta|.
\]

If \(g_\beta\equiv1\), this is the high-pair block associated with
\(M(h_\alpha,1,h_\gamma)\).  The relevant local energy object is therefore the
two-component pairing
\[
    \operatorname{Re}
    \left\langle
        \binom{h_\alpha}{h_\gamma},
        \mathsf M_{\alpha\gamma}
        \binom{h_\alpha}{h_\gamma}
    \right\rangle_{\mathbb R^2},
\]
rather than the scalar pairing with the \(\alpha\)-equation alone.

Although
\[
    \operatorname{spec}(\mathsf M_{\alpha\gamma})
    =
    \{0,\,2\omega_\alpha\omega_\gamma\}
\]
and the nonzero eigenvalue is negative near the collapsed endpoint,
\(\mathsf M_{\alpha\gamma}\) is generally not self-adjoint.  Hence the spectral
sign alone does not determine the sign of the standard Euclidean energy.
Using
\[
    \omega_\alpha+\omega_\beta+\omega_\gamma=0,
\]
one obtains
\[
\begin{aligned}
\operatorname{Re}
\left\langle
    \binom{h_\alpha}{h_\gamma},
    \mathsf M_{\alpha\gamma}
    \binom{h_\alpha}{h_\gamma}
\right\rangle_{\mathbb R^2}
&=
-\frac{(\omega_\alpha-\omega_\gamma)^2}{4}
    |h_\alpha-h_\gamma|^2
\\
&\quad+
\frac{\omega_\beta^2}{4}
    |h_\alpha+h_\gamma|^2 .
\end{aligned}
\]
Thus the standard high-pair energy is dissipative up to an
\(O(\omega_\beta^2)\) defect.  The decomposition below isolates precisely the
dissipative, antisymmetric, and lower-order parts of this identity.
\[
    \mathsf M_{\alpha\gamma}
    =
    \mathsf D_{\alpha\gamma}
    +
    \mathsf A_{\alpha\gamma}
    +
    \mathsf R_{\alpha\gamma},
\]
where
\[
    \mathsf D_{\alpha\gamma}
    :=
    (-\omega_\alpha\omega_\gamma)
    \begin{pmatrix}
        -1&1\\
        1&-1
    \end{pmatrix},
\]
\[
    \mathsf A_{\alpha\gamma}
    :=
    -\frac{\omega_\beta(\omega_\alpha-\omega_\gamma)}{2}
    \begin{pmatrix}
        0&1\\
        -1&0
    \end{pmatrix},
\]
and
\[
    \mathsf R_{\alpha\gamma}
    :=
    \frac{\omega_\beta^2}{2}
    \begin{pmatrix}
        0&1\\
        1&0
    \end{pmatrix}.
\]
Indeed,
\[
\begin{aligned}
\mathsf M_{\alpha\gamma}-\mathsf D_{\alpha\gamma}
&=
\begin{pmatrix}
0&-\omega_\alpha\omega_\beta\\
-\omega_\gamma\omega_\beta&0
\end{pmatrix}
\\
&=
\mathsf A_{\alpha\gamma}
+
\mathsf R_{\alpha\gamma}.
\end{aligned}
\]
The first term is dissipative:
\[
\operatorname{Re}
\left\langle
    \binom{h_\alpha}{h_\gamma},
    \mathsf D_{\alpha\gamma}
    \binom{h_\alpha}{h_\gamma}
\right\rangle
=
-(-\omega_\alpha\omega_\gamma)
|h_\alpha-h_\gamma|^2
\le0.
\]
The second term is skew-symmetric and therefore invisible in the unweighted
energy pairing:
\[
\operatorname{Re}
\left\langle
    \binom{h_\alpha}{h_\gamma},
    \mathsf A_{\alpha\gamma}
    \binom{h_\alpha}{h_\gamma}
\right\rangle
=0.
\]
The third term is the \(O(\omega_\beta^2)\) symmetric defect appearing in the
quadratic-form identity above.

The nonlinear term
\[
    \omega_\alpha\omega_\beta n_\alpha n_\gamma
\]
is not included in this principal high-pair paralinearization.  It is
quadratic in the two high variables and carries the additional small factor
\(\omega_\beta\).  Likewise, the full Fr\'echet linearization would contain
terms in which the variation is placed on the low leg.  Both belong to the
terms omitted from the frozen high-pair endpoint model.

Near \(k_\beta=0\), one has
\[
    |\omega_\beta|\lesssim\rho,
    \qquad
    d\nu_{\mathrm{loc}}\sim\rho\,d\rho,
    \qquad
    \rho=|\varsigma_\beta|.
\]
Moreover, the nonvanishing part of the unit-scale interaction coefficient
satisfies
\[
\pi^4
(\sin\theta_\alpha+\sin\theta_\beta+\sin\theta_\gamma)^2
(1+|\varsigma_\beta|+|\varsigma_\gamma|)^2
\longrightarrow
4\pi^4 .
\]
Consequently, the principal high-pair exchange has the weighted
vector-field size
\[
    |k_\alpha|^4
    \int_0^{\rho_0}
        \rho\,g_\beta(|k_\alpha|\rho)\,d\rho,
\]
where the limiting angular direction is suppressed in the notation.  If the
low-mode background is \(O(1)\) near the origin, this quantity may grow like
\[
    O(|k_\alpha|^2).
\]
This is the sense in which the zero-frequency exchange does not define a
bounded vector field in the weighted \(L^\infty_m\) framework used here.

The matrix remainder has an additional factor
\[
    \omega_\beta^2=O(\rho^2).
\]
If
\[
    g_\beta(|k_\alpha|\rho)
    \lesssim
    \langle|k_\alpha|\rho\rangle^{-m},
    \qquad
    m>4,
\]
then its model contribution is bounded:
\[
\begin{aligned}
|k_\alpha|^4
\int_0^{\rho_0}
    \rho\,\omega_\beta^2
    g_\beta(|k_\alpha|\rho)\,d\rho
&\lesssim
|k_\alpha|^4
\int_0^{\rho_0}
    \rho^3
    \langle|k_\alpha|\rho\rangle^{-m}\,d\rho
\\
&=
\int_0^{|k_\alpha|\rho_0}
    y^3\langle y\rangle^{-m}\,dy
\lesssim1.
\end{aligned}
\]
This estimate concerns only the matrix remainder in the frozen leading
endpoint model.  It does not establish boundedness of all remaining terms in
the full non-cutoff collision operator.  In particular, the low-leg
variations, the quadratic high-pair term carrying \(\omega_\beta\), and the
errors in the endpoint expansions of the coefficient and co--area density are
not analyzed here.

The endpoint \(k_\gamma=0\) is obtained by exchanging \(\beta\) and
\(\gamma\).  The low mode is then \(\gamma\), the high pair is
\((\alpha,\beta)\), and the same construction applies with low frequency
\(\omega_\gamma\).
%

\paragraph{The remaining resonant branches.}

On the \((+,+,-)\) branch, the collapsed endpoint
\[
    \varsigma_\gamma=0
\]
is a zero-frequency endpoint analogous to those of the \((+,+,+)\) branch.
The low mode is now \(\gamma\), while the high pair is
\((\alpha,\beta)\); hence the same frozen high-pair structure applies after
exchanging \(\beta\) and \(\gamma\).  This endpoint is excluded by the
localization near \(\cos\theta=0\).

The other collapsed point on the \((+,+,-)\) branch is
\[
    \varsigma_\beta=0,
\]
which is the finite-frequency high--low--high cusp.  The cut-off near
\[
    |\cos\theta|=\frac12
\]
removes its critical quadratic degeneration.  On the remaining noncritical
part of the cusp, the interaction coefficient satisfies
\[
    \bigl|1+|\varsigma_\beta|-|\varsigma_\gamma|\bigr|
    \lesssim
    |\varsigma_\beta|.
\]
Together with the local co--area behavior
\[
    d\nu_{\rm loc}
    \lesssim
    |\varsigma_\beta|\,d|\varsigma_\beta|,
\]
this gives the integrable endpoint factor
\[
    |\varsigma_\beta|^3\,d|\varsigma_\beta|.
\]
Thus the noncritical cusp is controlled after the critical
\(|\cos\theta|=\tfrac12\) region has been removed.  The exchanged
\((+,-,+)\) branch is identical after interchanging \(\beta\) and
\(\gamma\).

The collapsed endpoints of the \((+,-,-)\) branch are controlled in the same
way.  Near, for example, \(\varsigma_\beta=0\),
\[
    \bigl|1-|\varsigma_\beta|-|\varsigma_\gamma|\bigr|
    \lesssim
    |\varsigma_\beta|,
\]
and the same co--area estimate again produces an integrable factor of order
\[
    |\varsigma_\beta|^3\,d|\varsigma_\beta|.
\]
The endpoint with \(\varsigma_\gamma=0\) follows by central symmetry.

We therefore retain both angular localizations in the present paper.  The
cut-off near \(\cos\theta=0\) removes the zero-frequency exchange endpoints,
whereas the cut-off near \(|\cos\theta|=\frac12\) removes the critical
quadratic high--low--high cusp.  The remaining collapsed endpoints are
controlled by the cancellations above.

The discussion above explains why the zero-frequency cut-off is retained in
the present work.  The frozen high-pair block has a favorable energy-level
structure, but the leading zero-frequency exchange does not define a bounded
vector field on \(L^\infty_m\), and the remaining terms of the full
non-cutoff operator are not controlled here.  Removing this cut-off would
likely require a different framework, combining an energy-level treatment of
the leading exchange with estimates for the low-leg terms, endpoint errors,
and weight commutators.

This would not remove the distinct cut-off near
\[
    |\cos\theta|=\frac12,
\]
which remains necessary in the present \(L^\infty_m\) framework because of
the critical quadratic high--low--high cusp.

%


\section{Related Hamiltonian Boussinesq models}
\label{sec:related-Hamiltonian-models}

The full non-rotating Boussinesq model studied above has two distinct angular
obstructions: the zero-frequency set
\[
    \cos\theta=0
\]
and the critical quadratic high--low--high cusp at
\[
    |\cos\theta|=\frac12,
\]
illustrated in Figure~\ref{fig:high-low-high-cusp}.  We now compare this
geometry with two related Hamiltonian Boussinesq models.

For the rotating--stratified system, rotation opens a frequency gap and
removes the zero-frequency wave degeneration.  We determine the range in
which the three-wave resonant manifold is nonempty, classify the exceptional
critical point of the reduced resonance function, and identify the rotating
analogue of the finite-frequency high--low--high cusp.  A complete
function-space theory for the rotating collision operator is left for future
work.

For the hydrostatic Euler--Boussinesq system, we obtain a complete cut-off
estimate.  The hydrostatic dispersion retains the zero-frequency set
\(\cos\theta=0\) and has an additional pole at \(\sin\theta=0\).  After
localizing away from these two axes, we prove that the collision operator is
bounded on \(L^\infty_m\) for \(m>4\), and hence obtain local well-posedness.
In contrast with the full Boussinesq model, the hydrostatic collapsed
high--low--high geometry is linear rather than quadratic, so no localization
near \(|\cos\theta|=\frac12\) is required.

\subsection{The rotating--stratified Boussinesq model}
\label{subsec:rotating-model}

With the buoyancy frequency normalized to one, the two-dimensional
rotating--stratified Boussinesq system may be written in terms of the
stream function \(A\), the transverse velocity \(v\), and the vertical
displacement \(\zeta\) as
\begin{equation}
\label{eq:rotating-Boussinesq-Hamiltonian}
\begin{cases}
\partial_t(-\Delta A)
+\{A,-\Delta A\}
-\partial_1\zeta
-f\partial_2v
=0,
\\[0.4em]
\partial_tv+\{A,v\}-f\partial_2A=0,
\\[0.4em]
\partial_t\zeta+\{A,\zeta\}-\partial_1A=0.
\end{cases}
\end{equation}
This is a non-canonical Hamiltonian system with Hamiltonian
\begin{equation}
\label{eq:rotating-Boussinesq-energy}
    H_f
    =
    \frac12
    \int
    \left(
        |\nabla A|^2+v^2+\zeta^2
    \right)\,dx .
\end{equation}

The linearized system has a zero-frequency balanced branch and two
inertia--gravity wave branches.  The linear potential vorticity is
\[
    q_{\mathrm{lin}}
    =
    \partial_1v-f\partial_2\zeta .
\]
Following Shavit--B\"uhler--Shatah, one removes the balanced component by
restricting to the wave manifold
\[
    q_{\mathrm{lin}}=0
\]
and retaining only the two wave branches
\cite{shavit2026rotating}.  Their dispersion relation is
\[
    \omega_{\sigma,k}^{f}
    =
    \sigma\Omega_f(k),
    \qquad
    \Omega_f(k)
    =
    \sqrt{
        \frac{(k_1)^2+f^2(k_2)^2}
             {(k_1)^2+(k_2)^2}
    }
    =
    \sqrt{\cos^2\theta+f^2\sin^2\theta}.
\]
Consequently,
\[
    f\le\Omega_f(\theta)\le1.
\]
In particular, for every \(f>0\),
\[
    \Omega_f\left(\frac{\pi}{2}\right)=f>0,
\]
so the vertical direction is no longer a zero-frequency wave direction.

The corresponding rotating WKE has the same symmetric three-wave structure as
the non-rotating equation, with the frequency and interaction coefficient
replaced by their rotating counterparts.  Since only the resonance geometry
is used below, we do not record the full collision operator.  The rotating
resonant set is determined by
\[
    k_\alpha+k_\beta+k_\gamma=0
\]
and
\[
    \sigma_\alpha\Omega_f(k_\alpha)
    +
    \sigma_\beta\Omega_f(k_\beta)
    +
    \sigma_\gamma\Omega_f(k_\gamma)
    =
    0.
\]

\begin{lemma}[Disappearance of rotating three-wave resonances]
\label{lem:no-rotating-resonance-large-f}
Let
\[
    \frac12\le f<1.
\]
Then there is no resonant triad satisfying
\[
    k_\alpha,k_\beta,k_\gamma\neq0.
\]
Equivalently, the nontrivial three-wave resonant manifold is empty.
\end{lemma}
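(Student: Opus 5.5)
The plan is to reduce the lemma to a pure inequality on the frequency range. Every $\Omega_f(k)$ with $k\neq0$ lies in $[f,1]$. Up to the global sign flip and the $\beta\leftrightarrow\gamma$ exchange recorded in Section~\ref{Symmetry of the resonance manifold}, the sign patterns $(\sigma_\alpha,\sigma_\beta,\sigma_\gamma)$ fall into two types:
\[
    \sigma_\alpha=\sigma_\beta=\sigma_\gamma,
    \qquad\text{or}\qquad
    \text{two signs equal and one opposite.}
\]
In the first type, the frequency relation forces
\[
    \Omega_f(k_\alpha)+\Omega_f(k_\beta)+\Omega_f(k_\gamma)=0,
\]
which is impossible since each term is at least $f>0$. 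This case needs neither the momentum constraint nor the hypothesis $f\ge\frac12$; it uses only $f>0$.

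In the second type, after relabeling, the frequency relation reads
\[
    \Omega_f(k_1)=\Omega_f(k_2)+\Omega_f(k_3)
\]
for some ordering of the three nonzero vectors. The left side is at most $1$, and the right side is at least $2f\ge1$. Resonance therefore forces equality throughout:
\[
    \Omega_f(k_1)=1,
    \qquad
    \Omega_f(k_2)=\Omega_f(k_3)=f=\tfrac12.
\]
Hence for $\frac12<f<1$ there is already a contradiction, and the only remaining case is $f=\frac12$.

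For $f=\frac12$ (so $f<1$), I would identify the extremal directions. We have $\Omega_f=1$ exactly when $\sin\theta=0$, i.e.\ the wave vector is horizontal. We have $\Omega_f=f$ exactly when $\cos\theta=0$, i.e.\ the wave vector is vertical, because $\Omega_f^2=f^2+(1-f^2)\cos^2\theta$ and $1-f^2>0$.

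So $k_1=(a,0)$ with $a\neq0$, while $k_2=(0,b)$ and $k_3=(0,c)$. Momentum conservation $k_1+k_2+k_3=0$ in the first component gives $a=0$, which contradicts $k_1\neq0$. This closes the endpoint case.

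The only delicate step is this endpoint $f=\frac12$. There the crude frequency bound is not strict, and the argument must invoke momentum conservation together with the characterization of the extremal directions of $\Omega_f$. The rest of the proof is bookkeeping over the sign configurations.
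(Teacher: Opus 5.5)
Your proposal is correct and follows the paper's proof essentially step for step. You exclude same-sign triads by positivity of $\Omega_f$. You then reduce a mixed-sign resonance to $\Omega_f(k_1)=\Omega_f(k_2)+\Omega_f(k_3)$ and use the range bound $f\le\Omega_f\le1$ to get an immediate contradiction for $f>\tfrac12$. At $f=\tfrac12$ you force one horizontal and two vertical wave vectors and contradict momentum conservation. Your only additions are the explicit justification of the extremal directions via $\Omega_f^2=f^2+(1-f^2)\cos^2\theta$ and the use of the first momentum component; both are sound.
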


\begin{proof}
A resonant triad cannot have all three frequencies with the same sign.
After relabeling, any mixed-sign resonance may therefore be written as
\[
    \Omega_f(\theta_\alpha)
    =
    \Omega_f(\theta_\beta)
    +
    \Omega_f(\theta_\gamma).
\]
If \(f>\frac12\), then
\[
    \Omega_f(\theta_\alpha)
    \le1
    <
    2f
    \le
    \Omega_f(\theta_\beta)
    +
    \Omega_f(\theta_\gamma),
\]
which is impossible.

It remains to consider \(f=\frac12\).  Equality would require
\[
    \Omega_f(\theta_\alpha)=1,
    \qquad
    \Omega_f(\theta_\beta)
    =
    \Omega_f(\theta_\gamma)
    =
    \frac12.
\]
The first condition forces \(k_\alpha\) to be horizontal, while the latter two
force \(k_\beta\) and \(k_\gamma\) to be vertical.  Hence
\(k_\beta+k_\gamma\) is vertical and cannot equal the nonzero horizontal
vector \(-k_\alpha\).  This contradicts
\[
    k_\alpha+k_\beta+k_\gamma=0.
\]
Therefore the nontrivial resonant manifold is empty also at
\(f=\frac12\).
\end{proof}

Thus the nontrivial rotating resonance manifold is confined to
\[
    0<f<\frac12.
\]

For completeness, after eliminating
\[
    k_\gamma=-k_\alpha-k_\beta,
\]
define the reduced rotating resonance function
\[
    \widetilde\Xi_{1,f}(k_\beta)
    :=
    \sigma_\alpha\Omega_f(k_\alpha)
    +
    \sigma_\beta\Omega_f(k_\beta)
    +
    \sigma_\gamma\Omega_f(-k_\alpha-k_\beta).
\]
The corresponding co--area Jacobian is
\[
    J_f(k_\beta)
    :=
    \left|
        \nabla_{k_\beta}
        \widetilde\Xi_{1,f}(k_\beta)
    \right|.
\]
Since
\[
    k_\gamma=-k_\alpha-k_\beta,
\]
one has
\[
    \nabla_{k_\beta}\widetilde\Xi_{1,f}
    =
    \sigma_\beta\nabla\Omega_f(k_\beta)
    -
    \sigma_\gamma\nabla\Omega_f(k_\gamma),
\]
where
\[
    \nabla\Omega_f(k)
    =
    \frac{1-f^2}{\Omega_f(k)|k|^4}
    \begin{pmatrix}
        k_1(k_2)^2\\
        -(k_1)^2k_2
    \end{pmatrix}.
\]

\begin{lemma}[Critical points of the rotating reduced resonance function]
\label{lem:rotating-coarea-critical-points}
Assume
\[
    0<f<\frac12,
\]
and fix \(k_\alpha\neq0\).  Let
\[
    k_\gamma=-k_\alpha-k_\beta,
\]
and suppose that
\[
    \widetilde\Xi_{1,f}(k_\beta)=0,
    \qquad
    k_\beta,k_\gamma\neq0.
\]
If
\[
    k_{\beta,1}k_{\beta,2}\neq0,
    \qquad
    k_{\gamma,1}k_{\gamma,2}\neq0,
\]
then
\[
    J_f(k_\beta)>0.
\]

If \(J_f(k_\beta)=0\), then both group velocities vanish, so
\(k_\beta\) and \(k_\gamma\) lie on the coordinate axes.  Up to permutation,
coordinate reflections, and simultaneous reversal of all branch signs, the
only possible finite-frequency critical configuration has
\[
    (\omega_\alpha,\omega_\beta,\omega_\gamma)
    =
    (1-f,f,-1).
\]
For fixed \(k_\alpha\), this is an isolated nondegenerate critical point of
\(\widetilde\Xi_{1,f}\), rather than a one-dimensional co--area singularity.
\end{lemma}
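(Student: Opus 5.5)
The plan rests on a structural observation about \(\nabla\Omega_f\). From the displayed formula,
\[
    \nabla\Omega_f(k)
    =
    \frac{(1-f^2)\,k_1k_2}{\Omega_f(k)|k|^4}
    \begin{pmatrix} k_2\\ -k_1\end{pmatrix},
\]
so the group velocity is always a scalar multiple of \(k^\perp\). That scalar vanishes exactly when \(k\) lies on a coordinate axis.

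\emph{First assertion.} Suppose \(k_{\beta,1}k_{\beta,2}\neq0\) and \(k_{\gamma,1}k_{\gamma,2}\neq0\), and suppose \(J_f(k_\beta)=0\). Then \(\sigma_\beta\nabla\Omega_f(k_\beta)=\sigma_\gamma\nabla\Omega_f(k_\gamma)\) with both vectors nonzero. Hence \(k_\beta^\perp\parallel k_\gamma^\perp\), so \(k_\beta\parallel k_\gamma\). Momentum conservation then makes \(k_\alpha=-k_\beta-k_\gamma\) parallel to both.

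Since \(\Omega_f\) depends only on \(\theta\) modulo \(\pi\), all three frequencies have a common modulus \(w=\Omega_f(\theta)\ge f>0\). The resonance condition becomes \((\sigma_\alpha+\sigma_\beta+\sigma_\gamma)w=0\). This is impossible, because a sum of three signs \(\pm1\) is odd. Hence \(J_f>0\).

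\emph{Second assertion.} If \(J_f=0\), the same reasoning shows the two gradients cannot both be nonzero. Equality of the two signed gradients then forces both to vanish, so \(k_\beta\) and \(k_\gamma\) lie on the axes and \(\Omega_f(k_\beta),\Omega_f(k_\gamma)\in\{1,f\}\). I will split into cases.

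\begin{itemize}
\item Both horizontal, or both vertical: then \(k_\alpha\) lies on the same axis, and the odd-sum argument again excludes resonance (recall \(k_\alpha\neq0\)).
\item One horizontal and one vertical: after a permutation of \(\beta,\gamma\), reflections and a global sign flip, the resonance reads \(\sigma_\alpha\Omega_f(k_\alpha)\pm1\pm f=0\) with \(\Omega_f(k_\alpha)\in[f,1]\).
\end{itemize}

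In the mixed case, the value \(1+f\) exceeds \(1\). The values \(f-1\) and \(-1-f\) are negative. So the only admissible choice is \(\Omega_f(k_\alpha)=1-f\), which lies in \([f,1]\) precisely because \(f<\tfrac12\). This gives \((\omega_\alpha,\omega_\beta,\omega_\gamma)=(1-f,f,-1)\) up to the stated symmetries.

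\emph{Nondegeneracy.} I compute the Hessian of \(\widetilde\Xi_{1,f}\) at this point. Since \(k_\gamma=-k_\alpha-k_\beta\), we have \(\nabla^2\widetilde\Xi_{1,f}=\sigma_\beta\nabla^2\Omega_f(k_\beta)+\sigma_\gamma\nabla^2\Omega_f(k_\gamma)\).

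By degree-zero homogeneity, each Hessian annihilates the radial direction. Taylor expansion gives the two axis Hessians:
\[
    \nabla^2\Omega_f(0,b)=\operatorname{diag}\!\Bigl(\tfrac{1-f^2}{f b^2},0\Bigr),
    \qquad
    \nabla^2\Omega_f(a,0)=\operatorname{diag}\!\Bigl(0,-\tfrac{1-f^2}{a^2}\Bigr).
\]
The first comes from \(\Omega_f=f+\tfrac{1-f^2}{2f}\tfrac{k_1^2}{b^2}+\dots\) near \((0,b)\). The second comes from \(\Omega_f=1-\tfrac{1-f^2}{2}\tfrac{k_2^2}{a^2}+\dots\) near \((a,0)\).

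With one vertical leg and one horizontal leg, the two kernels are complementary. The determinant is therefore \(-\sigma_\beta\sigma_\gamma(1-f^2)^2/(f a^2b^2)\neq0\). So the critical point is nondegenerate, and by the Morse lemma it is isolated. It is thus a point singularity of the co--area measure, not a curve of vanishing Jacobian.

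The main obstacle I expect is careful bookkeeping of the signs and of the symmetry reductions in the case analysis. The Hessian computation itself I expect to be routine.
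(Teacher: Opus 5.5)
Your argument follows the paper's proof almost step for step: the factorization $\nabla\Omega_f(k)\propto k_1k_2\,k^\perp$, the exclusion of nonzero group velocities via collinearity and the odd sign sum, and the axis case split forcing $(\omega_\alpha,\omega_\beta,\omega_\gamma)=(1-f,f,-1)$. Your two diagonal Hessians are also the paper's, and everything through the determinant is correct. The one step that does not go through is the last inference. Nondegeneracy, and hence isolation among critical points, does not by itself show that the point is not a one-dimensional co--area singularity in the sense the paper intends. The non-rotating zero-frequency critical point has $\widetilde\Xi_1(u,v)=8uv+O(|(u,v)|^3)$, so it too is a nondegenerate, isolated critical point. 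Yet by the Morse lemma its zero set consists of two crossing resonant branches, along which $d\mathcal H^1/J\sim d\varrho/\varrho$. That is exactly the singularity the lemma is contrasting with.

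What you need is definiteness of the Hessian, and you already have the ingredients. Your case analysis forces $\sigma_{\mathrm{vert}}=\sigma_\alpha$ and $\sigma_{\mathrm{hor}}=-\sigma_\alpha$: with $\sigma_\alpha=+1$, the horizontal leg must carry $-1$ and the vertical leg $+f$. Hence $\sigma_\beta\sigma_\gamma=-1$, and the Hessian is $\sigma_\alpha(1-f^2)\operatorname{diag}\bigl(1/(fb^2),\,1/a^2\bigr)$ (vertical leg labelled $\beta$), which is definite rather than merely invertible. Since $\widetilde\Xi_{1,f}(k_{\beta,0})=0$, this gives $|\widetilde\Xi_{1,f}(k_{\beta,0}+q)|\ge c|q|^2$ for small $q$. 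So the resonant set near $k_{\beta,0}$ is the single point, and no one-dimensional branch passes through it. This is what the paper records as ``positive definite, hence an isolated nondegenerate local minimum.'' Had the signs been $\sigma_\beta\sigma_\gamma=+1$, your determinant would be negative and you would be in the saddle situation. The sign check is therefore the substantive content of the final clause, not bookkeeping.
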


\begin{proof}
By the formula preceding the lemma,
\[
    J_f(k_\beta)=0
\]
is equivalent to
\[
    \sigma_\beta\nabla\Omega_f(k_\beta)
    =
    \sigma_\gamma\nabla\Omega_f(k_\gamma).
\]
The explicit expression for \(\nabla\Omega_f\) shows that, for \(k\neq0\),
\[
    \nabla\Omega_f(k)=0
    \quad\Longleftrightarrow\quad
    k_1k_2=0.
\]
Moreover,
\[
    k\cdot\nabla\Omega_f(k)=0,
\]
as expected from the degree-zero homogeneity of \(\Omega_f\).

Suppose first that both group velocities are nonzero.  Their signed equality
implies that they are parallel.  Since
\(\nabla\Omega_f(k_\beta)\perp k_\beta\) and
\(\nabla\Omega_f(k_\gamma)\perp k_\gamma\), it follows that
\[
    k_\beta\parallel k_\gamma.
\]
The momentum constraint then implies that
\(k_\alpha,k_\beta,k_\gamma\) are all collinear.  Since \(\Omega_f\) is even
and homogeneous of degree zero, the three frequency magnitudes are equal:
\[
    \Omega_f(k_\alpha)
    =
    \Omega_f(k_\beta)
    =
    \Omega_f(k_\gamma).
\]
The frequency resonance would therefore require
\[
    \sigma_\alpha+\sigma_\beta+\sigma_\gamma=0,
\]
which is impossible for three signs in \(\{\pm1\}\).  Hence no critical point
with nonzero group velocities belongs to the nontrivial resonant manifold.

It follows that \(J_f(k_\beta)=0\) can occur only if
\[
    \nabla\Omega_f(k_\beta)
    =
    \nabla\Omega_f(k_\gamma)
    =
    0.
\]
Thus both \(k_\beta\) and \(k_\gamma\) lie on coordinate axes.  If they lie on
the same axis, then the triad is collinear, which has already been excluded.
Therefore one vector must be vertical and the other horizontal.  Their
frequency magnitudes are respectively
\[
    f
    \qquad\text{and}\qquad
    1.
\]
Since \(1\) is the largest frequency, the only possible resonance relation is
\[
    1=f+\Omega_f(k_\alpha),
\]
and hence
\[
    \Omega_f(k_\alpha)=1-f.
\]
Up to the stated symmetries, we may take
\[
    (\sigma_\alpha,\sigma_\beta,\sigma_\gamma)=(+,+,-)
\]
and
\[
    k_\alpha=(k_{\alpha,1},k_{\alpha,2}),
    \qquad
    k_{\beta,0}=(0,-k_{\alpha,2}),
    \qquad
    k_{\gamma,0}=(-k_{\alpha,1},0).
\]
This gives
\[
    (\omega_\alpha,\omega_\beta,\omega_\gamma)
    =
    (1-f,f,-1).
\]
Since \(0<f<1/2\), the relation
\(\Omega_f(k_\alpha)=1-f\) implies
\[
    \cos^2\theta_\alpha
    =
    \frac{1-2f}{1-f^2},
\]
so both \(k_{\alpha,1}\) and \(k_{\alpha,2}\) are nonzero.

It remains to verify that this critical point is isolated.  For the
representative sign configuration,
\[
    \widetilde\Xi_{1,f}(k_\beta)
    =
    \Omega_f(k_\alpha)
    +
    \Omega_f(k_\beta)
    -
    \Omega_f(-k_\alpha-k_\beta).
\]
At \(k_{\beta,0}\),
\[
    \nabla_{k_\beta}
    \widetilde\Xi_{1,f}(k_{\beta,0})=0.
\]
A direct calculation gives
\[
    D^2\Omega_f(0,-k_{\alpha,2})
    =
    (1-f^2)
    \begin{pmatrix}
        \dfrac{1}{f\,k_{\alpha,2}^2} & 0\\[1.1ex]
        0 & 0
    \end{pmatrix},
\]
and
\[
    D^2\Omega_f(-k_{\alpha,1},0)
    =
    (1-f^2)
    \begin{pmatrix}
        0 & 0\\[0.6ex]
        0 & -\dfrac{1}{k_{\alpha,1}^2}
    \end{pmatrix}.
\]
Therefore
\[
\begin{aligned}
D_{k_\beta}^2
\widetilde\Xi_{1,f}(k_{\beta,0})
&=
D^2\Omega_f(k_{\beta,0})
-
D^2\Omega_f(k_{\gamma,0})
\\
&=
(1-f^2)
\begin{pmatrix}
    \dfrac{1}{f\,k_{\alpha,2}^2} & 0\\[1.1ex]
    0 & \dfrac{1}{k_{\alpha,1}^2}
\end{pmatrix}.
\end{aligned}
\]
This matrix is positive definite.  Hence \(k_{\beta,0}\) is an isolated
nondegenerate local minimum of the reduced resonance function.
\end{proof}

\paragraph{Comparison with the non-rotating zero-frequency geometry.}

The frequency gap
\[
    \Omega_f(\theta)\ge f>0
\]
removes the zero-frequency wave degeneration of the non-rotating model.
At the exceptional configuration identified in
Lemma~\ref{lem:rotating-coarea-critical-points}, the co--area Jacobian
vanishes, but the Hessian of the reduced resonance function is positive
definite.  More precisely, if \(k_{\beta,0}\) denotes the critical point and
\[
    \mathsf H_f
    :=
    D_{k_\beta}^2
    \widetilde\Xi_{1,f}(k_{\beta,0}),
\]
then
\[
    \widetilde\Xi_{1,f}(k_{\beta,0}+q)
    =
    \frac12 q^{\mathsf T}\mathsf H_f q
    +
    O(|q|^3),
    \qquad
    \mathsf H_f>0.
\]
Consequently, for \(q\) sufficiently small,
\[
    \widetilde\Xi_{1,f}(k_{\beta,0}+q)
    \ge c_f|q|^2,
\]
and the zero level set consists locally only of \(k_{\beta,0}\).  In
particular, no one-dimensional resonant branch passes through this critical
point.

For sufficiently small regular values \(\lambda>0\), the portion of the
level set \(\{\widetilde\Xi_{1,f}=\lambda\}\) lying in a sufficiently small
neighborhood of \(k_{\beta,0}\) has length comparable to
\(\sqrt{\lambda}\), while
\[
    J_f\sim_f\sqrt{\lambda}
\]
there.  Hence its local co--area mass remains uniformly bounded as
\(\lambda\downarrow0\).

Consequently, there exist \(\delta_f,\lambda_f>0\) such that, for every
regular value \(0<\lambda<\lambda_f\),
\[
\begin{aligned}
\int_{
    \{\widetilde\Xi_{1,f}=\lambda\}
    \cap B_{\delta_f}(k_{\beta,0})
}
\frac{d\mathcal H^1}{J_f}
&\lesssim_f
\lambda^{-1/2}
\mathcal H^1\!\left(
    \{\widetilde\Xi_{1,f}=\lambda\}
    \cap B_{\delta_f}(k_{\beta,0})
\right)
\\
&\lesssim_f 1 .
\end{aligned}
\]
Thus the integrated local co--area contribution remains uniformly bounded as
\(\lambda\downarrow0\).

This is qualitatively different from the non-collapsed zero-frequency
critical geometry of the non-rotating Boussinesq model.  Fix
\[
    k_\alpha=(0,1),
    \qquad
    k_{\beta,0}=k_{\gamma,0}
    =
    \left(0,-\frac12\right),
\]
and write
\[
    u:=k_{\beta,1},
    \qquad
    v:=k_{\beta,2}+\frac12.
\]
Then
\[
    k_\beta
    =
    \left(u,v-\frac12\right),
    \qquad
    k_\gamma
    =
    \left(-u,-v-\frac12\right),
\]
and the reduced non-rotating resonance function is
\[
    \widetilde\Xi_1(u,v)
    =
    \frac{u}{
        \sqrt{u^2+(v-\frac12)^2}
    }
    -
    \frac{u}{
        \sqrt{u^2+(v+\frac12)^2}
    }.
\]
On the region where \(k_\beta,k_\gamma\neq0\), its zero level set is
\[
    u=0
    \qquad\text{or}\qquad
    v=0.
\]
Moreover,
\[
    \widetilde\Xi_1(u,v)
    =
    8uv
    +
    O\!\left(|(u,v)|^3\right).
\]
Thus the Hessian is indefinite, and the critical point is a saddle at which
two one-dimensional resonant branches cross.

Along either branch, if
\[
    \varrho:=\sqrt{u^2+v^2},
\]
then
\[
    J
    =
    \left|
        \nabla\widetilde\Xi_1
    \right|
    \sim
    \varrho,
    \qquad
    d\mathcal H^1
    \sim
    d\varrho.
\]
Hence the co--area factor degenerates logarithmically:
\[
    \frac{d\mathcal H^1}{J}
    \sim
    \frac{d\varrho}{\varrho}.
\]
Thus the non-rotating critical point lies on two resonant branches carrying a
logarithmically degenerate co--area factor, whereas the rotating exceptional
triad is an isolated finite-frequency critical configuration.

\paragraph{The rotating high--low--high cusp.}

For
\[
    0<f<\frac12,
\]
rotation does not remove the distinct finite-frequency high--low--high cusp.
As shown in
Lemma~\ref{lem:rotating-high-low-high-cusp}, its critical location is
determined by
\[
    \Omega_f(\theta_\alpha)=\frac12.
\]
Equivalently,
\[
    |\cos\theta_\alpha|
    =
    \sqrt{
        \frac{\frac14-f^2}{1-f^2}
    }.
\]
At this angle, the two limiting directions of the collapsed low mode
coalesce, and the resonance curve reaches the collapsed point quadratically.

Consequently, rotation removes the analogue of the non-rotating
zero-frequency localization near
\[
    \cos\theta=0,
\]
but it does not remove the separate finite-frequency high--low--high cusp.

\paragraph{Outlook for the rotating WKE.}

For each fixed
\[
    0<f<\frac12,
\]
the preceding analysis suggests that the natural rotating localization is a
finite-frequency cut-off near
\[
    \Omega_f(\theta)=\frac12.
\]
More precisely, one may choose a symmetric angular function
\(\chi_{\varepsilon,f}\) satisfying
\[
    \chi_{\varepsilon,f}(\theta)=0
    \qquad\text{when}\qquad
    \left|
        \Omega_f(\theta)-\frac12
    \right|
    <\varepsilon,
\]
and place one factor on each member of the resonant triad.  Denote the
corresponding rotating collision operator by
\(\mathcal C_\varepsilon^f\).

The frequency gap removes the zero-frequency obstruction, while the
exceptional critical point in
Lemma~\ref{lem:rotating-coarea-critical-points} is isolated and does not lie
on a singular one-dimensional resonant branch.  Motivated by the
non-rotating estimates, we therefore expect that, for every fixed
\(f\in(0,\frac12)\) and \(m>4\),
\[
    \left\|
        \mathcal C_\varepsilon^f(n)
    \right\|_{L^\infty_m}
    \lesssim_{f,\varepsilon,m}
    \|n\|_{L^\infty_m}^2.
\]
Such an estimate would yield local well-posedness of the rotating cut-off WKE
by the same Banach-space argument as in the non-rotating case.

A proof would require a complete parametrization of the rotating resonant
branches and uniform estimates for the rotating interaction coefficient and
co--area factor in all endpoint and large-wave-number regimes.  These
calculations are beyond the scope of the present paper and are left for
future work.  No uniformity as \(f\downarrow0\) is expected, since the
frequency gap closes and the non-rotating zero-frequency geometry is recovered
in this limit.

\subsection{The hydrostatic Euler--Boussinesq model}
\label{subsec:hydrostatic-model}

The hydrostatic model originates from a different Hamiltonian PDE.  In terms
of the stream function \(\varphi\) and density perturbation \(\vartheta\), it
takes the form
\begin{equation}
\label{eq:hydrostatic-Hamiltonian-main-text}
\begin{cases}
\partial_t\partial_2^2\varphi-\partial_1\vartheta
=
\{-\partial_2^2\varphi,\varphi\},
\\[0.4em]
\partial_t\vartheta+\partial_1\varphi
=
\{-\vartheta,\varphi\}.
\end{cases}
\end{equation}
Its Hamiltonian is
\[
    H^h
    =
    \frac12
    \int
    \left(
        |\partial_2\varphi|^2+\vartheta^2
    \right)\,dx,
\]
and its two linear wave branches have dispersion relation
\[
    \omega_{\sigma,k}^h
    =
    \sigma\frac{k_1}{|k_2|},
    \qquad
    k_2\neq0.
\]

The hydrostatic WKE considered here is derived independently from
\eqref{eq:hydrostatic-Hamiltonian-main-text}; it is not obtained by replacing
the dispersion relation in an already closed Boussinesq WKE.  The formal
PDE-to-WKE derivation is given in
Appendix~\ref{app:hydrostatic-formal-derivation}.

The hydrostatic dispersion has two distinct angular degeneracies:
\[
    \cos\theta=0
    \quad\Longrightarrow\quad
    \omega_{\sigma,k}^h=0,
\]
whereas
\[
    \sin\theta=0
    \quad\Longrightarrow\quad
    |\omega_{\sigma,k}^h|=\infty.
\]
We therefore choose
\[
    0<\varepsilon_0,\varepsilon_h<1,
    \qquad
    \varepsilon_0^2+\varepsilon_h^2<1,
\]
and introduce a smooth symmetric angular cut-off
\[
    \chi_{\varepsilon_0,\varepsilon_h}^h(\theta)
\]
supported in
\[
    |\cos\theta|\ge\varepsilon_0,
    \qquad
    |\sin\theta|\ge\varepsilon_h.
\]
One factor is placed on each member of the resonant triad.  We denote the
corresponding collision operator by
\(\mathcal C_{\varepsilon_0,\varepsilon_h}^h\).
No localization near
\[
    |\cos\theta|=\frac12
\]
is imposed.

\begin{proposition}[Hydrostatic resonance geometry and cut-off estimate]
\label{prop:hydrostatic-cutoff-estimate}
Let \(m>4\).  Then
\begin{equation}
\label{eq:hydrostatic-cutoff-estimate}
    \left\|
        \mathcal C_{\varepsilon_0,\varepsilon_h}^h(n^h)
    \right\|_{L^\infty_m}
    \lesssim_{\varepsilon_0,\varepsilon_h,m}
    \|n^h\|_{L^\infty_m}^2.
\end{equation}
Consequently, the cut-off hydrostatic WKE is locally well posed in
\(L^\infty_m\).
\end{proposition}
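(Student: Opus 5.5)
The plan is to reproduce, for the hydrostatic dispersion \(\omega^h_{\sigma,k}=\sigma\cot\theta\cdot\operatorname{sgn}(\sin\theta)\), the three-step scheme used for the non-hydrostatic operator. These steps are homogeneity and scaled co-area, branchwise unit-scale parametrization with cut-off lower bounds, and a weighted one-dimensional integral estimate. Since \(\omega^h\) is again homogeneous of degree zero, and the hydrostatic interaction coefficient derived in Appendix~\ref{app:hydrostatic-formal-derivation} is homogeneous of degree one, the reduction of Section~\ref{Scaling and co–area representation} applies verbatim. This gives the representation \eqref{eq:parametrized-collision} with the prefactor \(|k_\alpha|^4\), the reduced constraint \(\widetilde\Xi^h_1(\varsigma_\beta)\), and the angular parametrization of the unit-scale resonance curve. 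The sign reductions of Section~\ref{Symmetry of the resonance manifold} (global sign flip, \(\beta\leftrightarrow\gamma\), mirror reflections) also persist. We therefore only treat \(\sigma_\alpha=+1\), \(0<\theta_\alpha<\pi/2\), and the three representatives \((+,+,+)\), \((+,+,-)\), \((+,-,-)\).

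The first key step is the geometry. Writing \(t_j:=\cot\theta_j\), momentum gives \(|k_\beta|,|k_\gamma|\) by the same polar formula \eqref{eq:polar-momentum-++-}, while the frequency constraint becomes linear in the cotangents, e.g. \(t_\gamma=\pm(t_\alpha\pm t_\beta)\) up to the signs of \(\sin\theta_j\). On the cut-off support all \(|t_j|\) lie in \([\varepsilon_0,\varepsilon_h^{-1}]\), so every angle is in a compact set. The co-area Jacobian degenerates only along parallel configurations, which the cut-off excludes, so on the support \(r\)-type factors and the denominators analogous to \(f,\widetilde f\) are bounded below by powers of \(\varepsilon_0,\varepsilon_h\).

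I would then classify the collapsed endpoints. At a point where \(k_\beta\to0\) with limiting direction \(\theta_\beta\), the resonance forces \(\omega^h(\theta_\beta)\) to balance \(\omega^h_\alpha-\omega^h_\gamma\to0\). A first-order expansion in \(\rho=|\varsigma_\beta|\) shows that \(\rho\) vanishes linearly in \(\theta_\beta-\theta^\ast_\beta\), because \(\partial_\theta\cot\theta=-\csc^2\theta\) never vanishes. This is the ``linear rather than quadratic'' cusp announced in the section introduction. It is the analogue of Lemmas~\ref{lem:first-zero}--\ref{lem:second-zero}, but with constants depending only on \(\varepsilon_0,\varepsilon_h\) and \emph{no} exceptional angle. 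Non-compact ends, if any, are handled as in Lemmas~\ref{Right-hand linear asymptote}--\ref{Left-hand linear asymptote}: the denominator vanishes linearly.

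The second key step is the estimate. It follows the proofs of Propositions~\ref{prop:apriori-++-} and~\ref{prop:apriori-+--} line by line.
\begin{itemize}
\item Compact interior pieces use \(|\varsigma_\beta|,|\varsigma_\gamma|\gtrsim_{\varepsilon_0,\varepsilon_h}1\), giving \(|k_\alpha|^4\langle k_\alpha\rangle^{-2m}\) and hence boundedness for \(m\ge4\).
\item Asymptotic ends use \(\langle k_\beta\rangle^{-m}\sim s^m|k_\alpha|^{-m}\) against \(s^{-3}\), split at \(s=|k_\alpha|\) when \(|k_\alpha|\le1\).
\item Collapsed endpoints use the radial cancellation \(|1\pm|\varsigma_\beta|-|\varsigma_\gamma||\le2|\varsigma_\beta|\), as in Lemma~\ref{lem:cusp-cancellation-++-}. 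Together with the co-area factor, which is \(\lesssim s\) by linearity, this gives an integrand \(\lesssim s^3\langle |k_\alpha|s\rangle^{-m}\). Splitting at \(s=|k_\alpha|^{-1}\) yields \(|k_\alpha|^{4}\int s^{3}\langle|k_\alpha| s\rangle^{-m}\,ds\lesssim1\) for \(m>4\).
\end{itemize}
Local well-posedness then follows exactly as Corollary~\ref{cor:LWP}: a bounded symmetric bilinear map on the Banach space \(L^\infty_m\) yields a locally Lipschitz vector field, and Picard iteration applies. Nonnegativity is preserved because the loss terms are linear in \(n_\alpha\), so the operator splits as gain minus \(n_\alpha\) times a bounded rate.

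The main obstacle is verifying the uniform linear vanishing at every collapsed endpoint. This means showing that on the hydrostatic \((+,+,-)\)-type branch the two limiting directions never coalesce, i.e. there is no angle playing the role of \(|\cos\theta|=\tfrac12\). I would try first to write the endpoint condition explicitly in cotangent variables. The expectation is that it reads \(t_\beta=\text{const}\cdot t_\alpha\), which has a simple root. That would make the vanishing derivative \(\csc^2\theta_\beta\ge1\) uniform, with the \(\varepsilon_h\)-cut-off controlling the blow-up of \(\omega^h\) in the interaction coefficient.
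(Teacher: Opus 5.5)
\textbf{Gap: the collapsed-endpoint step uses the wrong interaction coefficient.} Your overall scheme is the same skeleton as the paper's proof: symmetry reduction, the $|k_\alpha|^4$ scaling, and the split into compact pieces, active collapsed endpoints and a non-compact tail. However, you get the $\rho^3$ factor from $\bigl|1\pm|\varsigma_\beta|-|\varsigma_\gamma|\bigr|\le2|\varsigma_\beta|$ ``as in Lemma~\ref{lem:cusp-cancellation-++-}''. That radial factor belongs to the non-hydrostatic $\Gamma$. The coefficient $\Gamma^h$ comes from a different Hamiltonian, and $\widetilde{\mathcal V}$ is built from $s^h_j=\sigma_j|k_j^2|$ and explicit factors $1/|k_j^2|$, not from $\sum_j\sigma_j|k_j|$.

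Dividing $\widetilde{\mathcal V}_\alpha^{\beta\gamma}$ by $\omega_\alpha^h$ and using the resonance relation, one finds on the resonant set
$|\Gamma^h_{\alpha\beta\gamma}|=\pi\bigl|\sigma_\alpha|k_\alpha^2|+\sigma_\beta|k_\beta^2|+\sigma_\gamma|k_\gamma^2|\bigr|$.
This is what the table \eqref{eq:hydrostatic-short-Gamma-table} records. The cancellation is therefore in vertical components. At a finite-frequency collapse $k_\beta\to0$, which forces $\sigma_\gamma=-\sigma_\alpha$, the bound $\bigl||k_\alpha^2|-|k_\gamma^2|\bigr|\le|k_\beta^2|$ gives $|\Gamma^h|\le2\pi|k_\beta^2|\lesssim\rho$.

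This first-order vanishing is indispensable. With $\Gamma^h\sim1$ the endpoint would contribute $|k_\alpha|^4\int_0^\delta\rho\,\langle|k_\alpha|\rho\rangle^{-m}\,d\rho\sim|k_\alpha|^2$ to the weighted norm. It must therefore be proved for $\Gamma^h$ itself, and your argument does not do this.

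\textbf{The same input is needed at the non-compact end.} Write, as the paper does, $\varsigma_\beta=(\cos\theta_\alpha X(z),\sin\theta_\alpha z)$ with $z=k_\beta^2/k_\alpha^2$. The hydrostatic resonance is then linear in $X$. Hence $X(z)$, $\Gamma^h(z)$ and the co-area density are explicit rational functions whose only $\theta_\alpha$-dependence is through $\sin\theta_\alpha$. This makes uniformity in $\theta_\alpha$ automatic and replaces your implicit-function analysis.

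On the $(+,+,-)$ branch with $z<-1$ one has $X=z^2/(1+2z)$, and both $k_\beta^1/k_\beta^2$ and $k_\gamma^1/k_\gamma^2$ tend to $\tfrac12\cot\theta_\alpha$ as $z\to-\infty$. So this end is in general not removed by $\chi^h_{\varepsilon_0,\varepsilon_h}$, contrary to your claim that the $f$-type denominators are bounded below on the support. The $s^{-3}$ tail estimate then requires $\Gamma^h$ to be bounded there, which the formula above gives: $|\Gamma^h|=2\pi\sin\theta_\alpha$ at unit scale. The explicit form also shows $\Gamma^h\equiv0$ on the $(+,+,-)$, $z>0$ and $(+,-,-)$, $-1<z<0$ branches. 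Once the coefficient is identified, your three estimates close as you describe.

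\textbf{Two smaller slips.}
\begin{itemize}
\item At $k_\beta\to0$ the resonance gives $\omega^h_\beta=-(\omega^h_\alpha+\omega^h_\gamma)\to-2\omega^h_\alpha\neq0$ on the active endpoints. The limit is $0$, and hence cut off, only when $\sigma_\gamma=\sigma_\alpha$. It is not a balance against ``$\omega^h_\alpha-\omega^h_\gamma\to0$''.
\item Two-sided linear vanishing of $\rho$ needs more than $\partial_\theta\cot\theta\neq0$. You also need a nonzero radial derivative of the reduced resonance function at the endpoint, i.e.\ the limiting direction (with $\cot\theta^\ast=\pm2\cot\theta_\alpha$) must not be parallel to $k_\alpha$. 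This does hold on the cut-off support.
\end{itemize}
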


\begin{proof}
We record only the geometric features that differ from the full Boussinesq
analysis.  By symmetry, it is enough to take
\[
    \sigma_\alpha=+1,
    \qquad
    0<\theta_\alpha<\frac{\pi}{2},
    \qquad
    \varsigma_\alpha
    =
    (\cos\theta_\alpha,\sin\theta_\alpha).
\]
The unit-scale resonant set is parametrized by
\[
    \varsigma_\beta(z)
    =
    \bigl(
        \cos\theta_\alpha X(z),
        \sin\theta_\alpha z
    \bigr),
\]
and
\[
    \varsigma_\gamma(z)
    =
    \bigl(
        -\cos\theta_\alpha(1+X(z)),
        -\sin\theta_\alpha(1+z)
    \bigr).
\]
The hydrostatic frequency resonance reduces to
\[
    1
    +
    \sigma_\beta\frac{X(z)}{|z|}
    -
    \sigma_\gamma\frac{1+X(z)}{|1+z|}
    =
    0.
\]
Solving this equation on
\[
    z>0,
    \qquad
    -1<z<0,
    \qquad
    z<-1,
\]
gives
\begin{equation}
\label{eq:hydrostatic-X-table}
\begin{array}{c|c|c|c}
&
z>0
&
-1<z<0
&
z<-1
\\[0.3em]\hline
(+,+,+)
&
-z^2
&
\displaystyle\frac{z^2}{1+2z}
&
z(z+2)
\\[0.9em]
(+,+,-)
&
\displaystyle-\frac{z(z+2)}{1+2z}
&
z(z+2)
&
\displaystyle\frac{z^2}{1+2z}
\\[1em]
(+,-,-)
&
z(z+2)
&
\displaystyle-\frac{z(z+2)}{1+2z}
&
-z^2 .
\end{array}
\end{equation}
For the same-sign configurations, the middle branch is understood away from
\(z=-\frac12\).

\begin{figure}[t]
    \centering
    \includegraphics[
        width=\textwidth,
        height=0.58\textheight,
        keepaspectratio
    ]{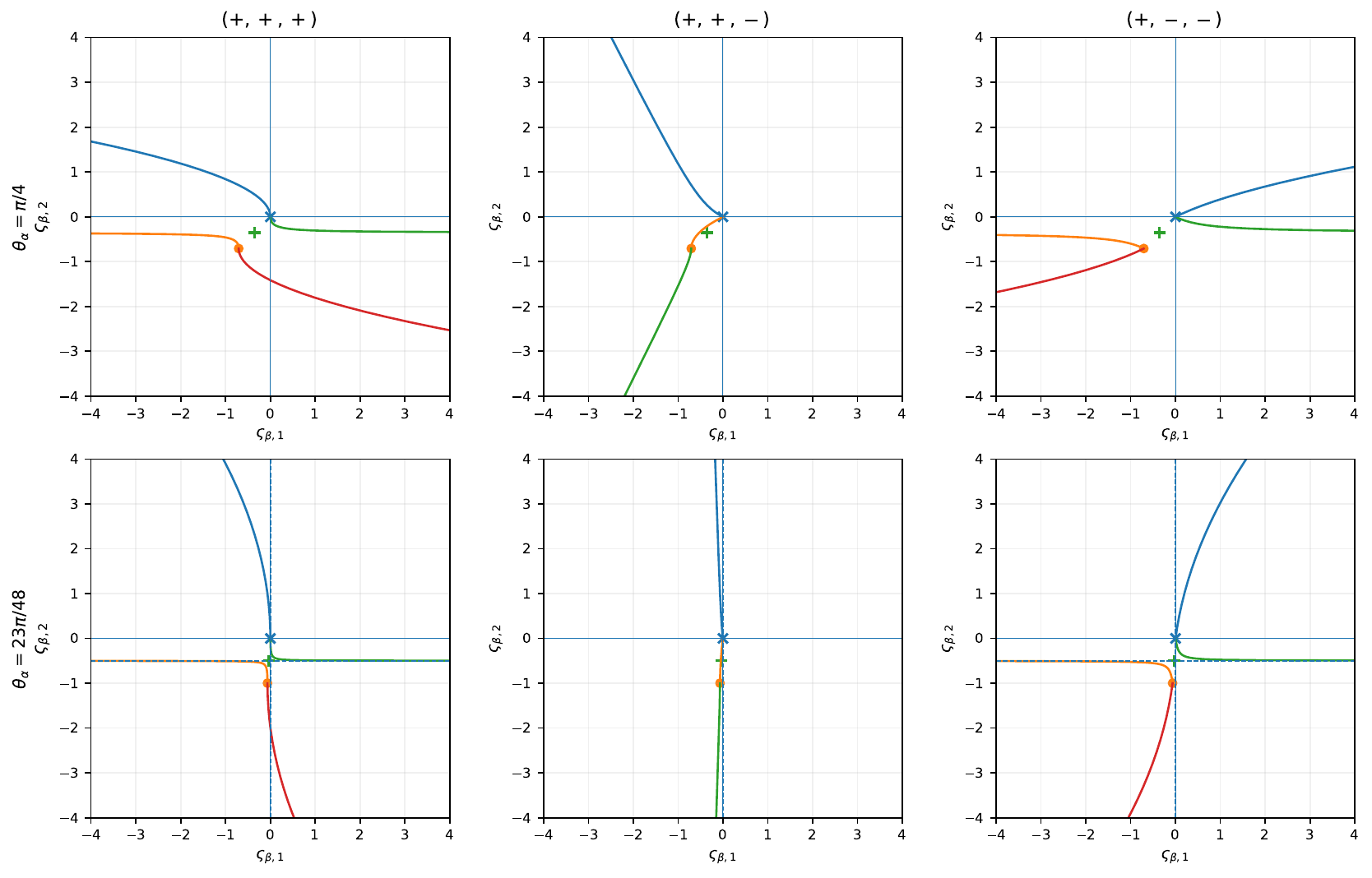}
    \caption{
    Unit-scale hydrostatic resonance curves in the
    \(\varsigma_\beta\)-plane.  The columns correspond to the three
    representative sign configurations \((+,+,+)\), \((+,+,-)\), and
    \((+,-,-)\), while the rows correspond to
    \(\theta_\alpha=\pi/4\) and \(\theta_\alpha=23\pi/48\).
    The cross marks \(\varsigma_\beta=0\), the dot marks
    \(\varsigma_\beta=-\varsigma_\alpha\), and the plus sign marks
    \(-\varsigma_\alpha/2\).  The dashed lines in the second row indicate the
    limiting sets as \(\theta_\alpha\to\pi/2\).
    }
    \label{fig:hydrostatic-resonance-curves}
\end{figure}

After eliminating \(\varsigma_\gamma\), let
\[
    J_h
    :=
    \left|
        \nabla_{\varsigma_\beta}
        \widetilde\Xi_1^h
    \right|.
\]
Since
\[
\partial_{\varsigma_{\beta,1}}
\widetilde\Xi_1^h
=
\frac{\sigma_\beta}{|\varsigma_{\beta,2}|}
-
\frac{\sigma_\gamma}{|\varsigma_{\gamma,2}|},
\]
the co--area factor is
\begin{equation}
\label{eq:hydrostatic-short-coarea}
\frac{d\mathcal H^1}{J_h}
=
\sin^2\theta_\alpha
\begin{cases}
\displaystyle
\frac{|z||1+z|}
     {\bigl||1+z|-|z|\bigr|}
\,dz,
&
\sigma_\beta=\sigma_\gamma,
\\[1.2em]
\displaystyle
\frac{|z||1+z|}
     {|1+z|+|z|}
\,dz,
&
\sigma_\beta=-\sigma_\gamma.
\end{cases}
\end{equation}

The resonant factorization
\[
    \widetilde{\mathcal V}_j
    =
    \omega_j^h\Gamma_{\alpha\beta\gamma}^h
\]
and the explicit hydrostatic interaction coefficient give
\begin{equation}
\label{eq:hydrostatic-short-Gamma-table}
\frac{\Gamma_{\alpha\beta\gamma}^h(z)}
     {2\pi\sin\theta_\alpha}
=
\begin{array}{c|c|c|c}
&
z>0
&
-1<z<0
&
z<-1
\\[0.3em]\hline
(+,+,+)
&
1+z
&
-1
&
-z
\\
(+,+,-)
&
0
&
-z
&
-1
\\
(+,-,-)
&
-z
&
0
&
1+z .
\end{array}
\end{equation}
At physical scale \(R=|k_\alpha|\),
\[
    \Gamma_{\alpha\beta\gamma}^h
    (R\varsigma_\alpha,R\varsigma_\beta,R\varsigma_\gamma)
    =
    R
    \Gamma_{\alpha\beta\gamma}^h
    (\varsigma_\alpha,\varsigma_\beta,\varsigma_\gamma).
\]
Together with the scaling of the co--area measure, this produces the overall
factor \(R^4\) in the collision integral.

On the \((+,+,+)\) branch, the two angular localizations bound
\[
    \left|\frac{k_1}{k_2}\right|
\]
above and below for each member of the triad.  Directly from
\eqref{eq:hydrostatic-X-table}, the admissible values of \(z\) therefore lie
in a finite union of compact intervals, uniformly separated from
\[
    0,\quad 1,\quad -1,\quad -2,\quad -\frac12,
    \quad\text{and}\quad |z|=\infty.
\]
Hence
\[
    |\varsigma_\beta(z)|
    \sim_{\varepsilon_0,\varepsilon_h}
    1,
    \qquad
    |\varsigma_\gamma(z)|
    \sim_{\varepsilon_0,\varepsilon_h}
    1,
\]
and this contribution is bounded by
\[
    R^4\langle R\rangle^{-m}
    \|n^h\|_{L^\infty_m}^2.
\]

On the \((+,+,-)\) branch, the part \(z>0\) is dynamically inactive:
\[
    \Gamma_{++-}^h(z)=0.
\]
On the active middle branch, as \(z\to0^-\),
\[
    |\varsigma_\beta(z)|
    \sim_{\varepsilon_0,\varepsilon_h}
    |z|,
    \qquad
    |\varsigma_\gamma(z)|
    \sim_{\varepsilon_0,\varepsilon_h}
    1.
\]
Moreover,
\[
    |\Gamma_{++-}^h(z)|
    \lesssim_{\varepsilon_0,\varepsilon_h}
    |z|,
    \qquad
    \frac{d\mathcal H^1}{J_h}
    \lesssim_{\varepsilon_0,\varepsilon_h}
    |z|\,dz.
\]
Thus the local kernel carries the factor
\[
    |z|^3\,dz.
\]
Restoring the scale \(R\), the term containing the collapsed low-mode weight is
controlled by
\[
\begin{aligned}
R^4
\int_0^\delta
    \rho^3
    \langle R\rho\rangle^{-m}
\,d\rho
&=
\int_0^{R\delta}
    y^3\langle y\rangle^{-m}
\,dy
\\
&\lesssim_m1,
\end{aligned}
\]
because \(m>4\).  The terms containing only high-mode weights are bounded by
\[
    R^4\langle R\rangle^{-m}.
\]

On the non-compact part \(z<-1\),
\[
    |\varsigma_\beta(z)|
    +
    |\varsigma_\gamma(z)|
    \sim_{\varepsilon_0,\varepsilon_h}
    |z|,
\]
while the remaining coefficient and co--area factors have at most linear
growth.  Hence the weighted tail is controlled by
\[
    R^4
    \int_2^\infty
        t\langle Rt\rangle^{-m}
    \,dt
    \lesssim_m1.
\]

Finally, on the \((+,-,-)\) branch,
\[
    \Gamma_{+--}^h(z)=0,
    \qquad
    -1<z<0.
\]
The two active branches have collapsed endpoints at \(z=0^+\) and
\(z=-1^-\).  Near either endpoint, with \(\rho\) denoting the distance to the
endpoint,
\[
    |\Gamma_{+--}^h|
    \lesssim_{\varepsilon_0,\varepsilon_h}
    \rho,
    \qquad
    \frac{d\mathcal H^1}{J_h}
    \lesssim_{\varepsilon_0,\varepsilon_h}
    \rho\,d\rho,
\]
and hence the same factor
\[
    \rho^3\,d\rho
\]
appears.  The remaining regular portions are handled as in the
\((+,+,+)\) case.  The other sign configurations follow by symmetry.

Combining the three representative branches proves
\eqref{eq:hydrostatic-cutoff-estimate}.  Local well-posedness follows from the
standard Banach-space iteration.
\end{proof}

In particular, the hydrostatic resonance geometry has no analogue of the
finite-frequency critical quadratic cusp at
\[
    |\cos\theta_\alpha|=\frac12.
\]
All active collapsed hydrostatic branches approach the low mode linearly, and
the interaction coefficient vanishes to first order.  Thus no localization
near \(|\cos\theta|=\frac12\) is needed.

\begin{remark}[The horizontal hydrostatic cut-off]
\label{rmk:hydrostatic-horizontal-cutoff}
The localization near
\[
    \sin\theta=0
\]
cannot be removed within the present radial \(L^\infty_m\) framework.
Although the explicit powers of \(1/|k_2|\) in the interaction coefficient
partially cancel on the exact resonant set, this cancellation is not uniform
in the joint near-horizontal and high-frequency limit.

Indeed, consider the \((+,+,+)\) branch and let
\[
    R\to\infty,
    \qquad
    \sin\theta_{\alpha,R}=R^{-1/2},
    \qquad
    z=R^{-1/2}\lambda,
\]
where
\[
    \lambda\in[\lambda_0,\lambda_1]
    \Subset(0,\infty).
\]
Since \(X_{+++}(z)=-z^2\),
\[
    k_{\beta,R}
    =
    R\varsigma_\beta(z)
    =
    \left(
        -\cos\theta_{\alpha,R}\lambda^2,\,
        \lambda
    \right),
\]
and hence
\[
    |k_{\beta,R}|\sim1,
    \qquad
    |k_{\alpha,R}|
    \sim
    |k_{\gamma,R}|
    \sim R.
\]
Along this family,
\[
    \Gamma_{\alpha\beta\gamma}^h
    \sim R^{1/2},
    \qquad
    |\omega_\alpha^h|
    \sim R^{1/2},
    \qquad
    \frac{d\mathcal H^1}{J_h}
    \sim d\lambda.
\]
Therefore the positive gain kernel has size
\[
    (\Gamma_{\alpha\beta\gamma}^h)^2
    (\omega_\alpha^h)^2
    \frac{d\mathcal H^1}{J_h}
    \sim
    R^2\,d\lambda.
\]

Choosing a nonnegative spectrum with
\[
    n_R(k_{\alpha,R})=0,
    \qquad
    n_R(k_{\beta,R})\sim1,
    \qquad
    n_R(k_{\gamma,R})\sim R^{-m},
    \qquad
    \|n_R\|_{L^\infty_m}\lesssim1,
\]
one obtains
\[
    \langle k_{\alpha,R}\rangle^m
    \left|
        \mathcal C^h(n_R)(k_{\alpha,R})
    \right|
    \gtrsim
    R^2.
\]
Thus the positive gain term alone prevents a bounded-vector-field estimate
without a fixed localization away from \(\sin\theta=0\).

This obstruction is specific to the present isotropic radial metric.
A treatment allowing arbitrarily small \(|\sin\theta|\) would require a
different framework, possibly using anisotropic weights adapted to the
hydrostatic slope
\[
    \left|\frac{k_1}{k_2}\right|,
\]
or incorporating the pole-generated high--low exchange into an energy-level
evolution.  Such a theory lies beyond the scope of the present paper.
\end{remark}


\section*{Acknowledgements}
\phantomsection
\addcontentsline{toc}{section}{Acknowledgements}

The author is a PhD student under the supervision of Pierre
Germain and Michele Coti Zelati.  This paper grew out of a problem introduced
to her by Pierre Germain, to whom she is deeply grateful.  She would also like
to thank Pierre Germain and Michele Coti Zelati for many inspiring discussions,
for their careful reading of the manuscript, and for their comments and
suggestions, which greatly improved the paper.  The author is also grateful to
Michal Shavit for kind discussions.
\appendix
\section{Formal Derivation of the WKE}
\label{Appendix: Formal derivation}

This appendix records a self-contained formal derivation of the kinetic
model used in the main text.  We first pass from the two-dimensional
Boussinesq Hamiltonian system to its normal-mode equations.  We then describe
the weakly turbulent regime in the large periodic box, perform the Duhamel
expansion to second order, use the parity-pairing identity for independent
uniform phases, and take successively the large-box and large-time limits.
The Hamiltonian energy and pseudo-momentum are used once, at the modal level,
to obtain the resonant factorization of the interaction coefficients.  This
factorization is also the algebraic origin of the corresponding collision
invariants of the WKE, so no separate repetition of the same argument is
needed later in the appendix.

The final part gives an independent derivation of the hydrostatic WKE from the
hydrostatic Euler--Boussinesq Hamiltonian system.  In particular, the
hydrostatic kinetic equation is not obtained by inserting the hydrostatic
dispersion relation into an already closed non-hydrostatic WKE.

The discussion is formal.  We do not prove propagation of the random-phase
structure, control of the Duhamel remainder up to kinetic time, or the
large-box equidistribution of quasi-resonant lattice points.  Earlier work of
Shavit--B\"uhler--Shatah derives the non-hydrostatic two-dimensional
Boussinesq kinetic equation by a normal-mode closure~\cite{shavit2024}.  Here
we use the normalization of the present paper, make the parity pairings and
kinetic scaling explicit, and derive the hydrostatic WKE directly from the
hydrostatic Hamiltonian PDE.

\subsection{The Boussinesq Hamiltonian system and its normal modes}
\label{app:boussinesq-modes}

Let
\[
    x=(x_1,x_2)\in\mathbb T_L^2
    :=
    (\mathbb R/L\mathbb Z)^2,
\]
where \(x_2\) is the vertical coordinate.  We work with \(L\)-periodic
perturbations of a motionless background with constant buoyancy gradient
\(N^2e_2\).  The affine background profile is understood on the universal
cover \(\mathbb R^2\); only the perturbation variables are required to be
periodic.  The corresponding inviscid perturbation equations are
\begin{equation}
\label{eq:appendix-boussinesq-primitive}
\begin{cases}
(\partial_t+u\cdot\nabla)u=-\nabla p+b\,e_2,\\[0.3em]
(\partial_t+u\cdot\nabla)b+N^2u_2=0,\\[0.3em]
\nabla\cdot u=0,
\end{cases}
\end{equation}
where \(e_2=(0,1)\).

We restrict to the zero-mean wave sector.  Since \(u\) is divergence free,
there is then a unique mean-zero stream function \(A\) such that
\[
    u=\nabla^\perp A
    :=
    (-\partial_2A,\partial_1A).
\]
Indeed, define the signed scalar vorticity
\[
    q_\mathrm{sign}
    :=
    \partial_2u_1-\partial_1u_2
    =
    -\Delta A,
\]
then the periodic Biot--Savart law reads
\[
    A=(-\Delta)^{-1}q_\mathrm{sign},
    \qquad
    u=\nabla^\perp(-\Delta)^{-1}q_\mathrm{sign},
\]
where \((-\Delta)^{-1}\) is taken on mean-zero functions.  Setting
\[
    \zeta:=-\frac{b}{N^2},
\]
system~\eqref{eq:appendix-boussinesq-primitive} becomes
\begin{equation}
\label{eq:appendix-boussinesq-stream}
\begin{cases}
\partial_t(-\Delta A)
+\{A,-\Delta A\}
-N^2\partial_1\zeta=0,\\[0.3em]
\partial_t\zeta
+\{A,\zeta\}
-\partial_1A=0,
\end{cases}
\end{equation}
where
\[
    \{F,G\}
    :=
    \partial_1F\,\partial_2G
    -
    \partial_2F\,\partial_1G .
\]

This is a non-canonical Hamiltonian system with Hamiltonian energy
\begin{equation}
\label{eq:appendix-boussinesq-energy}
    E
    =\frac12\int_{\mathbb T_L^2}
       \bigl(|\nabla A|^2+N^2\zeta^2\bigr)\,dx,
\end{equation}
and horizontal pseudo-momentum
\begin{equation}
\label{eq:appendix-boussinesq-pseudomomentum}
    P=\int_{\mathbb T_L^2}\zeta\,\Delta A\,dx.
\end{equation}

We use the Fourier convention
\[
    F(x)=\frac1{L^2}\sum_{k\in\mathbb Z_L^2}
       \widehat F(k)e^{2\pi i k\cdot x},
    \qquad
    \mathbb Z_L^2:=\frac1L\mathbb Z^2,
\]
so that
\[
    \int_{\mathbb T_L^2}|F(x)|^2\,dx
    =\frac1{L^2}\sum_{k\in\mathbb Z_L^2}|\widehat F(k)|^2.
\]
For $k\neq0$ and $\sigma\in\{\pm1\}$, define
\begin{equation}
\label{eq:appendix-normal-mode-transform}
    Z_{\sigma,k}
    :=-\sigma\pi|k|\,\widehat A(k)+\frac N2\widehat\zeta(k).
\end{equation}
Equivalently,
\[
    \widehat A(k)=\frac{Z_{-,k}-Z_{+,k}}{2\pi|k|},
    \qquad
    \widehat\zeta(k)=\frac{Z_{+,k}+Z_{-,k}}{N}.
\]
The linearized system diagonalizes as
\begin{equation}
\label{eq:appendix-linear-wave-equation}
    \partial_t Z_{\sigma,k}+i\omega_{\sigma,k}Z_{\sigma,k}=0,
    \qquad
    \omega_{\sigma,k}:=\sigma N\frac{k^1}{|k|}.
\end{equation}
The horizontal slowness is
\[
    s_{\sigma,k}:=\frac{k^1}{\omega_{\sigma,k}}
    =\frac{\sigma|k|}{N}.
\]
With the above normalization, the two quadratic invariants diagonalize as
\begin{equation}
\label{eq:appendix-modal-invariants}
    E=\frac1{L^2}\sum_{\sigma=\pm1}\sum_k|Z_{\sigma,k}|^2,
    \qquad
    P=\frac{2\pi}{L^2}\sum_{\sigma=\pm1}\sum_k
       s_{\sigma,k}|Z_{\sigma,k}|^2,
\end{equation}
up to the harmless overall sign convention in the definition of $P$.

Introduce the multi-index
\[
    \alpha=(\sigma_\alpha,k_\alpha),
    \qquad
    \omega_\alpha:=\omega_{\sigma_\alpha,k_\alpha},
    \qquad
    s_\alpha:=s_{\sigma_\alpha,k_\alpha},
\]
and use the same convention for $\beta$ and $\gamma$.  A direct Fourier
calculation of the quadratic terms gives
\begin{equation}
\label{eq:appendix-modal-amplitude-equation}
    \partial_t Z_\alpha+i\omega_\alpha Z_\alpha
    =\frac{\varepsilon}{2}
      \sum_{\beta,\gamma}
      V_{\alpha,L}^{\beta\gamma}
      \overline{Z_\beta}\,\overline{Z_\gamma}\,
      \delta^K_{k_\alpha+k_\beta+k_\gamma,0}.
\end{equation}
Here $0<\varepsilon\ll1$ measures the weak nonlinearity,
$\delta^K$ is the Kronecker delta on $\mathbb Z_L^2$, and
\[
    V_{\alpha,L}^{\beta\gamma}
    =V_{\alpha,L}^{\gamma\beta}\in\mathbb R.
\]
Under the Fourier normalization above, one interaction vertex contains a
factor $L^{-2}$; we therefore write
\begin{equation}
\label{eq:appendix-L-scaling-V}
    V_{\alpha,L}^{\beta\gamma}
    =L^{-2}\mathcal V_\alpha^{\beta\gamma},
\end{equation}
where $\mathcal V$ has a nontrivial continuum limit.  For nonzero wave vectors
satisfying the momentum constraint, a direct computation gives
\begin{equation}
\label{eq:appendix-boussinesq-interaction-coefficient}
\mathcal V_\alpha^{\beta\gamma}
=
-\pi N^2
\frac{\sigma_\alpha\sigma_\beta\sigma_\gamma}
{|k_\alpha||k_\beta||k_\gamma|}
(k_\beta\times k_\gamma)
(s_\beta-s_\gamma)
(s_\alpha+s_\beta+s_\gamma).
\end{equation}
In particular, the signed-area factor $k_\beta\times k_\gamma$ makes every
nonzero collinear interaction vanish.

\subsection{Hamiltonian triad identities and resonant factorization}
\label{app:resonant-factorization}

The Hamiltonian invariants are used at this stage to identify the algebraic
structure of the interaction coefficients.  Conservation of the two diagonal
quadratic forms in \eqref{eq:appendix-modal-invariants} implies, for every
nondegenerate interacting triad,
\begin{equation}
\label{eq:appendix-V-energy-identity}
    \mathcal V_\alpha^{\beta\gamma}
    +\mathcal V_\beta^{\alpha\gamma}
    +\mathcal V_\gamma^{\alpha\beta}=0,
\end{equation}
and
\begin{equation}
\label{eq:appendix-V-pseudomomentum-identity}
    s_\alpha\mathcal V_\alpha^{\beta\gamma}
    +s_\beta\mathcal V_\beta^{\alpha\gamma}
    +s_\gamma\mathcal V_\gamma^{\alpha\beta}=0.
\end{equation}
These identities can also be verified directly from
\eqref{eq:appendix-boussinesq-interaction-coefficient}.

On the resonant set,
\[
    k_\alpha+k_\beta+k_\gamma=0,
    \qquad
    \omega_\alpha+\omega_\beta+\omega_\gamma=0.
\]
Since $\omega_js_j=k_j^1$, the momentum constraint also gives
\[
    \omega_\alpha s_\alpha
    +\omega_\beta s_\beta
    +\omega_\gamma s_\gamma=0.
\]
Thus the vectors
\[
    \bigl(
       \mathcal V_\alpha^{\beta\gamma},
       \mathcal V_\beta^{\alpha\gamma},
       \mathcal V_\gamma^{\alpha\beta}
    \bigr)
    \quad\text{and}\quad
    (\omega_\alpha,\omega_\beta,\omega_\gamma)
\]
obey the same two linear constraints.  On the nondegenerate resonant set the
common orthogonal complement is one-dimensional.  Hence there exists a scalar
$\Gamma_{\alpha\beta\gamma}$, symmetric under permutations of
$(\alpha,\beta,\gamma)$, such that
\begin{equation}
\label{eq:appendix-resonant-factorization}
    \mathcal V_\alpha^{\beta\gamma}
       =\omega_\alpha\Gamma_{\alpha\beta\gamma},
    \qquad
    \mathcal V_\beta^{\alpha\gamma}
       =\omega_\beta\Gamma_{\alpha\beta\gamma},
    \qquad
    \mathcal V_\gamma^{\alpha\beta}
       =\omega_\gamma\Gamma_{\alpha\beta\gamma}.
\end{equation}
After setting $N=1$ and passing to the continuum normalization of the main
text, this coefficient is
\begin{equation}
\label{eq:appendix-explicit-Gamma}
    \Gamma_{\alpha\beta\gamma}
    =\pi^2
      \bigl(
          \sigma_\alpha\sin\theta_\alpha
          +\sigma_\beta\sin\theta_\beta
          +\sigma_\gamma\sin\theta_\gamma
      \bigr)
      \bigl(
          \sigma_\alpha|k_\alpha|
          +\sigma_\beta|k_\beta|
          +\sigma_\gamma|k_\gamma|
      \bigr).
\end{equation}

The same two Hamiltonian identities will reappear at the kinetic level as the
collision invariants $1$ and $s_\alpha$.  The direct symmetrization for the
angularly cut-off WKE is carried out in
Section~\ref{Conservation Laws}; it is not repeated in a separate appendix
subsection.

\subsection{The weakly turbulent regime and random-phase data}
\label{app:weakly-turbulent-regime}

Following the standard terminology in mathematical wave turbulence, we work
formally in the \emph{weakly turbulent regime}.  For the present three-wave
system this combines
\begin{enumerate}[label=\textup{(\roman*)}]
    \item weak nonlinearity, $0<\varepsilon\ll1$;
    \item a large periodic box, $L\gg1$;
    \item independent random phases with deterministic amplitudes.
\end{enumerate}
The large-box and weak-nonlinearity parameters are linked by the requirement
that the quasi-resonant layer observed on kinetic time contain many lattice
points.  For the two-dimensional quadratic interaction considered here this
leads formally to
\begin{equation}
\label{eq:appendix-weak-turbulence-window}
    1\ll T_{\rm kin}\asymp\varepsilon^{-2}\ll L^2,
    \qquad\text{equivalently}\qquad
    L^{-1}\ll\varepsilon\ll1.
\end{equation}
The origin of this condition is explained in Step~3 below.

Let $\mathcal A_+$ contain one representative from each reality pair
$(\sigma,k)\leftrightarrow(\sigma,-k)$.  For
$\alpha\in\mathcal A_+$, let
\[
    \vartheta_\alpha\sim\operatorname{Unif}[0,1),
    \qquad
    z_\alpha:=e^{2\pi i\vartheta_\alpha},
\]
with the variables $\{z_\alpha\}_{\alpha\in\mathcal A_+}$ independent.  We
choose fixed-amplitude random-phase data
\begin{equation}
\label{eq:appendix-random-phase-data-parity}
    b_{\alpha,L}:=B_{\alpha,L}(0)
    =L\sqrt{n_\alpha^0}\,z_\alpha,
    \qquad \alpha\in\mathcal A_+,
\end{equation}
and extend them by the reality condition
\[
    b_{(\sigma,-k),L}=\overline{b_{(\sigma,k),L}}.
\]
The factor $L$ is the two-dimensional thermodynamic normalization.  With
\begin{equation}
\label{eq:appendix-normalized-spectrum-parity}
    n_{\alpha,L}(t)
    :=\frac1{L^2}\mathbb E|B_{\alpha,L}(t)|^2,
\end{equation}
one has $n_{\alpha,L}(0)=n_\alpha^0$, and the expected Hamiltonian energy per
unit area has the nontrivial large-box limit
\[
    \frac1{L^2}\mathbb E E_L(0)
    =\frac1{L^2}\sum_\alpha n_\alpha^0
    \longrightarrow
    \int n^0(\alpha)\,d\alpha.
\]

The averaging rule is the parity-pairing identity.  For indices reduced to the
independent half-lattice,
\begin{equation}
\label{eq:appendix-parity-pairing}
\mathbb E\!\left(
 z_{\alpha_1}\cdots z_{\alpha_m}
 \overline{z_{\beta_1}}\cdots\overline{z_{\beta_m}}
\right)
=
\begin{cases}
1,
&\begin{array}{l}
\text{if there exists }\pi\in S_m\text{ such that}\\[-0.2em]
\alpha_{\pi(j)}=\beta_j\text{ for every }j,
\end{array}
\\[1em]
0,&\text{otherwise.}
\end{cases}
\end{equation}
Equivalently, the expectation is nonzero exactly when the two lists of
indices agree as multisets.  This is an exact identity for independent uniform
phases; no Gaussian assumption and no Wick formula are used.

In extracting the leading large-box contribution, we retain the nondegenerate
parity pairings.  Fully repeated-index diagonals have one fewer free lattice
index and disappear after the normalized large-box limit.  The
reality-degenerate pairing
\[
    k_\beta=-k_\gamma
\]
forces $k_\alpha=0$ by momentum conservation.  The zero vector is not part of
the wave-mode system, and the corresponding coefficient in the original
Fourier equation vanishes.  The analogous feedback degeneracies in the second
Duhamel iterate force $k_\beta=0$ or $k_\gamma=0$ and are neglected for the
same reason.

\subsection{Four-step formal derivation of the kinetic equation}
\label{app:four-step-formal-derivation}

We now follow the standard four-step presentation: a weakly nonlinear Duhamel
expansion, parity-pairing cancellations, the large-box limit, and the
large-time resonant limit.

\paragraph{Step 1: weakly nonlinear Duhamel expansion.}
Define the interaction representation
\[
    B_{\alpha,L}(t):=e^{i\omega_\alpha t}Z_{\alpha,L}(t)
\]
and the frequency resonance function
\[
    \Xi_1(\alpha,\beta,\gamma)
    :=\omega_\alpha+\omega_\beta+\omega_\gamma.
\]
Using \eqref{eq:appendix-L-scaling-V}, the modal equation becomes
\begin{equation}
\label{eq:appendix-interaction-representation}
    \partial_t B_{\alpha,L}
    =\frac{\varepsilon}{2L^2}
      \sum_{\beta,\gamma}
      \mathcal V_\alpha^{\beta\gamma}
      e^{it\Xi_1(\alpha,\beta,\gamma)}
      \overline{B_{\beta,L}}\,\overline{B_{\gamma,L}}\,
      \delta^K_{k_\alpha+k_\beta+k_\gamma,0}.
\end{equation}
Set
\[
    G_t(\lambda):=\int_0^t e^{is\lambda}\,ds.
\]
We expand
\begin{equation}
\label{eq:appendix-Duhamel-series}
    B_{\alpha,L}(t)
    =b_{\alpha,L}
     +\varepsilon B_{\alpha,L}^{(1)}(t)
     +\varepsilon^2B_{\alpha,L}^{(2)}(t)
     +\cdots.
\end{equation}
The first iterate is
\begin{equation}
\label{eq:appendix-B1}
    B_{\alpha,L}^{(1)}(t)
    =\frac1{2L^2}\sum_{\beta,\gamma}
      \mathcal V_\alpha^{\beta\gamma}
      G_t\bigl(\Xi_1(\alpha,\beta,\gamma)\bigr)
      \overline{b_{\beta,L}}\,\overline{b_{\gamma,L}}\,
      \delta^K_{k_\alpha+k_\beta+k_\gamma,0},
\end{equation}
and the second iterate is determined by
\begin{equation}
\label{eq:appendix-B2-differential}
\begin{aligned}
    \partial_tB_{\alpha,L}^{(2)}
    =\frac1{2L^2}\sum_{\beta,\gamma}
      \mathcal V_\alpha^{\beta\gamma}
      e^{it\Xi_1(\alpha,\beta,\gamma)}
      \bigl(
          \overline{B_{\beta,L}^{(1)}}\,\overline{b_{\gamma,L}}
         +\overline{b_{\beta,L}}\,\overline{B_{\gamma,L}^{(1)}}
      \bigr)
      \delta^K_{k_\alpha+k_\beta+k_\gamma,0}.
\end{aligned}
\end{equation}

\paragraph{Step 2: parity pairings and the second moment.}
Expanding the normalized second moment
\eqref{eq:appendix-normalized-spectrum-parity},
\begin{equation}
\label{eq:appendix-spectrum-expansion-parity}
\begin{aligned}
 n_{\alpha,L}(t)
={}&n_\alpha^0
 +\frac{2\varepsilon}{L^2}\operatorname{Re}
    \mathbb E\bigl[B_{\alpha,L}^{(1)}\overline{b_{\alpha,L}}\bigr]\\
 &+\frac{\varepsilon^2}{L^2}
 \left(
    \mathbb E|B_{\alpha,L}^{(1)}|^2
    +2\operatorname{Re}
       \mathbb E\bigl[B_{\alpha,L}^{(2)}
       \overline{b_{\alpha,L}}\bigr]
 \right)
 +\text{higher-order terms}.
\end{aligned}
\end{equation}
The order-$\varepsilon$ term contains three unmatched phases and therefore
vanishes.

For $\mathbb E|B_{\alpha,L}^{(1)}|^2$, the parity identity requires
\[
    \{\beta,\gamma\}=\{\beta',\gamma'\}.
\]
Away from the degenerate sets, the two possibilities are
\[
    (\beta',\gamma')=(\beta,\gamma)
    \qquad\text{or}\qquad
    (\beta',\gamma')=(\gamma,\beta).
\]
Using
$\mathcal V_\alpha^{\beta\gamma}
 =\mathcal V_\alpha^{\gamma\beta}$, one obtains
\begin{equation}
\label{eq:appendix-B1-square-parity}
\mathbb E|B_{\alpha,L}^{(1)}(t)|^2
=\frac12\sum_{\beta,\gamma}
\bigl(\mathcal V_\alpha^{\beta\gamma}\bigr)^2
n_\beta^0n_\gamma^0
\bigl|G_t(\Xi_{\alpha\beta\gamma})\bigr|^2
\delta^K_{k_\alpha+k_\beta+k_\gamma,0},
\end{equation}
up to the neglected repeated-index and zero-wavevector configurations.

For the part of $B_{\alpha,L}^{(2)}$ obtained by inserting the first iterate
on the $\gamma$-leg, the parity identity is nonzero precisely when the two
inner indices agree, up to permutation, with $\alpha$ and $\beta$.  The two
nondegenerate permutations give the same contribution.  The alternative
reality pairing forces $k_\gamma=0$ and is omitted.  Using
\begin{equation}
\label{eq:appendix-time-identity-parity}
    2\operatorname{Re}
    \int_0^t e^{is\lambda}\overline{G_s(\lambda)}\,ds
    =|G_t(\lambda)|^2,
\end{equation}
we obtain
\begin{equation}
\label{eq:appendix-B2-gamma-parity}
\begin{aligned}
2\operatorname{Re}
\mathbb E\bigl[B_{\alpha;\gamma,L}^{(2)}(t)
\overline{b_{\alpha,L}}\bigr]
={}&\frac12\sum_{\beta,\gamma}
\mathcal V_\alpha^{\beta\gamma}
\mathcal V_\gamma^{\alpha\beta}
 n_\alpha^0n_\beta^0
 |G_t(\Xi_{\alpha\beta\gamma})|^2
 \delta^K_{k_\alpha+k_\beta+k_\gamma,0}.
\end{aligned}
\end{equation}
The symmetric insertion on the $\beta$-leg gives
\begin{equation}
\label{eq:appendix-B2-beta-parity}
\begin{aligned}
2\operatorname{Re}
\mathbb E\bigl[B_{\alpha;\beta,L}^{(2)}(t)
\overline{b_{\alpha,L}}\bigr]
={}&\frac12\sum_{\beta,\gamma}
\mathcal V_\alpha^{\beta\gamma}
\mathcal V_\beta^{\alpha\gamma}
 n_\alpha^0n_\gamma^0
 |G_t(\Xi_{\alpha\beta\gamma})|^2
 \delta^K_{k_\alpha+k_\beta+k_\gamma,0}.
\end{aligned}
\end{equation}
Combining the three nondegenerate parity pairings yields
\begin{equation}
\label{eq:appendix-second-order-spectrum-parity}
\begin{aligned}
 n_{\alpha,L}(t)-n_\alpha^0
 ={}&\frac{\varepsilon^2}{2L^2}
 \sum_{\beta,\gamma}
 \mathcal Q_{\alpha\beta\gamma}(n^0)
 |G_t(\Xi_{\alpha\beta\gamma})|^2
 \delta^K_{k_\alpha+k_\beta+k_\gamma,0}
 +\text{higher-order terms},
\end{aligned}
\end{equation}
where
\begin{equation}
\label{eq:appendix-prekinetic-Q-parity}
\begin{aligned}
\mathcal Q_{\alpha\beta\gamma}(n)
={}&
\bigl(\mathcal V_\alpha^{\beta\gamma}\bigr)^2n_\beta n_\gamma
+\mathcal V_\alpha^{\beta\gamma}\mathcal V_\gamma^{\alpha\beta}
 n_\alpha n_\beta\\
&+\mathcal V_\alpha^{\beta\gamma}\mathcal V_\beta^{\alpha\gamma}
 n_\alpha n_\gamma.
\end{aligned}
\end{equation}

\paragraph{Step 3: large-box limit.}
For fixed $\alpha$, the Kronecker momentum constraint sets
$k_\gamma=-k_\alpha-k_\beta$.  Hence the sum in
\eqref{eq:appendix-second-order-spectrum-parity} contains one
 two-dimensional lattice sum, and formally
\begin{equation}
\label{eq:appendix-Riemann-sum}
    \frac1{L^2}\sum_{k\in\mathbb Z_L^2}F(k)
    \longrightarrow
    \int_{\mathbb R^2}F(k)\,dk
    \qquad\text{as }L\to\infty.
\end{equation}
Equivalently, the Kronecker momentum constraint converges to the Dirac factor
$\delta(k_\alpha+k_\beta+k_\gamma)$.  At fixed observation time,
\eqref{eq:appendix-second-order-spectrum-parity} therefore becomes
\begin{equation}
\label{eq:appendix-large-box-prekinetic}
\begin{aligned}
 n_\alpha(t)-n_\alpha^0
 \simeq{}&\frac{\varepsilon^2}{2}
 \int
 \mathcal Q_{\alpha\beta\gamma}(n^0)
 |G_t(\Xi_{\alpha\beta\gamma})|^2
 \delta(k_\alpha+k_\beta+k_\gamma)
 \,d\beta\,d\gamma.
\end{aligned}
\end{equation}

For $t\gg1$, the factor $|G_t(\Xi_1)|^2$ is concentrated in the
quasi-resonant layer
\[
    |\Xi_1|\lesssim t^{-1}.
\]
After imposing the momentum constraint, and away from critical points of the
resonance function, this layer is a strip of area $O(t^{-1})$ around a
one-dimensional resonance curve.  Since the lattice $\mathbb Z_L^2$ has
density $L^2$, the expected number of lattice points in this strip is
\[
    O\!\left(\frac{L^2}{t}\right).
\]
The Riemann-sum passage therefore requires an equidistribution, or lattice
equipartition, hypothesis for the quasi-resonant points and the sampling
condition
\begin{equation}
\label{eq:appendix-large-box-sampling}
    1\ll t\ll L^2.
\end{equation}
At kinetic time $t\sim\varepsilon^{-2}$, this becomes
\[
    \varepsilon^{-2}\ll L^2,
    \qquad\text{equivalently}\qquad
    L^{-1}\ll\varepsilon,
\]
which is the large-box component of
\eqref{eq:appendix-weak-turbulence-window}.

\paragraph{Step 4: large-time limit and kinetic time.}
In the sense of distributions,
\begin{equation}
\label{eq:appendix-secular-delta-limit}
    \frac1t|G_t(\lambda)|^2
    =\frac4t\frac{\sin^2(t\lambda/2)}{\lambda^2}
    \rightharpoonup 2\pi\delta(\lambda)
    \qquad\text{as }t\to\infty.
\end{equation}
Applying this to \eqref{eq:appendix-large-box-prekinetic} gives
\begin{equation}
\label{eq:appendix-large-time-increment}
\begin{aligned}
 n_\alpha(t)-n_\alpha^0
 \simeq{}&\pi\varepsilon^2t
 \int
 \mathcal Q_{\alpha\beta\gamma}(n^0)
 \delta(k_\alpha+k_\beta+k_\gamma)
 \delta(\omega_\alpha+\omega_\beta+\omega_\gamma)
 \,d\beta\,d\gamma.
\end{aligned}
\end{equation}
Thus the characteristic kinetic time is
\begin{equation}
\label{eq:appendix-kinetic-time}
    T_{\rm kin}=\frac1{\pi\varepsilon^2},
    \qquad
    \tau:=\frac{t}{T_{\rm kin}}=\pi\varepsilon^2t.
\end{equation}
The factor $\pi$ depends on the Fourier and kinetic-time normalization and may
be absorbed into the definition of $\tau$; the invariant scaling is
$T_{\rm kin}\asymp\varepsilon^{-2}$.

The formal order of limits is: first take the large-box limit on observation
times satisfying $t\ll L^2$, and then take the large-time limit.  Equivalently,
one may take the joint weak-turbulence limit
\begin{equation}
\label{eq:appendix-joint-kinetic-limit}
    \varepsilon\to0,
    \qquad
    L\to\infty,
    \qquad
    t=\frac{\tau}{\pi\varepsilon^2},
    \qquad
    L\varepsilon\to\infty,
\end{equation}
with $\tau=O(1)$.

Finally, the random-phase Markov closure replaces $n^0$ in the short kinetic
increment by the slowly evolving spectrum $n(\tau)$.  This yields
\begin{equation}
\label{eq:appendix-prekinetic-WKE}
    \partial_\tau n_\alpha
    =\int
    \mathcal Q_{\alpha\beta\gamma}(n)
    \delta(k_\alpha+k_\beta+k_\gamma)
    \delta(\omega_\alpha+\omega_\beta+\omega_\gamma)
    \,d\beta\,d\gamma.
\end{equation}
Using the resonant factorization
\eqref{eq:appendix-resonant-factorization},
\[
\mathcal Q_{\alpha\beta\gamma}(n)
=\Gamma_{\alpha\beta\gamma}^2\omega_\alpha
\bigl(
  \omega_\alpha n_\beta n_\gamma
 +\omega_\gamma n_\alpha n_\beta
 +\omega_\beta n_\alpha n_\gamma
\bigr),
\]
and therefore
\begin{equation}
\label{eq:appendix-formal-WKE}
\begin{aligned}
    \partial_\tau n_\alpha
    ={}&\int
    \Gamma_{\alpha\beta\gamma}^2\omega_\alpha
    \bigl(
        \omega_\alpha n_\beta n_\gamma
        +\omega_\gamma n_\alpha n_\beta
        +\omega_\beta n_\alpha n_\gamma
    \bigr)\\
    &\qquad\times
    \delta(k_\alpha+k_\beta+k_\gamma)
    \delta(\omega_\alpha+\omega_\beta+\omega_\gamma)
    \,d\beta\,d\gamma.
\end{aligned}
\end{equation}
This is the WKE studied in the main text.  We subsequently rename the slow
kinetic time $\tau$ as $t$.

\paragraph{Hamiltonian invariants at the kinetic level.}
\label{app:WKE-conservation-laws}
The resonant factorization in
\eqref{eq:appendix-resonant-factorization}, together with permutation
symmetry of the resonant measure, immediately yields the formal identities
\begin{equation}
\label{eq:appendix-WKE-energy-conservation}
    \frac{d}{d\tau}\int n_\alpha\,d\alpha=0,
\end{equation}
and
\begin{equation}
\label{eq:appendix-WKE-PM-conservation}
    \frac{d}{d\tau}\int s_\alpha n_\alpha\,d\alpha=0.
\end{equation}
These are the kinetic counterparts of
\eqref{eq:appendix-boussinesq-energy} and
\eqref{eq:appendix-boussinesq-pseudomomentum}.  The full permutation
calculation, including the symmetric angular cut-off and the $H$-theorem, is
given in Section~\ref{Conservation Laws}.

\begin{remark}[Formal nature of the four steps]
The argument above does not prove propagation of the random-phase structure,
control the Duhamel remainder up to $T_{\rm kin}$, or establish the
quasi-resonant lattice equidistribution required in Step~3.  The angular
cut-off and co-area estimates of the main text give a rigorous meaning to the
resulting collision operator; they do not by themselves justify the kinetic
limit from the Boussinesq PDE.
\end{remark}

\subsection{Independent derivation from the hydrostatic Hamiltonian system}
\label{app:hydrostatic-formal-derivation}

We now derive the hydrostatic kinetic equation from the hydrostatic
Euler--Boussinesq system itself.  The order of operations is
\[
\text{hydrostatic Hamiltonian PDE}
\longrightarrow
\text{hydrostatic normal modes}
\longrightarrow
\text{hydrostatic WKE}.
\]
We do not obtain the hydrostatic equation by replacing $|k|$ by $|k^2|$ in
\eqref{eq:appendix-formal-WKE}; a singular PDE limit and a kinetic closure need
not commute.

\subsubsection{Hydrostatic Hamiltonian, normal modes, and invariants}

Following the hydrostatic Euler--Boussinesq model
in~\cite{bianchini2025ill}, the horizontal velocity $u$, vertical velocity
$v$, density $\varrho$, and pressure $p$ satisfy
\begin{equation}
\label{eq:appendix-hydrostatic-primitive}
\begin{cases}
\partial_t\varrho+u\partial_1\varrho+v\partial_2\varrho=0,\\
\partial_tu+u\partial_1u+v\partial_2u+\partial_1p=0,\\
\partial_2p+\varrho=0,\\
\partial_1u+\partial_2v=0.
\end{cases}
\end{equation}
Write
\[
    u=\partial_2\varphi,
    \qquad
    v=-\partial_1\varphi,
\]
and perturb the stably stratified state $\varrho=-x_2$ by writing
$\varrho=-x_2+\vartheta$.  Then
\begin{equation}
\label{eq:appendix-hydrostatic-Hamiltonian-system}
\begin{cases}
\partial_t\partial_2^2\varphi-\partial_1\vartheta
   =\{-\partial_2^2\varphi,\varphi\},\\[0.3em]
\partial_t\vartheta+\partial_1\varphi
   =\{-\vartheta,\varphi\}.
\end{cases}
\end{equation}
This system has Hamiltonian energy
\begin{equation}
\label{eq:appendix-hydrostatic-energy}
    E^h
    =\frac12\int_{\mathbb T_L^2}
      \bigl(|\partial_2\varphi|^2+|\vartheta|^2\bigr)\,dx,
\end{equation}
and horizontal pseudo-momentum, up to an overall sign convention,
\begin{equation}
\label{eq:appendix-hydrostatic-PM}
    P^h=-\int_{\mathbb T_L^2}
      \vartheta\,\partial_2^2\varphi\,dx.
\end{equation}

For $k^2\neq0$, define
\begin{equation}
\label{eq:appendix-hydrostatic-normal-modes}
    Z^h_{\sigma,k}
    :=\sigma\pi|k^2|\widehat\varphi(k)
       +\frac12\widehat\vartheta(k).
\end{equation}
Then
\[
    \widehat\varphi(k)
    =\frac{Z^h_{+,k}-Z^h_{-,k}}{2\pi|k^2|},
    \qquad
    \widehat\vartheta(k)=Z^h_{+,k}+Z^h_{-,k},
\]
and
\begin{equation}
\label{eq:appendix-hydrostatic-dispersion}
    \partial_tZ^h_{\sigma,k}
    +i\omega^h_{\sigma,k}Z^h_{\sigma,k}=0,
    \qquad
    \omega^h_{\sigma,k}:=\sigma\frac{k^1}{|k^2|}.
\end{equation}
The hydrostatic horizontal slowness is
\[
    s^h_{\sigma,k}:=\frac{k^1}{\omega^h_{\sigma,k}}
    =\sigma|k^2|.
\]
The two invariants diagonalize as
\begin{equation}
\label{eq:appendix-hydrostatic-modal-invariants}
    E^h=\frac1{L^2}\sum_{\sigma,k}|Z^h_{\sigma,k}|^2,
    \qquad
    P^h=\frac{2\pi}{L^2}\sum_{\sigma,k}
       s^h_{\sigma,k}|Z^h_{\sigma,k}|^2.
\end{equation}

\subsubsection{Hydrostatic interaction coefficient and resonant factorization}

Let
\[
    I_1^h:=\{-\partial_2^2\varphi,\varphi\},
    \qquad
    I_2^h:=\{-\vartheta,\varphi\}.
\]
Their Fourier transforms are
\begin{equation}
\label{eq:appendix-hydrostatic-Fourier-nonlinearity}
\begin{aligned}
\widehat I_1^h(k)
={}&-\frac{16\pi^4}{L^2}
\sum_{p+q=k}(p^1q^2-p^2q^1)(p^2)^2
\widehat\varphi(p)\widehat\varphi(q),\\
\widehat I_2^h(k)
={}&\frac{4\pi^2}{L^2}
\sum_{p+q=k}(p^1q^2-p^2q^1)
\widehat\vartheta(p)\widehat\varphi(q).
\end{aligned}
\end{equation}
Substituting the inverse normal-mode transform, symmetrizing in $p,q$, and
using the reality condition gives
\begin{equation}
\label{eq:appendix-hydrostatic-amplitude-equation}
    \partial_tZ^h_\alpha+i\omega^h_\alpha Z^h_\alpha
    =\frac{\varepsilon}{2}
      \sum_{\beta,\gamma}
      \widetilde V_{\alpha,L}^{\beta\gamma}
      \overline{Z^h_\beta}\,\overline{Z^h_\gamma}\,
      \delta^K_{k_\alpha+k_\beta+k_\gamma,0},
\end{equation}
where
\begin{equation}
\label{eq:appendix-hydrostatic-interaction-coefficient}
\widetilde V_{\alpha,L}^{\beta\gamma}
=L^{-2}\widetilde{\mathcal V}_\alpha^{\beta\gamma},
\qquad
\widetilde{\mathcal V}_\alpha^{\beta\gamma}
=-\pi
\frac{\sigma_\alpha\sigma_\beta\sigma_\gamma}
{|k_\alpha^2||k_\beta^2||k_\gamma^2|}
(k_\beta\times k_\gamma)
(s_\beta^h-s_\gamma^h)
(s_\alpha^h+s_\beta^h+s_\gamma^h).
\end{equation}
The Hamiltonian energy and pseudo-momentum imply the hydrostatic triad
identities
\[
    \widetilde{\mathcal V}_\alpha^{\beta\gamma}
    +\widetilde{\mathcal V}_\beta^{\alpha\gamma}
    +\widetilde{\mathcal V}_\gamma^{\alpha\beta}=0,
\]
and
\[
    s_\alpha^h\widetilde{\mathcal V}_\alpha^{\beta\gamma}
    +s_\beta^h\widetilde{\mathcal V}_\beta^{\alpha\gamma}
    +s_\gamma^h\widetilde{\mathcal V}_\gamma^{\alpha\beta}=0.
\]
On the hydrostatic resonant set, the same one-dimensional nullspace argument
as above gives
\begin{equation}
\label{eq:appendix-hydrostatic-factorization}
    \widetilde{\mathcal V}_\alpha^{\beta\gamma}
       =\omega_\alpha^h\Gamma^h_{\alpha\beta\gamma},
    \qquad
    \widetilde{\mathcal V}_\beta^{\alpha\gamma}
       =\omega_\beta^h\Gamma^h_{\alpha\beta\gamma},
    \qquad
    \widetilde{\mathcal V}_\gamma^{\alpha\beta}
       =\omega_\gamma^h\Gamma^h_{\alpha\beta\gamma}.
\end{equation}

\subsubsection{Hydrostatic weak-turbulence closure}

We take fixed-amplitude random-phase data
\[
    b_{\alpha,L}^h
    =L\sqrt{(n_\alpha^h)^0}\,z_\alpha
\]
with the same independent uniform phases.  Repeating Steps~1--4 for
\eqref{eq:appendix-hydrostatic-amplitude-equation} gives the same three
nondegenerate parity-pairing contributions:
\begin{equation}
\label{eq:appendix-hydrostatic-WKE-V}
\begin{aligned}
\partial_\tau n_\alpha^h
={}&\int
\widetilde{\mathcal V}_\alpha^{\beta\gamma}
\Bigl(
  \widetilde{\mathcal V}_\alpha^{\beta\gamma}n_\beta^hn_\gamma^h
 +\widetilde{\mathcal V}_\gamma^{\alpha\beta}n_\alpha^hn_\beta^h
 +\widetilde{\mathcal V}_\beta^{\alpha\gamma}n_\alpha^hn_\gamma^h
\Bigr)\\
&\quad\times
\delta(k_\alpha+k_\beta+k_\gamma)
\delta(\omega_\alpha^h+\omega_\beta^h+\omega_\gamma^h)
\,d\beta\,d\gamma.
\end{aligned}
\end{equation}
The reality-degenerate gain pairing forces $k_\alpha=0$, while the feedback
degeneracies force $k_\beta=0$ or $k_\gamma=0$.  These exact zero modes are
not part of the hydrostatic wave system and are omitted.  Moreover, every
nonzero collinear interaction vanishes because of the factor
$k_\beta\times k_\gamma$ in
\eqref{eq:appendix-hydrostatic-interaction-coefficient}.

Using \eqref{eq:appendix-hydrostatic-factorization}, the hydrostatic WKE is
\begin{equation}
\label{eq:appendix-hydrostatic-WKE-Gamma}
\begin{aligned}
\partial_\tau n_\alpha^h
={}&\int
(\Gamma^h_{\alpha\beta\gamma})^2\omega_\alpha^h
\Bigl(
  \omega_\alpha^h n_\beta^hn_\gamma^h
 +\omega_\gamma^h n_\alpha^hn_\beta^h
 +\omega_\beta^h n_\alpha^hn_\gamma^h
\Bigr)\\
&\quad\times
\delta(k_\alpha+k_\beta+k_\gamma)
\delta(\omega_\alpha^h+\omega_\beta^h+\omega_\gamma^h)
\,d\beta\,d\gamma.
\end{aligned}
\end{equation}
This completes the independent hydrostatic PDE-to-WKE derivation.

\subsubsection{Hydrostatic energy and pseudo-momentum conservation}
\label{app:hydrostatic-WKE-conservation}

The hydrostatic WKE inherits the two invariants of the hydrostatic Hamiltonian
system.  Define
\[
\mathscr B^h_{\alpha\beta\gamma}(n^h)
:=
(\Gamma^h_{\alpha\beta\gamma})^2
\bigl(
  \omega_\alpha^h n_\beta^h n_\gamma^h
 +\omega_\beta^h n_\alpha^h n_\gamma^h
 +\omega_\gamma^h n_\alpha^h n_\beta^h
\bigr).
\]
Permutation symmetry gives, for every weight $q_\alpha$,
\begin{align}
\frac{d}{d\tau}\int q_\alpha n_\alpha^h\,d\alpha
={}&\frac13\iiint
\bigl(
q_\alpha\omega_\alpha^h
+q_\beta\omega_\beta^h
+q_\gamma\omega_\gamma^h
\bigr)
\mathscr B^h_{\alpha\beta\gamma}(n^h)
\notag\\
&\quad\times
\delta(k_\alpha+k_\beta+k_\gamma)
\delta(\omega_\alpha^h+\omega_\beta^h+\omega_\gamma^h)
\,d\alpha\,d\beta\,d\gamma.
\label{eq:appendix-hydrostatic-invariant-general}
\end{align}
Taking $q_\alpha=1$ gives
\begin{equation}
\label{eq:appendix-hydrostatic-WKE-energy}
    \frac{d}{d\tau}\int n_\alpha^h\,d\alpha=0,
\end{equation}
which is the kinetic counterpart of the hydrostatic Hamiltonian energy
\eqref{eq:appendix-hydrostatic-energy}.  Taking $q_\alpha=s_\alpha^h$ and
using
\[
    s_\alpha^h\omega_\alpha^h=k_\alpha^1
\]
gives
\begin{equation}
\label{eq:appendix-hydrostatic-WKE-PM}
    \frac{d}{d\tau}\int s_\alpha^h n_\alpha^h\,d\alpha=0,
\end{equation}
which, up to the factor $2\pi$ and the sign convention in
\eqref{eq:appendix-hydrostatic-PM}, is the kinetic counterpart of the
hydrostatic pseudo-momentum.  Thus the hydrostatic Hamiltonian energy and
pseudo-momentum appear consistently at the PDE, triad-coefficient, and WKE
levels.

\begin{remark}[Relation between the full and hydrostatic derivations]
Shavit--B\"uhler--Shatah derive the non-hydrostatic two-dimensional
Boussinesq WKE from the normal-mode dynamics~\cite{shavit2024}; we do not
claim that \eqref{eq:appendix-formal-WKE} is derived here for the first time.
The additional point is that the parity pairings, large-box sampling
condition, and kinetic time normalization are displayed explicitly, while
\eqref{eq:appendix-hydrostatic-WKE-Gamma} is obtained independently from the
hydrostatic Hamiltonian PDE.  To the best of our knowledge, this particular
two-branch, two-dimensional hydrostatic PDE-to-WKE derivation has not
previously been written in this form.
\end{remark}

\begin{remark}[The two closures are logically distinct]
The approximation
\[
    \omega_{\sigma,k}
    =\sigma N\frac{k^1}{\sqrt{(k^1)^2+(k^2)^2}}
    \approx \sigma N\frac{k^1}{|k^2|}
\]
relates the two linear dispersions in the anisotropic hydrostatic regime.  It
does not derive \eqref{eq:appendix-hydrostatic-WKE-Gamma} from
\eqref{eq:appendix-formal-WKE}.  The hydrostatic normal modes, interaction
coefficient, resonant measure, parity pairing, and kinetic closure above all
come directly from the hydrostatic PDE.
\end{remark}

\section{Additional Analytical Results}\label{appendix:additional-results}

\begin{lemma}\label{lem:psi_decreasing}
The function
\[
\psi(\theta_\alpha)
= \sin\theta_\alpha\left(\frac{\cos\theta_\alpha}{2}
      - \sqrt{\frac{1}{2}-\frac{\cos^2\theta_\alpha}{4}}\right)
  + \cos\theta_\alpha\sqrt{\frac{1}{2}
      + \cos\theta_\alpha\sqrt{\frac{1}{2}-\frac{\cos^2\theta_\alpha}{4}}},
\qquad \theta_\alpha\in(0,\tfrac{\pi}{3}),
\]
is strictly decreasing on the interval \((0,\pi/3)\).
\end{lemma}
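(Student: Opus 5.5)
The plan is to substitute \(c=\cos\theta_\alpha\in(\tfrac12,1)\), which is strictly decreasing in \(\theta_\alpha\) on \((0,\pi/3)\). Showing that \(\psi\) is strictly decreasing in \(\theta_\alpha\) is then equivalent to showing that \(\Psi(c):=\psi(\arccos c)\) is strictly increasing in \(c\) on \((\tfrac12,1)\). Set
\[
    w(c):=\sqrt{\tfrac12-\tfrac{c^2}{4}},\qquad
    \Psi(c)=\sqrt{1-c^2}\,\Bigl(\tfrac c2-w\Bigr)+c\sqrt{\tfrac12+c\,w}.
\]
It also helps to observe that \(\tfrac c2+w=\cos\bigl(\pi-\theta_-^0\bigr)=:\cos\phi\). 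The identity \((\tfrac c2)^2+w^2=\tfrac12\) gives \(\tfrac12+cw=\tfrac14(c+2w)^2+\dots\); more usefully, \(1-(\tfrac c2+w)^2=\tfrac12-cw\). This suggests writing \(\psi=\sin(\phi'-\theta_\alpha)\)-type expressions, as in the proofs of Lemmas~\ref{prop:positivity-beta} and \ref{Positivity of gamma}. Indeed \(\psi=\widetilde F_1(\theta_\alpha,\theta_-^0)\), which is of the form \(\sin(\theta_\gamma-\theta_\alpha)\) with \(\cos\theta_\gamma\) determined explicitly. So the first step is to rewrite
\[
    \psi(\theta_\alpha)=\sin\bigl(\chi(\theta_\alpha)-\theta_\alpha\bigr)
\]
for an explicit angle \(\chi\) with \(\cos\chi=\tfrac c2-w\). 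The monotonicity then reduces to the sign of \((\chi'-1)\cos(\chi-\theta_\alpha)\).

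Concretely, the steps are as follows. First, verify the trigonometric representation, using \((\tfrac c2-w)^2=\tfrac12-cw\), so that \(\sin\chi=\sqrt{\tfrac12+cw}\). Second, compute \(\chi'(\theta_\alpha)\) by implicit differentiation of \(\cos\chi=\tfrac c2-w\):
\[
    \chi'=\frac{\sin\theta_\alpha\,\bigl(\tfrac12+\tfrac{c}{4w}\bigr)}{\sin\chi}.
\]
Third, determine the range of \(\chi-\theta_\alpha\) on \((0,\pi/3)\) by checking endpoint values (at \(c=1\), \(\chi=\pi/2\); at \(c=\tfrac12\), \(\cos\chi=\tfrac14-\tfrac{\sqrt7}{4}\)). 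If \(\chi-\theta_\alpha\) stays in \((0,\pi/2)\), then \(\cos(\chi-\theta_\alpha)>0\), and it suffices to show \(\chi'<1\). If the angle instead lies in \((\pi/2,\pi)\), one needs \(\chi'>1\); the endpoint values decide which case holds.

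The main obstacle is the inequality between \(\chi'\) and \(1\), an algebraic inequality in \(c\in(\tfrac12,1)\) involving \(w\) and \(\sqrt{\tfrac12+cw}\). I would try squaring. After clearing denominators, the inequality becomes
\[
    (1-c^2)\Bigl(\tfrac12+\tfrac c{4w}\Bigr)^2 \;\lessgtr\; \tfrac12+cw,
\]
and then I would substitute \(c=\sqrt2\cos t\), \(w=\tfrac{1}{\sqrt2}\sin t\) with \(t\in(\pi/4,\arccos(1/(2\sqrt2)))\), which turns it into a polynomial inequality in \(\cos t,\sin t\). That inequality can be checked by sign analysis of a single factor, or, as a fallback, by a direct derivative computation of \(\Psi\) together with a monotone lower bound for \(\Psi'\) on subintervals. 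At the endpoint \(c\to1\) the factor \(1-c^2\) vanishes, so the strict inequality there is clear; the delicate region is near \(c=\tfrac12\), where I would verify the inequality numerically as a sanity check before completing the algebra.
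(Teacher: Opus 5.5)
Your central identity has the wrong sign. With \(\cos\chi=\tfrac c2-w\) and \(\sin\chi=\sqrt{\tfrac12+cw}\), the two terms of \(\psi\) combine as \(\sin\theta_\alpha\cos\chi+\cos\theta_\alpha\sin\chi=\sin(\chi+\theta_\alpha)\), not \(\sin(\chi-\theta_\alpha)\). The analogy with \(\widetilde F_1\) is where the sign slipped: on the branch \(\sin\theta_\gamma\le0\) one has \(\widetilde F_1=\sin(\theta_\alpha-\theta_\gamma)\) with \(\theta_\gamma=-\chi\).

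The function \(\sin(\chi-\theta_\alpha)=c\sqrt{\tfrac12+cw}-\sin\theta_\alpha\bigl(\tfrac c2-w\bigr)\) is genuinely different from \(\psi\). As \(\theta_\alpha\to\pi/3\) it tends to about \(0.81\), while \(\psi\) tends to \(c_0\approx0.099\). So everything after your first step is about the wrong function: the reduction to the sign of \((\chi'-1)\cos(\chi-\theta_\alpha)\), the range check for \(\chi-\theta_\alpha\), and the squared inequality you call the main obstacle. Nothing in your plan would flag this. Your formula for \(\chi'\) simplifies to \(\chi'=\sin\theta_\alpha/\sqrt{2-c^2}<1\), your squared inequality reduces to \(1-c^2<2-c^2\), and the numerical check near \(c=\tfrac12\) passes. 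The plan would therefore ``succeed'' in proving that \(\sin(\chi-\theta_\alpha)\) is decreasing, which says nothing about \(\psi\).

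Once the sign is fixed, the obstacle disappears. You get \(\psi'=(1+\chi')\cos(\chi+\theta_\alpha)\), and your own formula gives \(\chi'>0\), so \(\chi+\theta_\alpha\) is increasing. It runs from \(\pi/2\) at \(\theta_\alpha=0\) to \(\pi/3+\arccos\bigl(\tfrac{1-\sqrt7}{4}\bigr)\approx0.97\pi\) at \(\theta_\alpha=\pi/3\). Hence \(\cos(\chi+\theta_\alpha)<0\) on \((0,\pi/3)\), and \(\psi\) is strictly decreasing.

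Also, the ``\(+\dots\)'' in your expansion is zero: \(\tfrac12+cw=\tfrac14(c+2w)^2\) exactly. So \(\sin\chi=\tfrac c2+w\), and your substitution \(c=\sqrt2\cos t\) gives \(\chi=t+\tfrac\pi4\), i.e.\ \(\psi(\theta_\alpha)=\sin\bigl(\theta_\alpha+\tfrac\pi4+\arccos(\cos\theta_\alpha/\sqrt2)\bigr)\).

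This is the same identity the paper uses to remove the nested radical: \(\sqrt{\tfrac12+x\sqrt{\tfrac12-\tfrac{x^2}{4}}}=\tfrac{x+B}{2}\) with \(B=\sqrt{2-x^2}\). The paper then writes \(\Phi(x)=\tfrac12A(x-B)+\tfrac12x(x+B)\) with \(A=\sqrt{1-x^2}\), and checks that every term of \(2\Phi'(x)\) is positive using \(B>x\). The corrected trigonometric route replaces that derivative computation with a range check on an explicit increasing angle, and is somewhat shorter.
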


\begin{proof}
Set
\[
    x=\cos\theta_\alpha\in\left(\frac12,1\right),
    \qquad
    A:=\sqrt{1-x^2},
    \qquad
    B:=\sqrt{2-x^2},
\]
and define
\[
    \Phi(x):=\psi(\arccos x).
\]
Since \(B^2=2-x^2\), we have
\[
    (x+B)^2
    =x^2+B^2+2xB
    =2(1+xB).
\]
As \(x+B>0\), it follows that
\[
    \sqrt{\frac12+x\sqrt{\frac12-\frac{x^2}{4}}}
    =
    \sqrt{\frac{1+xB}{2}}
    =
    \frac{x+B}{2}.
\]
Therefore
\[
    \Phi(x)
    =
    \frac12 A(x-B)
    +\frac12 x(x+B).
\]

Using
\[
    A'=-\frac{x}{A},
    \qquad
    B'=-\frac{x}{B},
\]
we obtain
\begin{align*}
    2\Phi'(x)
    &=
    -\frac{x}{A}(x-B)
    +A\left(1+\frac{x}{B}\right)
    +2x+B-\frac{x^2}{B} \\
    &=
    \frac{x(B-x)}{A}
    +A+2x
    +\frac{B^2+xA-x^2}{B} \\
    &=
    \frac{x(B-x)}{A}
    +A+2x
    +\frac{A(2A+x)}{B}.
\end{align*}
Now \(A>0\), \(B>0\), and \(x>0\). Moreover,
\[
    B^2-x^2
    =2(1-x^2)
    =2A^2>0,
\]
so \(B>x\). Hence every term in the last expression is strictly
positive, and therefore
\[
    \Phi'(x)>0,
    \qquad x\in\left(\frac12,1\right).
\]

Finally, since \(x=\cos\theta_\alpha\),
\[
    \psi'(\theta_\alpha)
    =
    -\sin\theta_\alpha\,
    \Phi'(\cos\theta_\alpha)<0
\]
for every \(\theta_\alpha\in(0,\pi/3)\). Thus
\(\psi\) is strictly decreasing on \((0,\pi/3)\).
\end{proof}
\begin{lemma}\label{lem:positivity_decreasing_term}
The function
\[
    \psi_1(\theta_\alpha)
    =
    -\frac{\sin\theta_\alpha\cos\theta_\alpha}{2}
    +
    \cos\theta_\alpha
    \sqrt{1-\frac{\cos^2\theta_\alpha}{4}},
    \qquad
    0\le\theta_\alpha\le\frac{\pi}{2},
\]
is nonnegative and strictly decreasing on \([0,\frac{\pi}{2}]\), and is
positive on \([0,\frac{\pi}{2})\).
\end{lemma}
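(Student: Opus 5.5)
The plan is to factor out $\cos\theta_\alpha$ and then change variables to $x=\cos\theta_\alpha$, exactly as in the proof of Lemma~\ref{lem:psi_decreasing}.

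\textbf{Sign.} First write
\[
\psi_1(\theta_\alpha)=\cos\theta_\alpha\Bigl(\sqrt{1-\tfrac{\cos^2\theta_\alpha}{4}}-\tfrac{\sin\theta_\alpha}{2}\Bigr).
\]
On $[0,\tfrac{\pi}{2}]$ the bracket satisfies
\[
\sqrt{1-\tfrac{\cos^2\theta_\alpha}{4}}\ \ge\ \tfrac{\sqrt3}{2}\ >\ \tfrac12\ \ge\ \tfrac{\sin\theta_\alpha}{2},
\]
so it is strictly positive. The factor $\cos\theta_\alpha$ is nonnegative and vanishes only at $\theta_\alpha=\tfrac{\pi}{2}$. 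Hence $\psi_1\ge0$ on the closed interval, $\psi_1>0$ on $[0,\tfrac{\pi}{2})$, and $\psi_1(\tfrac{\pi}{2})=0$.

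\textbf{Monotonicity.} Set
\[
x=\cos\theta_\alpha\in[0,1],\qquad S=\sqrt{1-x^2}=\sin\theta_\alpha,\qquad C=\sqrt{1-x^2/4},
\]
and define $\Phi(x):=\psi_1(\arccos x)=x\bigl(C-\tfrac{S}{2}\bigr)$. Using $C'=-\tfrac{x}{4C}$ and $S'=-\tfrac{x}{S}$, which holds for $x<1$, we get
\[
\Phi'(x)=\Bigl(C-\tfrac{S}{2}\Bigr)+x^2\Bigl(\tfrac{1}{2S}-\tfrac{1}{4C}\Bigr).
\]
The first term is positive by the sign step. For the second, $S\le C$ gives $\tfrac{1}{2S}\ge\tfrac{1}{2C}>\tfrac{1}{4C}$, so that term is nonnegative. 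Therefore $\Phi'(x)>0$ on $(0,1)$.

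By the chain rule,
\[
\psi_1'(\theta_\alpha)=-\sin\theta_\alpha\,\Phi'(\cos\theta_\alpha)<0\qquad\text{for }\theta_\alpha\in(0,\tfrac{\pi}{2}).
\]
Since $\psi_1$ is continuous on $[0,\tfrac{\pi}{2}]$, it is strictly decreasing on the whole closed interval.

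\textbf{Expected obstacle.} The only delicate point is the endpoint $\theta_\alpha=0$, where $S=0$ and $\Phi'$ blows up. This is avoided by working on the open interval and then extending by continuity, so no real obstacle is expected.
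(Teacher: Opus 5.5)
Your proof is correct, but it takes a different route from the paper. The paper never differentiates. With $s=\sin\theta_\alpha$, $c=\cos\theta_\alpha$ and $1-\frac{c^2}{4}=\frac{3+s^2}{4}$, it rationalizes:
\[
\psi_1=\frac{c}{2}\bigl(\sqrt{3+s^2}-s\bigr)=\frac{3c}{2\bigl(\sqrt{3+s^2}+s\bigr)}.
\]
This is a strictly decreasing nonnegative numerator over a strictly increasing positive denominator. Strict monotonicity on the closed interval, positivity on $[0,\frac{\pi}{2})$, and the zero at $\frac{\pi}{2}$ can all be read off at once.

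Your sign step and your derivative formula $\Phi'(x)=\bigl(C-\frac{S}{2}\bigr)+x^2\bigl(\frac{1}{2S}-\frac{1}{4C}\bigr)$ are correct, and so is the comparison $S\le C$. Your argument is a valid calculus verification in the style of Lemma~\ref{lem:psi_decreasing}. It does, however, cost you a derivative computation and the restriction to $x<1$, where $S'$ blows up. It also needs the mean-value/continuity step to pass from the open interval to $[0,\frac{\pi}{2}]$.

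The rationalized form gives everything in two lines. As a byproduct it also gives the quantitative bound $\psi_1\ge\frac{c}{2}$, since the denominator is at most $6$. Your factorization yields the comparable bound $\psi_1\ge\frac{\sqrt3-1}{2}\,c$. A linear lower bound of this kind is what stands behind the later use $\varepsilon\lesssim\psi_1$ on the cut-off support $\cos\theta_\alpha\ge\varepsilon$ in the $(+,+,-)$ estimates.
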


\begin{proof}
Write
\[
    s:=\sin\theta_\alpha,
    \qquad
    c:=\cos\theta_\alpha.
\]
Since
\[
    1-\frac{c^2}{4}
    =
    \frac{3+s^2}{4},
\]
we have
\begin{align*}
    \psi_1(\theta_\alpha)
    &=
    \frac{c}{2}
    \left(
        \sqrt{3+s^2}-s
    \right) \\
    &=
    \frac{3c}{
        2\bigl(\sqrt{3+s^2}+s\bigr)
    }.
\end{align*}
On \([0,\frac{\pi}{2}]\), the numerator \(c\) is strictly decreasing,
whereas the positive denominator
\[
    \sqrt{3+s^2}+s
\]
is strictly increasing.  It follows that \(\psi_1\) is strictly decreasing.
Moreover,
\[
    \psi_1(\theta_\alpha)>0
    \quad\text{for}\quad
    0\le\theta_\alpha<\frac{\pi}{2},
    \qquad
    \psi_1\left(\frac{\pi}{2}\right)=0.
\]
This proves the claim.
\end{proof}

\begin{lemma}[Rotating high--low--high cusp]
\label{lem:rotating-high-low-high-cusp}
Let \(0<f<\frac12\), and define
\[
    \Omega_f(\theta)
    :=
    \sqrt{\cos^2\theta+f^2\sin^2\theta}.
\]
Equivalently, for \(k\neq0\),
\[
    \Omega_f(k)
    =
    \sqrt{
        \frac{(k_1)^2+f^2(k_2)^2}{|k|^2}
    }.
\]
Consider a high--low--high configuration in which the low wave vector \(q\)
collapses to zero.  Up to relabeling of the triad, the reduced frequency
resonance function has the local form
\[
    \widetilde\Xi_{1,f}^{\mathrm{HLH}}(q)
    :=
    \Omega_f(k_\alpha)
    +
    \Omega_f(-k_\alpha-q)
    -
    \Omega_f(q),
    \qquad
    k_\alpha=(\cos\theta_\alpha,\sin\theta_\alpha).
\]
Then the collapsed low-mode resonance condition is
\[
    \Omega_f(\theta_q)
    =
    2\Omega_f(\theta_\alpha),
\]
where \(\theta_q\) denotes the limiting direction of \(q\).  Hence the cusp
threshold is
\[
    \Omega_f(\theta_\alpha)=\frac12.
\]
Equivalently,
\[
    |\cos\theta_\alpha|
    =
    \sqrt{
        \frac{\frac14-f^2}{1-f^2}
    }.
\]
In the first quadrant, the critical angle is
\[
    \theta_\alpha^\ast(f)
    =
    \arccos
    \sqrt{
        \frac{\frac14-f^2}{1-f^2}
    }.
\]
At this critical angle, the resonance curve reaches the collapsed low mode
quadratically.  More precisely, if
\[
    q=\rho(\cos(\pi+\eta),\sin(\pi+\eta)),
    \qquad
    0<\rho\ll1,
    \qquad
    |\eta|\ll1,
\]
then
\[
    \widetilde\Xi_{1,f}^{\mathrm{HLH}}(q)=0
\]
has the local form
\[
    \rho=C_f\eta^2+O(|\eta|^3),
    \qquad
    C_f>0.
\]
Thus \(q=0\) is a cusp point of the high--low--high resonance curve.
\end{lemma}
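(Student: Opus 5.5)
We need to prove the rotating high--low--high cusp lemma.

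The plan is to work directly with the reduced function
\(\widetilde\Xi^{\mathrm{HLH}}_{1,f}(q)=\Omega_f(k_\alpha)+\Omega_f(-k_\alpha-q)-\Omega_f(q)\), written in polar coordinates \(q=\rho(\cos(\pi+\eta),\sin(\pi+\eta))\).

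\textbf{Step 1: the collapsed resonance condition.}
Since \(\Omega_f\) is even and homogeneous of degree zero, \(\Omega_f(-k_\alpha-q)=\Omega_f(k_\alpha+q)\). Moreover \(\Omega_f(q)=\Omega_f(\pi+\eta)\) depends only on the direction of \(q\). Letting \(\rho\to0\) at fixed direction gives the limit \(2\Omega_f(\theta_\alpha)-\Omega_f(\theta_q)\). So the collapsed resonance condition is \(\Omega_f(\theta_q)=2\Omega_f(\theta_\alpha)\).

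\textbf{Step 2: the threshold.}
Because \(\Omega_f\le1\) with maximum \(1\) attained only at horizontal directions \(\theta_q\in\{0,\pi\}\), solutions exist iff \(\Omega_f(\theta_\alpha)\le\frac12\). The two limiting directions coalesce, at \(\theta_q=\pi\), exactly when \(\Omega_f(\theta_\alpha)=\frac12\). Solving
\[
\cos^2\theta_\alpha+f^2(1-\cos^2\theta_\alpha)=\tfrac14
\]
gives \(\cos^2\theta_\alpha=(\tfrac14-f^2)/(1-f^2)\). This lies in \((0,1)\) since \(0<f<\frac12\), which yields \(\theta^\ast_\alpha(f)\).

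\textbf{Step 3: local expansion at the critical angle.}
Fix \(\theta_\alpha=\theta^\ast_\alpha\) and expand both terms.

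The low-mode term is
\[
\Omega_f(\pi+\eta)=\sqrt{1-(1-f^2)\sin^2\eta}=1-\tfrac12(1-f^2)\eta^2+O(\eta^4).
\]

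For the high-mode term, use homogeneity: \(\Omega_f(k_\alpha+q)=\Omega_f(k_\alpha)+\nabla\Omega_f(k_\alpha)\cdot q+O(\rho^2)\). The dependence on \(\eta\) enters the linear term only through \(q\), so it costs \(O(\rho|\eta|)\). This gives
\[
\widetilde\Xi=-\rho\,\nabla\Omega_f(k_\alpha)\cdot e_1+\tfrac12(1-f^2)\eta^2+O(\rho|\eta|+\rho^2+\eta^4),
\]
using \(2\Omega_f(k_\alpha)-1=0\). From the formula for \(\nabla\Omega_f\) recorded before Lemma~\ref{lem:rotating-coarea-critical-points},
\[
\nabla\Omega_f(k_\alpha)\cdot e_1=\frac{(1-f^2)\cos\theta_\alpha\sin^2\theta_\alpha}{\Omega_f(k_\alpha)}=2(1-f^2)\cos\theta_\alpha\sin^2\theta_\alpha>0
\]
in the first quadrant, because \(\cos\theta^\ast_\alpha>0\) and \(\sin\theta^\ast_\alpha>0\).

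\textbf{Step 4: implicit function argument.}
Write \(a:=2(1-f^2)\cos\theta_\alpha\sin^2\theta_\alpha\) and \(b:=\tfrac12(1-f^2)\). The equation \(\widetilde\Xi=0\) has the form \(a\rho=b\eta^2+O(\rho|\eta|+\rho^2+\eta^4)\).

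Set \(\rho=\eta^2 t\). The function \(\Phi(t,\eta):=\eta^{-2}\widetilde\Xi(\eta^2t,\eta)\) then satisfies \(\Phi(t,\eta)=-at+b+O(|\eta|)\). It extends to be \(C^1\) in \((t,\eta)\) near \((b/a,0)\), with \(\partial_t\Phi=-a\neq0\). The implicit function theorem gives \(t=b/a+O(|\eta|)\). Hence \(\rho=C_f\eta^2+O(|\eta|^3)\) with
\[
C_f=\frac{b}{a}=\frac{1}{4\cos\theta^\ast_\alpha\sin^2\theta^\ast_\alpha}>0.
\]
As a consistency check, at \(f=0\) this gives \(C_0=\frac{1}{4\cdot\frac12\cdot\frac34}=\frac23\), matching \(\rho\sim\eta^2\) in the non-rotating remark. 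The two one-sided branches \(\eta\gtrless0\) then meet at \(q=0\) with a common tangent direction and quadratic contact, which is the claimed cusp.

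\textbf{Main obstacle.}
The hardest step is Step 4: justifying the smoothness of the rescaled function \(\Phi\). The difficulty is that \(\Omega_f(q)\) is not smooth at \(q=0\) in Cartesian variables. I would handle this by working in the polar variables \((\rho,\eta)\) throughout. In these variables \(\Omega_f(q)\) is smooth in \(\eta\) alone, and \(\Omega_f(k_\alpha+q)\) is smooth in \((\rho,\eta)\), since \(k_\alpha\neq0\).

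Along the way I would also verify the sign conventions: that the relevant term is the horizontal low mode pointing near \(\pi\), and that \(q\cdot e_1=-\rho\cos\eta\). These give the sign of \(a\) and hence positivity of \(C_f\).
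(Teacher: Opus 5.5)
Your proposal is correct and follows the paper's proof essentially step for step. It uses the same collapsed limit $2\Omega_f(\theta_\alpha)-\Omega_f(\theta_q)$, the same threshold computation $\cos^2\theta_\alpha=(\tfrac14-f^2)/(1-f^2)$, and the same polar expansion, in which the linear-in-$\rho$ gradient term of the high mode balances the $\tfrac12(1-f^2)\eta^2$ term coming from the horizontal maximum of $\Omega_f$. The differences are cosmetic: you expand about $k_\alpha$ using evenness rather than about $-k_\alpha$, and you justify the final solving step with the blow-up $\rho=\eta^2t$ and the implicit function theorem, which the paper leaves implicit. Your explicit constant $C_f=1/(4\cos\theta_\alpha^\ast\sin^2\theta_\alpha^\ast)$ agrees with the paper's $(1-f^2)/(-2B_f)$.
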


\begin{proof}
As \(q\to0\), the magnitude of \(q\) collapses, but its direction remains as a
limiting angular parameter.  Since
\[
    -k_\alpha-q\to -k_\alpha
\]
and
\[
    \Omega_f(-k_\alpha)=\Omega_f(k_\alpha),
\]
the collapsed limit of the reduced resonance function is
\[
    \widetilde\Xi_{1,f}^{\mathrm{HLH}}(q)
    \to
    2\Omega_f(\theta_\alpha)-\Omega_f(\theta_q).
\]
Therefore a collapsed low-mode resonance requires
\[
    \Omega_f(\theta_q)=2\Omega_f(\theta_\alpha).
\]
Since
\[
    f\le \Omega_f(\theta)\le1,
\]
the limiting low-mode directions coalesce when
\[
    2\Omega_f(\theta_\alpha)=1.
\]
Thus the cusp threshold is
\[
    \Omega_f(\theta_\alpha)=\frac12.
\]
Using
\[
    \Omega_f^2(\theta)
    =
    f^2+(1-f^2)\cos^2\theta,
\]
we obtain
\[
    f^2+(1-f^2)\cos^2\theta_\alpha=\frac14.
\]
Hence
\[
    |\cos\theta_\alpha|
    =
    \sqrt{
        \frac{\frac14-f^2}{1-f^2}
    }.
\]

It remains to check the quadratic contact.  At the critical angle
\(\theta_\alpha=\theta_\alpha^\ast(f)\), one has
\[
    \Omega_f(\theta_\alpha)=\frac12.
\]
We expand near the horizontal limiting low-mode direction
\[
    \theta_q=\pi.
\]
Write
\[
    q=\rho(\cos(\pi+\eta),\sin(\pi+\eta)).
\]
Since \(\Omega_f\) has a nondegenerate maximum at the horizontal directions,
\[
    \Omega_f(\pi+\eta)
    =
    1-\frac{1-f^2}{2}\eta^2
    +
    O(\eta^4).
\]
On the other hand,
\[
    -k_\alpha-q
    =
    -k_\alpha+\rho(1,0)+O(\rho|\eta|),
\]
and therefore
\[
    \Omega_f(-k_\alpha-q)
    =
    \Omega_f(\theta_\alpha)
    +
    B_f(\theta_\alpha)\rho
    +
    O(\rho|\eta|+\rho^2),
\]
where
\[
    B_f(\theta_\alpha)
    =
    \nabla\Omega_f(-k_\alpha)\cdot(1,0)
    =
    -\frac{
        (1-f^2)\cos\theta_\alpha\sin^2\theta_\alpha
    }{
        \Omega_f(\theta_\alpha)
    }.
\]
At
\[
    \theta_\alpha=\theta_\alpha^\ast(f),
\]
this coefficient is strictly negative:
\[
    B_f(\theta_\alpha^\ast(f))<0.
\]
Substituting the expansions into
\(\widetilde\Xi_{1,f}^{\mathrm{HLH}}\) gives
\[
\begin{aligned}
    \widetilde\Xi_{1,f}^{\mathrm{HLH}}(\rho,\eta)
    &=
    \Omega_f(\theta_\alpha)
    +
    \Omega_f(-k_\alpha-q)
    -
    \Omega_f(\pi+\eta)
    \\
    &=
    B_f(\theta_\alpha)\rho
    +
    \frac{1-f^2}{2}\eta^2
    +
    O(\rho|\eta|+\rho^2+\eta^4).
\end{aligned}
\]
Since \(B_f(\theta_\alpha^\ast(f))\neq0\), solving
\[
    \widetilde\Xi_{1,f}^{\mathrm{HLH}}(\rho,\eta)=0
\]
gives
\[
    \rho
    =
    \frac{1-f^2}{-2B_f(\theta_\alpha^\ast(f))}
    \eta^2
    +
    O(|\eta|^3).
\]
The leading coefficient is positive because
\[
    B_f(\theta_\alpha^\ast(f))<0.
\]
Thus
\[
    \rho=C_f\eta^2+O(|\eta|^3),
    \qquad
    C_f>0.
\]
This proves that the resonance curve reaches the collapsed low mode
quadratically.  In Cartesian coordinates near \(q=0\), this is the usual cusp
behavior
\[
    (q_2)^2\sim C\,|q_1|^3.
\]
\end{proof}
\bibliographystyle{plain}
\bibliography{references}

@article{shavit2024,
  title={Sign-indefinite invariants shape turbulent cascades},
  author={Shavit, Michal and B{\"u}hler, Oliver and Shatah, Jalal},
  journal={Physical Review Letters},
  volume={133},
  number={1},
  pages={014001},
  year={2024},
  publisher={APS}
}

@article{shavit2025,
  title={Turbulent spectrum of 2D internal gravity waves},
  author={Shavit, Michal and B{\"u}hler, Oliver and Shatah, Jalal},
  journal={Physical Review Letters},
  volume={134},
  number={5},
  pages={054101},
  year={2025},
  publisher={APS}
}

@article{labarre2024kinetics,
  title={On the kinetics of internal gravity waves beyond the hydrostatic regime},
  author={Labarre, Vincent and Lanchon, Nicolas and Cortet, Pierre-Philippe and Krstulovic, Giorgio and Nazarenko, Sergey},
  journal={Journal of Fluid Mechanics},
  volume={998},
  pages={A17},
  year={2024},
  publisher={Cambridge University Press}
}

@article{caillol2000kinetic,
  title={Kinetic equations and stationary energy spectra of weakly nonlinear internal gravity waves},
  author={Caillol, Ph and Zeitlin, V},
  journal={Dynamics of atmospheres and oceans},
  volume={32},
  number={2},
  pages={81--112},
  year={2000},
  publisher={Elsevier}
}

@article{LvovTabak2001,
  title={Hamiltonian formalism and the Garrett-Munk spectrum of internal waves in the ocean},
  author={Lvov, Yuri V and Tabak, Esteban G},
  journal={Physical review letters},
  volume={87},
  number={16},
  pages={168501},
  year={2001},
  publisher={APS}
}

@article{lvov2010oceanic,
  title={Oceanic internal-wave field: Theory of scale-invariant spectra},
  author={Lvov, Yuri V and Polzin, Kurt L and Tabak, Esteban G and Yokoyama, Naoto},
  journal={Journal of Physical Oceanography},
  volume={40},
  number={12},
  pages={2605--2623},
  year={2010}
}

@article{lvov2004hamiltonian,
  title={A Hamiltonian formulation for long internal waves},
  author={Lvov, Yuri and Tabak, Esteban G},
  journal={Physica D: Nonlinear Phenomena},
  volume={195},
  number={1-2},
  pages={106--122},
  year={2004},
  publisher={Elsevier}
}

@article{lvov2012resonant,
  title={Resonant and near-resonant internal wave interactions},
  author={Lvov, Yuri V and Polzin, Kurt L and Yokoyama, Naoto},
  journal={Journal of Physical Oceanography},
  volume={42},
  number={5},
  pages={669--691},
  year={2012}
}

@article{hasselmann1962non,
  title={On the non-linear energy transfer in a gravity-wave spectrum Part 1. General theory},
  author={Hasselmann, Klaus},
  journal={Journal of Fluid Mechanics},
  volume={12},
  number={4},
  pages={481--500},
  year={1962},
  publisher={Cambridge University Press}
}

@article{muller1975dynamics,
  title={On the dynamics of internal waves in the deep ocean},
  author={M{\"u}ller, Peter and Olbers, Dirk J},
  journal={Journal of geophysical research},
  volume={80},
  number={27},
  pages={3848--3860},
  year={1975},
  publisher={Wiley Online Library}
}

@article{olbers1976nonlinear,
  title={Nonlinear energy transfer and the energy balance of the internal wave field in the deep ocean},
  author={Olbers, Dirk J},
  journal={Journal of Fluid mechanics},
  volume={74},
  number={2},
  pages={375--399},
  year={1976},
  publisher={Cambridge University Press}
}

@article{mccomas1977resonant,
  title={Resonant interaction of oceanic internal waves},
  author={McComas, C Henry and Bretherton, Francis P},
  journal={Journal of Geophysical Research},
  volume={82},
  number={9},
  pages={1397--1412},
  year={1977},
  publisher={Wiley Online Library}
}

@article{mccomas1981dynamic,
  title={The dynamic balance of internal waves},
  author={McComas, C Henry and M{\"u}ller, Peter},
  journal={Journal of Physical Oceanography},
  volume={11},
  number={7},
  pages={970--986},
  year={1981}
}

@article{mccomas1981time,
  title={Time scales of resonant interactions among oceanic internal waves},
  author={McComas, C Henry and M{\"u}ller, Peter},
  journal={Journal of physical oceanography},
  volume={11},
  number={2},
  pages={139--147},
  year={1981}
}

@book{nazarenko2011wave,
  title={Wave turbulence},
  author={Nazarenko, Sergey},
  volume={825},
  year={2011},
  publisher={Springer Science \& Business Media}
}

@article{ampatzoglou2025derivation,
  title={Derivation of the kinetic wave equation for quadratic dispersive problems in the inhomogeneous setting},
  author={Ampatzoglou, Ioakeim and Collot, Charles and Germain, Pierre},
  journal={American Journal of Mathematics},
  volume={147},
  number={4},
  pages={1053--1158},
  year={2025},
  publisher={Johns Hopkins University Press}
}

@article{ampatzoglou2022global,
  title={Global well-posedness and stability of the inhomogeneous kinetic wave equation near vacuum},
  author={Ampatzoglou, Ioakeim},
  journal={Kinetic and Related Models},
  year={2024},
  publisher={American Institute of Mathematical Sciences}
}

@article{AmpatzoglouMillerPavlovicTaskovic2025,
  title={Inhomogeneous wave kinetic equation and its hierarchy in polynomially weighted \(L^\infty\) spaces},
  author={Ampatzoglou, Ioakeim and Miller, Joseph K and Pavlovi{\'c}, Nata{\v{s}}a and Taskovi{\'c}, Maja},
  journal={Communications in Partial Differential Equations},
  volume={50},
  number={6},
  pages={723--765},
  year={2025},
  publisher={Taylor \& Francis}
}

@article{ampatzoglou2024scattering,
  title={Global existence of strong solutions to the inhomogeneous kinetic wave equation},
  author={Ampatzoglou, Ioakeim and L{\'e}ger, Tristan},
  journal={Communications in Mathematical Physics},
  volume={407},
  number={7},
  pages={144},
  year={2026},
  publisher={Springer}
}

@article{AmpatzoglouLegerIllposed2025,
  title={On the ill-posedness of kinetic wave equations},
  author={Ampatzoglou, Ioakeim and L{\'e}ger, Tristan},
  journal={Nonlinearity},
  volume={38},
  number={11},
  pages={115004},
  year={2025},
  publisher={IOP Publishing}
}

@article{Menegaki2024,
  title={$ L^{2}$-stability near equilibrium for the 4 waves kinetic equation},
  author={Menegaki, Angeliki},
  journal={Kinetic and Related Models},
  volume={17},
  number={4},
  pages={514--532},
  year={2024},
  publisher={Kinetic and Related Models}
}

@article{EscobedoMenegaki2024,
  title={Instability of singular equilibria of a wave kinetic equation},
  author={Escobedo, Miguel and Menegaki, Angeliki},
  journal={arXiv preprint arXiv:2406.05280},
  year={2024}
}

@article{CollotDietertGermain2024,
  title={Stability and Cascades for the Kolmogorov--Zakharov Spectrum of Wave Turbulence: C. Collot et al.},
  author={Collot, Charles and Dietert, Helge and Germain, Pierre},
  journal={Archive for Rational Mechanics and Analysis},
  volume={248},
  number={1},
  pages={7},
  year={2024},
  publisher={Springer}
}

@inproceedings{deng2021derivation,
  title={On the derivation of the wave kinetic equation for NLS},
  author={Deng, Yu and Hani, Zaher},
  booktitle={Forum of Mathematics, Pi},
  volume={9},
  pages={e6},
  year={2021},
  organization={Cambridge University Press}
}

@article{deng2021propagation,
  title={Propagation of chaos and the higher order statistics in the wave kinetic theory},
  author={Deng, Yu and Hani, Zaher},
  journal={arXiv preprint arXiv:2110.04565},
  year={2021}
}

@article{deng2023full,
  title={Full derivation of the wave kinetic equation},
  author={Deng, Yu and Hani, Zaher},
  journal={Inventiones mathematicae},
  volume={233},
  number={2},
  pages={543--724},
  year={2023},
  publisher={Springer}
}

@article{deng2023long,
  title={Long time justification of wave turbulence theory},
  author={Deng, Yu and Hani, Zaher},
  journal={arXiv preprint arXiv:2311.10082},
  year={2023}
}

@article{deng2024long,
  title={Long time derivation of the Boltzmann equation from hard sphere dynamics},
  author={Deng, Yu and Hani, Zaher and Ma, Xiao},
  journal={arXiv preprint arXiv:2408.07818},
  year={2024}
}

@article{deng2025hilbert,
  title={Hilbert's sixth problem: derivation of fluid equations via Boltzmann's kinetic theory},
  author={Deng, Yu and Hani, Zaher and Ma, Xiao},
  journal={arXiv preprint arXiv:2503.01800},
  year={2025}
}

@article{BuckmasterGermainHaniShatah2021,
  title={Onset of the wave turbulence description of the longtime behavior of the nonlinear Schr{\"o}dinger equation},
  author={Buckmaster, Tristan and Germain, Pierre and Hani, Zaher and Shatah, Jalal},
  journal={Inventiones mathematicae},
  volume={225},
  number={3},
  pages={787--855},
  year={2021},
  publisher={Springer}
}

@article{GermainIonescuTran2020,
  title={Optimal local well-posedness theory for the kinetic wave equation},
  author={Germain, Pierre and Ionescu, Alexandru D and Tran, Minh-Binh},
  journal={Journal of Functional Analysis},
  volume={279},
  number={4},
  pages={108570},
  year={2020},
  publisher={Elsevier}
}

@article{GermainLaZhang2025,
  title={Local well-posedness for the kinetic mmt model},
  author={Germain, Pierre and La, Joonhyun and Zhang, Katherine Zhiyuan},
  journal={Communications in Mathematical Physics},
  volume={406},
  number={1},
  pages={18},
  year={2025},
  publisher={Springer}
}

@incollection{grad1958,
  title={Principles of the kinetic theory of gases},
  author={Grad, Harold},
  booktitle={Thermodynamik der Gase/Thermodynamics of Gases},
  pages={205--294},
  year={1958},
  publisher={Springer}
}

@article{garrett1972space,
  title={Space-time scales of internal waves},
  author={Garrett, Christopher and Munk, Walter},
  journal={Geophysical Fluid Dynamics},
  volume={3},
  number={3},
  pages={225--264},
  year={1972},
  publisher={Taylor \& Francis}
}

@article{garrett1975space,
  title={Space-time scales of internal waves: A progress report},
  author={Garrett, Christopher and Munk, Walter},
  journal={Journal of Geophysical Research},
  volume={80},
  number={3},
  pages={291--297},
  year={1975},
  publisher={Wiley Online Library}
}

@article{munk1981internal,
  title={Internal waves and small-scale processes},
  author={Munk, WH},
  journal={Evolution of physical oceanography},
  year={1981},
  publisher={MIT press}
}

@article{collot2025longer,
  title={Derivation of the homogeneous kinetic wave equation: longer time scales},
  author={Collot, Charles and Germain, Pierre},
  journal={Journal of Functional Analysis},
  pages={111179},
  year={2025},
  publisher={Elsevier}
}

@article{collot2025homogeneous,
  title={On the derivation of the homogeneous kinetic wave equation},
  author={Collot, Charles and Germain, Pierre},
  journal={Communications on Pure and Applied Mathematics},
  volume={78},
  number={4},
  pages={856--909},
  year={2025},
  publisher={Wiley Online Library}
}

@article{vassilev2025,
  title={One-Dimensional Wave Kinetic Theory: KD Vassilev},
  author={Vassilev, Katja D},
  journal={Communications in Mathematical Physics},
  volume={406},
  number={12},
  pages={293},
  year={2025},
  publisher={Springer}
}

@article{wu2025,
  title={Rigorous Derivation of the Wave Kinetic Equation for the $\beta$-FPUT System},
  author={Wu, Boyang},
  journal={arXiv preprint arXiv:2506.02948},
  year={2025}
}

@article{vassilevwu2026,
  title={Rigorous Derivation of the Wave Kinetic Equation for full $\beta $-FPUT System},
  author={Vassilev, Katja and Wu, Boyang},
  journal={arXiv preprint arXiv:2605.19308},
  year={2026}
}

@incollection{cercignani1988,
  title={The boltzmann equation},
  author={Cercignani, Carlo},
  booktitle={The Boltzmann equation and its applications},
  pages={40--103},
  year={1988},
  publisher={Springer}
}

@article{DiPernaLions1989,
  title={On the Cauchy problem for Boltzmann equations: global existence and weak stability},
  author={DiPerna, Ronald J and Lions, Pierre-Louis},
  journal={Annals of Mathematics},
  pages={321--366},
  year={1989},
  publisher={JSTOR}
}

@article{villani2002,
  title={A review of mathematical topics in collisional kinetic theory},
  author={Villani, C{\'e}dric},
  journal={Handbook of mathematical fluid dynamics},
  volume={1},
  pages={71--74},
  year={2002},
  publisher={Elsevier}
}

@article{AlexandreDesvillettesVillaniWennberg2000,
  title={Entropy dissipation and long-range interactions},
  author={Alexandre, Radjesvarane and Desvillettes, Laurent and Villani, C{\'e}dric and Wennberg, Bernt},
  journal={Archive for rational mechanics and analysis},
  volume={152},
  number={4},
  pages={327--355},
  year={2000},
  publisher={Springer}
}

@article{bianchini2025ill,
  title={Ill-Posedness of the Hydrostatic Euler--Boussinesq Equations and Failure of Hydrostatic Limit: R. Bianchini, M. Coti Zelati, L. Ertzbischoff},
  author={Bianchini, Roberta and Coti Zelati, Michele and Ertzbischoff, Lucas},
  journal={Communications in Mathematical Physics},
  volume={406},
  number={10},
  pages={254},
  year={2025},
  publisher={Springer}
}

@article{shavit2026rotating,
  title={Wave turbulence of inertia--gravity waves: a theory for the oceanic spectrum},
  author={Shavit, Michal and B{\"u}hler, Oliver and Shatah, Jalal},
  journal={arXiv preprint arXiv:2601.01476},
  year={2026}
}

@article{PanWu2026,
  title={Local-in-Time Existence of $ L^1$ solutions to the Gravity Water Wave Kinetic Equation},
  author={Pan, Yulin and Wu, Xiaoxu},
  journal={arXiv preprint arXiv:2603.10882},
  year={2026}
}

@article{DengIonescuPusateriGravityI,
  title={On the wave turbulence theory of 2D gravity waves, I: deterministic energy estimates},
  author={Deng, Yu and Ionescu, Alexandru D and Pusateri, Fabio},
  journal={Communications on Pure and Applied Mathematics},
  volume={78},
  number={2},
  pages={211--322},
  year={2025},
  publisher={Wiley Online Library}
}

@article{DengIonescuPusateriGravityII,
  title={On the wave turbulence theory of 2D gravity waves, II: propagation of randomness},
  author={Deng, Yu and Ionescu, Alexandru and Pusateri, Fabio},
  journal={arXiv preprint arXiv:2504.14304},
  year={2025}
}

@article{germain2026stability,
  title={Stability of Rayleigh--Jeans Equilibria in the Kinetic FPU Equation},
  author={Germain, Pierre and La, Joonhyun and Menegaki, Angeliki},
  journal={Archive for Rational Mechanics and Analysis},
  volume={250},
  number={4},
  pages={65},
  year={2026},
  publisher={Springer}
}

@article{xiang2025long,
  title={Long-Time Existence and Behavior of Solutions to the Inhomogeneous Kinetic FPU Equation},
  author={Xiang, Haoling},
  journal={arXiv preprint arXiv:2512.21187},
  year={2025}
}

@article{escobedo2025entropy,
  title={Entropy maximizers for kinetic wave equations set on tori},
  author={Escobedo, Miguel and Germain, Pierre and La, Joonhyun and Menegaki, Angeliki},
  journal={Bulletin of the London Mathematical Society},
  volume={57},
  number={12},
  pages={3977--3990},
  year={2025},
  publisher={Wiley Online Library}
}

@article{lukkarinen2008anomalous,
  title={Anomalous energy transport in the FPU-$\beta$ chain},
  author={Lukkarinen, Jani and Spohn, Herbert},
  journal={Communications on Pure and Applied Mathematics: A Journal Issued by the Courant Institute of Mathematical Sciences},
  volume={61},
  number={12},
  pages={1753--1786},
  year={2008},
  publisher={Wiley Online Library}
}
\medskip
\noindent\textsc{Department of Mathematics,
Imperial College London,
London SW7 2AZ, United Kingdom}

\noindent\textit{Email address:} \texttt{h.xiang23@imperial.ac.uk}
\end{document}